\documentclass[10pt]{article}
\usepackage{tikz-cd}
\usepackage{quiver}
\usepackage{graphicx}
\usepackage{pgfkeys}
\usetikzlibrary{arrows}
\usepackage[english]{babel}
\usepackage[utf8]{inputenc}
\usetikzlibrary{calc,babel}
\usepackage[all,cmtip]{xy}
\usepackage{extarrows}
\usepackage{amsfonts}
\usepackage{amsmath}
\usepackage{yhmath}
\usepackage[
backend=biber,
style=alphabetic,
]{biblatex}
\usepackage{stmaryrd}
\usepackage{mathtools}
\usepackage{amssymb}
\usepackage{amsthm}
\usepackage[shortlabels]{enumitem}

\usepackage[top=1.5in, bottom=1in, left=1in, right=1in]{geometry}
\usepackage{setspace}
\usepackage[hidelinks]{hyperref} 
\usepackage[capitalize]{cleveref}
\usepackage{thmtools}
\hypersetup{hypertexnames=false}

\numberwithin{equation}{section}

\newtheorem{thm}[equation]{Theorem}
\newtheorem{lem}[thm]{Lemma}
\newtheorem*{clm}{Claim}
\newtheorem{cor}[thm]{Corollary}
\newtheorem{prop}[thm]{Proposition}

\newtheorem{que}[thm]{Question}
\newtheorem{notn}[thm]{Notation}

\theoremstyle{definition}
\newtheorem{defi}[thm]{Definition}
\newtheorem{example}[thm]{Example}

\theoremstyle{remark}
\newtheorem{rem}[thm]{Remark}

\newcommand{\N}{\mathbb N}

\newcommand{\Z}{\mathbb Z}

\newcommand{\E}{\mathcal{E}}

\newcommand{\K}{\mathbb{K}}

\newcommand{\s}{\mathcal{S}}

\newcommand{\res}{\operatorname{res}}
\newcommand{\ind}{\operatorname{ind}}

\newcommand{\kg}{{\mathcal{K}_G}}
\newcommand{\hatkg}{\hat{\mathcal{K}}_G}
\newcommand{\R}{\mathbb R}

\newcommand{\KKRGRAnon}{\wideKK_{!,\infty}}

\newcommand{\CAlg}{\mathsf C^{*}\!\mathsf{Alg}}
\newcommand{\CAlgGr}{\widehat{\mathsf C}^{*}\!\mathsf{Alg}}
\newcommand{\sep}{\mathrm{sep}}
\newcommand{\ex}{\mathrm{ex}}
\newcommand{\se}{\mathrm{se}}
\newcommand{\Kas}{\mathrm{Kas}}
\newcommand{\KK}{\mathrm{KK}}
\newcommand{\wideKK}{\widehat{\mathrm{KK}}}
\newcommand{\kk}{\mathrm{kk}}
\newcommand{\widekk}{\widehat{\mathrm{kk}}}
\newcommand{\ETheory}{\mathrm{E}}
\newcommand{\wideETheory}{\widehat{\mathrm{E}}}
\newcommand{\eTheory}{\mathrm{e}}
\newcommand{\wideeTheory}{\widehat{\mathrm{e}}}
\DeclareMathOperator{\Asy}{Asy}

\newcommand{\RALG}{\mathsf G\CAlg}
\newcommand{\RGraded}{\mathsf G\CAlgGr}
\newcommand{\RGradedsep}{\mathsf G\CAlgGr^{\sep}}

\newcommand{\GTWOALG}{\mathsf G_2\CAlg}
\newcommand{\RALGsep}{\mathsf G\CAlg^{\sep}}
\newcommand{\RALGsepH}{\mathsf H\CAlg^{\sep}}
\newcommand{\RALGsTWO}{\mathsf G_2\CAlg^{\sep}}
\newcommand{\RALGsTWOh}{\mathsf G_2\CAlg_{h}^{\sep}}
\newcommand{\RALGseph}{\mathsf G\CAlg_{h}^{\sep}}
\newcommand{\SKGRALGseph}{\mathrm S_{\kg}\mathsf G\CAlg_{h}^{\sep}}
\newcommand{\SKGexcl}{\mathrm S_{\kg}\mathsf G\CAlg_{h,!,\oplus}^{\sep}}
\newcommand{\hoKKse}{\KK^{G,\sep}_{\mathrm{se},0}}
\newcommand{\hokkse}{\kk^{G,\sep}_{\mathrm{se},0}}
\newcommand{\hoKKex}{\KK^{G,\sep}_{\ex,0}}
\newcommand{\ASYG}{\Asy^{G}}
\newcommand{\SKGASYG}{\mathrm S_{\kg}\Asy^{G}}

\newcommand{\KKANA}{\wideKK^{G}_{!,0}}
\newcommand{\kkANA}{\widekk^{G}_{!,0}}
\newcommand{\KKRGRA}{\wideKK^{G}_{!,\infty}}
\newcommand{\ER}{\KK^{G}_{\ex,\infty}}
\newcommand{\kkRGRA}{\widekk^{G}_{!,\infty}}
\newcommand{\KKANAnon}{\wideKK_{!,0}}
\newcommand{\coind}{\mathsf{coind}}
\newcommand{\Egrclass}{\wideETheory^{G}_{\mathrm{class}}}
\newcommand{\egrclass}{\wideeTheory^{G}_{\mathrm{class}}}
\newcommand{\EBGHT}{\ETheory^{G}_{\mathrm{class}}}
\newcommand{\eBGHT}{\eTheory^{G}_{\mathrm{class}}}

\newcommand{\KKKAS}{\KK_{\Kas}^{G,\sep}}
\newcommand{\kkKAS}{\kk_{\Kas}^{G,\sep}}
\newcommand{\KKKASgen}{\KK^{G}_{\Kas}}

\newcommand{\KKKASgr}{\wideKK_{\Kas}^{G,\sep}}

\newcommand{\hoKKsegen}{\KK^{G}_{\mathrm{se},0}}
\newcommand{\KKR}{\KK^{G}_{!,\infty}}
\newcommand{\KKRex}{\KK^{G_2}_{\ex,\infty}}
\newcommand{\KKRexTWO}{\KK^{C_2}_{\ex,\infty}}
\newcommand{\hoKKRex}{\KK^{G_2}_{\ex,0}}
\newcommand{\hoKKRexT}{\KK^{C_2}_{\ex,0}}
\newcommand{\kkR}{\kk^{G}_{!,\infty}}
\newcommand{\hoKKR}{\KK^{G}_{!,0}}

\newcommand{\KKRCTWO}{\KK^{C_2}_{!,\infty}}
\newcommand{\KKRGTWO}{\KK^{G_2}_{!,\infty}}
\newcommand{\KKRGTWOsep}{\KK_{!,\infty}^{G_2,\sep}}
\newcommand{\kkRGTWO}{\kk^{G_2}_{!,\infty}}
\newcommand{\hokkRsep}{\kk_{!,0}^{G,\sep}}
\newcommand{\hoKKRsep}{\KK_{!,0}^{G,\sep}}
\newcommand{\hoEgra}{\wideKK_{\ex,0}^{G,\sep}}
\newcommand{\Egra}{\wideKK_{\ex,\infty}^{G,\sep}}
\newcommand{\KKgr}{\wideKK_{\se,\infty}^{G,\sep}}
\newcommand{\hoKKgra}{\wideKK_{\mathrm{se},0}^{G,\sep}}

\newcommand{\hoKKCTWO}{\KK^{C_2}_{!,0}}
\newcommand{\hoKKGTWO}{\KK^{G_2}_{!,0}}

\newcommand{\KKRsep}{\KK_{!,\infty}^{G,\sep}}
\newcommand{\kkRsep}{\kk_{!,\infty}^{G,\sep}}

\newcommand{\KKse}{\KK^{G}_{\mathrm{se},\infty}}
\newcommand{\kkse}{\kk^{G}_{\mathrm{se},\infty}}

\newcommand{\C}{\mathbb{C}}

\DeclarePairedDelimiterX\braket[2]{\langle}{\rangle}{#1 \delimsize\vert #2}
\DeclarePairedDelimiterX\inr[2]{\langle}{\rangle}{#1 \delimsize, #2}

\makeatletter
\newcommand{\colim@}[2]{%
  \vtop{\m@th\ialign{##\cr
    \hfil$#1\operator@font colim$\hfil\cr
    \noalign{\nointerlineskip\kern1.5\ex@}#2\cr
    \noalign{\nointerlineskip\kern-\ex@}\cr}}%
}
\newcommand{\colim}{%
  \mathop{\mathpalette\colim@{\rightarrowfill@\textstyle}}\nmlimits@
}
\makeatother

\title{A higher categorical approach to graded bivariant $K$-theories for Real $C^*$-algebras}
\author{\scalebox{1.1}Anupam Datta\thanks{Universität Göttingen, \href{mailto:anupam.datta@mathematik.uni-goettingen.de}{anupam.datta@mathematik.uni-goettingen.de}}}
\date{}
\begin{document}
\maketitle
\begin{abstract}
    We present an $\infty$-categorical refinement of classical $KK$-theory and $E$-theory for graded, $G$-equivariant for a locally compact second countable group $G$, Real $C^*$-algebras.   As a consequence, we obtain a couple of fiber sequences connecting the different $K$-theory spectra, and explicitly analyse the induced endomorphisms on $K_0$-groups. We also answer precisely some folklore questions in the area. Among other things, we show that graded $KK$- and $E$-categories are not a Dwyer-Kan localization of graded $C^*$-algebras.  
\end{abstract}
\tableofcontents
\begingroup

\renewcommand{\C}{\mathbb C}
\renewcommand{\hatkg}{\hat{\mathcal{K}}_G}
\section{Introduction}
This article is devoted to studying bivariant $K$-theories for $C^*$-algebras found in an index theorist's toolkit using methods from higher category theory. Recent works of Bunke and coauthors (\cite{BEL}, \cite{KKandE}, \cite{EisCASS}) have refined the classical $KK$- and $E$-categories of $C^*$-algebras to stable $\infty$-categories, which then enjoy an $\infty$-categorical version of the classical ($1$-categorical) universal properties as mentioned in \cite{HIGFIRST} and \cite{CH}. This $\infty$-categorical enhancement is not just for the satisfaction of the modern homotopy theorist, but also helps segregate the homotopical and analytical aspects of the subject.

In \cite{KKandE}, Bunke constructs small stable $\infty$-categories $KK^{\operatorname{sep}}_{!,\infty}$ (where $! \in \{\operatorname{semi-split (se),exact(ex)}\}$ denotes the kind of excision we want to have) by universally enforcing certain properties Kasparov's $KK$- and Connes-Higson's $E$-theory are known to satisfy, namely, homotopy invariance, compact stability, $!$-excision, and Bott periodicity. The same construction can also be done equivariantly in the presence of the action of locally compact second countable group, as mentioned in \cite{BUNKEcrossed}. That the homotopy category of $KK^{\operatorname{sep}}_{\operatorname{se},\infty}$ (resp. $KK^{\operatorname{sep}}_{\operatorname{ex},\infty}$) is equivalent to Kasparov's $KK$- (resp. Connes-Higson's $E$-)categories can be seen by comparing with the classical universal properties, as is worked out in \cite[Section 13]{KKandE}. The advantage of \emph{defining} bivariant $K$-categories as a Dwyer-Kan localization is that one can write functors out of the $KK_{!,\infty}^{\operatorname{sep}}$-categories by verifying that a functor defined on separable $C^*$-algebras satisfies certain properties.

However, \cite{BEL}, \cite{KKandE}, \cite{EisCASS} have been written exclusively with \emph{ungraded} $C^*$-algebras. In contrast, most of classical bivariant $K$-theories have been applied to problems in geometry and topology in their \emph{graded} avatar. For example, consider the proof of the Baum-Connes conjecture for a-T-menable groups in Higson-Guentner's article \cite{HG}, where a myriad of the exotic functorial properties of the graded $E$-category were used.
Nevertheless, it is the {ungraded $KK_!$-theories}\footnote{we adopt Bunke's notation and denote by $KK_!$ either $KK$- or $E$-theory, even in the classical contexts.} whose universal properties are classically known. Given the importance of the graded theory and the effectiveness of realizing $KK_!$ as Dwyer-Kan localizations as mentioned earlier, one can ask the same for the graded $KK_!$-category. It turns out that the answer is in the negative, as shown in \Cref{prop: graded kk not dk loc}. The author does not know if this fact was known earlier. However, not all is lost, and higher categorical refinements of graded $KK_!$-categories can still be provided. Following ideas in an unpublished note of Bunke-Duenzinger (see also \cite[Section 2.4]{BUNKELUDEWIG}), we define an $\infty$-categorical refinement of graded, $G$-equivariant $KK_!$-category as \emph{a certain category of comodules over a distinguished cocommutative coalgebra} in $KK^{G_2}_{!,\infty}$, the $G \times C_2$-equivariant, ungraded bivariant $K$-category. It turns out that this picture captures the formal properties of the classical graded $KK_!$-categories. The details of this higher categorical refinement of graded bivariant $K$-categories is one of the cornerstones of this article, and is developed in Section 4. 

The other aspect missing from the writings of Bunke on higher categorical approaches to $KK_!$-theories is a discussion of Real $C^*$-algebras. Apart from being of independent interest, Real $K(K)$-theory encodes finer information than its complex counterpart, as for example is clear from the seminal works of Hitchin (\cite{HITCHIN}) and Rosenberg (\cite{ROSENBERG}). Luckily, the construction of Real $KK_{!,\infty}$-categories for separable $C^*$-algebras follow the same path as in \cite{KKandE} (see also \cite[Section 3.4]{EisCASS}), with only one difference: we would have to enforce the well known $(1,1)$-periodicity at the final step of the construction. The construction of stable $\infty$-categories refining classical Real $KK_!$-categories is the content of the first few subsections of Section 3.\footnote{In our treatment of graded $KK_!$-categories, we have worked with Real $C^*$-algebras equipped with the action of an external locally compact group. We did not mention this in the last paragraph because the passage from ungraded to graded is much more drastic than the one from complex to Real. And equivariant matters were already considered in works of Bunke.} \emph{Unless otherwise mentioned, in this article we work with Real $C^*$-algebras by default.}

We now mention some applications of the abstract theory developed in Sections 3 and 4 of this article. In order to streamline the presentation, we package all of them in the next theorem. To simplify the exposition, we also ignore the external (compact) group action here. The (more or less straightforward) generalization to the equivariant case is present in Section 5.
\begin{thm}\label{thm: intro endo}
    Denote by $\K(-),\K^{C_2}(-),\hat{\K}(-)$ the ungraded (classical), $C_2$-equivariant, and graded spectrum valued $K$-theory functors on the corresponding categories of (Real) $C^*$-algebras. Let $A$ be a $C^*$-algebra with a $C_2$-action $\alpha_A$. Then we have the following\footnote{the labels $\mathbf{1},\mathbf{2},\mathbf{3},\mathbf{4}$ are just notation and should not be confused with the multiplication with some constant maps!}.
    \begin{enumerate}
        \item There is a natural fiber sequence of spectra
        \[
        \K(A_u) \xrightarrow{\mathbf{1}} \K^{C_2}(A) \xrightarrow{\mathbf{2}} \hat{\K}(A)
        \tag{ABS}
        \]
         Here $(-)_u$ denotes the underlying $C^*$-algebra functor. Furthermore, the fiber sequence splits if $A$ is inner graded.
        \item There is a natural fiber sequence of spectra
        \[
        \hat{\K}(A) \xrightarrow{\mathbf{3}}\K^{C_2}(A) \xrightarrow{\mathbf{4}} \K((A \hat\otimes Cl^{0,2})_u) \tag{van Daele}
        \]
        Here $A \hat\otimes Cl^{0,2}$ denotes the graded tensor product of $A$ and $Cl^{0,2}$, the Clifford algebra on two generators which square to $+1$. If $A$ is inner graded, the fiber sequence splits.
        \item There are endomorphisms of the different $K$-theory spectra of $A$ arising from the above two fiber sequences. At the level of $\pi_0$, their effect can be concretely described when the algebra $A$ is unital:
        \begin{enumerate}
            \item $\mathbf{3} \circ \mathbf{2} \in \operatorname{End}(K_0^{C_2})(A)$ is given by mapping the class of a $C_2$-equivariant projective $A$-module $[M,\alpha_M] \in K_0^{C_2}(A)$ to $[M,\alpha_M] - [M,-\alpha_M]$.
            \item $\mathbf{2} \circ \mathbf{3} \in \operatorname{End}(\hat{K}_0(A))$ is given by mapping the class of a Fredholm cycle\footnote{see \Cref{defi: fred cycle}.} $(\E,F) \in K_0^{\operatorname{Fr}}(A)$ to $(\E \oplus \E,F \oplus  -F) \in K_0^{\operatorname{Fr}}(A)$.
            \item When $A$ is inner graded \emph{or} $A$ is complex, then $\K((A \hat\otimes Cl^{0,2})_u) \simeq \K(A_u)$, and the map $\mathbf{4} \circ \mathbf{1} \in \operatorname{End}(K_0(A_u))$ is given by mapping the class of a finitely generated projective $A$-module $[M] \in K_0(A)$ to $[M \oplus M_\alpha] \in K_0(A)$, where $M_\alpha$ has the same underlying abelian group $\underline{M}$ as $M$, and the $A$-action is given by $\underline{m}.a:=\underline{m\alpha_A(a)}.$
            \item Under the same assumption on $A$ as in the last point, the map $\mathbf{1} \circ \mathbf{4} \in \operatorname{End}(K_0^{C_2}(A))$ is given by mapping the class of a $C_2$-equivariant finitely generated projective $A$-module $[M,\alpha_M] \in K_0^{C_2}(A)$ to $[M \oplus M, (m,n) \mapsto (\alpha_M(n),\alpha_M(m))] \in K_0^{C_2}(A)$.
        \end{enumerate}
    \end{enumerate}
\end{thm}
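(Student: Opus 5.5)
The plan is to realise both fiber sequences as images, under corepresented functors, of cofiber sequences of objects in the stable $\infty$-category $\KKRCTWO$. We work in the graded framework of Section 4, where a graded algebra $A$ is viewed as a $C_2$-algebra together with a comodule structure over the distinguished cocommutative coalgebra; the graded theory $\hat{\K}$ is then corepresented by the coalgebra itself. Concretely, write $\hat{\K}(A)\simeq \KK^{C_2}(\C, A\hat\otimes \hatkg)$, and treat $\K^{C_2}(A)\simeq \KK^{C_2}(\C,A)$ and $\K(A_u)\simeq \KK^{C_2}(C(C_2),A)$ (Green–Julg/induction) in the same way.

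\textbf{The (ABS) sequence.} We take the cofiber sequence of $C_2$-algebras $C_0((0,1))\otimes \C^{\mathrm{triv}} \to \mathbb{C}\ell_1\text{-type object} \to \dots$. More precisely, we use the Atiyah–Bott–Shapiro style extension relating $C(C_2)$, $\C$ and $Cl^{1,0}$, namely the extension $0\to C_0(\R)\otimes \C \to C_0(\R)\ \text{with flip} \to \dots$. After applying Bott periodicity and $(1,1)$-periodicity, this becomes a cofiber sequence $C(C_2)\to \C \to Cl$ in $\KKRCTWO$. Mapping into $A$ yields $\mathbf{1},\mathbf{2}$, and the cofiber is identified with $\hat\K(A)$ using the comodule description.

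\textbf{The (van Daele) sequence.} We dualise, taking the sequence whose cofiber is $C(C_2)\hat\otimes Cl^{0,2}$ corepresenting $\K((A\hat\otimes Cl^{0,2})_u)$.

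\textbf{Splittings.} If $A$ is inner graded, the grading is implemented by a unitary. This gives an equivalence of $A$ with $A$ carrying the trivial action in the comodule category, under which $\mathbf{2}$ acquires a section (respectively $\mathbf{3}$ a retraction). The identification $\K((A\hat\otimes Cl^{0,2})_u)\simeq\K(A_u)$ in the inner graded or complex case follows by untwisting $Cl^{0,2}\cong M_2$ using the inner grading, respectively complex $2$-periodicity.

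\textbf{The $\pi_0$ formulas.} Each composite is induced by a specific morphism in $\KKRCTWO$ between the corepresenting objects: $C(C_2)\to \C\to C(C_2)$, $\C\to Cl\to\C$, and so on. We will compute these morphisms as explicit Kasparov modules. For example, $\mathbf{3}\circ\mathbf{2}$ corresponds to the element $1-\sigma\in KK^{C_2}(\C,\C)=R(C_2)$, where $\sigma$ is the sign representation, and tensoring $(M,\alpha_M)$ with it gives $[M,\alpha_M]-[M,-\alpha_M]$. Similarly, $\mathbf{4}\circ\mathbf{1}$ is induced by restriction followed by induction along $C_2$, which is tensoring with $C(C_2)$; on underlying modules this gives $M\oplus M_\alpha$, and on equivariant modules it gives $M\oplus M$ with the swap twisted by $\alpha_M$. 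For $\mathbf{2}\circ\mathbf{3}$ we translate through the Fredholm picture of $\hat K_0$ (\Cref{defi: fred cycle}), where the doubling by the sign representation produces $(\E\oplus\E,F\oplus -F)$.

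\textbf{Main obstacle.} We expect the main difficulty to be the identification of the cofibers with the graded $K$-theory in the comodule model, together with pinning down the connecting maps precisely enough, including their signs, to obtain the explicit $\pi_0$ formulas. To handle this, we would first verify everything on $A=\C$ and $A=Cl^{p,q}$ and then extend by naturality and the exterior tensor product.
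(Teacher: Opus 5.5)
Your strategy of corepresenting all three theories in $\KK^{C_2}$ and applying $\KK^{C_2}(-,A)$ to cofiber sequences of corepresenting objects is the right frame. But the concrete ingredients are either wrong or missing.

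\textbf{The two fiber sequences.}
\begin{itemize}
\item The graded theory is $\hat\K(A)\simeq\KK^{C_2}(\s,A)$. This uses that $\s$ is special, via the spectral model. Your ``concrete'' formula $\KK^{C_2}(\C,A\hat\otimes\hatkg)$ is, by stability, just $\K^{C_2}(A)$.
\item Your ABS sequence has the wrong shape. Applying the contravariant functor $\KK^{C_2}(-,A)$ to $C(C_2)\to\C\to Cl$ gives $\KK^{C_2}(Cl,A)\to\K^{C_2}(A)\to\K(A_u)$, which puts $\K(A_u)$ in the last slot.
\item The correct input is $\s\xrightarrow{\operatorname{ev}_0}\C\xrightarrow{\iota}Cl^{0,1}$, with $Cl^{0,1}\cong C(C_2)$ the induced algebra. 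It is a fiber sequence already after inverting homotopy equivalences, since $\s$ is the fiber of the mapping cylinder of $\iota$. Equivalently, $\ker\operatorname{ev}_0=C_0(\R)\hat\otimes Cl^{0,1}\cong\Omega\,\ind\C$, not $C_0(\R)\otimes\C$. So no Bott or $(1,1)$-periodicity is involved here.
\item ``We dualise'' does not produce the van Daele sequence. The missing idea is graded Bott periodicity. The Bott element is an equivalence in the comodule category, so $C_0(\R^{0,1})\simeq Cl^{1,0}$ there. Equivalences there lift to equivalences of $\s\hat\otimes-$ in $\KK^{C_2}$, giving $\s\hat\otimes Cl^{1,0}\simeq\Omega\s$. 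Tensoring the ABS sequence with $Cl^{1,0}$, using $Cl^{1,1}\simeq\C$, and rotating yields $Cl^{1,0}\xrightarrow{j}\C\xrightarrow{p}\s$.
\item The corepresenting object for $\K((A\hat\otimes Cl^{0,2})_u)$ is $Cl^{1,0}$. Read contravariantly as in your ABS step, $C(C_2)\hat\otimes Cl^{0,2}$ instead corepresents $\K((A\hat\otimes Cl^{2,0})_u)$, which differs in the Real case.
\end{itemize}

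\textbf{The $\pi_0$ formulas.} These are asserted rather than derived. Everything reduces to computing $\operatorname{ev}_0\circ p\in K_0^{C_2}(\C)$ and $p\circ\operatorname{ev}_0\in\KK^{C_2}(\s,\s)\cong\Z[x]/(x^2-1)$, where $x=[\alpha_\s]$. Your list of morphisms to compute even has the wrong middle object ($Cl$ instead of $\s$).
\begin{itemize}
\item For $\mathbf3\circ\mathbf2$ you need two facts. First, $p$ is $\pm$ a generator of $K_0^{C_2}(\s)\cong\Z$; this follows from surjectivity coming from $K_{-1}^{C_2}(Cl^{1,0})=0$ plus $C_2$-equivariance, and the sign can be absorbed by replacing $p$ with $-p$. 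Second, $\operatorname{ev}_0$ sends the explicit generator $p_1-p_2$ to $[\C^{\mathrm{triv}}]-[\C^{\mathrm{sign}}]$.
\item For $\mathbf2\circ\mathbf3$ your mechanism is wrong. It is not multiplication by $1+\sigma$ with $\sigma$ the sign representation. Indeed $\sigma$ acts by $-1$ on $\hat K_0(\C)$, so $1+\sigma$ kills it, whereas $\mathbf2\circ\mathbf3$ acts by $2$ there. The correct element is $1+[\alpha_\s]$, and $[\alpha_\s]$ acts on Fredholm cycles by $F\mapsto -F$.
\item Your plan to test on $\C$ and Clifford algebras cannot detect this. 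The grading acts trivially on $\hat K_0$ of all of them, so you only learn $a+b$ for $p\circ\operatorname{ev}_0=a+bx$.
\item What forces $a=b$ is $C_2$-equivariance of $p_*$ together with $\hat K_0(\s)\cong\Z\oplus\Z$ carrying the flip action. That in turn rests on $\alpha_*u=-u$ and $\alpha_*e=e-u$, which come from the degree computations for the Bott map and for $\beta_\C\circ\psi$.
\item The value $a+b=2$ then comes from $\operatorname{ev}_0\circ p=1-x$ acting by $2$ on $\KK^{C_2}(\s,\C)$.
\item For $\mathbf4\circ\mathbf1$ and $\mathbf1\circ\mathbf4$ you must prove two identifications. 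One is that $\mathbf1=\iota^*$ sends $M$ to $(M\oplus M_\alpha,\mathrm{flip})$. The other is that, for inner graded or complex $A$, $\mathbf4=j^*$ is forgetting the $C_2$-action. This uses the description of $j^*$ through $A\to A\hat\otimes Cl^{0,1}$ and the counit of $\res\dashv\coind$, respectively $Cl^{1,0}\cong Cl^{0,1}$ in the complex case.
\item You also have the order reversed: $\mathbf4\circ\mathbf1$ is induction followed by restriction, while ``tensoring with $C(C_2)$'' is the other composite $\mathbf1\circ\mathbf4$.
\end{itemize}
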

Before describing the organization of this article, we mention the relation between our fiber sequences and the \emph{Clifford module quotient}\footnote{the author learnt this sleek term from a talk of Luuk Stehouwer at the Max Planck Institute for Mathematics in 2025.} picture of $K$-theory of a point as described by Atiyah, Bott and Shapiro in \cite{ABS}. Roughly speaking, one of the main results of loc.\! cit.\! (see also \cite[Section I.9]{LM}) is the following (\emph{recall that $Cl^{p,q}$ denotes the Clifford algebra on $p+q$ generators, where $p$ generators square to $-1$, and $q$ generators square to $+1$.})
\begin{thm}{(Atiyah-Bott-Shapiro)}\label{thm: Atiyah-Bott-Shapiro}
    For every non-negative integer $n$, there is a canonical isomorphism of abelian groups
    \[
    K^{-n}(*) \cong \frac{\{\operatorname{finitely~generated~graded~Cl^{n,0}-modules}\}^{\operatorname{gp}}}{\{\operatorname{finitely~generated~graded~Cl^{n+1,0}-modules}\}^{\operatorname{gp}}}
    \]
\end{thm}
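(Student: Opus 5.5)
The plan is to deduce the Atiyah--Bott--Shapiro isomorphism from the (ABS) fiber sequence of \Cref{thm: intro endo}(1), applied with $A$ a suitable Clifford algebra, and then to identify the $K_0$-groups involved in terms of Clifford modules. Concretely, I take $A = Cl^{n+1,0}$ (graded; in the Real setting the real Clifford algebra, and in the complex setting its complexification). I view its grading as a $C_2$-action $\alpha_A$. The long exact sequence on homotopy groups of
\[
\K(A_u) \xrightarrow{\mathbf{1}} \K^{C_2}(A) \xrightarrow{\mathbf{2}} \hat{\K}(A)
\]
gives an exact segment
\[
K_0(A_u) \xrightarrow{\mathbf{1}} K_0^{C_2}(A) \xrightarrow{\mathbf{2}} \hat{K}_0(A) \to K_{-1}(A_u).
\]

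The first step is to identify the terms. Since $A$ is finite dimensional and semisimple, $K_0^{C_2}(A)$ is the group completion of finitely generated $C_2$-equivariant $A$-modules, that is, of graded $Cl^{n+1,0}$-modules. Similarly, $K_0(A_u)$ is the group completion of ungraded $Cl^{n+1,0}$-modules. The standard equivalence between ungraded $Cl^{n+1,0}$-modules and graded $Cl^{n+2,0}$-modules (or equivalently between graded $Cl^{n+1}$-modules and ungraded $Cl^{n}$-modules, up to the index shift) rewrites the map $\mathbf{1}$ as restriction of graded modules from $Cl^{n+2,0}$ to $Cl^{n+1,0}$. I must make sure the indexing matches the displayed formula. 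In fact, the cleaner choice is $A = Cl^{n,0}$: then $\mathbf{1}$ becomes the restriction $M(Cl^{n+1,0})^{\mathrm{gr}} \to M(Cl^{n,0})^{\mathrm{gr}}$, using $Cl^{n,0}_u \cong Cl^{n+1,0}{}^{0}$. I will use $A = Cl^{n,0}$ and confirm the identification of $\mathbf{1}$ with restriction by tracing its construction (induction along the inclusion of the even part, or via the unit of the comodule adjunction of Section 4).

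The second step is to identify $\hat{K}_0(Cl^{n,0}) \cong KO^{-n}(*)$ (respectively $K^{-n}(*)$), which is graded Clifford periodicity in the graded $KK$-category: $Cl^{n,0}$ is $KK$-equivalent to the $n$-fold suspension in the appropriate direction. The third step is to show that $\mathbf{2}$ is surjective on $\pi_0$. For this I use that $K_{-1}(Cl^{n,0}_u) = K_{-1}$ of a finite product of matrix algebras over $\R$, $\C$, or $\mathbb{H}$. These can be nonzero in the Real case (for example $KO_{-1}(\R)=0$ but $KO_{-1}$ of other factors may not vanish), so it is cleaner to argue surjectivity directly. Every class in $\hat{K}_0(Cl^{n,0})$ is represented by a Fredholm cycle which, by finite dimensionality, can be taken to have zero operator on a finitely generated graded module; hence it lies in the image of $\mathbf{2}$. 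Exactness then gives the quotient description.

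The main obstacle is the first step: identifying $\mathbf{1}$, defined abstractly via the comodule picture, with the concrete restriction map of Clifford modules, keeping careful track of signs ($Cl^{p,q}$ conventions) so that $n+1$ and not $n-1$ appears. I would first compute $\mathbf{1}$ on free modules using the description in \Cref{thm: intro endo}(3), where $\mathbf{3}\circ\mathbf{2}$ and $\mathbf{4}\circ\mathbf{1}$ are explicit, and then deduce the formula for $\mathbf{1}$ by naturality.
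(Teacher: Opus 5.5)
Your route is the one the paper itself indicates in \Cref{rem: abs} (the paper cites the theorem rather than proving it). You take $A=Cl^{n,0}$ in the (ABS) sequence, read $K_0^{C_2}(Cl^{n,0})$ as graded $Cl^{n,0}$-modules, obtain $\hat K_0(Cl^{n,0})\cong K_n(\C)$ from \Cref{cor: loops is clifford one zero} and \Cref{thm: higson email to johannes}, and combine exactness at $K_0^{C_2}(Cl^{n,0})$ with surjectivity of $\mathbf{2}$ on $\pi_0$. Those steps are fine.

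The gap is in the step you yourself flag as the main obstacle. Your plan to recover $\mathbf{1}$ from the formula for $\mathbf{4}\circ\mathbf{1}$ in \Cref{thm: intro endo}(3c) does not work, for four reasons.
\begin{itemize}
\item A composite does not determine its first factor, and $\mathbf 4$ is not injective in general.
\item (3c) is only available for inner graded or complex $A$, which excludes, e.g., every Real $Cl^{n,0}$ with $n$ odd.
\item Free modules do not generate $K_0(Cl^{n,0}_u)$ when it is $\Z^2$ (e.g. $n=3$, real part $\mathbb H\oplus\mathbb H$), so there is nothing to extend ``by naturality''.
\item The argument runs backwards: the paper derives (3c) from an explicit formula for $\mathbf 1$.
\end{itemize}
The comodule adjunction does not enter either: $\mathbf 1=\iota^*$ lives in the ungraded $C_2$-equivariant category and comes from $\mathsf{ind}\dashv\mathsf{res}$ with $\mathsf{ind}\,\C\cong Cl^{0,1}$.

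The fix is to use that explicit formula directly. By \Cref{cor: map induced by C in CL01}, $\mathbf 1[N]=[N\oplus N_\alpha,\mathrm{flip}]$ on $\pi_0$. Put $e=e_{n+1}$. Then $a\mapsto a^0+a^1e$ is a Real $*$-isomorphism $Cl^{n,0}_u\cong(Cl^{n+1,0})^0$. Form the induced graded module $\hat N=N\otimes_{(Cl^{n+1,0})^0}Cl^{n+1,0}=N\oplus Ne$. The assignment $x\mapsto(x,\underline x)$, $ye\mapsto(-y,\underline y)$ is a grading-preserving $Cl^{n,0}$-linear isomorphism $\hat N|_{Cl^{n,0}}\cong(N\oplus N_\alpha,\mathrm{flip})$; a direct check on $a=a^0+a^1$ confirms this. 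Since $N\mapsto\hat N$ is an equivalence onto graded $Cl^{n+1,0}$-modules (with inverse $M\mapsto M^0$), the image of $\mathbf 1$ is exactly the image of restriction, which is what the statement needs. Alternatively, work with the equivalent sequence \Cref{eqn: fib seq ABS style}, whose first map is restriction on $\pi_0$ (\Cref{rem: abs style seqn}).

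Three smaller points.
\begin{itemize}
\item Your claim that $K_{-1}(Cl^{n,0}_u)$ may be nonzero is wrong. The factors are matrix algebras over $\R$, $\C$, $\mathbb H$, and $KO_{-1}(\R)=KO_7=0$, $K_{-1}(\C)=0$, $KO_{-1}(\mathbb H)=KO_3=0$. So surjectivity of $\mathbf 2$ on $\pi_0$ follows at once from the long exact sequence.
\item Your direct Fredholm argument is still valid; it is the finite-dimensional-kernel input mentioned in \Cref{rem: abs}. To use it, invoke the corollary following \Cref{lemlem}, which identifies $\mathbf 2$ on $\pi_0$ with $[M]\mapsto(M,0)$. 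Also note that $(\mathcal{E},F)$ is concordant to $(\ker F,0)$ plus a degenerate cycle, and degenerate cycles are null-concordant.
\item Reading the quotient as a cokernel and using only exactness in the middle is exactly right. The restriction map is not injective in general: for complex $n=1$ it is $\Z^2\to\Z$, $(a,b)\mapsto a+b$.
\end{itemize}
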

It can be seen (\Cref{rem: abs}) that \Cref{thm: Atiyah-Bott-Shapiro} is asserting that the long exact sequence of homotopy groups of the fiber sequence labeled ``ABS" splits off into short exact sequences when $A$ is a Clifford algebra. Our results therefore clarifies the situation for a general graded, Real ($G$-equivariant for a compact group $G$) $C^*$-algebra: \emph{replace the process of taking cokernels in abelian groups by taking cofiber in spectra}. This also answers a folklore question pertaining to whether higher $K$-groups of a $C^*$-algebra can be given a Clifford module quotient description analogous to \Cref{thm: Atiyah-Bott-Shapiro}; see \Cref{rem: abs} for details.

As the reader may guess, the other fiber sequence labeled ``van Daele" has been tagged so because it recovers the classical definition of van Daele $K$-theory (\cite{vDone},\cite{vDtwo}). After a bit of massaging\footnote{by working with the (graded) opposite $C^*$-algebra}, this sequence can be seen to recover Karoubi's definition of graded $K$-theory (\cite[Chapitre II.2.1]{KAR}) as well. We do not delve into these comparisons (and the inter-comparisons) in this article, and point the curious reader to the work of David Aretz and Luuk Stehouwer (\cite{AS}), where such considerations have been discussed in detail. Note that loc.\! cit.\! is written in the generality of Banach algebras, and our fiber sequence ``ABS" is more or less their \emph{definition} of the graded $K$-theory spectrum (see \cite[Definitions 3.42 \& 3.55]{AS}).

Finally, we mention that Ulrich Haag has works (\cite{HAAG2},\cite{HAAG1}) in this area. Therein, Haag obtains a couple of long exact sequences involving the different flavors of $K$-groups, but only for \emph{complex} $C^*$-algebras. It can be seen that exactly one of Haag's sequences survives the Real case, and the other one does not. The sequence that survives can be readily identified with the long exact sequence of homotopy groups of our sequence ``ABS". In the author’s view, some of the proofs presented in his papers would benefit from a more detailed and transparent exposition. Hence the methods of higher categories used in this paper both generalize and clarify Haag's works.
\subsection{Organization of the article}
We now discuss the organization of this article.

Section 2 contains well-known results from equivariant and graded $K$-theory. The key highlights of this section are \Cref{thm: spectral model of g eq k theory} and \Cref{prop: omnibus graded k-theory}. The non-equivariant case of \Cref{thm: spectral model of g eq k theory} is the content of \cite{D}, and the equivariant generalization is more or less straightforward. \Cref{prop: omnibus graded k-theory} contains some miscellaneous results on the graded $K$-theory of a distinguished graded $C^*$-algebra $\s$, and the results therein are going to be really crucial in the later sections.

In Section 3, we emulate the analogous chapters of \cite{KKandE}, and construct a stable $\infty$-categorical refinement of the $G$-equivariant, Real $KK$- and $E$-categories. We actually combine the approaches of \cite{KKandE} and \cite{EisCASS}; Section 3.3 is taken from the relevant section of \cite{EisCASS}. In Section 3.4, we (have to) deviate from the writings of Bunke and work with a Real variant of the Toeplitz extension so as to enforce $(1,1)$-periodicity. The category so obtained is denoted by $\KKRsep$, where $!\in \{\operatorname{se,ex}\}$ denotes whether we enforced excision for semi-split exact or all exact sequences. Section 3.5 discusses the well-known change of group functors: restriction, induction and coinduction. These come in handy while studying the graded theory, although in the simple case with the groups involved being $G$ and $G_2:=G \times C_2$. Section 3.6 makes connections with the models of equivariant and graded $K$-theory as studied in Section 2 with certain corepresentable functors out of $\KKRsep$ (see \Cref{thm: Recovering classical K-theories from our stable inf cat} for a precise statement). Finally in Section 3.7, the extension of $\KKRsep$ to the large category of all $G$-$C^*$-algebras is considered.

Section 4 contains the higher categorical approach to defining the graded bivariant $K$-categories, which we denote by $\KKRGRA$. \emph{Without apriori knowing that the homotopy category of $\KKRGRA$ recovers classical graded $KK$- and $E$-theory according as $!=\operatorname{se}$ or $!=\operatorname{ex}$}, we show that the formal consequences of our definition, together with a calculation in classical graded $K$-theory yields the familiar equivalence between $C_0(\R^{0,1})$ and $Cl^{1,0}$ in $\KKRGRA$. We also connect the spectral model of $K$-theory as discussed in Section 2 with the functor corepresented by $\C$ in $\KKRGRA$.

Section 5 is devoted to deducing \Cref{thm: intro endo} as a consequence of the machinery developed in Sections 3 and 4. As advertised before, in this section we also discuss the folklore question of representing graded $K$-groups of a (graded), (Real) unital $C^*$-algebra in terms of Clifford module quotients.

Finally in Section 6, we compare the homotopy categories of the ungraded and graded $KK^G_{!,\infty}$-categories we developed with the classical ungraded and graded $KK^G$- and $E^G$-categories (upon restricting to separable $C^*$-algebras; the general comparison is discussed in \Cref{cor: KK large same}). We give more details in the context of $E$-theory: the well-known universal property of ungraded $E$-theory lacks a proof in the foundational writings \cite{CH} and \cite{HGT}, and a detailed proof (following the work of Thomsen in \cite{THOASY}) is provided in Section 6.2. In Section 6.3, we discuss an equivariant generalization of a technical result of Dadarlat and Loring, which is an intermediary in making the comparison of the homotopy category of $\Egra$ (denoted $\hoEgra$) and $\Egrclass$. We mention that the comparison of the homotopy category of $\KKgr$ (denoted $\hoKKgra$) and $\KKKASgr$ is not directly possible using the techniques we used for their $E$-theoretic counterparts, but instead can be deduced from results of Meyer (\cite{MEYER}). \\\\
\textit{Acknowledgments: The author is deeply grateful to Johannes Ebert for suggesting this project, his generous guidance throughout, and his thoughtful comments on earlier drafts. Many thanks to David Aretz and Luuk Stehouwer for having several pleasant discussions on topics related to this article, and also for freely sharing some ideas from their recent (and related) project. The author would also like to thank  Ulrich Bunke, Benjamin Duenzinger, Michael Joachim, Matthias Lesch, Yuezhao Li and Maxime Ramzi for discussions at several points which clarified some mathematical issues.}

\textit{
This paper forms part of the PhD thesis of the author at the University of Bonn. Throughout the duration of his PhD, the author was supported by the Deutsche Forschungsgemeinschaft (DFG, German
Research Foundation) under Germany's Excellence Strategy – GZ 2047/1 -
390685813, Hausdorff Center for Mathematics.
}
\section{Background on $K$-theory spaces, groups, and some calculations}
In this section, we discuss the necessary background from classical / equivariant / graded $K$-theory that will be needed in the later sections. We emphasize that (morally) $K$-theory is a significantly simpler concept than $KK$-theory, and we use our knowledge of $K$-theory as discussed in this section later while studying the bivariant $K$-categories.

Before proceeding further, we have to introduce the main objects of the entire project, namely, \emph{graded, Real, equivariant} $C^*$-algebras. 
\begin{defi}
    \hfill
    \begin{enumerate}
        \item  A graded complex $C^*$-algebra $(A,\varepsilon)$ is a complex $C^*$-algebra $A$ equipped with a $*$-automorphism $\varepsilon$ such that $\varepsilon^2=1$. $\varepsilon$ is called the grading on $A$. The elements of the $+1$ and $-1$ eigenspaces of $A$ are called even and odd, respectively.
        \item For $G$ a locally compact second countable group, a complex $G$-$C^*$-algebra $(A,\alpha)$ is a $C^*$-algebra $A$ together with a continuous homomorphism $\alpha: G \rightarrow \operatorname{Aut}(A)$, where $\operatorname{Aut(A)}$ is equipped with the point-norm topology.
        \item A Real $C^*$-algebra $(A,\tau)$ is a complex $C^*$-algebra $A$ equipped with a $\C$-anti-linear $*$-automorphism $\tau$ such that $\tau^2=1$. $\tau$ is called the Real structure on $A$. \footnote{Sometimes, the Real structure will simply be denoted by $a \mapsto \bar{a}$.} The real $C^*$-algebra associated to $(A,\tau)$ is $A_{\tau}:=\{ a \in A \mid a = \tau(a)\}$, and it can be shown that this induces an equivalence of categories between real and Real $C^*$-algebras.
        \item For any two (or all three) of the adjectives defined before, we combine them in the obvious way by requiring the corresponding structure maps to commute.
    \end{enumerate}
\end{defi}
\begin{notn}\label{notn: prelim}
    We follow the following conventions throughout this article.
    \begin{itemize}
        \item Unless otherwise mentioned, $G$ will denote a locally compact second countable group. 
        \item As already mentioned in the introduction, we will by default be working be working with Real $C^*$-algebras. Results in the complex case can be deduced by simply ignoring the Real structure. We \emph{do not} mention the word ``Real" unless there is an explicit need to do so.
        \item We denote the category of (ungraded) Real $G$-$C^*$-algebras (with the obvious morphisms) by $\RALG$. This category is enriched over topological spaces using the point-norm topology on the hom sets (see \cite[Section 2.2]{D} for details), and the mapping space is denoted by $\underline{\operatorname{Hom}}_G(A,B)$. The set of path componentns of $\underline{\operatorname{Hom}}_G(A,B)$ is denoted by $[A,B]_G$. If it is clear from the context, we suppress the $G$ from the notation.
        \item The symmetric monoidal structures on $\RALG$ will be denoted by $-\otimes_? -$, where $? \in \{\min,\max\}$ corresponds to the minimal and maximal tensor products. If an algebra under consideration is nuclear, we often omit the decoration from the tensor product.
        \item We denote the category of graded, Real $G$-$C^*$-algebras (with the obvious morphisms) by $\RGraded$. Its symmetric monoidal structures will be denoted by $-\hat\otimes_? -$, where $? \in \{\min,\max\}$ corresponds to the graded minimal and maximal tensor products. If an algebra under consideration is nuclear, we often omit the decoration from the tensor product (see also \cite[Corollary E.2.19]{WY}). $\RGraded$ is also topologically enriched in a way analogous to the ungraded case mentioned above; the only difference being that the desired equivariance now is with respect to $G \times C_2$.
        \item Finally, we remark that in exceptional cases where we do have to work with the category of complex $C^*$-algebras (for example, in \Cref{cor: endomorphisms in the complex graded case}), we keep using the same notation as above. This should not create any confusion.
    \end{itemize}
\end{notn}
We next mention some examples of graded $C^*$-algebras which we will often encounter. Further examples and discussions can be found in \cite[]{D}
\begin{example}\label{example: basic examples}
    \hfill
    \begin{enumerate}
        \item The base field $\C$ is trivially graded, and is always equipped with the trivial $G$-action. \emph{In the Real situation, the Real structure on $\C$ is given by pointwise complex conjugation, and is omitted from the notation}.
        \item The Clifford algebras come with a natural grading, and carry the trivial action of the external group $G$.
        \item $\s:= C_0(\R)$ with the even-odd grading, trivial $G$-action, and the Real structure given by pointwise complex conjugation is going to be a very important algebra for us. By choosing an odd, increasing homeomorphism between $(-1,1)$ and $\R$ ({the choice of this homeomorphism does not matter as the space of all such homeomorphisms is contractible}), we obtain an isomorphism between $\s$ and $\tilde{\s}$, the \emph{bounded transform} of $\s$. We mention another perspective on $\tilde{\s}$ below, which comes very handy in concrete situations. Consider the map
        \begin{align*}
    \Phi:\Tilde{\s} &\rightarrow \{f:[0,1) \rightarrow Cl^{0,1} \mid f(0) \in \mathbb{C} \subseteq Cl^{0,1}\} \subseteq C_0([0,1),Cl^{0,1})\\
    f&\mapsto f_{\operatorname{ev}}\mid_{[0,1)} + \varepsilon f_{\operatorname{odd}}\mid_{[0,1)}, 
\end{align*}
where $f_{\operatorname{ev}},f_{\operatorname{odd}}$ denote the even and odd parts of $f$ respectively, and $\varepsilon$ is the non-trivial generator of $Cl^{0,1}$. $\Phi$ can directly be checked to be an isomorphism in $\CAlgGr$.
        \item A grading on a $C^*$-algebra is said to be \emph{inner} if it is given by conjugation with a self-adjoint unitary in the multiplier algebra of $A$. If $A$ is furthermore equivariant and/or Real, the unitary needs to satisfy the required invariance conditions. The algebras of $G$-continuous bounded and compact operators on a graded $G$-Hilbert space are naturally inner graded. The Clifford algebra $Cl^{p,q}$ is inner graded if $p-q \equiv 0 ~(\operatorname{mod}4)$.
    \end{enumerate}
\end{example}
Before proceeding, we set up some shorthand notation that we will be using throughout this article.
\begin{notn}
    We denote by $G_2$ the group $G \times C_2$ (equipped with the product topology).
\end{notn}
\begin{rem}
    Although the categories $\GTWOALG$ and $\RGraded$ are equivalent (by definition!), the symmetric monoidal structures on them are different. See \cite[Section 3.1]{AS} for a categorical perspective on the symmetric monoidal structures on $\RGraded$.
\end{rem}
We now begin our study of $K$-theory. As is customary in the development of equivariant $K$-theory, we obtain interesting results when the action is by a {compact} group. So \emph{throughout the remainder of this section, $G$ will denote a compact group.}

\subsection{Miscellaneous results on equivariant (ungraded) $K$-theory}
In this subsection, we record some well known results in the realm of equivariant $K$-theory following \cite[Chapter 11]{BLAK}. Recall that (by definition) the equivariant $K_0$-group of a unital $G$-$C^*$-algebra is the Grothendieck group of the abelian monoid (under direct sum) of finitely generated, $G$-equivariant projective $A$-modules. We summarize some well-known properties of the equivariant $K_n$-functors in the next remark for the ease of reference later.
\begin{rem}\label{prop: KOC2}
    The following facts about $G$-equivariant $K$-theory are well-known.
    \begin{enumerate}
        \item  $K_0^G(\C)$ is a ring with multiplication obtained from the external tensor product of $G$-vector spaces, and is isomorphic to $R(G)$, the representation ring of $G$. In particular, the ring $K_0^{C_2}(\C)$ is isomorphic to $\frac{\Z[x]}{x^2=1}$, the group ring of $C_2$. The preferred isomorphism maps the trivial representation of $C_2$ on $\C$ to $1 \in \frac{\Z[x]}{x^2=1}$ and the sign representation of $C_2$ on $\C$ to $x \in \frac{\Z[x]}{x^2=1}$. 
        \item More generally, the external product with an element of $K^G_0(\C)$ refines $K^G_n(-)$ to functors \footnote{this should not be confused with the ``$G$-action on $K^G$-groups induced by the action on the algebra", because in general there is no such action! When $G$ is $C_2$, then this action exists, but can be shown to be always trivial.}
        \[
        K_n^G(-): \RALG \rightarrow R(G)\text{-}\operatorname{Mod} \forall~ n \geq 0
        \] 
        \item Of importance to us is the Green-Julg theorem, which asserts that there is a natural isomorphism
        \[
        K^G_n(A) \cong K_n(A \rtimes G) ~\forall~n \geq 0
        \]
        See \cite[Theorem 11.7.1]{BLAK} for a proof. For the case of $G=C_2$, note that we have a natural isomorphism $A \rtimes C_2 \cong (A \hat\otimes Cl^{0,1})_u$ (see \cite[Proposition 14.5.1]{BLAK}) and will sometimes use this identification without mentioning.
    \end{enumerate}
\end{rem}
    For our applications later, we need to compute the $C_2$-equivariant $K$-groups of the graded $C^*$-algebras $Cl^{0,1}$ and $\s$. Though the calculations of these groups are surely well known, we include in the proofs some details which will come handy in Section 5.
\begin{prop}\label{prop: KOC2 CL01}
    $K_n^{C_2}(Cl^{0,1}) \cong K_n(\C) ~\forall~n \in \N_{\geq 0} $. The $C_2$-action on $K_n^{C_2}(Cl^{0,1})$ induced by $K_0^{C_2}(\C)$ (as dictated by \Cref{prop: KOC2}.2) is given by the identity.
\end{prop}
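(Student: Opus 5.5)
The plan is to identify $Cl^{0,1}$, as a Real $C_2$-$C^*$-algebra with $C_2$ acting through the grading, with an induced algebra. Then the computation follows from the Green--Julg theorem, and the $R(C_2)$-action is checked by an explicit module isomorphism.

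\emph{Step 1: identify the algebra.} Let $\varepsilon$ be the odd self-adjoint unitary generator, so $\varepsilon^2=1$. Then $p_\pm:=\frac{1\pm\varepsilon}{2}$ are orthogonal central projections with $p_++p_-=1$. They are fixed by the Real structure, since $\varepsilon$ is Real. The grading automorphism sends $\varepsilon\mapsto-\varepsilon$, so it swaps $p_+$ and $p_-$. Hence $Cl^{0,1}\cong \C p_+\oplus\C p_-\cong C(C_2)$ as Real $C_2$-$C^*$-algebras, where $C_2$ acts on $C(C_2)$ by translation and the Real structure is pointwise complex conjugation.

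\emph{Step 2: compute the groups.} By \Cref{prop: KOC2}.3, $K_n^{C_2}(Cl^{0,1})\cong K_n(C(C_2)\rtimes C_2)$. The crossed product $C(C_2)\rtimes C_2$ is $\K(\ell^2(C_2))\cong M_2(\C)$, and this identification is compatible with the Real structures because the translation action is Real. Morita invariance (stability) of Real $K$-theory then gives $K_n(M_2(\C))\cong K_n(\C)$.

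As a cross-check, one can instead use $A\rtimes C_2\cong (A\hat\otimes Cl^{0,1})_u$. With $A=Cl^{0,1}$ this gives $(Cl^{0,2})_u$. The real part of $Cl^{0,2}$ is $M_2(\R)$, since both generators square to $+1$, so again we get $K_n(\C)$.

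\emph{Step 3: the $R(C_2)$-action.} It suffices to show that multiplication by $x$, i.e.\ tensoring with the sign representation, acts as the identity.

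In degree $0$, let $(M,\alpha_M)$ be a finitely generated $C_2$-equivariant projective $Cl^{0,1}$-module. Tensoring with the sign representation gives $(M,-\alpha_M)$. Since $Cl^{0,1}$ is commutative, the map $u\colon m\mapsto m\varepsilon$ is $Cl^{0,1}$-linear and invertible, because $u^2=\mathrm{id}$. It also satisfies
\[
\alpha_M(m\varepsilon)=\alpha_M(m)\alpha(\varepsilon)=-\alpha_M(m)\varepsilon,
\]
so $u$ is an equivariant isomorphism $(M,\alpha_M)\cong(M,-\alpha_M)$. Hence $x\cdot[M,\alpha_M]=[M,\alpha_M]$.

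For higher $n$, I would run the same argument after tensoring with the (trivially acted) suspension algebras that define $K_n^{C_2}$. Alternatively, one can note that $x\cdot(-)$ is induced by the automorphism of $C(C_2)\rtimes C_2$ coming from the unitary $\varepsilon$. After the identification with $M_2(\C)$, this automorphism is inner, so it acts trivially on $K_*$.

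\emph{Main obstacle.} The step needing the most care is this passage from degree $0$ to all $n$, together with checking that every identification respects the Real structures, since the paper works in the Real setting by default. My first attempt would be the inner-automorphism argument on the crossed product, because it is uniform in $n$.
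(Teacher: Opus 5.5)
Your proposal is correct, but it takes a different route from the paper. The paper never identifies $Cl^{0,1}$ as an induced algebra. Instead it studies the unit map $\iota_*\colon K_n^{C_2}(\C)\to K_n^{C_2}(Cl^{0,1})$. Under Green--Julg this becomes the map induced by $Cl^{0,1}_u\cong\C\oplus\C\to Cl^{0,2}_u\cong M_2(\C)$, $(a,b)\mapsto\frac12\begin{pmatrix}a+b&a-b\\a-b&a+b\end{pmatrix}$. Homotoping the two rank-one pieces to the corner embeddings shows that $\iota_*$ is the addition map $K_n(\C)\oplus K_n(\C)\to K_n(\C)$. Triviality of the action then follows formally: $\iota_*$ is $R(C_2)$-linear and surjective, and $x$ swaps the two summands of $K_n^{C_2}(\C)$, which addition does not see.

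Your argument is more intrinsic. The groups come from $C(C_2)\rtimes C_2\cong\mathcal K(\ell^2C_2)$. The action is killed by the central, odd, Real unitary $\varepsilon$ itself. It gives $(M,\alpha_M)\cong(M,-\alpha_M)$, and on the crossed product it implements the dual automorphism: $\varepsilon u_g\varepsilon^*=\chi(g)u_g$ for the sign character $\chi$. So that automorphism is $\operatorname{Ad}(\varepsilon)$, inner already before passing to $M_2(\C)$. This applies verbatim to any $C_2$-algebra whose multiplier algebra contains such an $\varepsilon$, and it needs no surjectivity input.

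For the higher degrees you should state two points explicitly. First, Green--Julg intertwines multiplication by $x$ with the dual automorphism $au_g\mapsto\chi(g)au_g$; this is standard but is the step your uniform argument rests on. Second, in the suspension variant $\varepsilon$ is only a multiplier of $C_0(\R^n)\otimes Cl^{0,1}$. So you either work with Hilbert modules over the multiplier algebra, or pass to $C(S^n)\otimes Cl^{0,1}$ and use split exactness.

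What the paper's route buys is the explicit description of $\iota_*$ as addition. That is exactly what is reused in \Cref{prop: K0C2S}, where $K_0^{C_2}(\s)$ is identified with $\ker(\Z\oplus\Z\xrightarrow{+}\Z)$ carrying the action $n\mapsto-n$, and in \Cref{lem: endo of K0C2}. With your proof, that computation of $\iota_*$ would have to be done separately.
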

\begin{proof}
    We begin by analyzing the maps
    \[
    K_n^{C_2}(\C) \xrightarrow{\iota_*} K_n^{C_2}(Cl^{0,1}).
    \]
    Consider the commutative diagram (with the unlabeled horizontal arrow us so defined to render the diagram commutative)
\[\begin{tikzcd}
	{K_n^{C_2}(\mathbb{C})} && {K_n^{C_2}(Cl^{0,1})} \\
	{K_n(Cl^{0,1}_u)} && {K_n(Cl^{0,2}_u) \cong K_n(M_2(\mathbb{C}_u))} \\
	{K_n(\mathbb{C}) \oplus K_n(\mathbb{C})} && {K_n(\mathbb{C})}
	\arrow["{\iota_*}", from=1-1, to=1-3]
	\arrow["\cong"', no head, from=1-1, to=2-1]
	\arrow["\cong", no head, from=1-3, to=2-3]
	\arrow["{(\iota \hat\otimes Cl^{0,1})_u}", from=2-1, to=2-3]
	\arrow["\cong", from=3-1, to=2-1]
	\arrow[from=3-1, to=3-3]
	\arrow["\cong"', from=3-3, to=2-3]
\end{tikzcd}\]
where the top vertical isomorphisms are induced by the Green-Julg theorem, and the lower vertical isomorphisms are induced by the canonical maps. We claim that the bottom horizontal map is given by addition. To that end, one checks that the inclusion $Cl^{0,1} \rightarrow Cl^{0,2}$ corresponds to the homomorphism
\begin{align*}
    \C \oplus \C &\rightarrow M_2(\C)\\
    (x,y) &\mapsto \frac{1}{2}\begin{pmatrix}
        x+y & x-y \\
        x-y & x+y
    \end{pmatrix}.
\end{align*}
under the standard identifications. Precomposing with the two inclusions of $\C$ into $(\C \oplus \C)_u$ therefore induce the homomorphisms $\C \rightarrow M_2(\C)_u$ given by 
\[
 \lambda \mapsto \frac{1}{2}\begin{pmatrix}
        \lambda & \lambda \\ \lambda & \lambda
    \end{pmatrix}  ~~\text{and}~~\mu \mapsto \frac{1}{2}\begin{pmatrix}
        \mu & -\mu \\ -\mu & \mu
    \end{pmatrix}.
\]
The two homomorphisms can be homotoped to the left-upper-corner inclusion and/or the right-lower-corner inclusion. The claim now follows from \cite[Lemma 2.7.6]{WY}.

To understand the $C_2$-action on $K_n^{C_2}(Cl^{0,1})$ induced by $K_0^{C_2}(\C)$, one notes that the flip action on $K_0^{C_2}(\C)$ is invariant under the addition map, and that $\iota_*$ is an $\Z[C_2]$-module homomorphism.
\end{proof}
\begin{cor}\label{prop: K0C2S}
    $K_0^{C_2}(\mathcal{S}) \cong \Z$. The $C_2$-action on $K_0^{C_2}(\s)$ induced by $K_0^{C_2}(\C)$ is given by $n \mapsto -n$.
\end{cor}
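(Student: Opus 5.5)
We derive this from \Cref{prop: KOC2 CL01} using the description $\tilde{\s}\cong\{f:[0,1)\to Cl^{0,1}\mid f(0)\in\C\}$ from \Cref{example: basic examples}. Evaluation at $0$ gives a short exact sequence of graded (Real) $C^*$-algebras, i.e.\ of $C_2$-$C^*$-algebras,
\[
0\to C_0((0,1),Cl^{0,1})\to \tilde{\s}\xrightarrow{\operatorname{ev}_0}\C\to 0 .
\]
It is $C_2$-equivariantly split exact in the sense that $\C$ includes as constants. But the constant function $1$ is not in $C_0$, so I instead use the six-term sequence in $C_2$-equivariant $K$-theory, which holds by Green--Julg and the ordinary six-term sequence. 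Since $C_0((0,1),Cl^{0,1})$ is the suspension of $Cl^{0,1}$, \Cref{prop: KOC2 CL01} gives $K^{C_2}_n(C_0((0,1),Cl^{0,1}))\cong K_{n+1}(\C)$. In particular $K_0^{C_2}=K_1(\C)$ and $K_1^{C_2}=K_2(\C)$ for the Real point; in the complex case these are $0$ and $\Z$.

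The relevant portion of the long exact sequence reads
\[
K_1^{C_2}(\tilde\s)\to K_1^{C_2}(\C)\xrightarrow{\partial} K_0^{C_2}(S Cl^{0,1})\to K_0^{C_2}(\tilde\s)\to K_0^{C_2}(\C)\xrightarrow{\partial} K_{-1}^{C_2}(SCl^{0,1})=K_0^{C_2}(Cl^{0,1}) .
\]
The key point is to identify the last boundary map. Under the cone identification, $\partial$ is the map induced by the inclusion $\C\to Cl^{0,1}$, i.e.\ the map $\iota_*$ studied in the proof of \Cref{prop: KOC2 CL01}. The function $f$ is a path in $Cl^{0,1}$ from $\C$, so the mapping-cone sequence of $\iota:\C\to Cl^{0,1}$ (up to a sign) is exactly this extension. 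That proof shows $\iota_*$ on $K_0$ is the addition map $\Z\oplus\Z=\Z[C_2]\to\Z$, which is surjective with kernel generated by $1-x$. So $K_0^{C_2}(\tilde\s)\to K_0^{C_2}(\C)$ has image $\Z\cdot(1-x)\cong\Z$.

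It remains to show the map $K_0^{C_2}(SCl^{0,1})\to K_0^{C_2}(\tilde\s)$ is zero. In degree $n$, $\iota_*$ is the sum map $K_n(\C)^2\to K_n(\C)$, which is split surjective. Hence every boundary map $\partial$ in the long exact sequence of the cone is surjective, so $\pi_n(\text{cone})\cong\ker(\iota_*)_{n-1}\cong K_{n}(\C)$ through the fiber description. Under the cone identification, $K_0^{C_2}(\tilde\s)$ is then the kernel of $\iota_*$ on $K_0$, namely $\Z(1-x)\cong\Z$. I expect the bookkeeping of the degree shift in the cone-versus-extension identification to be the main obstacle, so I will check it carefully with Bott/suspension conventions. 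As a cross-check, $\tilde\s\cong Cl^{1,0}$-type results, i.e.\ $K^{C_2}_0(\s)\cong K_0((\s\hat\otimes Cl^{0,1})_u)$, give $\Z$ directly.

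For the action, the $\Z[C_2]$-module structure is natural, so the injection $K_0^{C_2}(\tilde\s)\to K_0^{C_2}(\C)=\Z[x]/(x^2-1)$ is $\Z[C_2]$-linear. Its image is generated by $1-x$, and $x(1-x)=x-1=-(1-x)$, so $x$ acts by $n\mapsto -n$.
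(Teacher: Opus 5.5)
Your argument is correct and is essentially the paper's: both realize $\tilde\s$ as the mapping cone of $\iota\colon\C\to Cl^{0,1}$ and read off $K_0^{C_2}(\s)$ as $\ker(\iota_*)=\Z(1-x)$, using that $\iota_*$ is the split surjective sum map in every degree. The paper applies excision to $C(\iota)\to Z(\iota)\to Cl^{0,1}$, whereas you use $SCl^{0,1}\to\tilde\s\to\C$ with boundary $\pm\iota_*$, and your parenthetical ``$\pi_n(\text{cone})\cong\ker(\iota_*)_{n-1}\cong K_n(\C)$'' has an index slip (it should read $K_n^{C_2}(\tilde\s)\cong\ker(\iota_*)_n\cong K_n(\C)$), but your displayed sequence already gives the correct degree-$0$ conclusion.
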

\begin{proof}\footnote{We probably are not giving the cleanest of this result! Instead, we mention in the proof some ideas which we will use later. We are also implicitly identifying $\s$ with the bounded transform $\tilde{\s}$ of $\s$ in this proof.}
Consider the diagram in $\RALG$
\begin{equation}\label{eqn: mapping cone for s}
    \begin{tikzcd}
	{\ker(\operatorname{ev}_1)} & {\{f:[0,1] \rightarrow Cl^{0,1} \mid f(0) \in \mathbb{C} \subseteq Cl^{0,1}\}} & {C([0,1],Cl^{0,1})} \\
	{\tilde{\mathcal{S}}} & {\mathbb{C}} & {Cl^{0,1}}
	\arrow[hook, from=1-1, to=1-2]
	\arrow["{\Phi, \cong}"', from=1-1, to=2-1]
	\arrow[hook, from=1-2, to=1-3]
	\arrow["{\operatorname{ev_0}, \simeq}"', two heads, from=1-2, to=2-2]
	\arrow["{\operatorname{ev}_1}"{description}, dashed, two heads, from=1-2, to=2-3]
	\arrow["{\operatorname{ev_0}, \simeq}"', two heads, from=1-3, to=2-3]
	\arrow["{\operatorname{ev_0}}"', from=2-1, to=2-2]
	\arrow["\iota"', from=2-2, to=2-3]
\end{tikzcd}
\end{equation}
where $\Phi$ is as in \Cref{example: basic examples}.3. Note that the right square is a pullback square (it is the defining square of the mapping cylinder construction; see also \Cref{example: mapping cone sequence is Schochet}.2), and the left square as well as the two triangles involving the dashed edge commute up to homotopy. The rest follows by applying the $K_0^{C_2}(-)$ functor to \Cref{eqn: mapping cone for s}, using the homotopy invariance and exicision properties of equivariant $K$-theory and the proof of \Cref{prop: KOC2 CL01}. Observe that the kernel of the addition map $\Z \oplus \Z \rightarrow \Z$ is
    $
    \{(n,-n) \mid n \in \Z\},
    $
    which is evidently isomorphic to $\Z$ with the action $n \mapsto -n$.
\end{proof}
We now want to present a generator of $K_0^{C_2}(\s) \cong \Z$. To that end, we need a concrete model for the $C_2$-equivariant $K_0$-group of a non-unital $C^*$-algebra.  This picture is going to be convenient for later use, especially in Section 3.6.

We begin with the relevant definitions.
\begin{defi}
    Let $A$ be a separable $G$-$C^*$-algebra. Set $QA:= A *A$, the coproduct in the category of separable $G$-$C^*$-algebras. Denote by $qA$ the kernel of the fold map $QA \rightarrow A$. $qA$ gets a natural $G$-action from $A$.
\end{defi}
\begin{rem}\label{rem: qA properties}
    The following remarks about $qA$ are in order.
    \begin{enumerate}
        \item $qA$ fits in a short exact sequence
        \[
        0 \rightarrow qA \rightarrow QA \rightarrow A \rightarrow 0.
        \]
        The sequence splits in two canonical ways, depending on embedding $A$ into one of the components of $QA$.
        \item The $q$ construction is functorial, and there are two natural maps $qA \rightarrow A$ given by restricting the homomorphisms
        \[
        A * A \xrightarrow{(1_A,0)} A, ~~~~ A * A \xrightarrow{(0,1_A)} A
        \]
        to $qA$.
        \item Given $A,B \in \RALG$, a prequasi-homomorphism from $A$ to $B$ consists of a $G$-$C^*$-algebra $E$, two $G$-equivariant $*$-homomorphisms\footnote{$\overline{\alpha}$ should not be confused with any sort of complex conjugation!} $\alpha,\overline{\alpha}:A \rightarrow E$, an ideal $J \triangleleft E$, and a $G$-equivariant $*$-homomorphism $\mu:J\rightarrow B$ such that $\alpha(x)-\overline{\alpha}(x) \in J~\forall~x\in A$. We write it as
    \[
\begin{tikzcd}
	{A } & {E \triangleright J} & B
	\arrow["{\overline{\alpha}}"', from=1-1, to=1-2]
	\arrow["\alpha", shift left=3, from=1-1, to=1-2]
	\arrow["\mu", from=1-2, to=1-3]
\end{tikzcd}
    \]
    By universal properties, a prequasi-homomorphism $A \rightarrow B$ yields a map $\mu \circ q(\alpha,\overline{\alpha}):qA \rightarrow B$, which we simply denote by $\alpha-\overline{\alpha}$.
        \item When $A=\C$, there is a concrete description of $q\C$:
        \[
        q\C \cong \{f\in C_0((0,1],M_2(\C)) \mid f(1) ~\text{is diagonal}\}.
        \]
        This is stated in \cite[Exercise 10.11.13.g]{BLAK}, and a proof with a lot of details and other background is available in \cite{SAARBRUCKEN}. Technically, loc.\! cit.\! gives a concrete function algebra description of the universal unital $C^*$-algebra on two projections, from which our result can be directly deduced in light of the defining short exact sequence of $q\C$ as mentioned in the first remark above. See also \cite[Section 6]{CUNSURVEY}.
        \item The above isomorphism directly shows that, in light of \Cref{example: basic examples}.3, \[\s \rtimes C_2 \cong (\s \hat\otimes Cl^{0,1})_u \cong q\C.\] This bridges the gap between the spectral model (studied in the next subsection) and the Cuntz picture. In particular, the proof of the next proposition can also be deduced from \Cref{thm: spectral model of g eq k theory} in light of a restriction-coinduction adjunction at the level of the compact-stabilized homotopy categories. Similar ideas are used in the proof of \Cref{lem: qC special}.
        \item An important technical feature of $q\mathbb{C}$ is \emph{semi-projectivity}, which implies that for all $A \in \RALG$, the canonical map
        \[
        [q\mathbb{C},A]_G \xrightarrow{\cong} [[q\mathbb{C},A]]_G
        \]
        from the $G$-equivariant homotopy classes of maps to the $G$-equivariant asymptotic homotopy classes of maps from $q\mathbb{C}$ to $A$ is a bijection of sets. See \cite[Exercise 4.7.1, Proposition 25.1.7]{BLAK} and \cite{Blackadar1985Shape} for general results on semi-projective $C^*$-algebras. The proof of $q\mathbb{C}$ being semi-projective is the content of \cite[Theorem 7]{Loring2008Projective}.
    \end{enumerate}
\end{rem}
\begin{prop}\label{prop: Cuntz picture k-theory}
    Let $G$ be a compact group and $A$ a $G$-$C^*$-algebra. Then, there is a natural isomorphism of abelian groups
    \[
    [q\C,A \otimes \mathcal{K}(L^2G_\N)]_G \cong K_0^{G}(A).
    \]
\end{prop}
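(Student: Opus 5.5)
We prove the statement by building a map from homotopy classes to $K_0^G$, checking that it is a group homomorphism, and then proving bijectivity. Throughout we write $\mathcal{K}_G:=\mathcal{K}(L^2G_\N)$ and $B:=A\otimes\mathcal{K}_G$. By \Cref{rem: qA properties}.4, $q\C\cong\{f\in C_0((0,1],M_2(\C)) \mid f(1)\text{ diagonal}\}$ with trivial $G$-action. This algebra is the kernel of the split surjection $\tilde{\mathcal{Q}}\to\C$ with $\tilde{\mathcal{Q}}:=\{f\in C([0,1],M_2)\mid f(0)\in\C\cdot 1,\ f(1)\text{ diagonal}\}$, which is the universal unital algebra generated by two projections modulo identifying them at one end. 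A $G$-map $\varphi:q\C\to B$ is the same as a pair of $G$-invariant projections $p,\bar p\in M(B)$ with $p-\bar p\in B$, taken up to the canonical identification. Concretely, $\varphi$ corresponds to the prequasi-homomorphism $\C\rightrightarrows M(B)\triangleright B$ of \Cref{rem: qA properties}.3.

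\textbf{Construction of the map.} Send $\varphi$ to the class $[p]-[\bar p]\in K_0^G(B)$. This is well defined because $p-\bar p\in B$; it is the standard Cuntz-picture construction. Applying it to the unitization, the element lies in $K_0^G(B^+)$ and maps to zero in $K_0^G(\C)$, since $p$ and $\bar p$ agree modulo $B$. The target group is then identified with $K_0^G(A)$ by equivariant stability $K_0^G(A\otimes\mathcal{K}_G)\cong K_0^G(A)$, which holds because $L^2G_\N$ contains every irreducible representation with infinite multiplicity. Homotopy invariance of $K_0^G$ shows the map descends to $[q\C,B]_G$.

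\textbf{Group structure and homomorphism property.} The group structure on $[q\C,B]_G$ comes from the direct-sum operation: choose $G$-equivariant isometries $V_1,V_2$ in $M(\mathcal{K}_G)$ with $V_1V_1^*+V_2V_2^*=1$, and conjugate by them. Such isometries exist by the absorbing property of $L^2G_\N$, and the path space of such pairs is connected, so the operation is well defined up to homotopy. Inverses are given by swapping $p$ and $\bar p$, that is, precomposing with the flip automorphism of $q\C$. The map to $K_0^G$ is then additive by construction.

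\textbf{Surjectivity.} A class in $K_0^G(A)$ of a unital algebra, or of $A^+$ with vanishing rank, is represented as $[e]-[f]$ with $e,f$ invariant projections in $M_n(A^+)\otimes\mathrm{End}(V)$ for some finite-dimensional $G$-representation $V$. Embed $V\otimes\C^n$ equivariantly into $L^2G_\N$. Since the augmentation of $e$ and $f$ agree, we may assume $e-f\in B$. Then the pair $(e,f)$ is a $G$-map $q\C\to B$ mapping to the given class.

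\textbf{Injectivity (main obstacle).} Suppose $[p]=[\bar p]$ in $K_0^G(B)$. Then, after adding a common invariant projection, stabilizing inside $\mathcal{K}_G$, and using the absorbing isometries, $p$ and $\bar p$ are $G$-unitarily equivalent via an invariant unitary $u\in (B)^+$ with $u\bar p u^*=p$. We then need an invariant path from $u$ to $1$ modulo $B$, in the unitary group of $M(B)^G$, that preserves the condition $p-\bar p\in B$. The plan is to rotate using the $2\times2$ trick: stabilize by $\mathrm{diag}(u,u^*)$, which is joined to $1$ by a path of invariant unitaries of the form $1+B$. This deforms $(p,\bar p)$ into $(p,p)$, which is the zero map and therefore null-homotopic. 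The delicate points are all here: keeping equivariance throughout, which requires equivariant Kasparov stabilization together with contractibility of the relevant unitary group of $M(\mathcal{K}_G)^G$ in the strict topology, and making sure each stabilization step is itself implemented by a homotopy in $[q\C,B]_G$. We expect this step to be where the real work lies. As a fallback, we would instead deduce injectivity from \Cref{thm: spectral model of g eq k theory}, using $q\C\cong\s\rtimes C_2$ as noted in \Cref{rem: qA properties}.5 together with the restriction–coinduction adjunction.
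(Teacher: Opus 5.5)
\textbf{The gap is in your direct route.} It sits in the sentence identifying a $G$-map $\varphi\colon q\C\to B$ with a pair of invariant projections $p,\bar p\in M(B)$ with $p-\bar p\in B$. Only one direction holds. A general $\varphi$ extends (via $Q\C\to M(q\C)$) only into the multiplier algebra $M(D)$ of the hereditary subalgebra $D=\overline{\varphi(q\C)B\varphi(q\C)}$. In general $M(D)$ does not map to $M(B)$ compatibly with $D\subseteq B$; for example, take $D=C_0((0,1])\subseteq C[0,1]$.

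For defining the map this is harmless. You can use $\varphi_*([e_1]-[e_2])$ for the universal class in $K_0^G(q\C)$, with $e_1,e_2\in Q\C$ the canonical projections, or push $[p]-[\bar p]\in K_0^G(D)$ into $K_0^G(B)$. Your injectivity argument, however, needs the pair to lie in $M(B)$, for two reasons. First, the hypothesis is vanishing in $K_0^G(B)$, which does not give vanishing in $K_0^G(D)$, since $K_0^G(D)\to K_0^G(B)$ need not be injective ($\mathcal{K}\subseteq\mathcal{T}$ is a counterexample). Second, a unitary in $1+B$ does not act on $M(D)$.

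The missing step is to use stability of $B=A\otimes\kg$ together with equivariant Kasparov stabilization, $\overline{DB}\oplus(B\otimes L^2G_\N)\cong B\otimes L^2G_\N$. This lets you replace $\varphi$ by a homotopic map that comes from a pair in $M(B\otimes\kg)\cong M(B)$; the two embeddings of $D=\mathcal{K}(\overline{DB})$ into the stabilization are joined by a rotation homotopy.

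Two related fixes are needed. The class $[p]-[\bar p]$ does not live in $K_0^G(B^+)$, since $p,\bar p\notin B^+$ in general; use the split extension $0\to B\to B+\C p\to\C\to 0$ instead. Once the pair is in $M(B)$, your $2\times 2$ trick works, but only if you run it on $(P\oplus 0,\bar P\oplus 0)$ with $P=p\oplus 1_k$ and $\bar P=\bar p\oplus 1_k$, after correcting the implementing unitary to lie in $1+M_N(B)$. Conjugating $\bar P\oplus\bar P$ by $\mathrm{diag}(u,u^*)$ does not give $P\oplus\bar P$. As written, the injectivity step is a plan, and the passage from $M(D)$ to $M(B)$ is exactly what is missing.

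\textbf{Your fallback is sound.} It is precisely the alternative the paper itself flags in \Cref{rem: qA properties}.5. The paper's own proof is only a citation of the classical Cuntz picture, plus the observation that for compact $G$ equivariant Fredholm operators classify $K_0^G$ (\Cref{thm: spectral model of g eq k theory}.5), so the non-equivariant argument carries over.

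Concretely, use the chain of equivalences in the proof of \Cref{lem: qC special}. That chain uses only $q\C\cong(\s\hat\otimes Cl^{0,1})_u$, $\coind^{G_2}_G A\cong A\hat\otimes Cl^{0,1}$ and invertibility of $Cl^{1,0}$, and not the present proposition, so there is no circularity. On $\pi_0$ it gives a natural monoid isomorphism $[q\C,A\otimes\kg]_G\cong[\s,A\hat\otimes\hatkg]_{G_2}$. Then \Cref{thm: spectral model of g eq k theory}.6 identifies the right-hand side with $K_0^G(A)$, because a trivially graded algebra is inner graded.

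Note that this produces the isomorphism outright; it does not prove injectivity of your explicit map. A surjective homomorphism between abstractly isomorphic groups need not be injective. So either check that the two maps agree, or simply make the fallback your proof. Compared with the direct route, it trades the multiplier-algebra analysis for the spectral model and the change-of-group machinery the paper has already set up.
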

\begin{proof}
    The result is well known, see for example \cite[Section 17.6]{BLAK}, \cite[Remark 1]{CUNTZ2}. The compact group action (and the Real structure) do not cause any extra difficulty as $G$-equivariant\footnote{$G$-equivariant adjointable operators are precisely the $G$-fixed points of the $G$-$C^*$-algebra of adjointable operators on a $G$-Hilbert module.}, $G$-continuous Fredholm operators on $G$-Hilbert modules classify $G$-equivariant $K_0$ when $G$ is compact; see \Cref{thm: spectral model of g eq k theory}.5 for example.
\end{proof}
\begin{rem}\label{rem: the map in the cuntz picture iso}
     We mention that the isomorphism \begin{align}\label{eqn: cuntz picture dia}
        K_0^G(A) &\xrightarrow{\cong} [q\C,A \otimes \mathcal{K}(L^2G_\N)]_G
    \end{align}
    is given by precomposition with the first map $q\C \rightarrow \C$ as described in \Cref{rem: qA properties}.2, see \cite[Exercise 10.11.13.f]{BLAK}. By taking $n$-fold loops on both sides of \Cref{eqn: cuntz picture dia}, we obtain 
    \[
    K_n^G(A) \cong \pi_n(\underline{\operatorname{Hom}}_G(q\C,A \otimes \mathcal{K}(L^2G_\N)))
    \]
    for all $n \in \N$.
\end{rem}
We are now in a position to supply a concrete generator of $K_0^{C_2}(\s)$. To that end, we consider two distinguised $C_2$-invariant projections in $C([-1,1],M_2)$. Note that $C[-1,1]$ with the even-odd grading is the multiplier algebra of $\s$ \footnote{we implicitly identified $\s$ with its bounded version} and $M_2$ has the diagonal-off diagonal grading, thereby inducing the tensor product action on $C([-1,1],M_2)$.
Set
\begin{align*}
    p_1:[-1,&1] \rightarrow M_2(\C) \\
    &t \mapsto \begin{pmatrix}
        \cos^{2}(\frac{\pi}{2}t) & \sin(\frac{\pi}{2}t)\cos(\frac{\pi}{2}t) \\
        \sin(\frac{\pi}{2}t)\cos(\frac{\pi}{2}t) & \sin^2(\frac{\pi}{2}t)
    \end{pmatrix}
\end{align*}
and 
\begin{align*}
    p_2:[-1,&1] \rightarrow M_2(\C) \\
    &t \mapsto \begin{pmatrix}
        0 & 0\\
        0 & 1
    \end{pmatrix}
\end{align*}
{It is clear that $p_1,p_2$ are $C_2$-invariant projections in $C([-1,1],M_2)$, and that they induce a homomorphism 
\[
p_1-p_2: q\C \rightarrow M_2(\s)
\]}
in the notation of \Cref{rem: qA properties}.3.
\begin{prop}\label{prop: KOC2S to KOC2C commutes}
    The homomorphism $\Z \rightarrow K_0^{C_2}(\s)$ given by $1 \mapsto p_1-p_2$ renders the diagram 
\[\begin{tikzcd}
	{K_0^{C_2}(\mathcal{S})} && {K_0^{C_2}(\mathbb{C})} \\
	\Z && {\Z \oplus \Z}
	\arrow["{{\operatorname{ev}_0}_*}", from=1-1, to=1-3]
	\arrow["\cong", from=1-3, to=2-3]
	\arrow[from=2-1, to=1-1]
	\arrow["{n \mapsto (n,-n)}"', from=2-1, to=2-3]
\end{tikzcd}\]
commutative, where the right isomorphism is the isomorphism described in \Cref{prop: KOC2}.1.
\end{prop}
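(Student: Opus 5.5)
It suffices to compute the image of the single class $p_1-p_2\in K_0^{C_2}(\s)$ under ${\operatorname{ev}_0}_*$ and check that it corresponds to $(1,-1)\in\Z\oplus\Z$. Both composites $\Z\to\Z\oplus\Z$ in the square are then homomorphisms agreeing on the generator $1$, so the square commutes. Combined with \Cref{prop: K0C2S} and the injectivity of ${\operatorname{ev}_0}_*$ (whose image is the kernel $\{(n,-n)\}$ of the addition map), this also shows a posteriori that $p_1-p_2$ generates $K_0^{C_2}(\s)$.

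First I make the naturality explicit. In the model of \Cref{prop: Cuntz picture k-theory}, ${\operatorname{ev}_0}_*$ is given by postcomposition with $\operatorname{ev}_0\otimes 1:M_2(\s)\to M_2(\C)$. Here $\operatorname{ev}_0$ is evaluation at $t=0$ of the bounded transform, which is $C_2$-equivariant because odd functions vanish at $0$. The quasi-homomorphism $p_1-p_2$ lives in the unitalization $C([-1,1],M_2)$, and evaluating at $0$ is compatible with the construction of \Cref{rem: qA properties}.3. So ${\operatorname{ev}_0}_*[p_1-p_2]$ is represented by the pair of $C_2$-invariant projections
\[
p_1(0)=\begin{pmatrix}1&0\\0&0\end{pmatrix},\qquad p_2(0)=\begin{pmatrix}0&0\\0&1\end{pmatrix}
\]
in $M_2(\C)$. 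The grading of $M_2(\C)$ is diagonal--off-diagonal, so the $C_2$-action is conjugation by $\operatorname{diag}(1,-1)$. By \Cref{rem: the map in the cuntz picture iso}, a difference of two invariant projections in the unital algebra $M_2(\C)$ corresponds to the formal difference of the equivariant modules they cut out in $K_0^{C_2}(\C)$.

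Second, I identify those modules. The image of $p_1(0)$ is the first coordinate line of $\C^2$, on which the implementing unitary $\operatorname{diag}(1,-1)$ acts by $+1$, giving the trivial representation. The image of $p_2(0)$ is the second line, acted on by $-1$, giving the sign representation. Hence the class is $[\mathrm{triv}]-[\mathrm{sgn}]=1-x\in\Z[x]/(x^2-1)$. Under the additive isomorphism $K_0^{C_2}(\C)\cong\Z\oplus\Z$ of \Cref{prop: KOC2}.1 (coefficients of $1$ and $x$), this is $(1,-1)$, as required.

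The main obstacle is the bookkeeping of conventions rather than any real computation. One must check that the equivariant module associated to an invariant projection $p\in M_2(\C)$ with the inner action really carries the restricted representation $\operatorname{diag}(1,-1)$ on $p\C^2$, and not its twist. One must also check that the Cuntz-picture identification respects the order of the pair, so that $p_1$ is the positive term and $p_2$ the negative one. I would settle both points by the Green--Julg / Morita identification $M_2(\C)\sim_{C_2}\C$ with $\C^2=\mathrm{triv}\oplus\mathrm{sgn}$, and by tracing the sign convention of \Cref{rem: the map in the cuntz picture iso} through the first map $q\C\to\C$. If the convention turned out reversed, one would get $(-1,1)$, which is still of the form $(n,-n)$; the statement's normalisation then amounts to fixing that sign.
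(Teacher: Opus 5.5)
Your proposal is correct and follows essentially the same route as the paper. The paper evaluates $p_1-p_2$ at $t=0$ to get the pair $\operatorname{diag}(1,0),\operatorname{diag}(0,1)$ in $M_2$, identifies $M_2$ with its diagonal--off-diagonal grading as the operators on $\C^{\mathrm{triv}}\oplus\C^{\mathrm{sign}}$, and reads off $[\mathrm{triv}]-[\mathrm{sign}]\leftrightarrow(1,-1)$; your argument does the same. Your closing sign hedge is unnecessary. \Cref{rem: the map in the cuntz picture iso} already fixes the convention: the class of a projection $p$ corresponds to precomposition with the first map $q\C\to\C$, i.e.\ to the pair $(p,0)$. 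So the first entry of the pair is the positive term, and under the paper's conventions one gets exactly $(1,-1)$.
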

\begin{proof}
    From the descriptions of $p_1$ and $p_2$ and \Cref{rem: the map in the cuntz picture iso}, it is clear that $\operatorname{ev}_0(p_1-p_2)$ corresponds to the map $q\C \rightarrow M_2$ given by the difference of the $C_2$-equivariant projections $\begin{pmatrix}
        1 & 0 \\
        0 & 0
    \end{pmatrix}$ and $\begin{pmatrix}
        0 & 0\\
        0 & 1
    \end{pmatrix}$ in $M_2$. Now, $M_2$ with the diagonal-off diagonal grading is nothing but the algebra of operators on the $C_2$-vector space $\C^{\operatorname{triv}} \oplus \C^{\operatorname{sign}}$. The rest follows directly.
\end{proof}
\begin{cor}
    The left vertical arrow above is in fact an isomorphism. We declare $p:=p_1-p_2$ to be the preferred generator of $K_0^{C_2}(\s) \cong \Z$. 
\end{cor}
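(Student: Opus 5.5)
The claim is that the map $\Z \rightarrow K_0^{C_2}(\s)$, $1 \mapsto p_1-p_2$, is an isomorphism. I will get this from the commutative square in \Cref{prop: KOC2S to KOC2C commutes} together with the computation in the proof of \Cref{prop: K0C2S}.

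First, apply $K_0^{C_2}(-)$ to the diagram \Cref{eqn: mapping cone for s}. By homotopy invariance and excision, the long exact sequence of the extension $\ker(\operatorname{ev}_1) \to \{f \mid f(0)\in\C\} \to Cl^{0,1}$, combined with the homotopy-commutative triangles, identifies ${\operatorname{ev}_0}_*: K_0^{C_2}(\s) \to K_0^{C_2}(\C)$ with the kernel of $\iota_*: K_0^{C_2}(\C) \to K_0^{C_2}(Cl^{0,1})$. Exactness on the left holds because the preceding term is $K_1^{C_2}(Cl^{0,1}) \cong K_1(\C) = 0$, by \Cref{prop: KOC2 CL01}. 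Strictly, in the Real case this is $KO_1 = \Z/2$, so I will instead use the paper's argument as stated: the connecting map from $K_1^{C_2}(\C)$ surjects onto $K_1^{C_2}(Cl^{0,1})$, since addition $K_1(\C)^2 \to K_1(\C)$ is onto. Either way, ${\operatorname{ev}_0}_*$ is injective.

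Second, the proof of \Cref{prop: KOC2 CL01} shows that $\iota_*$ becomes the addition map $\Z\oplus\Z\to\Z$ under the identification $K_0^{C_2}(\C)\cong \Z\oplus\Z$ of \Cref{prop: KOC2}.1. So the image of ${\operatorname{ev}_0}_*$ is exactly $\{(n,-n)\mid n\in\Z\}$.

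Now use the square of \Cref{prop: KOC2S to KOC2C commutes}. The bottom map $n\mapsto(n,-n)$ is injective with image equal to this kernel. The right vertical map is an isomorphism, and ${\operatorname{ev}_0}_*$ is an isomorphism onto that same image. Hence the left vertical map equals $({\operatorname{ev}_0}_*)^{-1}$ composed with an isomorphism $\Z \to \operatorname{im}({\operatorname{ev}_0}_*)$, so it is a bijection. This makes $p=p_1-p_2$ a generator.

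The main obstacle is the injectivity of ${\operatorname{ev}_0}_*$, that is, controlling the $K_1$ term in the Real setting. Everything else is bookkeeping with the already established square.
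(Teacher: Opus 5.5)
Your argument is correct, but it is organized differently from the paper's.

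\textbf{The paper's route.} The paper takes the abstract isomorphism $K_0^{C_2}(\s)\cong\Z$ from \Cref{prop: K0C2S} as given. It then uses only the commutativity of the square and an elementary fact. If $n\mapsto(n,-n)$ factors as $\Z\xrightarrow{a}\Z\xrightarrow{b}\Z\oplus\Z$, then $a(1)\cdot b(1)=(1,-1)$. Since $(1,-1)$ is primitive, this forces $a=\pm1$. So the paper never needs to know separately that ${\operatorname{ev}_0}_*$ is injective.

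\textbf{Your route.} You rederive from the long exact sequence that ${\operatorname{ev}_0}_*$ is injective with image the antidiagonal, and then invert it. This does not use the abstract isomorphism; it reproves it. It also makes explicit a step that the proof of \Cref{prop: K0C2S} leaves implicit: the vanishing of the connecting map $\partial\colon K_1^{C_2}(Cl^{0,1})\to K_0^{C_2}(\s)$. That vanishing also underlies the short exact sequence used later in \Cref{lem: endo of K0C2}. The paper's route is shorter given \Cref{prop: K0C2S}; yours is self-contained and yields the stronger statement that ${\operatorname{ev}_0}_*$ is an isomorphism onto the antidiagonal.

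\textbf{Two small corrections.}
\begin{enumerate}
\item Drop your first justification. The paper works with Real $C^*$-algebras by default, and there $K_1^{C_2}(Cl^{0,1})\cong K_1(\C)\cong\Z/2\neq0$.
\item The map whose surjectivity you need is not a connecting map. It is $\iota_*\colon K_1^{C_2}(\C)\to K_1^{C_2}(Cl^{0,1})$. The proof of \Cref{prop: KOC2 CL01} identifies this map with addition $K_1(\C)\oplus K_1(\C)\to K_1(\C)$, in every degree. Its surjectivity forces $\partial=0$ by exactness, hence ${\operatorname{ev}_0}_*$ is injective, as you conclude.
\end{enumerate}
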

\begin{proof}
    Since we know that $K_0^{C_2}(\s)$ is abstractly isomorphic to $\Z$, this follows from the elementary observation that the map \[
    \Z \xrightarrow{n \mapsto (n,-n)} \Z \oplus \Z
    \]
    can only factor through $\Z$ (in the category of abelian groups) through an isomorphism $\Z \rightarrow \Z$.
\end{proof}
\subsection{The spectral model of graded, $G$-equivariant $K$-theory}
Recall that throughtout this subsection, $G$ is a \emph{compact} group. We want to develop a $G$-equivariant version of the \emph{spectral model of $K$-theory}, originally due to Higson and Guentner in the complex case (\cite[Section 1.5]{HG}), and discussed with details in the Real case in \cite{D}. The spectral model predates any $\infty$-categorical refinements of $K(K)$-theory, and in the present work, we would need to have formal properties of it setup \emph{a priori} in order to get mileage out of the stable $\infty$-categories for graded bivariant $K$-theories constructed in Section 4 (see especially the proof of \Cref{thm: dirac-dual Dirac abstract}).

We begin with the relevant definitions.
\begin{defi}
    Let $A \in \RGraded$.  Define the $G$-equivariant spectral $K$-theory space of $A$, denoted $\mathbb{K}^{G,\operatorname{sp}}(A)$, to be 
    \[
    \mathbb{K}^{G,\operatorname{sp}}(A):=\underline{\operatorname{Hom}}_{G_2}(\s,A \hat\otimes \hatkg),
    \]
    the $G \times C_2$-equivariant $*$-homomorphisms from $\s$ to $A \hat\otimes \hatkg$ equipped with the point-norm topology.
   We define the spectral $K$-theory groups of $A$ to be the homotopy groups of $\mathbb{K}^{G,\operatorname{sp}}(A)$,
    \[
    K_n^{G,\operatorname{sp}}(A):=\pi_n(\mathbb{K}^{G,\operatorname{sp}}(A),\underline{0})
    \]
\end{defi}
Before studying the spectral model, we introduce the so-called \emph{Fredholm model}, which formalizes the notion of $G$-equivariant Fredholm operators classifying $G$-equivariant $K$-theory. It is also a crucial bridge between showing that the spectral model recovers the classical projective module picture of $K$-theory when the algebra is inner graded.
\begin{defi}\label{defi: fred cycle}
    Let $A \in \RGraded$. 
    \begin{itemize}
        \item A $G$-Fredholm cycle over $A$ is a tuple $(\E,F)$, where $\E$ is a graded, countably generated Hilbert $A,G$-module \footnote{see \cite[Section 2.1]{MEYER} for the definition}, and $F$ is a $G$-continuous, $G$-invariant, odd, self-adjoint Fredholm operator on it.
        \item Two $G$-Fredholm cycles $(\E_0,F_0)$ and $(\E_1,F_1)$ over $A$ are said to be \emph{concordant} if there exists a $G$-Fredholm cycle $(\E,F)$ over $A \hat\otimes C[0,1]\cong C([0,1],A)$ such that $(\E \hat\otimes_{\operatorname{ev}_i}A, \operatorname{ev}_i(F)) \cong (\E_i,F_i)$ for $i=0,1$, where $\operatorname{ev}_t:C([0,1],A) \rightarrow A$ is the homomorphism given by $\operatorname{ev}_t(f)=f(t)$.
        \item Denote by $K_0^{G,\operatorname{Fr}}(A)$ the abelian monoid of concordance classes of $G$-Fredholm cycles over $A$.
    \end{itemize}
\end{defi}
\begin{thm}\label{thm: spectral model of g eq k theory}
    Some key features of the spectral model and the Fredholm model of graded, $G$-equivariant $K$-theory are mentioned below. In what follows, $A,B,...$ denote arbitrary graded $G$-$C^*$-algebras, and all the morphisms are in the category $\RGraded$.
    \begin{enumerate}
        \item $\mathbb{K}^{G,\operatorname{sp}}(A)$ is an $H$-group. Consequently, all the equivariant spectral $K$-groups are in fact abelian groups.
        \item A short exact sequence $0 \rightarrow A \rightarrow B \rightarrow C \rightarrow 0$ in $\RGraded$ yields a Serre fibration on the associated spectral $K$-theory spaces.
        \item The functors $K_n^{G,\operatorname{sp}}(-): \RGraded \rightarrow \operatorname{Ab}$ preserve filtered colimits for all $n \in \N_{\geq 0}$.
        \item The functors 
        \[
        K_0^{G,\operatorname{sp}}(-): (\RGraded,\hat\otimes_!) \rightarrow (\operatorname{Ab},\otimes)
        \]
        are lax monoidal.
        \item We have a natural isomorphism of abelian groups \[\pi_0(\mathbb{K}^{G,\operatorname{sp}}(A)) \xrightarrow{\cong} K_0^{G,\operatorname{Fr}}(A).\]
        \item When $A$ is inner graded, we have a natural isomorphism of abelian groups
        \[
        \pi_n(\mathbb{K}^{G,\operatorname{sp}}(A)) \cong K_n^{G}(A) ~\forall~n \geq 0.
        \]
    \end{enumerate}
\end{thm}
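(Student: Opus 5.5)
The strategy is to deduce each item from the non-equivariant Real case treated in \cite{D}, following Higson--Guentner \cite[Section 1.5]{HG}. Compactness of $G$ is used only through averaging (Haar measure) and the absorption property of $\hatkg=\mathcal{K}(L^2G_\N\hat\otimes \ell^2_{\mathrm{gr}})$. I would prove the items in the order (1)--(4), then (5), then (6), since (6) uses (5) together with the Cuntz picture of \Cref{prop: Cuntz picture k-theory}.

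For (1), the $H$-space structure comes from the comultiplication $\Delta:\s\to\s\hat\otimes\s$, $f(x)\mapsto f(x\hat\otimes 1+1\hat\otimes x)$. This is $C_2$-equivariant and Real, and $G$ acts trivially on $\s$. Given $\varphi,\psi\in\mathbb{K}^{G,\operatorname{sp}}(A)$, form $\varphi\boxplus\psi$ by placing them block diagonally. For this we use a $G\times C_2$-equivariant isometric isomorphism $\hatkg\oplus\hatkg\cong\hatkg$, which exists because $L^2G_\N\oplus L^2G_\N\cong L^2G_\N$ equivariantly. The space of such isometries is contractible by the equivariant Kuiper theorem for compact $G$, so the sum is well defined and homotopy associative and commutative. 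Inverses are given by precomposing with $f(x)\mapsto f(-x)$ and reversing the grading on $\hatkg$, and the usual rotation homotopy from \cite{D} is equivariant.

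For (2), I reduce the Serre fibration property to the homotopy lifting property for maps $\s\to C\hat\otimes\hatkg$ parametrized by cubes. The argument of \cite{D}/\cite{HG} uses that $\s$ is generated by $e^{-x^2}$ and $xe^{-x^2}$ with explicit relations, together with a lifting through $B\to C$ obtained by functional calculus on lifts of the generators. Equivariance is arranged by averaging the lifts over $G$ and over $C_2$, which is possible since the quotient map is equivariant and $G$ is compact. The Real structure is handled the same way. I expect this step to be the main obstacle, because averaging destroys the exact relations. My first attempt would be to average, then apply functional calculus to the averaged self-adjoint odd lift $\tilde x$ via $f\mapsto f(\tilde x)$. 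Since $\tilde x$ is an unbounded-free bounded-transform element, this automatically gives a genuine $*$-homomorphism out of $\tilde{\s}$, so it suffices to lift the single odd self-adjoint contraction equivariantly.

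For (3) and (4), compactness of $\s$-type generators (semiprojectivity-like finite presentation of $\tilde\s$ up to homotopy) gives compatibility with filtered colimits exactly as in \cite{D}, and averaging again handles equivariance. The lax monoidal structure is $(\varphi,\psi)\mapsto(\varphi\hat\otimes\psi)\circ\Delta$, composed with $\hatkg\hat\otimes\hatkg\cong\hatkg$.

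For (5), send $\varphi$ to the Fredholm cycle obtained from the unbounded-to-bounded picture. The element $\varphi(x/\sqrt{1+x^2})$ lies in the multiplier algebra, and on the Hilbert module $\E=\overline{\varphi(\s)(A\hat\otimes \hatkg)}$, extended by Kasparov stabilization to $A\hat\otimes L^2G_\N$, it is an odd, self-adjoint, $G$-invariant operator $F$ with $1-F^2$ compact. The inverse map uses Kasparov's equivariant stabilization theorem to embed any cycle into $\hat{\mathcal{H}}_A$, and then takes $f\mapsto f(F)$ after homotoping $F$ so that $F^2-1\in\mathcal{K}$. Concordance corresponds to homotopy by applying the same construction over $C[0,1]$.

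For (6), use an inner grading unitary to identify $A\hat\otimes\hatkg$ with $A\otimes\mathcal K(L^2G_\N)\otimes M_2$ carrying the diagonal--off-diagonal grading. The statement then reduces to $\s\hat\otimes Cl^{0,1}$, i.e. to graded maps $\s\to D\otimes M_2$ for an ungraded $G$-algebra $D$. These correspond to ungraded maps $q\C\cong(\s\hat\otimes Cl^{0,1})_u\to D$, by \Cref{rem: qA properties}.5 and a restriction/induction adjunction. Now \Cref{prop: Cuntz picture k-theory} and \Cref{rem: the map in the cuntz picture iso} give the claim on $\pi_n$.
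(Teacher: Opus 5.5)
Items 1, 4 and 5 broadly follow the paper, but item 2 has a genuine gap, and it sits exactly where you anticipate it. Averaging lifts of $e^{-x^2}$ and $xe^{-x^2}$ destroys the relations, and your fallback does not repair this. The algebra $B\hat\otimes\hatkg$ is non-unital, so for a lifted odd self-adjoint contraction $\tilde x$ the elements $f(\tilde x)$, $f\in C_0(-1,1)$, lie only in the unitization or the multiplier algebra. Nothing forces $1-\tilde x^2\in B\hat\otimes\hatkg$.

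No better choice of lift can rescue a procedure that lifts a single homomorphism, because it would prove too much. Odd self-adjoint contractions always lift and can be averaged. So, already for trivial $G$, your recipe would lift the graded homomorphism $\s\to(\C\oplus\C)\hat\otimes\hatkg$, $f\mapsto(f(0)p,0)$ with $p$ an even rank-one projection, through the surjection induced by $C[0,1]\to\C\oplus\C$. This homomorphism represents $(1,0)\in\hat K_0(\C\oplus\C)$, so the lift is impossible: $\hat K_0(C[0,1])\to\hat K_0(\C\oplus\C)$ is the diagonal $\Z\to\Z^2$. Homotopy lifting has to use the given lift at time $0$, as the non-equivariant argument of \cite[Section 3.2]{D} does.

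The idea you are missing is that equivariance needs no averaging of homomorphisms at all. Since $G$ acts trivially on $\s$, one has $\underline{\operatorname{Hom}}_{G_2}(\s,B\hat\otimes\hatkg)=\underline{\operatorname{Hom}}_{C_2}(\s,(B\hat\otimes\hatkg)^G)$. For compact $G$ the functor $(-)^G$ is exact: one only averages a lift of a single invariant element, where no relations are at stake. Applying the graded Real non-equivariant statement of \cite{D} to $0\to(A\hat\otimes\hatkg)^G\to(B\hat\otimes\hatkg)^G\to(C\hat\otimes\hatkg)^G\to 0$ gives item 2. The same reduction, using $(\operatorname{colim}_iA_i)^G=\operatorname{colim}_iA_i^G$, reduces item 3 to a non-equivariant weak equivalence. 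The paper obtains that from the Cayley transform picture and Palais' theorem, not from an unspecified semiprojectivity argument.

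Further corrections, item by item:
\begin{enumerate}
\item[Item 1.] The $H$-space structure is the block sum; the comultiplication belongs to item 4. For inverses, the grading reversal is essential. It must be implemented by placing $\alpha\circ\varphi$ on the oppositely graded summand, as in the paper's nullhomotopy of $\operatorname{diag}(\varphi,\alpha\circ\varphi)$ in $A\hat\otimes M_2(\C)$ graded by $\operatorname{Ad}(\tau)$. Negating the grading operator of the Hilbert space does not change the graded algebra $\hatkg$. Precomposition with the reflection alone sends $e=[1_\s]$ to $e-u\neq -e$, by \Cref{prop: omnibus graded k-theory}.7.
\item[Item 5.] The module is $\overline{\varphi(\s)(A\otimes\widehat{L^2G_\N})}$, a Hilbert $A$-module. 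The point you skip is that $\varphi$ restricts to $A$-compact operators on this possibly non-complemented submodule \cite[Lemma 4.1]{MEYER}; Kasparov stabilization does not provide this.
\item[Item 6.] Deducing it from the Cuntz picture via $q\C\cong(\s\hat\otimes Cl^{0,1})_u$ is a genuinely different route from the paper's. It runs the chain of equivalences in the proof of \Cref{lem: qC special} in the opposite direction, and it has the merit of giving all $\pi_n$ at once without unitality. However, the paper's own justification of the equivariant Cuntz picture (\Cref{prop: Cuntz picture k-theory}) appeals to the Fredholm model. To avoid circularity you should derive that picture from the non-equivariant one, using the same fixed-point observation together with $(A\otimes\mathcal K(L^2G))^G\cong A\rtimes G$ and Green--Julg. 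That is essentially the paper's argument for item 6.
\end{enumerate}
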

\begin{proof}[Proof sketch]
    Assertion 1 follows from the more general statement that given a graded morphism $\varphi : \s \rightarrow A$, the induced morphism
    \[
    \s \rightarrow A \hat\otimes M_2(\C) = M_2(A), ~~~ f \mapsto \begin{pmatrix}
        \varphi(f) & 0\\
        0 & \alpha(\varphi(f))
    \end{pmatrix}
    \]
    is nullhomotopic in $\underline{\operatorname{Hom}}_G(\s,A \hat\otimes M_2(\C))$ (here $\alpha$ is the grading on $A$), where $M_2(\C)$ has the grading given by $\operatorname{Ad}(\tau)$, where $\tau=\begin{pmatrix}
        1 & 0 \\
        0 & -1
    \end{pmatrix}$. See \cite[Remark 1.7]{HG}.\\\\
    Assertion 2 follows from the non-equivariant assertion proven in \cite[Section 3.2]{D} by taking $G$-fixed points. Note that taking $G$-fixed points is an exact functor because $G$ is compact.\\\\
    By similar reductions as done before, to show Assertion 3, it suffices to prove that for a filtered diagram $(A_i)_{i \in I}$ of non-equivariant graded $C^*$-algebras with colimit $A$, one has a weak homotopy equivalence of spaces
    \[
    \operatorname{colim}_{i \in I} \underline{\operatorname{Hom}}(\s,A_i) \rightarrow \underline{\operatorname{Hom}}(\s,A).
    \]
    Upon passing to the Cayley transform picture (see \cite[Lemma 3.1]{D}), the result follows from \cite[Corollary to Theorem 12]{PALAIS}.\\\\
    The key to demonstrating Assertion 4 is the comultiplication map on $\s$ as described in \cite[Section 4.2]{D}. The details are in \cite[Section 6]{D}, and the same proofs go through equivariantly. See also \Cref{cor: s cocommutative coalgebra}.\\\\
    We now discuss Assertion 5. A natural map $\pi_0(\mathbb{K}^{G,\operatorname{sp}}(A)) \xrightarrow{\cong} K_0^{G,\operatorname{Fr}}(A)$ can be written by directly emulating the non-equivariant construction as in \cite[Section 8.5]{D}. We mention some aspects of it for ease of reference later.

    Firstly, we implicitly replace $\s$ with $\tilde{\s}$, and the Fredholm cycles with essentially unitary ones. Recall that \[A \hat\otimes \hatkg \cong \mathcal{K}_A(A \otimes \widehat{L^2G_{\N}}),\footnote{we do not decorate the tensor product of two graded Hilbert modules, since no Koszul signs are involved as there are no multiplicative structures.}\] the $G$-equivariant graded $C^*$-algebra of $A$-compact operators on the standard graded $G$-equivariant Hilbert $A$-module. Given $\varphi: \s \rightarrow A \hat\otimes \hatkg \cong \mathcal{K}_A(A \otimes \widehat{L^2G_{\N}})$, set \[\mathcal{H}_{\varphi}:=\overline{\varphi(\s)(A \otimes \widehat{L^2G_{\N}})},\] and note that $\varphi$ restricts to a non-degenerate homomorphism
    \[
    \varphi^{\operatorname{nd}}: \s \rightarrow \mathcal{K}_A(\mathcal{H}_\varphi).
    \]
    The crucial point here is that $\varphi^{\operatorname{nd}}$ indeed maps to the $A$-compact operators on the (not necessarily complementable) submodule $\mathcal{H}_\varphi \subseteq A \otimes \widehat{L^2G_{\N}}$, as is guaranteed by \cite[Lemma 4.1]{MEYER}. Non-degeneracy of $\varphi^{\operatorname{nd}}$ implies that there exists a unique $G$-equivariant, graded homomorphism $\tilde{\varphi}: C[-1,1] \rightarrow \mathcal{B}(\mathcal{H}_{\varphi})$ extending $\varphi^{\operatorname{nd}}$. The natural map is now given by 
    \begin{align*}
       \pi_0(\mathbb{K}^{G,\operatorname{sp}}(A)) \xrightarrow{}& K_0^{G,\operatorname{Fr}}(A)  \\
       \varphi \longmapsto& (\mathcal{H}_\varphi,\tilde{\varphi}(x))
    \end{align*}
    The proof that this map is bijective now follows with the obvious modifications from \cite[Section 8.5]{D}.\\\\
    Finally, we come to Assertion 6. We just discuss the case of $\pi_0$ when $A$ is unital and ungraded, the general case follows from the techniques described in \cite[Section 2.9]{WY}. One proof strategy in the unital, ungraded case would be to show that the natural map $K_0^G(A) \rightarrow K_0^{G,\operatorname{Fr}}(A)$ has an inverse given by taking the $G$-equivariant index of $G$-invariant Fredholm operators (as done in the non-equivariant case in \cite[Section 8.4]{D}), and then conclude by Assertion 5. However, granted the results of loc.\! cit.\! and the Green-Julg theorem, we can quickly obtain the conclusion by leveraging the fact that $A \rtimes G \cong (A \otimes \mathcal{K}(L^2G))^G$, as shown in \cite[Proposition 11.2]{HGT}. See also the proof of \cite[Proposition 3.1]{DEKM}.
\end{proof}
\begin{notn}
    Having established natural isomorphisms between different models of graded $K$-theory, we henceforth denote the $G$-equivariant graded $K$-groups as $\hat{K}^G_*(-)$.
\end{notn}
Next, we analyse the graded $K_0$-group of $\s$ (ignoring the external group $G$). We summarize our results in the following proposition. Note that all the algebras $C_0(\R^n)$ in the below proof (and in general) carry the trivial grading. The author learnt most of these results from Johannes Ebert.
\begin{prop} \label{prop: omnibus graded k-theory}
\hfill
    \begin{enumerate}
        \item There is a short exact sequence
        \[
        0 \xrightarrow{} C_0(\R) \hat\otimes Cl^{0,1} \xrightarrow{\psi} \s \xrightarrow{\operatorname{ev}_0} \C \rightarrow 0
        \]
        in $\CAlgGr$.
        \item For every $A \in \CAlgGr$, the map
        \[
        \underline{\operatorname{Hom}}(\s,\s \hat\otimes A) \xrightarrow{(\operatorname{ev}_0 \hat\otimes 1_A)_*} \underline{\operatorname{Hom}}(\s,A)
        \]
        have natural continuous sections $\sigma_A$. Furthermore, $\sigma_{\C}(\operatorname{ev}_0)=\operatorname{1}_{\s}$.
        \item For every $A \in \CAlgGr$, we have naturally split exact sequences
        \[
        0\rightarrow \hat{K}_*(C_0(\R) \hat\otimes Cl^{0,1} \hat\otimes A) \xrightarrow{\psi_*} \hat{K}_*(\s \hat\otimes A) \xrightarrow{\operatorname{{ev}_0}_*} \hat{K}_*(A) \rightarrow 0.
        \]
        \item For each $n \in \N$, multiplication by the Bott class $[\beta_\C^n] \in \hat{K}(C_0(\R^n) \hat\otimes Cl^{0,n})$ yields natural isomorphisms 
        \[
        \hat{K}_*(A) \rightarrow \hat{K}_*(C_0(\R^n) \hat\otimes Cl^{0,n} \hat\otimes A).
        \] In particular, $\hat{K}_0(C_0(\R^n) \hat\otimes Cl^{0,n}) \cong \Z$, and the isomorphism identifies $[\beta_{\C}^n]$ and $1$. The grading $\alpha$ on $C_0(\R^n) \hat\otimes Cl^{0,n}$ induces the map $(-1)^n: \Z \rightarrow \Z$. 
        \item $\hat{K}_0(\s) \cong \Z \oplus \Z$ as abelian groups, with the preferred isomorphism mapping $\psi_*[\beta_\C]=:u$ to $(0,1)$ and $[\operatorname{1_\s}]=:e$ to $(1,0)$.
        \item The composition
        \[
        C_0(\R) \hat\otimes Cl^{0,1} \xrightarrow{\psi} \s \xrightarrow{\beta_\C} C_0(\R) \hat\otimes Cl^{0,1} 
        \]
        induces multiplication by $2$ on $\hat{K}_0(C_0(\R) \hat\otimes Cl^{0,1} ) \cong \Z$.
        \item In the context of Assertion 5, we have $\alpha_*(u)=-u$ and $\alpha_*(e)=e-u$, where $\alpha$ denotes the grading on $\s$.
        \item There exists an isomorphism of $C_2$-abelian groups between $\hat{K_0}(\s)$ and $\Z \oplus \Z$ with the flip action, with the preferred isomorphism sending $[1_\s]$ to $(1,0)$ and $[\alpha_\s]$ to $(0,1)$.
    \end{enumerate}
\end{prop}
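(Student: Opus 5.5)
I will prove the eight assertions in order, since each later one feeds on the earlier ones. For Assertion 1, I work in the bounded picture $\tilde{\s}$ via the isomorphism $\Phi$ of \Cref{example: basic examples}.3. Under $\Phi$, the kernel of $\operatorname{ev}_0$ is the algebra of functions $f:[0,1)\to Cl^{0,1}$ vanishing at $0$. This is $C_0((0,1))\otimes Cl^{0,1}$, with grading coming only from $Cl^{0,1}$, i.e. $C_0(\R)\hat\otimes Cl^{0,1}$ with $C_0(\R)$ trivially graded. The map $\psi$ is this inclusion. Surjectivity of $\operatorname{ev}_0$ onto $\C$ is clear.

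For Assertion 2, I define $\sigma_A(\varphi):=(1_\s\hat\otimes\varphi)\circ\Delta$, where $\Delta:\s\to\s\hat\otimes\s$ is the comultiplication of \cite[Section 4.2]{D}. This is continuous and natural in $A$. The section property reduces to the counit identity $(\operatorname{ev}_0\hat\otimes 1)\Delta=1_\s$, which also gives $\sigma_\C(\operatorname{ev}_0)=1_\s$. Stabilizing by $\hatkg$ and passing to $\pi_*$ of the spectral model, Assertion 2 splits $\operatorname{ev}_{0*}$. Assertion 1 together with \Cref{thm: spectral model of g eq k theory}.2 gives a fibration and hence a long exact sequence. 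The splitting makes it break into the short exact sequences of Assertion 3.

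Assertion 4 is graded Bott periodicity. I would take it from the classical theory, namely \cite{D} and the Kasparov or Higson--Guentner Bott element with the Clifford index. Its value for $A=\C$ follows, since $\hat K_0(\C)=\Z$ by \Cref{thm: spectral model of g eq k theory}.6. The statement about the grading reduces to $n=1$ by multiplicativity, using \Cref{thm: spectral model of g eq k theory}.4. For $n=1$, the grading on $C_0(\R)\hat\otimes Cl^{0,1}$ is $x\mapsto -x$ on the Clifford generator. Conjugating $\beta_\C(x)=x\otimes\varepsilon$ (suitably bounded) by it sends $\beta$ to the Bott element composed with the reflection $x\mapsto -x$ of $\R$, and that reflection acts by $-1$ on $K$-theory. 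Assertion 5 then follows from Assertion 3 with $A=\C$: $\hat K_0(\s)\cong \Z\oplus\Z$, with $e$ splitting the $\operatorname{ev}_0$ summand and $u$ generating the kernel summand.

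Assertion 6 is the main computational obstacle. I would compute $\beta_\C\circ\psi$ directly in the Cayley/bounded picture. The composite sends the generator $\beta$ to the map $\s\to C_0(\R)\hat\otimes Cl^{0,1}$ obtained by restricting $f\mapsto f(x\otimes\varepsilon)$ along the half-line picture. In the $[0,1)$ model it looks like the Bott map traversed twice, covering the positive and negative halves of $\R$. Equivalently, I would use the factorization $\Phi$: the restriction of $\beta_\C$ to the kernel is a map $C_0(0,1)\otimes Cl^{0,1}\to C_0(-1,1)\otimes Cl^{0,1}$ that folds $(-1,1)$ onto $(0,1)$, given by $t\mapsto |t|$ on even parts with the sign twisted on odd parts. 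Up to homotopy, this is the sum of the two half-line inclusions. One inclusion is orientation preserving and the other is a reflection composed with the grading. The reflection contributes $-1$ and the grading contributes another $-1$ by Assertion 4, so the total is $1+1=2$.

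For Assertion 7, $\alpha_*u=-u$ follows from Assertion 4 since $\psi$ is graded. For $e$, naturality gives $\operatorname{ev}_0\alpha=\operatorname{ev}_0$, so $\alpha_*e-e\in\ker \operatorname{ev}_{0*}=\Z u$, say $\alpha_*e=e+ku$. To determine $k$, I apply $\beta_{\C*}$ and use Assertion 6 to get $\beta_*u=2$. Since $\beta_\C$ is graded and the grading acts by $-1$ on the target, $\beta_*(\alpha_*e)=-\beta_*e$. Writing $b=\beta_*e$, this gives $-b=b+2k$, so $k=-b$. The class $b=\beta_*[1_\s]=[\beta_\C]=1$, which forces $k=-1$. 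Assertion 8 then follows from Assertion 7: the images of $e$ and $\alpha_* e=e-u$ form a basis, and $\alpha_*$ swaps them since $\alpha^2=1$. This identifies $\hat K_0(\s)$ with $\Z\oplus\Z$ carrying the flip action.
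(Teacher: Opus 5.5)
Your proposal is correct and follows the paper's proof essentially step by step. It uses the same identification of $\ker(\operatorname{ev}_0)$ and the same section $\sigma_A(\varphi)=(1_\s\hat\otimes\varphi)\circ\Delta$ via the counit identity. It cites Bott periodicity from \cite{D} and reduces the $n=1$ grading sign to a reflection, where the paper phrases this as $-\operatorname{id}$ on $\R^+$ having degree $-1$. Assertions 7 and 8 are derived by the same linear algebra.

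Your splitting of $\beta_\C\circ\psi$ into two orthogonal half-line pieces, each contributing $+1$, is a $K$-theoretic repackaging of the paper's local-degree computation $\deg h'=2$. Like the paper's choice $h=\ln$, it needs the identification $(0,1)\cong\R$ in Assertion 1 to be orientation preserving; otherwise the composite induces $-2$ and the formula $\alpha_*(e)=e-u$ changes sign.
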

\begin{proof}
    We begin with Assertion 1. Consider 
    \[
     0 \xrightarrow{} J\xrightarrow{} \s \xrightarrow{\operatorname{ev}_0} \C \rightarrow 0
    \]
    where $J:=\ker(\operatorname{ev}_0)$. Then, we have 
    \begin{align}\label{eqn: J and C0 0 inf}
        J \xrightarrow{\cong} C_0((0,\infty)) \hat\otimes Cl^{0,1} ~~~~~~~~~~~~~f \mapsto f_{\operatorname{ev}}\mid_{(0,\infty)}\hat\otimes 1+f_{\operatorname{odd}}\mid_{(0,\infty)}\hat\otimes \varepsilon,
    \end{align}
    where $f_{\operatorname{ev}}(x)=\frac{f(x)+f(-x)}{2}, f_{{\operatorname{odd}}}(x)=\frac{f(x)-f(-x)}{2}$, and $\varepsilon$ is the canonical generator of $Cl^{0,1}$. Take a homeomorphism $h:(0,\infty) \rightarrow \R$ such that \[\lim_{t \to 0} h(t)=-\infty, \lim_{t \to \infty} h(t)=\infty.\]
    For concreteness, take $h(x)=\ln(x)$. We can define our sought after map $\psi:C_0(\R) \hat\otimes Cl^{0,1} \rightarrow \s$ using $h$ and the isomorphism above.\\\\
    For Assertion 2, note that a direct calculation using the generators $e^{-x^2}$ and $xe^{-x^2}$ of $\s$ shows that the composite
    \[
    \operatorname{Hom}(\s,A) \xrightarrow{1_{\s} \hat\otimes -} \operatorname{Hom}(\s \hat\otimes \s,\s \hat\otimes A) \xrightarrow{\Delta^*} \operatorname{Hom}(\s,\s \hat\otimes A) \xrightarrow{(\operatorname{ev}_0 \hat\otimes 1_A)_*} \operatorname{Hom}(\s,A)
    \]
    is the identity, and has the required effect on $\operatorname{ev}_0 \in \operatorname{Hom}(\s,\C)$ too.\\\\
    Assertion 3 follows immediately from Assertion 2.\\\\
    We just prove the last part of Assertion 4, since the first two parts are the content of \cite[Section 9]{D}. Recall that the Bott class is the class in graded $K$-theory represented by the Bott map, which is the functional calculus of the embedding of $\R^{n}$ in ${Cl^{0,n}}$ as a homomorphism $\s \rightarrow C_0(\R^{n},Cl^{0,n})$.\footnote{strictly speaking, we mean $\R^{0,n}$ here}
    
    We begin with the case $n=1$. To simplify the notation, we denote $\beta_\C^1$ by $\beta_\C$. We have to show that $\alpha_*([\beta_{\C}])=-[\beta_{\C}]$.  By definition of $\beta_{\C}$, we have
    \begin{align*}
    \beta_{\C}: \s &\rightarrow C_0(\R) \hat\otimes Cl^{0,1} \cong C_0(\R,Cl^{0,1})~~~~~
    f \mapsto t \mapsto f(t\varepsilon).
    \end{align*}
    This is same as the map $f \mapsto f^{\operatorname{ev}}+f^{\operatorname{odd}}\varepsilon$, as can be readily checked on the generators $u$ and $v$ of $\s$. Therefore, we have \begin{align*}
        \alpha \circ \beta_\mathbb{C} : \s &\rightarrow C_0(\R,Cl^{0,1}),~~~~~f \mapsto f^{\operatorname{ev}}-f^{\operatorname{odd}}\varepsilon.
    \end{align*}
    We now identify 
    \[
    \operatorname{Hom}(\s,C_0(\R) \hat\otimes Cl^{0,1} ) \cong \operatorname{Map}_*(\R^{+},\operatorname{Hom}(\s,Cl^{0,1})).
    \]
    Furthermore, under the Cayley transform isomorphism, we have a homeomorphism\footnote{in what follows, we denote by $\R^+$ the one-point compactification of $\R$. $\infty \in \R^+$ is the point at infinity.}
    \begin{align} \label{eqn: iso Rplus and hom s cl01}
        \R^+ &\rightarrow \operatorname{Hom}(\s,Cl^{0,1}) \\
        \infty &\mapsto 0 \\
        t &\longmapsto f \mapsto f^{\operatorname{ev}}(t)+f^{\operatorname{odd}}(t)\varepsilon
    \end{align}
    Whence, we have a homeomorphism of spaces
    \[
     \operatorname{Hom}(\s,C_0(\R) \hat\otimes Cl^{0,1} ) \cong \operatorname{Map}_*(\R^+,\R^+),
    \]
    under which $\beta_{\C}$ corresponds to the identity, and $\alpha \circ \beta$ corresponds to the negative of identity. Since degree of the negative identity as a map from $\R^+$ to itself is $-1$, the conclusion follows.
    
    For the general case, we consider the commutative diagram of abelian groups (see \cite[Section 7.1]{D} for the details)
\[
\begin{array}{ccccccc}
\hat{K}_0\!\left(C_0(\R)\hat\otimes Cl^{0,1}\right)
& \otimes &
\hat{K}_0\!\left(C_0(\R)\hat\otimes Cl^{0,1}\right)
& \otimes\cdots\otimes &
\hat{K}_0\!\left(C_0(\R)\hat\otimes Cl^{0,1}\right)
& \longrightarrow &
\hat{K}_0\!\left(C_0(\R^n)\hat\otimes Cl^{0,n}\right)
\\[1ex]
\downarrow\!{\scriptstyle\alpha_*}
&&
\downarrow\!{\scriptstyle\alpha_*}
&&
\downarrow\!{\scriptstyle\alpha_*}
&&
\downarrow\!{\scriptstyle\alpha_*}
\\[1ex]
\hat{K}_0\!\left(C_0(\R)\hat\otimes Cl^{0,1}\right)
& \otimes &
\hat{K}_0\!\left(C_0(\R)\hat\otimes Cl^{0,1}\right)
& \otimes\cdots\otimes &
\hat{K}_0\!\left(C_0(\R)\hat\otimes Cl^{0,1}\right)
& \longrightarrow &
\hat{K}_0\!\left(C_0(\R^n)\hat\otimes Cl^{0,n}\right).
\end{array}
\]
    From \cite[Lemma 9.3.4]{WY} on the one hand we get \[[\beta_\C]\times [\beta_\C]\times... \times [\beta_\C] \mapsto [\beta_\C^n]\in \hat{K}_0(C_0(\R^n) \hat\otimes Cl^{0,n}).\]
    Since $\alpha_*([\beta_\C])=-[\beta_\C]$ for the case dealt with above, from the commutativity of the above diagram, we get on the other hand
    \[
    (-[\beta_\C])\times (-[\beta_\C])\times... \times (-[\beta_\C]) \mapsto \alpha_*[\beta_\C^n]\in \hat{K}_0(C_0(\R^n) \hat\otimes Cl^{0,n}).
    \]
    This proves Assertion 4.
    \\\\
    Assertion 5 follows immediately from  Assertions 3 and 4.\\\\
    We now show Assertion 6. Choose a homeomorphism $h:(0,\infty) \rightarrow \R$ as in the proof of Assertion 1. By definition, the composition $\beta_\C \circ \psi$ is
    \[
    C_0(\R) \hat\otimes Cl^{0,1} \xrightarrow{h^*} C_0((0,\infty)) \hat\otimes Cl^{0,1} \xrightarrow{} \s \xrightarrow{\beta_\C} C_0(\R) \hat\otimes Cl^{0,1},
    \]
    where the unlabelled arrow sends 
    \[
    f \hat\otimes 1 \longmapsto x \mapsto f(|x|), ~~~~~~~~f \hat\otimes \varepsilon \longmapsto x \mapsto s(x)f(|x|),
    \]
    where $s(x): \R \setminus \{0\} \rightarrow \{-1,1\}$ is the sign function\footnote{strictly speaking these are the formulae when $x \neq 0$. At $x=0$ we by default assign the value $0$. We also do this at certain places later without explicitly mentioning.} (this function is nothing but the inverse to the isomorphism in \Cref{eqn: J and C0 0 inf} postcomposed with the inclusion of $J$ into $\s$). The whole composite therefore sends
    \[
    f \hat\otimes 1 \longmapsto f \circ h \circ |\cdot| \hat\otimes 1,~~~~~~~~ f \hat\otimes \varepsilon \longmapsto (s\cdot f\circ h \circ |\cdot|) \hat\otimes \varepsilon.
    \]
    From the description of the Bott map as in the proof of Assertion 4, the induced map
    \begin{align*}
        \operatorname{Hom}(\s,C_0(\R) \hat\otimes Cl^{0,1}) &\xrightarrow{(\beta_{\mathbb{C}}\circ \psi)_*}  \operatorname{Hom}(\s,C_0(\R) \hat\otimes Cl^{0,1})
    \end{align*}
    sends the Bott map to the homomorphism \[
    (f \longmapsto f^{\operatorname{ev}} \circ h \circ |\cdot|)\hat\otimes 1 + (s\cdot f^{\operatorname{odd}}\circ h \circ |\cdot|)\hat\otimes \varepsilon.
    \]
    Upon identifying 
    \[
     \operatorname{Hom}(\s,C_0(\R) \hat\otimes Cl^{0,1} ) \cong \operatorname{Map}_*(\R^{+},\operatorname{Hom}(\s,Cl^{0,1})),
    \]
    the Bott map therefore gets mapped to 
    \[
    x \longmapsto f\mapsto (f^{\operatorname{ev}}(h(|x|))\cdot 1+s(x)f^{\operatorname{odd}}(h(|x|))\cdot\varepsilon).
    \]
    But this is same as the function
    \begin{align*}
        \R^+ &\rightarrow \operatorname{Hom}(\s,Cl^{0,1}) \\
        x &\longmapsto f \mapsto \begin{cases}
            f^{\operatorname{ev}}(h(x))\cdot1+f^{\operatorname{odd}}(h(x))\cdot\varepsilon~~~~~~~~~\text{if}~x>0\\
            f^{\operatorname{ev}}(h(-x))\cdot1-f^{\operatorname{odd}}(h(-x))\cdot\varepsilon ~~~~ \text{if}~x<0 \\
            0 ~~~~~~~~~~~~~~~~~~~~~~~~~~~~~~~~~~~~~~~~~~~~~~~~ \text{if}~ x=0,\infty
        \end{cases}
    \end{align*}
    Under the identification 
    \[
    \R^+ \cong \operatorname{Hom}(\s,Cl^{0,1})
    \]
    from \Cref{eqn: iso Rplus and hom s cl01},  this corresponds to the function
    \begin{align*}
        h':\R^+ &\rightarrow \R^+ \\
        x &\mapsto \begin{cases}
            h(x)~\text{if}~x > 0\\
            -h(-x)~\text{if}~x < 0 \\
            \infty~~~~~~\text{if} ~x=0,\infty
        \end{cases}
    \end{align*}
    The {local degree formula} yields that the degree of $h'$ is $2$. The proof is now completed by the following argument: consider the diagram
\[\begin{tikzcd}
	{\operatorname{Hom}(\mathcal{S},C_0(\mathbb{R},Cl^{0,1}))} && {\operatorname{Hom}(\mathcal{S},C_0(\mathbb{R},Cl^{0,1}))} \\
	{\operatorname{Map}_*(\mathbb{R}^+,\operatorname{Hom}(\mathcal{S},Cl^{0,1}))} && {\operatorname{Map}_*(\mathbb{R}^+,\operatorname{Hom}(\mathcal{S},Cl^{0,1}))} \\
	{\operatorname{Map}_*(\mathbb{R}^+,\mathbb{R}^+)} && {\operatorname{Map}_*(\mathbb{R}^+,\mathbb{R}^+)}
	\arrow["{(\beta_{\mathbb{C}} \circ \psi)_*}", from=1-1, to=1-3]
	\arrow["\cong", no head, from=1-1, to=2-1]
	\arrow["\cong", no head, from=1-3, to=2-3]
	\arrow["\cong", no head, from=2-1, to=3-1]
	\arrow["\cong", no head, from=2-3, to=3-3]
	\arrow[from=3-1, to=3-3],
\end{tikzcd}\]
where the bottom horizontal arrow is defined so as to make the diagram commute. Therefore the bottom horizontal arrow sends 
\[
\operatorname{id}: \R^+ \rightarrow \R^+ 
\]
to
\[
h':\R^+ \rightarrow \R^+,
\]
since we showed above that the right-down itinerary maps $\beta_\C$ to $h'$, and the left vertical composite identifies $\beta_\C$ and $\operatorname{id}$, as was observed in the proof of Assertion 4. Thus, the map induced by $\beta_\C \circ \psi$ on $\hat{K}_0(C_0(\R,Cl^{0,1}))$ is multiplication by the degree of $h'$, which is equal to $2$.
\\\\
For Assertion 7, we consider the commutative diagram (we denote by $\alpha$ the grading on the respective algebras)
\[\begin{tikzcd}
	0 && 0 \\
	{\hat{K}_0(C_0(\R)\hat\otimes Cl^{0,1})} && {\hat{K}_0(C_0(\R)\hat\otimes Cl^{0,1})} \\
	{\hat{K}_0(\mathcal{S})} && {\hat{K}_0(\mathcal{S})} \\
	{\hat{K_0}(\mathbb{C})} && {\hat{K}_0(\mathbb{C})} \\
	0 && 0
	\arrow[from=1-1, to=2-1]
	\arrow[from=1-3, to=2-3]
	\arrow["{\alpha_*=-1.}", from=2-1, to=2-3]
	\arrow["{\psi_*}", from=2-1, to=3-1]
	\arrow["{\psi_*}", from=2-3, to=3-3]
	\arrow["{\alpha_*}", from=3-1, to=3-3]
	\arrow["{{\operatorname{ev}_0}_*}", from=3-1, to=4-1]
	\arrow["{{\operatorname{ev}_0}_*}", from=3-3, to=4-3]
	\arrow["{\alpha_*=1.}", from=4-1, to=4-3]
	\arrow[from=4-1, to=5-1]
	\arrow[from=4-3, to=5-3]
\end{tikzcd}\]
where the the top horizontal map is negation from Assertion 4, and the bottom horizontal map is the identity since $\C$ has the trivial grading. The commutativity of the top rectangle yields that \[\alpha_*(u)=-u.\]
Understanding $\alpha_*(e)$ takes a bit more work. From Assertion 5 and the commutativity of the lower rectangle in the above diagram, we can say that 
\[
\alpha_*(e)=e+nu
\]
for some $n \in \Z$. To compute the value of $n$, we consider the commutative diagram 
\[\begin{tikzcd}
	{\hat{K}_0(\mathcal{S})} && {\hat{K}_0(\mathcal{S})} \\
	{\hat{K}_0(C_0(\mathbb{R}) \hat\otimes Cl^{0,1})} && {\hat{K}_0(C_0(\mathbb{R}) \hat\otimes Cl^{0,1})}
	\arrow["{\alpha_*}", from=1-1, to=1-3]
	\arrow["{\beta_*}"', from=1-1, to=2-1]
	\arrow["{\beta_*}", from=1-3, to=2-3]
	\arrow["{\alpha_*=-1.}"', from=2-1, to=2-3]
\end{tikzcd}\]
From Assertion 4 and Assertion 6, we can say that $\beta_*(e)=[\beta_\C]$ and $\beta_*(u)=2[\beta_\C]$. Tracing the fate of $e$ under the two itineraries from $\hat{K}_0(\s)$ to $\hat{K}_0(C_0(\R) \hat\otimes Cl^{0,1})$, we obtain the equation
\[
[\beta_\C] + 2n\cdot[\beta_\C]=-[\beta_\C],
\]
which shows \[n=-1.\] This finishes the proof of Assertion 7.
\\\\
To prove Assertion 8, we note that Assertion 7 shows (in light of Assertion 5) that $(e,\alpha_*e)$ is also a $\Z$-basis of $\hat{K}_0(\s)$, and the grading $\alpha_\s$ evidently swaps the basis elements.
\end{proof}
\section{Equivariant (ungraded) Real $KK_!$-theory}
\emph{Throughout this section, we work with Real $C^*$-algebras as always. Recall that $! \in \{\operatorname{se,ex}\}$ denotes the type of excision we have in our bivariant $K$-theory, where $\operatorname{se}$ stands for semi-split and $\operatorname{ex}$ stands for exact. In the remainder of this article, we use the superscript ``sep" to denote separable versions of the categories described in \Cref{notn: prelim}.}

In this section, we will first define a small stable $\infty$-category $\KKRsep$ as a series of Dwyer-Kan localizations of $\RALGsep$. Then, we take ind-completions to get a presentable, stable $\infty$-category $\KKR$ together with a universal functor $\RALG \rightarrow \KKR$. We devote the next few subsections to analyzing each step of the localization. Before proceeding, we remind the readers that $\RALGsep$ is an essentially small category.

\subsection{Inverting homotopy equivalences}
In the first step, we perform the Dwyer-Kan localization of $\RALGsep$ at the class of ($G$-equivariant) homotopy equivalences. We begin by recalling the relevant definitions.
\begin{defi}
\hfill
\begin{enumerate}
    \item Two maps $f_0,f_1:A \rightarrow B$ in $\RALGsep$ are said to be homotopic if there exists a morphism $F:A \rightarrow C([0,1],B)$ such that $\operatorname{ev}_i \circ F=f_i$ for $i=0,1$.
    \item  Let $f:A \rightarrow B$ be a morphism in $\RALGsep$. $f$ is said to be a homotopy equivalence if there exists a morphism $g: B \rightarrow A$ in $\RALGsep$ such that the two composites are homotopic to the respective identity maps.
\end{enumerate}
We denote by $\mathcal{W}_h$ the (essentially small) class of homotopy equivalences in $\RALGsep$. 
\end{defi}
\begin{defi}
    Denote by ${\RALGseph}$ the Dwyer-Kan localization of $\RALGsep$ at $\mathcal{W}_h$, and denote by \[
    L_h: \RALGsep \rightarrow \RALGseph
    \]
    the localization functor.
\end{defi}
By definition, this means for any $\infty$-category $\mathcal{D}$, we have an equivalence of functor categories
\begin{equation}\label{eqn: uni prop ralgseph}
    \mathbf{Fun}(\RALGseph,\mathcal{D}) \xrightarrow[\simeq]{L_h^*} \mathbf{Fun}^{h}(\RALGsep,\mathcal{D}),
\end{equation}
where the right-hand side denotes the full subcategory of $\mathbf{Fun}(\RALGsep,\mathcal{D})$ spanned by functors which sends homotopy equivalences in $\RALGsep$ to equivalences in $\mathcal{D}$.

To phrase the relevant properties of the $\infty$-category $\RALGseph$, we need the notion of Schochet fibrations, defined first in \cite{SCHOCHETFIB}. Recall that the category $\RALGsep$ is topological enriched, as mentioned in \Cref{notn: prelim}. Although it is not necessary for us, the basic definitions and properties of Schochet fibrations also work verbatim in the category $\RALG$.
\begin{defi}
\hfill
\begin{enumerate}
    \item A morphism $f: A \rightarrow B$ in $\RALGsep$ is said to be a \emph{Schochet fibration} if for all $D \in \RALGsep$, the induced map of topological spaces
    \[
    \underline{\operatorname{Hom}}(D,A) \xrightarrow{f_*} \underline{\operatorname{Hom}}(D,B)
    \]
    is a Serre fibration.
    \item A cartesian square in $\RALGsep$ 
\[\begin{tikzcd}
	A & B \\
	D & C
	\arrow[from=1-1, to=1-2]
	\arrow[from=1-1, to=2-1]
	\arrow["f", from=1-2, to=2-2]
	\arrow[from=2-1, to=2-2]
\end{tikzcd}\]
is called \emph{Schochet fibrant} if $f$ is a Schochet fibration.
\item For an $\infty$-category $\mathcal{D}$, a functor $\RALGsep \rightarrow \mathcal{D}$ will be called \emph{Schochet exact} if it maps Schochet fibrant cartesian squares to cartesian squares.  We denote by 
\[
\mathbf{Fun}^{Sch}(\RALGsep,\mathcal{D})
\]
the full subcategory of $\mathbf{Fun}(\RALGsep,\mathcal{D})$ spanned by functors which are Schochet exact.
\end{enumerate}
\begin{example}\label{example: mapping cone sequence is Schochet}
We present a few important examples of Schochet fibrations which will be required later on. For proofs see the relevant sections in \cite{SCHOCHETFIB} and \cite{KKandE}.
\begin{enumerate}
    \item For any $C^*$-algebra $A$, the canonical map $C([0,1],A) \xrightarrow{\operatorname{ev}_0} A$ is a Schochet fibration (and a homotopy equivalence). $C([0,1],A)$ is also called the \emph{path algebra} of $A$, and denoted $PA$.
    \item Pullback of a Schochet fibration along any $*$-homomorphism is a Schochet fibration. Given a $*$-homomorphism $f:B \rightarrow
     C$, denote by $Z(f)$ the pullback $B \times_C PC$. $Z(f)$ is called the \emph{mapping cylinder} of $f$, and the canonical map $Z(f) \rightarrow B$ is a Schochet fibration (and a homotopy equivalence).

     Concretely, we have 
     \[
     Z(f)=\{(b,\gamma) \mid b \in B, \gamma \in PC, \gamma(0)=f(b)\}.
     \]
     There is also a canonical map $\operatorname{ev}_1:Z(f) \rightarrow C$ which is also a Schochet fibration. By definition, the \emph{mapping cone} of $f$ fits in the short exact sequence
     \[
     0 \rightarrow C(f) \rightarrow Z(f) \xrightarrow{\operatorname{ev}_1} C \rightarrow 0.
     \]
     This sequence is furthermore semi-split and is called the mapping cone sequence. It will be extremely important for us in Section 3.3.
     
     In the special case that $f$ is an inclusion, $Z(f)$ is isomorphic to all paths in $C$ which start at $B$.
    \item The maximal tensor product with an auxilliary algebra of a Schochet fibrant cartesian square remains a Schochet fibrant cartesian square. The minimal tensor product with an auxilliary algebra of of a mapping cone sequence remains a mapping cone sequence.
\end{enumerate}
\end{example}
\end{defi}
The following proposition discusses properties of $\RALGseph$ and $L_h$.
\begin{prop}\label{prop: RALGseph omnibus}
    \hfill
    \begin{enumerate}
        \item The $\infty$-category $\RALGseph$ is locally small, and is canonically equivalent to the $\infty$-category presented by the Kan-enriched\footnote{we implicitly always view a topologically enriched category as a Kan enriched category} category $\RALGsep$, i.e., by taking its homotopy coherent nerve.
        \item $\RALGseph$ is pointed and $L_h$ is reduced, i.e. maps the zero algebra to the zero object in $\RALGseph$.
        \item $\RALGseph$ has finite limits and countable coproducts. $L_h$ preserves finite products and countable coproducts. 
        \item For every left-exact $\infty$-category $\mathcal{D}$, the equivalence in \Cref{eqn: uni prop ralgseph} restricts to an equivalence 
        \[
        \mathbf{Fun}^{\operatorname{lex}}(\RALGseph,\mathcal{D}) \xrightarrow[\simeq]{L_h^*} \mathbf{Fun}^{h,Sch}(\RALGsep,\mathcal{D}),
        \]
        where the left-hand entity denotes the full subcategory of $\mathbf{Fun}(\RALGseph,\mathcal{D})$ spanned by left-exact functors.
        \item $L_h: \RALGsep \rightarrow \RALGseph$ has a symmetric monoidal refinement for both the minimal and maximal tensor products on $\RALGsep$. Moreover, the functors \[\otimes_?:\RALGseph \times \RALGseph \rightarrow \RALGseph\] are bi-left exact for $? \in \{\operatorname{min,max}\}$.
        \item For every left-exact and symmetric monoidal $\infty$-category $\mathcal{D}$ and $? \in \{\min,\max\}$, the pullback along the symmetric monoidal refinement of $L_h$ induces an equivalence
        \[
        \mathbf{Fun}^{\operatorname{lex}}_{\otimes_?/\operatorname{lax}}(\RALGseph,\mathcal{D}) \xrightarrow[\simeq]{L_h^*} \mathbf{Fun}^{h,Sch}_{\otimes_?/\operatorname{lax}}(\RALGsep,\mathcal{D}),
        \]
        where the subscript $\otimes_? / \operatorname{lax}$ indicates two separate formulas, one for symmetric monoidal functors and the other for lax monoidal functors.
    \end{enumerate}
\end{prop}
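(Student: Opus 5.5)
The plan is to follow Bunke's argument for the non-equivariant complex case (\cite[Section 3]{KKandE}) essentially verbatim. The only inputs there are formal properties of the topologically enriched category, and those survive the addition of a $G$-action and a Real structure. The central tool is the theory of Schochet fibrations, which turns the topologically enriched category $\RALGsep$ into a category of fibrant objects. Concretely, I would take as fibrations the Schochet fibrations and as weak equivalences $\mathcal{W}_h$, with path objects $PA=C([0,1],A)$ carrying the diagonal $G$-action and the pointwise Real structure. Every object is fibrant. Pullbacks of Schochet fibrations are Schochet fibrations by \Cref{example: mapping cone sequence is Schochet}.2. Factorization of an arbitrary map through a homotopy equivalence followed by a fibration is achieved by the mapping cylinder $Z(f)$. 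The standard machinery for categories of fibrant objects (Brown, Cisinski) then gives two things: the Dwyer-Kan localization has finite limits computed by Schochet fibrant cartesian squares, and left-exact functors out of it correspond to homotopy invariant functors preserving such squares. That is Assertion 4.

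For Assertion 1, I would compare $L_h$ with the functor $\RALGsep\to N^{hc}(\RALGsep)$ into the homotopy coherent nerve of the Kan-enriched category. The key point is that homotopy equivalences become equivalences there. It then suffices to show that the mapping spaces of the localization agree with the $\underline{\operatorname{Hom}}_G(A,B)$. I would get this by checking that the simplicial resolution $[n]\mapsto C(\Delta^n,B)$ is a Reedy fibrant path object. In a category of fibrant objects this computes the mapping spaces in the localization via $\operatorname{Hom}(A,C(\Delta^\bullet,B))$, whose realization is the singular complex of $\underline{\operatorname{Hom}}_G(A,B)$. Local smallness follows because $\RALGsep$ is essentially small.

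The remaining assertions are short. Assertion 2 holds because $0$ is both initial and terminal in $\RALGsep$ and $L_h$ preserves it, since $L_h$ preserves the terminal object as a finite limit. For Assertion 3, finite products in $\RALGsep$ are cartesian squares over $0$ with Schochet fibrant legs, so $L_h$ preserves them. For countable coproducts I would use the free product $*_i A_i$ (or the direct sum for the $C_0$-coproduct appropriate in the separable setting) and check that $\underline{\operatorname{Hom}}_G(*A_i,B)\cong\prod\underline{\operatorname{Hom}}_G(A_i,B)$. Using Assertion 1, this upgrades to a coproduct in the enriched sense. For Assertion 5, $\otimes_?$ preserves homotopies, since $C([0,1],A)\otimes_? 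B\cong C([0,1],A\otimes_? B)$ equivariantly and Really. Hence it preserves $\mathcal{W}_h$ in each variable, and the localization of a symmetric monoidal category at a class closed under tensoring inherits a symmetric monoidal structure with $L_h$ symmetric monoidal (\cite[Prop.~4.1.7.4]{LurieHA} style). Bi-left exactness then follows from \Cref{example: mapping cone sequence is Schochet}.3 together with Assertion 4. Assertion 6 combines Assertions 4 and 5 with the universal property of symmetric monoidal localizations.

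The step I expect to be the main obstacle is \Cref{example: mapping cone sequence is Schochet}.3 for the \emph{minimal} tensor product. The minimal tensor product need not preserve arbitrary Schochet fibrant pullbacks, so bi-left exactness of $\otimes_{\min}$ is not formal. My first attempt would be to use that every finite limit in $\RALGseph$ can be computed by pullbacks along mapping-cylinder fibrations $Z(f)\to C$, which are built from path algebras. $-\otimes_{\min}D$ visibly preserves these, since $C([0,1],C)\otimes_{\min}D\cong C([0,1],C\otimes_{\min}D)$ and the defining pullback of $Z(f)$ stays a pullback after minimal tensoring, by exactness of $\otimes_{\min}$ with nuclear $C([0,1])$. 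The equivariance and the Real structure only require checking that these identifications commute with the actions, which is immediate.
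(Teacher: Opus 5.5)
Your proposal is correct and follows the paper's own argument: the paper adapts \cite[Section 3]{KKandE}, using that Schochet fibrations and homotopy equivalences make $\RALGsep$ a category of fibrant objects. Two things to tighten. First, the countable coproduct in $\RALGsep$ is the free product; the $c_0$-direct sum is not a coproduct there, so drop that parenthetical. Second, for $\otimes_{\min}$ you need that the pullback $X\times_C Z(f)$ of $Z(f)\xrightarrow{\operatorname{ev}_1}C$ along $g\colon X\to C$ survives minimal tensoring, not only $Z(f)$ itself; this holds because $0\to C(f)\to X\times_C Z(f)\to X\to 0$ has the equivariant Real cpc split $x\mapsto (x,0,t\mapsto t\,g(x))$, and $\otimes_{\min}$ preserves mapping cone sequences (\Cref{example: mapping cone sequence is Schochet}.3).
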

\begin{proof}
    The proofs of \cite[Section 3]{KKandE} can be readily adapted to this situation. The Real structure poses no extra difficulty. Note that the key point for the left-exact assertions is that the category of separable $G$-$C^*$-algebras becomes a category of fibrant objects with fibrations as the Schochet fibrations and weak equivalences as the homotopy equivalences.
\end{proof}
\begin{rem}\label{rem: S is loops}
    For later use, we explicitly mention an easy corollary of \Cref{prop: RALGseph omnibus}.4: we have an equivalence of functors
    \[
    L_h \circ (C_0(\R^{0,1},-)) \simeq \Omega \circ L_h : \RALGsep \rightarrow \RALGseph.
    \]
    The details are exactly the same as in \cite[Example 3.19]{KKandE}.

    We also mention at this juncture that all the subsequent localizations leading to $\KKRsep$ will be left-exact Dwyer-Kan localizations. Hence we can take $C_0(\R^{0,1},-)$ (descended to the category under consideration) as a model for the loops functor in each of the subsequent localizations.
\end{rem}
We end this subsection by a casting the idea of the proof of \Cref{prop: K0C2S} using the language of fiber sequences.
\begin{prop}\label{prop: S to C to Cl fib seqn in homotopy cat}
    There exists a fiber sequence in $\RALGsTWOh$ of the form
    \begin{equation}\label{eq: s fib seqn}
        \s \xrightarrow{\operatorname{ev}_0} \mathbb{C} \xrightarrow{\operatorname{\iota}} Cl^{0,1},
    \end{equation}
    where $\iota$ is the canonical inclusion of $\mathbb{C} \cong Cl^{0,0}$ into $Cl^{0,1}$.
\end{prop}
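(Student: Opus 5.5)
We would prove this by realising \Cref{eq: s fib seqn} as the image under $L_h$ of a Schochet fibrant cartesian square in $\RALGsTWO$ (with $G_2$ acting via the grading), and then invoking \Cref{prop: RALGseph omnibus}.4: by that result $L_h$ is left exact, so it carries Schochet fibrant cartesian squares to cartesian squares. The square to use is the one already written down in \Cref{eqn: mapping cone for s}. Put $Z:=\{f:[0,1]\to Cl^{0,1}\mid f(0)\in \C\}$. Since $\C\subseteq Cl^{0,1}$ is an inclusion, $Z$ is the mapping cylinder $Z(\iota)$ by \Cref{example: mapping cone sequence is Schochet}.2. Consider the square with top row $\ker(\operatorname{ev}_1)\to Z$, left column $\ker(\operatorname{ev}_1)\to 0$, right column $\operatorname{ev}_1:Z\to Cl^{0,1}$, and bottom row $0\to Cl^{0,1}$. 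It is cartesian, because the kernel is the pullback along $0\to Cl^{0,1}$. It is Schochet fibrant, because $\operatorname{ev}_1:Z(\iota)\to Cl^{0,1}$ is a Schochet fibration by \Cref{example: mapping cone sequence is Schochet}.2. All maps are $C_2$-equivariant for the grading actions and compatible with the Real structures, so everything lives in $\RALGsTWO$. Applying $L_h$ therefore gives a fiber sequence $L_h\ker(\operatorname{ev}_1)\to L_h Z\to L_h Cl^{0,1}$ in $\RALGsTWOh$.

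Next we identify the terms. The map $\operatorname{ev}_0:Z\to\C$ is a homotopy equivalence, with homotopy inverse the constant-path inclusion; this is the contraction $f\mapsto f(s\,\cdot)$. So $L_h Z\simeq \C$. Under this identification $\operatorname{ev}_1$ becomes $\iota$, because $\operatorname{ev}_1\circ(\text{constants})=\iota$. On the fiber side, the isomorphism $\Phi$ of \Cref{example: basic examples}.3 identifies $\tilde{\s}\cong\s$ with $\{f:[0,1)\to Cl^{0,1}\mid f(0)\in\C\}\cong\ker(\operatorname{ev}_1)$, after reparametrizing $[0,1)$. Under this identification the composite $\ker(\operatorname{ev}_1)\hookrightarrow Z\xrightarrow{\operatorname{ev}_0}\C$ is exactly $\operatorname{ev}_0$ on $\s$, since $\Phi(f)(0)=f_{\operatorname{ev}}(0)=f(0)$. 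Combining these gives the fiber sequence $\s\xrightarrow{\operatorname{ev}_0}\C\xrightarrow{\iota}Cl^{0,1}$.

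The step we expect to need care is checking that the identifications are equivariant and that the triangles in \Cref{eqn: mapping cone for s} commute on the nose, or at least coherently in the $\infty$-category, not merely up to homotopy in the $1$-category. The cleanest route is to replace the diagram by the strictly commuting square from the previous paragraph and transport along the single isomorphism $\Phi$. The only genuinely homotopical input is then the equivalence $L_h(\operatorname{ev}_0):L_hZ\simeq \C$, which enters through an honest natural transformation, so the resulting sequence is equivalent to \Cref{eq: s fib seqn} as a diagram in $\RALGsTWOh$. We also need to check that $\Phi$ intertwines the even–odd grading on $\s$ with the grading on $Cl^{0,1}$-valued paths, and the pointwise conjugation with the Real structure. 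This is immediate from the formula $f\mapsto f_{\operatorname{ev}}+\varepsilon f_{\operatorname{odd}}$.
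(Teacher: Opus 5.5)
Your proposal is correct and follows essentially the same route as the paper. The paper's one-line proof also takes the mapping cone sequence of $\iota$ from \Cref{eqn: mapping cone for s}, uses that $\operatorname{ev}_1\colon Z(\iota)\to Cl^{0,1}$ is a Schochet fibration and that $\operatorname{ev}_0\colon Z(\iota)\to\C$ is a homotopy equivalence, and identifies the kernel with $\tilde{\s}\cong\s$ via $\Phi$; you simply make explicit the strict identities and the equivariance checks that the paper leaves implicit. One small correction: \Cref{prop: RALGseph omnibus}.4 gives that $L_h$ is Schochet exact rather than left exact, but that is exactly the property your argument uses.
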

\begin{proof}
This is immediate from \Cref{eqn: mapping cone for s} and \Cref{example: mapping cone sequence is Schochet}.
\end{proof}
\subsection{Enforcing $\mathcal{K}_G$-stability}
At the second step, we want to enforce a equivariant form of stability in $\RALGseph$. The idea is to enforce that the map induced on the $G$-$C^*$-algebras of compact operators by isometric inclusions of $G$-Hilbert spaces become equivalences. We begin with a couple of definitions.
\begin{defi}
\hfill
\begin{itemize}
    \item A morphism $f: A \rightarrow B$ in $\RALGsep$ is said to be a \emph{$\kg$-equivalence} if the induced map
        \[
        f \otimes 1_{\kg}: A \otimes \kg \rightarrow B \otimes \kg
        \]
        is a homotopy equivalence.
    \item For any $\infty$-category $\mathcal{D}$, a functor $F: \RALGsep \rightarrow \mathcal{D}$ is said to be \emph{$G$-stable} if for every isometric embedding of separable $G$-Hilbert spaces $V'\hookrightarrow V$, with $V' \neq 0$, and for all objects $A \in \RALGsep$, the induced map $F(A \otimes \mathcal{K}(V)) \rightarrow F(A \otimes \mathcal{K}(V'))$ is an equivalence.
\end{itemize}
         
\end{defi}
\begin{rem}
    It is easy to see that a homotopy invariant functor on $\RALGsep$ inverts $\kg$-equivalences if and only if is $G$-stable. See \cite[Proposition 2.2]{NAD} for a proof. It so turns out that inverting the $\mathcal{K}_G$-equivalences enables one to control the mapping spaces in the localization, whereas to verify that a homotopy invariant functor indeed inverts $\mathcal{K}_G$-equivalences, one usually just checks $G$-stability. 
\end{rem}
 We denote by $\mathcal{W}_{\kg}$ the (essentially small) class of $\mathcal{K}_G$-equivalences.
\begin{defi}
    Denote by $\SKGRALGseph$ the Dwyer-Kan localization of $\RALGseph$ at $L_h(\mathcal{W}_\kg)$, and denote by
    \[
    L_{\kg}: \RALGseph \rightarrow \SKGRALGseph
    \] the localization functor.
\end{defi}
Note that this means that the canonical functor
\[
L_{h,\kg}:\RALGsep \xrightarrow{L_h} \RALGseph \xrightarrow{L_{\kg}} \SKGRALGseph
\]
is a Dwyer-Kan localization of $\RALGsep$ at $\mathcal{W}_h \cup \mathcal{W}_{\kg}$, whence we have an equivalence of functor categories
\begin{equation}\label{eqn: SKGRALGseph uni prop}
     \mathbf{Fun}(\SKGRALGseph,\mathcal{D}) \xrightarrow[\simeq]{L_{h,\kg}^*} \mathbf{Fun}^{h,\kg}(\RALGsep,\mathcal{D}),
\end{equation}
where the right-hand side denotes the full subcategory of $\mathbf{Fun}(\RALGsep,\mathcal{D})$ spanned by functors which are homotopy invariant and $G$-stable.
\begin{prop}\label{prop: SKGRALKGseph omnibus}
    \hfill
    \begin{enumerate}
        \item The $\infty$-category $\SKGRALGseph$ is canonically equivalent to the full-subcategory of $\RALGseph$ spanned by $\kg$-stable algebras. When $G$ is compact, the functor $L_{\kg}$ is in fact the left-adjoint of a left Bousfield localization.
        \item $\SKGRALGseph$ has finite limits and countable coproducts. $L_{h,\kg}$ preserves finite products and countable coproducts. $L_\kg$ is left-exact.
        \item For every left-exact $\infty$-category $\mathcal{D}$, the equivalence in \Cref{eqn: SKGRALGseph uni prop} restricts to an equivalence
        \begin{equation}\label{eqn: lex skg uni prop}
            \mathbf{Fun}^{\operatorname{lex}}(\SKGRALGseph,\mathcal{D}) \xrightarrow[\simeq]{L_{h,\kg}^*} \mathbf{Fun}^{h,\kg,Sch}(\RALGsep,\mathcal{D}).
        \end{equation}
        \item Countable coproducts in $\SKGRALGseph$ are given by countable direct sums of the respective $C^*$-algebras. The canonical map from a binary (resp. countable) coproduct to a binary (resp. countable) product in $\SKGRALGseph$ is an equivalence, whence $\SKGRALGseph$ is a semi-additive $\infty$-category.
        \item For every left-exact semi-additive $\infty$-category $\mathcal{D}$ with countable coproducts, the equivalence in \Cref{eqn: lex skg uni prop} restricts to an equivalence
        \begin{equation}\label{eqn: skg uni prop for sym mon}
              \mathbf{Fun}^{\operatorname{lex},\coprod_{\omega}}(\SKGRALGseph,\mathcal{D}) \xrightarrow[\simeq]{L_{h,\kg}^*} \mathbf{Fun}^{h,\kg,Sch,\oplus_\omega}(\RALGsep,\mathcal{D}),
        \end{equation}
        where the superscripts $\coprod_\omega, \oplus_\omega$ denote countable coproduct and countable sum preserving functors, respectively.
        \item $L_\kg: \RALGseph \rightarrow \SKGRALGseph$ has a symmetric monoidal refinement for both the tensor products on $\RALGseph$. Consequently the same assertion is also true for $L_{h,\kg}$. Moreover, the functors \[\otimes_?:\SKGRALGseph \times \SKGRALGseph\rightarrow \SKGRALGseph\] are bi-left exact and bi-countable coproduct preserving for $? \in \{\operatorname{min,max}\}$.
        \item For every symmetric monoidal left-exact semi-additive $\infty$-category $\mathcal{D}$ with countable coproducts, the pullback along the symmetric monoidal refinement of $L_{h,\kg}$ induces an equivalence
        \[
        \mathbf{Fun}^{\operatorname{lex},\coprod_\omega}_{\otimes_?/\operatorname{lax}}(\SKGRALGseph,\mathcal{D}) \xrightarrow[\simeq]{L_{h,\kg}^*} \mathbf{Fun}^{h,\kg,Sch,\oplus_\omega}_{\otimes_?/\operatorname{lax}}(\RALGsep,\mathcal{D}).
        \]
    \end{enumerate}
\end{prop}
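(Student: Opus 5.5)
The plan is to transport the arguments of \cite[Section 4]{KKandE} (and \cite{BUNKEcrossed} for the equivariant setting) to Real $G$-$C^*$-algebras. The key construction is a model of $\SKGRALGseph$ inside $\RALGseph$ via the endofunctor $A \mapsto A \otimes \kg$. Here $\kg = \mathcal{K}(L^2(G)\otimes \ell^2)$ with its Real structure induced by complex conjugation.

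\emph{Assertion 1.} First, $-\otimes \kg$ preserves homotopy equivalences, so it descends to an endofunctor of $\RALGseph$. Next, choose a $G$-equivariant, Real isometry $L^2(G)\otimes\ell^2 \oplus L^2(G)\otimes\ell^2 \cong L^2(G)\otimes\ell^2$. By a Kuiper-type contractibility argument (equivariant Kuiper for the $G$-continuous unitaries, Real version), this yields a homotopy between the two maps $A\otimes\kg\otimes\kg \rightrightarrows A\otimes \kg\otimes\kg$ obtained from the corner inclusions. Consequently the corner embedding $\kg \to \kg\otimes\kg$ is a $\kg$-equivalence, and $\kg$-stable algebras are local objects.

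For compact $G$, every separable $G$-Hilbert space embeds into $L^2(G)\otimes \ell^2$, so the unit $A \to A\otimes \kg$ (given by a rank one invariant projection, which exists since trivial representations occur in $L^2(G)$) becomes invertible after tensoring with $\kg$. This gives an idempotent localization in the sense of \cite[Proposition 5.2.7.4]{HTT}, hence a left Bousfield localization whose essential image consists of the $\kg$-stable algebras. For non-compact $G$ I would instead follow Bunke and only identify the Dwyer–Kan localization with the full subcategory, using the explicit zig-zag $A \to A\otimes\kg \leftarrow \dots$ together with \cite[Proposition 2.2]{NAD}. This identification is the step I expect to be the main obstacle, since for non-compact $G$ one cannot obtain an invariant rank one projection.

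\emph{Assertions 2 and 3.} Since $-\otimes\kg$ preserves Schochet fibrant cartesian squares (\Cref{example: mapping cone sequence is Schochet}.3, noting that $\kg$ is nuclear), the localization is left exact. Limits are therefore computed in $\RALGseph$ after stabilizing, and the universal property restricts as in \Cref{prop: RALGseph omnibus}.4. Countable coproducts are handled the same way.

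\emph{Assertion 4 (semi-additivity).} I would show that $A\oplus B \to A\times B$ is an equivalence after stabilization, using the standard rotation homotopy in $M_2(\kg)\cong\kg$. The same argument applies to countable sums via an infinite isometry decomposition.

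\emph{Assertions 5–7.} These follow formally. For 5, combine the previous items with the fact that $L_{h,\kg}$ sends countable sums to coproducts. For 6 and 7, the symmetric monoidal refinement comes from the fact that $\kg$-equivalences are stable under $\otimes_?$ with any algebra (since $\kg\otimes\kg\simeq\kg$), so the localization is compatible with the tensor products by \cite[Proposition 4.1.7.4]{HA}. Bi-exactness and coproduct preservation are inherited from \Cref{prop: RALGseph omnibus}.5.
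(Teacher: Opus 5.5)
Your overall route, transporting \cite[Section 4]{KKandE} to the Real equivariant setting, is the one the paper takes. The paper's proof is only a citation of \cite[Section 4]{KKandE}, \cite[Remark 3.34]{EisCASS} and \cite[Lemma 3.18, Corollary 3.19]{DJ}. Your compact-group argument (unit $a\mapsto a\otimes e$ for an invariant rank-one projection $e$, then \cite[Proposition 5.2.7.4]{HTT}) is fine.

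The genuine gap is the first claim of Assertion 1 for non-compact $G$. You flag it as the main obstacle but do not resolve it, and parts of your general setup are false there. If $G$ is not compact, $L^2(G)\otimes\ell^2$ has no finite-dimensional subrepresentations, so $\kg$ contains no nonzero invariant projection; already every equivariant map $\C\to\kg$ is zero. Hence there is no unit $A\to A\otimes\kg$, no equivariant corner embedding $\kg\to\kg\otimes\kg$, and your zig-zag ``$A\to A\otimes\kg\leftarrow\dots$'' does not exist.

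Moreover, $\kg$-stable algebras are then not local objects. Take $G=\Z$. Every equivariant homomorphism $\mathcal{K}(\C\oplus L^2(G)\otimes\ell^2)\to\kg$ kills the invariant rank-one projection and hence vanishes, since the source is simple. Yet the corner inclusion $\kg\to\mathcal{K}(\C\oplus L^2(G)\otimes\ell^2)$ is a $\kg$-equivalence, and $\operatorname{id}_{\kg}$ is not null-homotopic (it acts as the identity on $K_0(\kg\rtimes\Z)\neq 0$).

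The missing idea is the natural zig-zag of corner inclusions $A\to A\otimes\mathcal{K}(\C\oplus L^2(G)\otimes\ell^2)\leftarrow A\otimes\kg$. Both legs come from isometric embeddings of absorbing $G$-Hilbert spaces, so they are $\kg$-equivalences for every $A$ and homotopy equivalences when $A$ is $\kg$-stable. Consequently $-\otimes\kg$ descends to a functor from $\SKGRALGseph$ to the full subcategory of $\kg$-stable algebras, with inverse $L_{\kg}$ restricted to that subcategory. The mapping spaces are $\operatorname{Map}_{\RALGseph}(A\otimes\kg,B\otimes\kg)$, not $\operatorname{Map}_{\RALGseph}(A,B\otimes\kg)$. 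Whatever you deduce in Assertions 2--5 from the adjunction has to be rederived from this description when $G$ is non-compact.

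Two further points.
\begin{enumerate}
\item Coproducts in $\RALGseph$ are free products, and $-\otimes\kg$ does not preserve them. So countable coproducts are not handled ``the same way'' as finite limits, and bi-coproduct-preservation in Assertion 6 is not inherited from \Cref{prop: RALGseph omnibus}.5. What you actually need is that $*_nA_n\to\bigoplus_nA_n$ is a $\kg$-equivalence, plus the fact that $\otimes_?$ commutes with $c_0$-direct sums. The first holds by putting the $n$-th summand into the $n$-th diagonal corner of $\mathcal{K}(\ell^2)\otimes\kg$ and rotating with scalar unitaries, which needs no invariant projection. Note that $A\oplus B$ already is the product of $C^*$-algebras, so ``$A\oplus B\to A\times B$'' is not the relevant map.
\item The infinite version of the rotation argument only identifies $\bigoplus_nA_n$ with the countable coproduct. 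It cannot show that the countable coproduct is a countable product, and that part of Assertion 4 fails already for trivial $G$. There $\pi_0\operatorname{Map}(\C,-)$ is homotopy classes of projections after stabilizing. It takes the value $\bigoplus_n\N$ on $\bigoplus_n\C$, but would take the value $\prod_n\N$ on a countable product of copies of $\C$.
\end{enumerate}
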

\begin{proof}
    This follows by combining the approaches in \cite[Section 4]{KKandE}, \cite[Remark 3.34]{EisCASS} and \cite[Lemma 3.18, Corollary 3.19]{DJ}. The Real structure poses no extra difficulty.
\end{proof}
\subsection{Enforcing $!$-excision}
Let
\[\begin{tikzcd}
	0 & A & B & C & 0
	\arrow[from=1-1, to=1-2]
	\arrow[from=1-2, to=1-3]
	\arrow["f"', from=1-3, to=1-4]
	\arrow["s"', shift left=3, curve={height=18pt}, dotted, from=1-4, to=1-3]
	\arrow[from=1-4, to=1-5]
\end{tikzcd}\]
be an extension of algebras in $\RALGsep$, where the cpc-split $s$ exists if we are dealing with the case $!=\operatorname{se}$. From this extension, we form the natural diagram
\begin{equation}\label{eqn: excision diagram}
    \begin{tikzcd}
	& 0 &&& \\
	0 & A & B & C & 0 \\
	0 & {C(f)} & {Z(f)} & C & 0 \\
	& {Q(f)} \\
	& 0
	\arrow[from=1-2, to=2-2]
	\arrow[from=2-1, to=2-2]
	\arrow[from=2-2, to=2-3]
	\arrow["{\iota_f}", from=2-2, to=3-2]
	\arrow["f"', from=2-3, to=2-4]
	\arrow["{h_f}", from=2-3, to=3-3]
	\arrow["s"', shift left=3, curve={height=18pt}, dotted, from=2-4, to=2-3]
	\arrow[from=2-4, to=2-5]
	\arrow[no head, from=2-4, to=3-4]
	\arrow[from=3-1, to=3-2]
	\arrow[from=3-2, to=3-3]
	\arrow["{\pi_f}", from=3-2, to=4-2]
	\arrow["{\tilde{f}}"', from=3-3, to=3-4]
	\arrow["{\tilde{s}}", shift left=3, curve={height=-18pt}, dotted, from=3-4, to=3-3]
	\arrow[from=3-4, to=3-5]
	\arrow["{\hat{s}}", shift left=3, curve={height=-18pt}, dotted, from=4-2, to=3-2]
	\arrow[from=4-2, to=5-2]
\end{tikzcd}
\end{equation}
We now explain the algebras and the morphisms in the above diagram. The middle row is the mapping cone extension of $f$ as described in \Cref{example: mapping cone sequence is Schochet}.2. $h_f$ is the map $b \mapsto (b,\mathrm{const}_{f(b)})$, and $\iota_f$ is the map $a \mapsto (a,0)$ (without loss of generality, we are assuming that $A$ is an ideal in $B$). $\pi_f: C(f) \rightarrow Q(f)$ is the cokernel of $\iota_f$. Note that we have an isomorphism \[Q(f) \cong \{(c,\gamma) \in C \times PC \mid \gamma(0)=c,\gamma(1)=0\} \cong C_0([0,1),C),\]
and therefore $Q(f)$ is contractible.

We now note that if $s$ is a cpc-split of $f$, then it induces natural cpc-splits $\tilde{s}$ and $\hat{s}$ of $\tilde{f}$ and $\pi_f$, given by $\tilde{s}(c)=(s(c),\mathrm{const}_c)$ and $\hat{s}(c,\gamma)=(s(c),\gamma)$.\\\\
We denote by $\hat{W_!}$ the (essentially small) class of morphisms $L_{h,\kg}(\iota_f)$ for all $!$-exact sequences. We then define $W_!$ to be the closure of $\hat{W_!}$ inside $\SKGRALGseph$ under pullbacks, the 2-out-of-3 property and countable sums. Note that ${W_!}$ is also essentially small.
\begin{defi}
    Denote by $\SKGexcl$ the Dwyer-Kan localization of $\SKGRALGseph$ at $W_!$, and denote by
    \[
    L_{!,\oplus_\omega}: \SKGRALGseph \rightarrow \SKGexcl
    \] the localization functor.
\end{defi}
Note that the composite 
\[
    L_{h,\kg,!,\oplus_\omega}:\RALGsep \xrightarrow{L_h} \RALGseph \xrightarrow{L_\kg} \SKGRALGseph\xrightarrow{L_{!,\oplus_\omega}} \SKGexcl
\]
is a Dwyer-Kan localization of $\RALGsep$, as can be seen from the proof of \cite[Proposition 3.40]{EisCASS}. Before describing the key properties of this localization, we mention the motivation for closing $\hat{W}_!$ under countable sums.
\begin{rem}
    It is a non-trivial property of classical $KK$- and $E$-categories for separable $C^*$-algebras that they have countable coproducts, which are furthermore modeled by countable direct sums of the respective $C^*$-algebras. If we do not close up $\hat{W}_!$ under countable sums, then there is no formal reason why our to-be-constructed $\KKRsep$ category will have countable coproducts (see the next paragraph too). The issue therefore arises when we want to extend $\KKRsep$ to the large category $\KKR$ on all of $\RALG$: on the one hand, we can simply consider $\operatorname{Ind}_{\aleph_0}$-completion of $\KKRsep$, but then we would destroy the countable coproducts which already exist in $\KKRsep$ (since \emph{a posteori} the homotopy category of $\KKRsep$ is equivalent to the classical bivariant $K$-categories), and would therefore be an unnatural object to consider. On the other hand, our current approach (as done in Section 3.7) of taking $\operatorname{Ind}_{\aleph_1}(\KKRsep)$ would \emph{a priori} have no reason to have countable coproducts, but would have all uncountable filtered colimits, and would be yet another unnatural object! Of course, if we already know the comparison between the homotopy category of $\KKRsep$ and the classical bivariant $K$-categories, then the $\operatorname{Ind}_{\aleph_1}$-completion of $\KKRsep$ works out fine and becomes a presentable stable $\infty$-category, but we want to postpone this comparison until the very end. Thus, it calls for \emph{enforcing} that our $\KKRsep$ categories have countable coproducts.   

    Having said that, we mention that the step of closing up $\hat{W}_!$ under countable sums is only relevant for non-separable algebras (and even then, more for the {aesthetic} features of our large categories). In particular, if the reader is only interested in separable $C^*$-algebras, then this step can be omitted, and one would still get a stable $\infty$-categorical refinement of the classical bivariant $K$-categories for separable $C^*$-algebras which would a fortiori have countable coproducts.
\end{rem}

\begin{prop}\label{prop: SKGexcl omnibus}
    \hfill
    \begin{enumerate}
        \item The $\infty$-category $\SKGexcl$ is left-exact, semi-additive and posseses countable coproducts.
        \item The localization functor $L_{!,\oplus_\omega}$ is left-exact and countable sum preserving. The functor $L_{h,\kg,!}$ is $!$-exact.
        \item For every left-exact semi-additive $\infty$-category $\mathcal{D}$ admitting countable coproducts, we have equivalences
        \begin{equation}
\label{uni prop excision iwthout symm mon}
\resizebox{0.95\textwidth}{!}{$\displaystyle
\mathbf{Fun}^{\operatorname{lex},\coprod_\omega}
(\SKGexcl,\mathcal{D})
\xrightarrow[\simeq]{L_{!,\oplus}^*}
\mathbf{Fun}^{\operatorname{lex},\coprod_\omega,\hat{W}_!}
(\SKGRALGseph,\mathcal{D})
\xrightarrow[\simeq]{L_{h,\kg}^*}
\mathbf{Fun}^{{h,\kg,!+Sch,\oplus}_\omega}
(\RALGsep,\mathcal{D}),
$}
\end{equation}
where the superscript $\hat{W}_!$  in the middle entity stands for functors on $\SKGRALGseph$ which map morphisms in $\hat{W}_!$ to equivalences, and the superscript $!$ in the last entity stands for functors on $\RALGsep$ which map $!$-exact sequences to fiber sequences.
        \item When $!=\operatorname{se}$, \[L_{\operatorname{se},\oplus_\omega}: \SKGRALGseph \rightarrow \mathrm S_{\kg}\mathsf G\CAlg_{h,\mathrm{se},\oplus_\omega}^{\sep}\] has a symmetric monoidal refinement for both the tensor products on $\SKGRALGseph$. When $!=\operatorname{ex}$, \[L_{\operatorname{ex},\oplus_\omega}: \SKGRALGseph \rightarrow \mathrm S_{\kg}\mathsf G\CAlg_{h,\mathrm{ex},\oplus_\omega}^{\sep}\] has a symmetric monoidal refinement for \emph{only} the maximal tensor product on $\SKGRALGseph$. Consequently, the same assertions are true for $L_{h,\kg,\operatorname{se},\oplus_\omega}$ and $L_{h,\kg,\operatorname{ex},\oplus_\omega}$. 
        
        Moreover, the functors \[\otimes_?:\SKGexcl \times \SKGexcl\rightarrow \SKGexcl\] are bi-left exact and bi-countable coproduct preserving (for every choice of $(?,!)$ except $(\operatorname{min,ex})$).
        \item For every symmetric monoidal left-exact semi-additive $\infty$-category $\mathcal{D}$ admitting countable coproducts, the pullback along the symmetric monoidal refinement of $L_{h,\kg,!,\oplus_\omega}$ (as discussed in the previous assertion) induces an equivalence
        \begin{equation}\label{eqn: uni prop excision}
            \mathbf{Fun}^{\operatorname{lex},\coprod_\omega}_{\otimes_?/\operatorname{lax}}(\SKGexcl,\mathcal{D})  \xrightarrow[\simeq]{L_{h,\kg,!,\oplus_\omega}^*} \mathbf{Fun}^{{h,\kg,!+Sch,\oplus_{\omega}}}_{\otimes_?/\operatorname{lax}}(\RALGsep,\mathcal{D}). 
        \end{equation}   
    \end{enumerate}
\end{prop}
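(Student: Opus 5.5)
The plan is to adapt Bunke's construction in \cite[Section 5]{KKandE}, together with the countable-sum refinement of \cite[Proposition 3.40]{EisCASS}. The Real structure and the $G$-action only enter through the input facts already established for $\SKGRALGseph$ in \Cref{prop: SKGRALKGseph omnibus}. Three points in that proposition are the inputs: $\SKGRALGseph$ is left-exact and semi-additive with countable coproducts given by direct sums; $L_{h,\kg}$ is Schochet exact; and the tensor products are bi-left exact and preserve countable coproducts in each variable.

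The first step is to show that $W_!$ is well behaved enough that the Dwyer--Kan localization at $W_!$ is a left-exact localization. By construction $W_!$ is closed under pullbacks, 2-out-of-3 and countable sums. I would check that it admits a calculus of right fractions, i.e. that $(\SKGRALGseph, W_!)$ is an $\infty$-category with weak equivalences and fibrations in the sense of Cisinski, in which every morphism is a fibration. Then the localization inherits finite limits and $L_{!,\oplus_\omega}$ is left-exact; \cite[Proposition 7.5.6]{KKandE}-type arguments apply verbatim. Closure under countable sums shows that countable direct sums still represent coproducts after localizing and are preserved by $L_{!,\oplus_\omega}$. Semi-additivity then passes to the localization, because a left-exact localization preserving finite coproducts preserves the canonical comparison map, which is an equivalence upstairs. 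This settles Assertions 1 and 2, except for $!$-exactness.

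For $!$-exactness, take a $!$-exact sequence and use the diagram \eqref{eqn: excision diagram}. The sequence $C(f)\to Z(f)\to C$ is a mapping cone sequence, so by \Cref{example: mapping cone sequence is Schochet} it is a Schochet fibrant pullback. Since $Z(f)\simeq B$, it is therefore a fiber sequence in $\SKGRALGseph$. After inverting $\iota_f \in \hat W_!$, the sequence $A\to B\to C$ becomes a fiber sequence.

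For Assertion 3, the first equivalence is the universal property of a left-exact Dwyer--Kan localization restricted to countable-coproduct-preserving functors. A lex, $\coprod_\omega$-preserving functor inverting $\hat W_!$ automatically inverts its closure $W_!$, because equivalences in $\mathcal D$ are closed under pullback, 2-out-of-3 and countable sums (the latter since sums are coproducts). The second equivalence is \eqref{eqn: skg uni prop for sym mon}, once we identify functors inverting $\hat W_!$ with $!$-exact ones. If $F$ is Schochet exact and inverts $\iota_f$, then $F$ sends the mapping cone fiber sequence to a fiber sequence with $F(A)\simeq F(C(f))$. Conversely, if $F$ is $!$-exact, then applying $F$ to $0\to A\to C(f)\to Q(f)\to 0$, which is $!$-exact via $\hat s$, and using $F(Q(f))\simeq 0$ by contractibility, shows that $F(\iota_f)$ is an equivalence.

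For Assertions 4 and 5, I would use the criterion of \cite[Proposition 4.1.7.4]{HA}/\cite{KKandE}: it suffices that $W_!\otimes_? D\subseteq W_!$ for every object $D$. Because $-\otimes_? D$ is left-exact and preserves countable sums, closure properties reduce this to $\hat W_!\otimes_? D\subseteq W_!$. Then $\iota_f\otimes_? D=\iota_{f\otimes_? 1_D}$, since minimal or maximal tensoring preserves mapping cone sequences (\Cref{example: mapping cone sequence is Schochet}.3). It remains that $-\otimes_? D$ preserves $!$-exact sequences. For $\max$ this always holds. For $\min$ it holds for semi-split sequences, because a cpc split tensors to a cpc split, but it fails for general exact sequences when $D$ is not exact. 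That failure is exactly the excluded case $(\min,\mathrm{ex})$. I expect the main obstacle to be this step together with the calculus-of-fractions verification, in particular ensuring that the countable-sum closure remains compatible with the tensor product and with left-exactness. Assertion 5 then follows formally from the symmetric monoidal universal property of Dwyer--Kan localizations combined with Assertion 3.
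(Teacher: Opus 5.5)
Your overall route is the paper's, which simply defers to \cite[Section 5]{KKandE} and \cite[Remark 3.35]{EisCASS}. Several of your steps are sound:
\begin{itemize}
\item left-exactness, via the structure in which every map is a fibration;
\item $!$-exactness, via the mapping cone sequence and $\iota_f$;
\item the identification of Schochet exact functors inverting $\hat W_!$ with $!$-exact ones, via $0\to A\to C(f)\to Q(f)\to 0$;
\item the tensor-ideal criterion for Assertions 4 and 5.
\end{itemize}
The gap is your sentence asserting that closure of $W_!$ under countable sums \emph{shows} that countable direct sums remain coproducts in $\SKGexcl$ and are preserved by $L_{!,\oplus_\omega}$. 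Closure under countable sums is only the hypothesis that makes this possible, not a proof of it. What it does give you is that both functors of the adjunction $\bigoplus\dashv\Delta$ between $\prod_{n\in\N}\SKGRALGseph$ and $\SKGRALGseph$ preserve weak equivalences, so the adjunction descends to the localizations. To conclude that the descended left adjoint computes coproducts in $\SKGexcl$, you also need that $\prod_n\SKGRALGseph\to\prod_n\SKGexcl$ is a Dwyer--Kan localization at $\prod_n W_!$. For finitely many factors this is automatic. For countably many it is not: it fails in general already for $1$-categorical localizations, since a family of morphisms may require zig-zags of unbounded length. This is precisely the point the paper flags as omitted in \cite{EisCASS}. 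The paper fills it with a dual form of \cite[Theorem 2.6.4]{RSW}, which rests on \cite[Proposition 7.7.1]{CIS}.

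You need to supply this step, because several parts of the statement rest on it:
\begin{itemize}
\item the existence of countable coproducts in Assertion 1;
\item the countable-sum preservation in Assertion 2;
\item the first equivalence in Assertion 3, since a functor out of $\SKGexcl$ preserves countable coproducts if and only if its composite with $L_{!,\oplus_\omega}$ does;
\item the bi-countable-coproduct preservation of $\otimes_?$ on $\SKGexcl$ in Assertion 4.
\end{itemize}
You have two options. The first is to invoke a product-compatibility result of the above kind for $\infty$-categories with weak equivalences and fibrations. The second is to derive it from the right calculus of fractions you already set up. Express $\operatorname{Map}_{\SKGexcl}(X,Y)$ as the filtered colimit of $\operatorname{Map}(X',Y)$ over $(X'\to X)\in W_!/X$. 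Then, for $X=\bigoplus_n X_n$, use pullbacks along the summand inclusions $X_n\to\bigoplus_m X_m$, together with closure of $W_!$ under pullbacks and countable sums. A cofinality argument should then show that this colimit is the product over $n$ of the corresponding colimits for the $X_n$.
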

\begin{proof}
    This follows by combining the approaches in \cite[Section 5]{KKandE} and \cite[Remark 3.35]{EisCASS}. Note that closing up $\hat{W}_!$ under countable sums has only been considered in \cite{EisCASS}, and an argument for the localization $L_{!,\oplus_\omega}$ to be countable coproduct preserving is omitted in loc.\! cit.\! This however follows from a dual version of \cite[Theorem 2.6.4]{RSW}\footnote{note that loc.\! cit.\! is written with cofibration categories, whereas our current business is with fibration categories. However, loc.\! cit.\! uses \cite[Proposition 7.7.1]{CIS} in the proof, which is indeed written with fibration categories.}. 
\end{proof}
We end this subsection with an important lemma, which will be useful in formulating the universal property of $\KKRsep$ in the next section. The lemma can be considered as an automatic Schochet-exactness result, which is ubiquitous for applications since Schochet exactness of a functor can be very difficult to verify.
\begin{lem}\label{prop: automatic Schochet exactness}
    Any homotopy invaraint and $!$-exact functor from $\RALGsep$ to a stable $\infty$-category is automatically Schochet exact.
\end{lem}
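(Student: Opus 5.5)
Let $F:\RALGsep\to\mathcal{D}$ be homotopy invariant and $!$-exact, with $\mathcal{D}$ stable. We must show that $F$ sends every Schochet fibrant cartesian square
\[
\begin{tikzcd}
	A & B \\
	D & C
	\arrow[from=1-1, to=1-2]
	\arrow[from=1-1, to=2-1]
	\arrow["f", from=1-2, to=2-2]
	\arrow["g", from=2-1, to=2-2]
\end{tikzcd}
\]
to a cartesian square. Since $\mathcal{D}$ is stable, a square is cartesian exactly when the induced map on horizontal (or vertical) fibers is an equivalence. The plan is to compute both vertical fibers through mapping cone sequences and compare them.

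\textbf{Step 1.} For any morphism $f:B\to C$, the mapping cone sequence $0\to C(f)\to Z(f)\to C\to 0$ of \Cref{example: mapping cone sequence is Schochet}.2 is semi-split, hence $!$-exact for both values of $!$. Therefore $F(C(f))\to F(Z(f))\to F(C)$ is a fiber sequence. The map $Z(f)\to B$ is a homotopy equivalence compatible with the maps to $C$ (up to homotopy; $\operatorname{ev}_1$ is homotopic to $f\circ \mathrm{pr}_B$ via the path coordinate). By homotopy invariance this gives a natural equivalence $F(C(f))\simeq \operatorname{fib}(F(f))$. To make naturality honest rather than only up to homotopy, I would work in $\RALGseph$ using \Cref{prop: RALGseph omnibus}, factoring $F$ through $L_h$.

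\textbf{Step 2.} Pulling the cartesian square back gives the identification $C(f)\times_C \dots$, namely a canonical map of mapping cones $C(f')\to C(f)$ for $f':A\to D$, induced by the square. Cartesianness of the square means $Z(f')=A\times_D\dots$, and more precisely
\[
C(f')=\{(a,\gamma)\mid \gamma\in PD,\ \gamma(0)=f'(a),\ \gamma(1)=0\}.
\]
I then compare $C(f')$ with $C(f)$. By Step 1 it suffices to show $F(C(f'))\to F(C(f))$ is an equivalence.

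\textbf{Step 3 (main obstacle).} This comparison is where the Schochet fibration hypothesis must enter, since for general cartesian squares it fails. The map $f$ being a Schochet fibration gives path lifting. My first attempt would use the standard argument in categories of fibrant objects: a Schochet fibration $f$ is homotopy equivalent over $C$ to $Z(f)\to C$ via $\operatorname{ev}_1$. More precisely, the comparison $B\to Z(f)$ is a homotopy equivalence over $C$, using the fibration lifting for the topologically enriched homs. Pullback along $g$ of a homotopy equivalence between fibrations over $C$ stays a homotopy equivalence (the usual coglueing lemma, available because $\RALGsep$ is a category of fibrant objects, as noted in the proof of \Cref{prop: RALGseph omnibus}). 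Hence $A\simeq D\times_C Z(f)$. Then the fiber of $D\times_C Z(f)\to D$ is $C(f)$ on the nose, and it is the kernel of a pulled-back mapping cone extension, which is still semi-split (pull back the cpc-split), so $!$-exactness applies. This yields that $F(\mathrm{fib})$ of both vertical maps is $F(C(f))$, compatibly, and the square is cartesian.

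Alternatively, I could cite that Schochet exactness for homotopy invariant functors is equivalent to sending the squares $D\times_C Z(f)$ to cartesian squares, and verify that via the semi-split pulled-back sequence as above.
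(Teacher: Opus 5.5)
Your Step 3 is a correct and complete proof, and it is essentially the standard category-of-fibrant-objects argument the paper imports from \cite[Lemma 5.2]{KKandE}: replace the Schochet fibration $f$ by the mapping-cylinder fibration $\operatorname{ev}_1\colon Z(f)\to C$, which has the equivariant, Real cpc section $c\mapsto (0,t\mapsto tc)$; use the coglueing lemma to get $A\simeq D\times_C Z(f)$; and read off both vertical fibers as $F(C(f))$ from the two semi-split extensions. Step 2 can be dropped (it is incomplete as written), and in Step 1 use $h_f\colon B\to Z(f)$ rather than the projection $Z(f)\to B$: since $\operatorname{ev}_1\circ h_f=f$ holds on the nose, no ``up to homotopy'' bookkeeping is needed.
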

\begin{proof}
    The proof can be taken directly from \cite[Lemma 5.2]{KKandE}.
\end{proof}
\subsection{Inverting Bott}
This subsection is going to somewhat different than the corresponding step in the complex theory as developed in \cite[Sections 6,7]{KKandE}. Indeed, in the complex case, an analysis of the Toeplitz extensions \[
0 \rightarrow \mathcal{K}(l^2\N) \otimes B \rightarrow \mathcal{T}_0 \otimes B \rightarrow C_0(\R) \otimes B \rightarrow 0
\]\footnote{ That the Toeplitz algebra is nuclear can be deduced from nuclear $C^*$-algebras being closed under extensions, see \cite[Section IV.3.1]{BLAOP} for details.}
yields a boundary map \[C_0(\R^2) \otimes B\xrightarrow{\beta_{!,B}}B\] in (the complex analog of)
$\SKGexcl$, which subsequently are inverted yielding the stable $\infty$-category $\KK^{G,\sep}_{!,\infty,\mathbb{C}}$ (check loc.\! cit.\! for details). This also shows that $\Omega$ is a self-inverse endofunctor on $\KK^{G,\sep}_{!,\infty,\mathbb{C}}$.

In the Real case, one has to work with the Real version of the Toeplitz extension. We discuss it next. Before proceeding, we setup some notation.
\begin{notn}
    Let $\R^{p,q}$ denote the Real space $\R^{p+q}$, with the involution $\kappa_{p,q}$ given by \[\kappa_{p,q}(x_1,x_2,...,x_{p+q})=(-x_1,-x_2,...,-x_p,x_{p+1},x_{p+2},...,x_{p+q}).\]  Denote by $\mathbb{S}^{p,q}$ the unit sphere in $\R^{p,q}$ with the induced involution.
\end{notn}
The following result is going to be our workhorse for the Real Bott periodicity.
\begin{prop}\label{prop: real toep extn}
    There exists a semi-split extension of Real $C^*$-algebras
    \[
    0\rightarrow \mathcal{K}(l^2\N) \rightarrow \mathcal{T}^{R} \rightarrow C(\mathbb{S}^{1,1}),
    \]
    where $\mathcal{T}^R$ is the Real isometry classfier $C^*$-algebra, i.e., there is a distinguised Real isometry $v$ in $\mathcal{T}^{R}$ such that for any unital (non-equivariant) Real $C^*$-algebra $A$, there is a bijection of sets 
    \[
    {\operatorname{Hom}}_u(\mathcal{T}^R,A) \xrightarrow{\cong} \operatorname{Isom}(A),~~~~~ \varphi \mapsto \varphi(v)
    \]
    between the set of unital, Real homomorphisms from $\mathcal{T}^R$ to $A$ and the set of Real isometries in $A$.
\end{prop}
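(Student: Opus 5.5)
The plan is to take the classical (complex) Toeplitz algebra $\mathcal{T}=C^*(v)$, generated by the unilateral shift $v$ on $l^2\N$, and equip it with the Real structure inherited from complex conjugation on $l^2\N$, i.e. $\tau(T)=\overline{T}$, where $\overline{T}\xi:=\overline{T\bar\xi}$. Since the shift has real matrix coefficients, $\tau(v)=v$, and $\tau$ is an antilinear $*$-automorphism of $\mathcal{B}(l^2\N)$ of order two preserving $\mathcal{T}$ and $\mathcal{K}(l^2\N)$. We set $\mathcal{T}^R:=(\mathcal{T},\tau)$. The universal property then follows from Coburn's theorem. Given a Real isometry $w=\tau_A(w)$ in a unital Real $C^*$-algebra $A$, there is a unique unital complex $*$-homomorphism $\varphi:\mathcal{T}\to A$ with $\varphi(v)=w$. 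The map $\tau_A\circ\varphi\circ\tau$ is again a unital complex-linear $*$-homomorphism sending $v$ to $\tau_A(w)=w$, so by uniqueness it equals $\varphi$, and $\varphi$ is Real. Injectivity of $\varphi\mapsto\varphi(v)$ is clear because $v$ generates $\mathcal{T}$.

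For the extension, we use the classical exact sequence $0\to\mathcal{K}(l^2\N)\to\mathcal{T}\xrightarrow{\sigma}C(\mathbb{S}^1)\to 0$, where the symbol map $\sigma$ sends $v$ to the coordinate function $z$. The Real structure on $C(\mathbb{S}^1)$ is forced: it must send $z$ to $z$ and be antilinear, so it is $f\mapsto \overline{f\circ c}$ with $c(z)=\bar z$. This means $c(x,y)=(x,-y)$ in real coordinates, which is precisely $\mathbb{S}^{1,1}$ up to relabelling the coordinates. One therefore checks that $\sigma$ intertwines $\tau$ with this structure, by checking on the generator $v$ and using antilinearity. The kernel $\mathcal{K}(l^2\N)$ is $\tau$-invariant and carries the standard Real structure.

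The remaining point, and the one I expect to need the most care, is \emph{semi-splitness}: we need a Real completely positive contractive section. I would use the classical Toeplitz compression $s(f):=P M_f P$, where $P:L^2(\mathbb{S}^1)\to H^2\cong l^2\N$ is the Hardy projection and $M_f$ is multiplication by $f$. This map is ucp and satisfies $\sigma\circ s=\mathrm{id}$, since $T_z=v$ and $T_fT_g-T_{fg}$ is compact. For Real-ness, equip $L^2(\mathbb{S}^1)$ with the conjugation $J\xi=\overline{\xi\circ c}$. This $J$ fixes the orthonormal basis vectors $z^n$, commutes with $P$, restricts to the standard conjugation on $l^2\N$, and satisfies $JM_fJ=M_{\overline{f\circ c}}$. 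Hence $s$ is Real.

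Alternatively, one could average an arbitrary cpc split $s_0$ as $\tfrac12(s_0+\tau s_0\tau)$; this again gives a cpc split, and it is Real. Such a split exists by Choi–Effros, since $C(\mathbb{S}^1)$ is nuclear.

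The only genuinely fiddly verifications are the sign conventions identifying the involution with $\kappa_{1,1}$, and the compatibility of $J$ with $P$. Both reduce to the basis $\{z^n\}$ and are straightforward.
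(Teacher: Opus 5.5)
Your proposal is correct and follows essentially the same route as the paper. Both arguments use the Toeplitz algebra with the Real structure coming from complex conjugation on $l^2\N$, so that the shift is Real, together with Coburn's universal property and the classical symbol map onto $C(\mathbb{S}^{1,1})$ with kernel $\mathcal{K}(l^2\N)$; you then check that a standard cpc split is Real, and you spell out details the paper leaves implicit (the uniqueness argument showing the Coburn homomorphism is automatically Real, and the explicit verification via the conjugation $J$ on $L^2(\mathbb{S}^1)$ that the Toeplitz compression is a Real split), where the paper simply asserts that Bunke's split is Real.
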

Before proving the above result, we begin with a quick justification that $\mathcal{T}^{R}$ indeed exists, and also set up some notation along the way.
\begin{lem}\label{lem: TR universal}
    The Real $C^*$-algebra $\mathcal{T}^R$ as defined in \Cref{prop: real toep extn} exists.
\end{lem}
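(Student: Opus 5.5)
The plan is to construct $\mathcal{T}^R$ as a concrete Real structure on the classical Toeplitz algebra $\mathcal{T}=C^*(S)\subseteq \mathcal{B}(l^2\N)$, where $S$ is the unilateral shift. I will equip $\mathcal{B}(l^2\N)$ with the Real structure $\tau(T)=\overline{T}$, coming from complex conjugation of coordinates in the standard basis $(e_n)_{n\in\N}$. This $\tau$ is an antilinear $*$-automorphism with $\tau^2=1$. Since $S$ has real matrix entries, $\tau(S)=S$, so $\tau$ preserves $\mathcal{T}$, and I set $v:=S$. Then $v$ is a Real isometry in $(\mathcal{T},\tau|_{\mathcal{T}})=:\mathcal{T}^R$. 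The compact operators $\mathcal{K}(l^2\N)$ form a $\tau$-invariant ideal. On the quotient $C(\mathbb{S}^1)$ the induced structure sends $z\mapsto \bar z$ antilinearly, i.e.\ $f\mapsto \overline{f\circ\kappa}$ with $\kappa(x,y)=(x,-y)$. This is exactly $C(\mathbb{S}^{1,1})$, so the extension in \Cref{prop: real toep extn} comes out for free. I note it here only as a byproduct: this lemma is existence, and the extension statement belongs to the proposition.

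For the universal property, let $A$ be unital Real and $w\in A$ an isometry with $\bar w=w$. By Coburn's theorem, $S\mapsto w$ extends uniquely to a unital $*$-homomorphism $\varphi:\mathcal{T}\to A$, and I need to check that $\varphi$ is Real, i.e.\ $\varphi\circ\tau=\tau_A\circ\varphi$. Both sides are antilinear $*$-homomorphisms $\mathcal{T}\to A$. Composing each with $\tau_A$ gives two unital linear $*$-homomorphisms, both sending $S$ to $\tau_A(w)=w$, so by the uniqueness in Coburn's theorem they agree. Applying $\tau_A$ again recovers the equality, and uniqueness of $\varphi$ is inherited from the complex case. The inverse map $\operatorname{Isom}(A)\to\operatorname{Hom}_u(\mathcal{T}^R,A)$ is therefore well defined, which gives the bijection.

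A cleaner alternative that avoids invoking Coburn's theorem is to take the universal unital $C^*$-algebra on a generator $v$ with $v^*v=1$. It exists because the relation is bounded, $\|v\|\le1$. The antilinear map determined by $v\mapsto v$ then exists by the universal property applied to the conjugate algebra $\overline{A}$. I would mention this, but identify the result with the concrete $\mathcal{T}$ via Coburn so that the extension is visible.

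The main obstacle is the bookkeeping of antilinearity, namely making sure the uniqueness argument legitimately applies to antilinear maps by passing through the conjugate algebra. Once that is handled, the rest is routine.
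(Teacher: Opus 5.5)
Your proposal is correct and takes the same route as the paper. Both use the concrete Toeplitz algebra $C^*(S)\subseteq\mathcal{B}(l^2\N)$, with the Real structure induced by complex conjugation on $l^2\N$ (equivalently, the unique Real structure fixing $S$), and both inherit the universal property from Coburn's theorem. Your check that the Coburn homomorphism is automatically Real, by comparing $\varphi$ with $\tau_A\circ\varphi\circ\tau$ through uniqueness, spells out a step the paper leaves implicit.
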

\begin{proof}
    Recall that the complex Toeplitz algebra $\mathcal{T^\C}$, given by the $C^*$-subalgebra generated by the unilateral shift (denoted $\boldsymbol{\iota}$ from now on) in $\mathcal{B}(l^2(\N))$, is the universal (complex) isometry classifier (\cite{COB}). It therefore follows that one model for $\mathcal{T}^R$ is $\mathcal{T}^{\C}$ equipped with the unique Real structure such that the unilateral shift is fixed by it. It is also easily seen to be the same as the Real $C^*$-subalgebra of $\mathcal{B}(l^2(\N))$ generated by the unilateral shift, where the Real structure on the Hilbert space $l^2(\N)$ is by complex conjugation (as always).
\end{proof}
We now show that $C(\mathbb{S}^{1,1})$ is also a universal Real $C^*$-algebra.
\begin{lem}
    $C(\mathbb{S}^{1,1})$ is the Real unitary classifier $C^*$-algebra.
\end{lem}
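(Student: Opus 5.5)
We need to show that $C(\mathbb{S}^{1,1})$, with the Real structure $f \mapsto \overline{f \circ \kappa_{1,1}}$, represents Real unitaries. Precisely, there is a distinguished Real unitary $z \in C(\mathbb{S}^{1,1})$ such that for every unital Real $C^*$-algebra $A$ the map $\operatorname{Hom}_u(C(\mathbb{S}^{1,1}),A) \to \{\text{Real unitaries in } A\}$, $\varphi \mapsto \varphi(z)$, is a bijection. The plan is to reduce to the classical complex statement, exactly as in the proof of \Cref{lem: TR universal}. Complex-linearly, $C(\mathbb{S}^1)$ is the universal unital $C^*$-algebra generated by a unitary, namely the coordinate function $z(x_1,x_2)=x_2+ix_1$ (this is Gelfand duality applied to the spectrum of a unitary). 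So the first step is to fix a convention for which coordinate is flipped and verify that $z$ is Real. Under $\kappa_{1,1}(x_1,x_2)=(-x_1,x_2)$ we have
\[
\overline{z\circ\kappa_{1,1}}=\overline{x_2-ix_1}=x_2+ix_1=z,
\]
so $z$ is fixed by the Real structure. Here a Real unitary means a unitary $u$ with $\tau(u)=u$, matching the convention for Real isometries in \Cref{prop: real toep extn}.

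Given a unital Real $C^*$-algebra $(A,\tau)$ and a Real unitary $u$, the complex universal property produces a unique unital $*$-homomorphism $\varphi: C(\mathbb{S}^1)\to A$ with $\varphi(z)=u$. It remains to check that $\varphi$ intertwines the Real structures. Consider $\tau\circ\varphi\circ\bar{\kappa}$, where $\bar\kappa(f)=\overline{f\circ\kappa_{1,1}}$. As a composite of two conjugate-linear $*$-maps, this is a $\C$-linear unital $*$-homomorphism $C(\mathbb{S}^1)\to A$. It sends $z$ to $\tau(\varphi(z))=\tau(u)=u$. By the uniqueness part of the complex universal property, it therefore equals $\varphi$, that is, $\tau\circ\varphi=\varphi\circ\bar\kappa$. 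Injectivity of $\varphi\mapsto\varphi(z)$ is immediate from complex uniqueness, and we have just shown surjectivity onto Real unitaries. Conversely, any Real homomorphism sends the Real unitary $z$ to a Real unitary, so the map is well defined.

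The only real obstacle is bookkeeping: choosing the identification of $\mathbb{S}^{1,1}$ with the unit circle so that the generator $z$ is genuinely fixed by the Real structure. If instead the first coordinate were used as the real part, $z$ would satisfy $\bar\kappa(z)=-z^*$ rather than $z$, and the lemma would classify a different class of elements. This is also exactly the compatibility needed later: under $\mathcal{T}^R\to C(\mathbb{S}^{1,1})$, the shift $v$, which is fixed by the Real structure, must map to $z$. So I would state the convention explicitly and note that it makes the quotient map of \Cref{prop: real toep extn} a Real homomorphism.
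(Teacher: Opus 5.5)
Your proposal is correct and follows the same route as the paper: you invoke the complex universal property of $C(\mathbb{S}^1)$ and check that the Real structure coming from $\mathbb{S}^{1,1}$ fixes the generating unitary, and your uniqueness argument ($\tau\circ\varphi\circ\bar\kappa=\varphi$) spells out what the paper compresses into ``directly follows from the definitions''. One small slip in your closing side remark: with $x_1+ix_2$ as the generator one gets $\bar\kappa(z)=-z$, not $-z^*$, but this does not affect the argument.
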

\begin{proof}
    Again as before, the complex version of this result is well-known, see for example \cite[Proposition 2.3.4]{SOBERS}. It follows that the Real structure on the $C^*$-algebra of continuous functions on the circle should be so defined that the identity function on the circle is a Real unitary. Since the Real structure on the codomain $\C$ is complex conjugation, it directly follows from the definitions that the Real structure stemming from the Real space $\mathbb{S}^{1,1}$ is indeed the right one.
\end{proof}
\begin{proof}[Proof (of \Cref{prop: real toep extn})]
    The morphism $\mathcal{T}^R \xrightarrow{} C(\mathbb{S}^{1,1})$ is given by mapping $\boldsymbol{\iota}$ (in the concrete model of $\mathcal{T}^R$ mentioned in \Cref{lem: TR universal}) to the inclusion $\mathbb{S}^{1,1} \hookrightarrow \C$. The kernel of $\varphi$ contains \[\boldsymbol{\iota}^n(1_{l^2\N}-\boldsymbol{\iota}\boldsymbol{\iota}^*){\boldsymbol{\iota}^*}^m~\forall~ n,m \geq 0,\] and therefore also contains the Real $C^*$-subalgebra $\mathcal{K}(l^2\N)$ of $\mathcal{B}(l^2(\N))$. It is well-known that upon forgetting the Real structure, the kernel of $\varphi$ is precisely $\mathcal{K}(l^2\N)$, see for example \cite[Section 2.4]{SOBERS}. This shows the existence of the extension.

    To show that it also admits a Real cpc-split, one directly checks that the cpc-split described in \cite[Remark 6.7]{KKandE} is a Real map.
\end{proof}
\begin{cor}\label{cor: real red toep extn}
    There exists a semi-split extension of Real $C^*$-algebras
    \begin{equation}\label{eqn: red toep eqn}
 0 \rightarrow \mathcal{K}(l^2\N) \rightarrow \mathcal{T}^R_0 \rightarrow C_0(\R^{1,0}) \rightarrow 0.
    \end{equation}
    Given $A \in \RALGsep$ and a homotopy invariant, $\kg$-stable, split exact functor $F:\RALGsep \rightarrow \mathcal{D}$ taking values in the group objects of a semi-additive $\infty$-category $\mathcal{D}$, $F(A \otimes \mathcal{T}^R_0)\simeq 0$.
\end{cor}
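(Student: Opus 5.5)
Define $\mathcal{T}^R_0$ as the kernel of the composite $\mathcal{T}^R \to C(\mathbb{S}^{1,1}) \xrightarrow{\operatorname{ev}_1} \C$. This makes sense Really: $1\in\mathbb{S}^{1,1}$ is a fixed point of the involution $z\mapsto\bar z$, so evaluation at $1$ is a Real homomorphism. The kernel of $\operatorname{ev}_1$ on $C(\mathbb{S}^{1,1})$ is $C_0(\mathbb{S}^{1,1}\setminus\{1\})$. The map $t\mapsto e^{2\pi i t}$, suitably reparametrized, gives a Real homeomorphism $(0,1)\cong \mathbb{S}^{1,1}\setminus\{1\}$ when $(0,1)$ carries the involution $t\mapsto 1-t$. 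This matches the involution $x\mapsto -x$ on $\R$, so $C_0(\mathbb{S}^{1,1}\setminus\{1\})\cong C_0(\R^{1,0})$. The kernel $\mathcal{K}(l^2\N)$ of $\mathcal{T}^R\to C(\mathbb{S}^{1,1})$ lies inside $\mathcal{T}^R_0$, so restricting the extension of \Cref{prop: real toep extn} gives \eqref{eqn: red toep eqn}. A Real cpc split is obtained by restricting the split $s$ of \Cref{prop: real toep extn}, after correcting it to $s'(f)=s(f)-f(1)\cdot 1$ if necessary so that it lands in $\mathcal{T}^R_0$. I expect $s$ can be chosen unital, in which case the restriction already lands in $\mathcal{T}^R_0$.

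For the vanishing statement, I would follow the strategy of \cite[Section 6]{KKandE}, which is Cuntz's argument. First, using the universal property of $\mathcal{T}^R$ from \Cref{lem: TR universal}, I would construct the standard homomorphisms. Let $\pi:\mathcal{T}^R\to\C$ be the composite above, and let $\iota$ be the unit $\C\to\mathcal{T}^R$. Since $F$ is split exact and $\mathcal{T}^R\to\C$ is split by the unit, the split exact sequence $0\to A\otimes\mathcal{T}^R_0\to A\otimes\mathcal{T}^R\to A\to 0$ gives
\[
F(A\otimes\mathcal{T}^R)\simeq F(A\otimes\mathcal{T}^R_0)\oplus F(A).
\]
Here semi-additivity and the group-object hypothesis make this a direct sum decomposition. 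It therefore suffices to show that $F(A\otimes\iota):F(A)\to F(A\otimes\mathcal{T}^R)$ is an equivalence, or equivalently that $F(A\otimes \iota\pi)\simeq \operatorname{id}$.

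To prove this, I would use the Real version of Cuntz's trick. Consider the homomorphism $\mathcal{T}^R\to \mathcal{T}^R\otimes\mathcal{T}^R$ sending $v\mapsto v\otimes v$, together with the maps sending $v\mapsto v\otimes 1$ and $v\mapsto 1\otimes v$. The key relation is the following. The homomorphisms $v\mapsto v\otimes v$ and $v\mapsto 1\otimes v$ differ, in the sense of quasihomomorphisms, by a map into $\mathcal{K}\otimes\mathcal{T}^R$ whose image under $F$ (using $\kg$-stability) is identified with that of $\iota\pi$. Meanwhile, a rotation homotopy in $M_2(\mathcal{T}^R\otimes\mathcal{T}^R)$ connects $v\otimes v$ with $v\otimes 1$. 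This rotation homotopy consists of Real isometries, since the rotation matrices have real entries, so it gives Real homomorphisms by the universal property. Combining these with homotopy invariance and the additivity of $F$ on orthogonal sums, which holds because $F$ is group-valued and split exact, gives $F(\operatorname{id}_{\mathcal{T}^R})=F(\iota\pi)$ after tensoring with $A$. All maps involved have real matrix coefficients or are built from $v$, so they respect the Real structure. The only new input beyond the complex case is checking that each homotopy is through Real maps.

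The main obstacle is the Cuntz-trick step, specifically arranging the $\kg$-stability identification equivariantly with the trivial $G$-action on $\mathcal{T}^R$. Since $G$ acts only on $A$, I would simply quote \cite[Proposition 6.9]{KKandE} and verify that its explicit homotopies are Real. That verification is the step I would check most carefully.
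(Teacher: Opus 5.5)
Your proposal is essentially the paper's proof: the extension comes from restricting the Real Toeplitz extension of \Cref{prop: real toep extn} to the preimage of the kernel of evaluation at a fixed point of $\mathbb{S}^{1,1}$, which is Real-isomorphic to $C_0(\R^{1,0})$. The vanishing of $F(A\otimes\mathcal{T}^R_0)$ is then the complex Cuntz argument of \cite[Section 6]{KKandE} with the Real structure checked (the paper also cites \cite[Corollary 1.5.3]{SCHROEDER}).

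Two harmless slips. First, no correction of the split is needed: $s$ is a section of $\mathcal{T}^R\to C(\mathbb{S}^{1,1})$, so $s(f)\in\mathcal{T}^R_0$ automatically whenever $f(1)=0$. Second, your informal sketch of the Cuntz trick is off as stated, since $v\otimes v-1\otimes v=(v-1)\otimes v\notin\mathcal{K}\otimes\mathcal{T}^R$ and so that pair is not a quasihomomorphism; because you defer to the cited argument, this does not affect the proof.
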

\begin{proof}
    The existence of the semi-split extension readily follows from \Cref{prop: real toep extn} and the isomorphism of Real $C^*$-algebras
    \[
    C_0(\R^{1,0}) \cong \ker(C(\mathbb{S}^{1,1})\xrightarrow{\operatorname{ev}_{(0,1)}} \C).
    \]
    The second part of the corollary is proven just as in the complex case, see \cite[Section 6]{KKandE} and \cite[Corollary 1.5.3]{SCHROEDER}.
\end{proof}
Having \Cref{cor: real red toep extn} at our disposal, we can again emulate the treatment of \cite{KKandE} without the Real structure posing any extra difficulty. 

For each $B \in \RALGsep$, \Cref{eqn: red toep eqn} yields a semi-split extension
\[
0 \rightarrow \mathcal{K}(l^2\N) \otimes B \rightarrow \mathcal{T}_0^R \otimes B  \rightarrow C_0(\R^{1,0}) \otimes B \rightarrow 0.
\]
Applying the semi-split exact functor $L_{h,\kg,!,\oplus_\omega}$ to these extensions, one obtains a Puppe sequence
\begin{equation}
\label{eqn: bott extension}
\resizebox{0.98\textwidth}{!}{$\displaystyle
\cdots \rightarrow
L_{h,\kg,!,\oplus_\omega}\!\bigl(C_0(\R^{1,1}) \otimes B\bigr)
\xrightarrow{\beta_{!,B}}
L_{h,\kg,!,\oplus_\omega}\!\bigl(\mathcal{K}(l^2\N) \otimes B\bigr)
\rightarrow
L_{h,\kg,!,\oplus_\omega}\!\bigl(\mathcal{T}_0^R \otimes B\bigr)
\rightarrow
L_{h,\kg,!,\oplus_\omega}\!\bigl(C_0(\R^{1,0}) \otimes B\bigr)
$}
\end{equation}
where $\beta_{!,B}$ are the relevant ``boundary maps" in $\SKGexcl$. Using the stability of $\SKGexcl$, we obtain a natural transformation
\[
\beta_!:=\beta_{!,\mathbb{C}} \otimes -: C_0(\R^{1,1}) \otimes - \implies \operatorname{id}_{\SKGexcl}: \SKGexcl \rightarrow \SKGexcl.
\]
The important feature of this natural transformation is the following.
\begin{cor}\label{cor: loop eq cor}
    If $A$ is a group object in $\SKGexcl$, then the component of $\beta_!$
    \[\beta_{!,A}: C_0(\R^{1,1}) \otimes_! A \rightarrow A\] is an equivalence.
\end{cor}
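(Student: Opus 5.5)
The plan is to use the fiber sequence \Cref{eqn: bott extension} and show that its third term vanishes after tensoring with a group object; this copies \cite[Section 6]{KKandE}. Fix a group object $A$ in $\SKGexcl$. Since the localization is essentially surjective on objects, we may represent $A$ by a separable $G$-$C^*$-algebra $B$ with $A\simeq L_{h,\kg,!,\oplus_\omega}(B)$. Applying $L_{h,\kg,!,\oplus_\omega}$ to the semi-split extension obtained by tensoring \Cref{eqn: red toep eqn} with $B$ gives the Puppe sequence \Cref{eqn: bott extension}. We identify its terms as follows.
\begin{itemize}
\item The term $L(C_0(\R^{1,1})\otimes B)$ is $C_0(\R^{1,1})\otimes_! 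A$ by monoidality.
\item The term $L(\mathcal{K}(l^2\N)\otimes B)$ is equivalent to $A$ by $\kg$-stability. Here we use the corner embedding $\C\to\mathcal{K}(l^2\N)$ with $l^2\N$ carrying the trivial $G$-action; this is a $\kg$-equivalence by \cite[Proposition 2.2]{NAD}.
\end{itemize}
Under these identifications the boundary map is $\beta_{!,A}$. Naturality in $B$, and compatibility with $\beta_{!,\C}\otimes -$, come from the symmetric monoidal refinement in \Cref{prop: SKGexcl omnibus}.4 applied to the extension for $\C$.

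It therefore suffices to show that $L(\mathcal{T}^R_0\otimes B)\simeq 0$ whenever $L(B)$ is a group object. The fiber sequence then reads $C_0(\R^{1,1})\otimes A\to A\to 0$, and we still need the boundary map to be an equivalence rather than merely an effective epimorphism. For that I would place $\beta_{!,A}$ in the rotated Puppe sequence
\[
\Omega(\mathcal{T}^R_0\otimes A)\to C_0(\R^{1,1})\otimes A\to A\to \mathcal{T}^R_0\otimes A .
\]
Both outer terms vanish: the left one is the loops of a zero object, since $\Omega$ is modeled by $C_0(\R^{0,1},-)$ (\Cref{rem: S is loops}). In a left-exact pointed category, a map whose fiber is zero is a monomorphism. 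To get the other direction I would use the long exact sequence of $\pi_0$ of mapping spaces $\operatorname{Map}(D,-)$, which are grouplike because $A$ and $C_0(\R^{1,1})\otimes A$ are group objects. From this exact sequence of groups, $\beta_{!,A}$ induces bijections on all homotopy groups, and is therefore an equivalence by Yoneda.

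The main step is the vanishing of $\mathcal{T}^R_0\otimes A$. For this I would invoke \Cref{cor: real red toep extn} with the functor
\[
F=\operatorname{Map}_{\SKGexcl}(D,\,L(-)\otimes_! A)\colon \RALGsep\to \mathbf{Sp}_{\geq 0}\ \text{(or grouplike spaces)},
\]
where $D$ ranges over all objects. This $F$ is homotopy invariant, $\kg$-stable and split exact, as a composite of $L$, which has these properties, with the exact functor $-\otimes_! A$. It lands in group objects because $A$ is a group object and $\otimes_!$ preserves products in each variable (\Cref{prop: SKGexcl omnibus}.4). The corollary gives $F(\mathcal{T}^R_0)\simeq 0$ for all $D$, so Yoneda yields $\mathcal{T}^R_0\otimes A\simeq 0$.

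The delicate part is the real content of \Cref{cor: real red toep extn}, namely Cuntz's argument that split-exact, stable, homotopy invariant group-valued functors kill $\mathcal{T}_0$. We are allowed to assume that corollary, so the remaining care is bookkeeping: checking that $F$ genuinely lands in group objects of a semi-additive category, so that the corollary applies.
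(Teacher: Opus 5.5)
Your proposal is correct and follows the same route as the paper, which defers to \cite[Corollary 6.11]{KKandE}. That route is to kill $\mathcal{T}^R_0\otimes_! A$ using the Cuntz-type vanishing of \Cref{cor: real red toep extn}, which applies because $\otimes_!$ is bi-left exact and so preserves group objects, and then to read off $\beta_{!,A}$ from the Toeplitz fiber sequence. Your final detour is unnecessary: $C_0(\R^{1,1})\otimes_! A\xrightarrow{\beta_{!,A}} A\to \mathcal{T}^R_0\otimes_! A$ is itself a fiber sequence in the left-exact pointed $\infty$-category $\SKGexcl$, so $\mathcal{T}^R_0\otimes_! A\simeq 0$ immediately gives $C_0(\R^{1,1})\otimes_! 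A\simeq A\times_0 0\simeq A$. You should also drop the claim that ``fiber zero implies monomorphism'', since it is false in a general left-exact pointed $\infty$-category without a group structure, for instance in pointed spaces.
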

\begin{proof}
    The proof of \cite[Corollary 6.11]{KKandE} will work with the obvious modifications.
\end{proof}
We are now in a position to define the small stable $\infty$-category $\KKRsep$. We denote by $W_{\operatorname{Bott}}$ the (essentially small) class of maps $\beta_{!,B}$ as in \Cref{eqn: bott extension}. 
\begin{defi}\label{defi: KKRsep}
    Define $\KKRsep$ to be the Dwyer-Kan localization of $\SKGexcl$ at $W_{\operatorname{Bott}}$, and denote by 
    \[
    L_{\operatorname{Bott}}: \SKGexcl \rightarrow \KKRsep
    \]
    the localization functor.
\end{defi}
Note that the composite 
    \begin{equation}\label{eqn: kk functor}
        \kkRsep:\RALGsep \xrightarrow{L_h} \RALGseph \xrightarrow{L_\kg} \SKGRALGseph\xrightarrow{L_{!,\oplus_\omega}} \SKGexcl \xrightarrow{L_{\operatorname{Bott}}} \KKRsep
    \end{equation}
    is a Dwyer-Kan localization of $\RALGsep$, as can be deduced from the proof of \cite[Proposition 3.40]{EisCASS}.
\begin{prop}\label{prop: imp}
    \hfill
    \begin{enumerate}
        \item $\KKRsep$ is left-exact. $L_{\operatorname{Bott}}$ is also left-exact and has a fully faithful left-adjoint. $\KKRsep$ can be realized as the full subcategory of group objects of $\SKGexcl$. 
        \item $\KKRsep$ is a small stable $\infty$-category. 
        \item $\KKRsep$ is countably cocomplete. For a countable collection $(B_n)_{n \in \N}$ of algebras in $\RALGsep$, we have an equivalence
        \[
        \coprod_{n \in \N}\kkRsep(B_n) \xrightarrow{\simeq} \kkRsep(\oplus_{n \in \N} B_n) .
        \]
        When $!=\operatorname{ex}$, $\kkRsep$ even preserves general countable filtered colimits. 
        \item Every left-exact functor from $\SKGexcl$ to a left-exact additive $\infty$-category $\mathcal{D}$ automatically factors (essentially uniquely) through $\KKRsep$. More precisely, for every left-exact $\infty$-category $\mathcal{D}$, we have an equivalence
        \[
        \mathbf{Fun}^{\operatorname{lex}}(\KKRsep,\mathcal{D}) \xrightarrow[\simeq]{L_{\operatorname{Bott}}^*} \mathbf{Fun}^{\operatorname{lex}}(\SKGexcl,\mathcal{D}).
        \]
        \item For every countably cocomplete stable $\infty$-category $\mathcal{D}$, we have an equivalence
        \[
        \mathbf{Fun}^{\operatorname{colim}_{\omega}}(\KKRsep,\mathcal{D}) \xrightarrow[\simeq]{{\kkRsep}^{*}} \mathbf{Fun}^{h,\kg,!,\oplus_\omega}(\RALGsep,\mathcal{D}),
        \]
        where the superscript $\operatorname{colim}_\omega$ denotes the full subcategory of functors which are countable colimit preserving.
        \item $\KKRsep$ has a symmetric monoidal refinement for both the tensor products when $!=\operatorname{se}$, and only for the maximal tensor product when $!=\operatorname{ex}$. 
        \item $L_{\operatorname{Bott}}$ has a symmetric monoidal refinement for the tensor product(s) on $\SKGexcl$. Consequently, the same assertions are true for $\kkRsep$. Moreover, the functors
        \[
        \otimes_?:\KKRsep\times \KKRsep\rightarrow \KKRsep\] are bi-countable colimit preserving (for every choice of $(?,!)$ except $(\operatorname{min,ex})$).
\item For every left-exact symmetric monoidal additive $\infty$-category $\mathcal{D}$, pullback along the symmetric monoidal refinement(s) of $L_{\operatorname{Bott}}$  induces an equivalence
        \[
        \mathbf{Fun}^{\operatorname{lex}}_{\otimes_?/\operatorname{lax}}(\KKRsep,\mathcal{D}) \xrightarrow[\simeq]{L_{\operatorname{Bott}}^*} \mathbf{Fun}^{\operatorname{lex}}_{\operatorname{\otimes_?/\operatorname{lax}}
        }(\SKGexcl,\mathcal{D}).
        \]
                \item For every symmetric monoidal countably cocomplete stable $\infty$-category $\mathcal{D}$, pullback along the symmetric monoidal refinement(s) of $\kkRsep$  induces an equivalence
        \[
        \mathbf{Fun}^{\operatorname{colim}_{\omega}}_{\otimes_?/\operatorname{lax}}(\KKRsep,\mathcal{D}) \xrightarrow[\simeq]{{\kkRsep}^*} \mathbf{Fun}^{h,\kg,!,\oplus_\omega}_{\otimes_?/\operatorname{lax}}(\RALGsep,\mathcal{D}).
        \]
    \end{enumerate}
\end{prop}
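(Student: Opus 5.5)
The plan is to follow \cite[Sections 6, 7]{KKandE} closely. The only genuinely new input is \Cref{cor: loop eq cor}, which replaces the complex Bott periodicity $C_0(\R^2)\otimes A\simeq A$ by the Real $(1,1)$-periodicity.

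\textbf{Assertion 1.} Write $\mathcal{C}:=\SKGexcl$, and let $\mathcal{C}^{\mathrm{grp}}\subseteq\mathcal{C}$ be its full subcategory of group objects. By \Cref{prop: SKGexcl omnibus}.1, $\mathcal{C}$ is left-exact and semi-additive. So $\mathcal{C}^{\mathrm{grp}}$ is the full subcategory of objects $A$ for which the shear map $A\times A\to A\times A$ is an equivalence. This subcategory is closed under finite limits and countable products.

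The key observation is that $\Omega A\simeq C_0(\R^{0,1})\otimes A$ (\Cref{rem: S is loops}) is always a group object. Hence $C_0(\R^{1,1})\otimes A\simeq\Omega\bigl(C_0(\R^{1,0})\otimes A\bigr)$ lies in $\mathcal{C}^{\mathrm{grp}}$ for every $A$.

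Following \cite[Proposition 6.13]{KKandE}, I would define
\[
R(A):=C_0(\R^{1,1})\otimes A ,
\]
together with the map $\beta_{!,A}\colon R(A)\to A$. I would then check two things. First, for every group object $B$, precomposition with $\beta_{!,A}$ gives an equivalence $\operatorname{Map}(A,B)\simeq\operatorname{Map}(R(A),B)$. This uses \Cref{cor: loop eq cor} applied to $B$, together with the naturality of $\beta_!$ and the symmetry of $\beta_{!,R(A)}$ versus $R(\beta_{!,A})$. Second, $R$ inverts exactly the class $W_{\operatorname{Bott}}$.

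With these two facts, $R$ exhibits $\mathcal{C}^{\mathrm{grp}}$ as a colocalization: the inclusion is left adjoint to $R$, and $R$ is the Dwyer--Kan localization at $W_{\operatorname{Bott}}$. This identifies $\KKRsep$ with $\mathcal{C}^{\mathrm{grp}}$ and $L_{\operatorname{Bott}}$ with $R$. Consequently $L_{\operatorname{Bott}}$ is left-exact, since it is a right adjoint, and its left adjoint is fully faithful.

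\textbf{Assertions 2--5.} Stability (Assertion 2) follows because on $\mathcal{C}^{\mathrm{grp}}$ the loop functor $\Omega=C_0(\R^{0,1})\otimes-$ is invertible. Indeed, $C_0(\R^{1,0})\otimes-$ is an inverse up to the equivalence $\beta_!$: we have
\[
\Omega\circ\bigl(C_0(\R^{1,0})\otimes-\bigr)\simeq C_0(\R^{1,1})\otimes-\simeq\operatorname{id}.
\]
A pointed left-exact category with invertible $\Omega$ is stable.

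For Assertion 3, countable coproducts are preserved by the left adjoint inclusion only after applying $R$. However, $R$ is given by tensoring, and tensoring preserves countable sums by \Cref{prop: SKGexcl omnibus}.4. So countable coproducts in $\KKRsep$ are $R$ of coproducts in $\mathcal{C}$, which gives the stated formula. For $!=\operatorname{ex}$ I would cite the corresponding argument in \cite{KKandE,EisCASS} for countable filtered colimits, which uses exactness of the mapping telescope sequence.

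Assertion 4 follows from the colocalization: if $\mathcal{D}$ is additive, then a left-exact functor $F$ lands in group objects. Hence $F(\beta_{!,A})$ is an equivalence, because it is naturally identified with a component of $\beta_!$ on a group object. Assertion 5 is obtained by composing with \Cref{prop: SKGexcl omnibus}.3 and then using \Cref{prop: automatic Schochet exactness} to drop the Schochet condition. On stable targets, left exactness together with countable sum preservation is equivalent to countable colimit preservation.

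\textbf{Assertions 6--9 and the main obstacle.} For the monoidal Assertions 6--9, note that $W_{\operatorname{Bott}}$ is closed under $-\otimes_?B$, because $\beta_{!,A}\otimes B\simeq\beta_{!,A\otimes B}$. By the general theory of compatible localizations, $L_{\operatorname{Bott}}$ therefore acquires a symmetric monoidal refinement. The universal properties then follow from \Cref{prop: SKGexcl omnibus}.5. Bi-countable-colimit preservation follows from Assertion 3 together with biexactness, excluding the case $(\min,\operatorname{ex})$ as before.

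I expect the main obstacle to be the Real-specific verification that $R$ really is a colocalization. Concretely, one must check that $\beta_{!,C_0(\R^{1,1})\otimes A}$ and $C_0(\R^{1,1})\otimes\beta_{!,A}$ are homotopic up to the flip of the two copies of $C_0(\R^{1,1})$, and that this flip acts trivially. In the complex case, the flip on $\R^4$ is an orientation-preserving rotation. In the Real case, I would realize the flip on $\R^{1,1}\times\R^{1,1}$ as an equivariant rotation through a path in $O(2)\times O(2)$ acting on the $\R^{2,0}$ and $\R^{0,2}$ parts separately. Each of these swaps has determinant $-1$, so their product is equivariantly connected to the identity. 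This is what I would try first.
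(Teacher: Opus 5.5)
Your overall architecture matches the paper's: a colocalization onto group objects via $R=C_0(\R^{1,1})\otimes-$, then stability, the universal properties, and compatibility of $W_{\operatorname{Bott}}$ with $\otimes$. The gap is that the key check in Assertion~1 has the wrong variance. For the inclusion of group objects to be \emph{left} adjoint to $R$ with counit $\beta_!$, you need, for every group object $B$, that \emph{postcomposition} $(\beta_{!,A})_*\colon\operatorname{Map}(B,R(A))\to\operatorname{Map}(B,A)$ is an equivalence. You instead propose to verify that precomposition $\operatorname{Map}(A,B)\to\operatorname{Map}(R(A),B)$ is an equivalence. That is the condition for group objects to be $W_{\operatorname{Bott}}$-\emph{local}, i.e.\ for $R$ to be a left adjoint of the inclusion. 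Taking $B=R(A)$, it would force $\beta_{!,A}$ to admit a retraction $A\to R(A)$ for every $A$. Naturality plus \Cref{cor: loop eq cor} for $B$ only identifies $f\mapsto f\circ\beta_{!,A}$ with $f\mapsto R(f)$ up to $\beta_{!,B}$, and nothing lets you invert that. The correct statement does follow from naturality, once you know that $\beta_{!,B}$, $\beta_{!,R(A)}$ and $R(\beta_{!,A})$ are all equivalences. This is exactly what the dual of \cite[Proposition 5.2.7.4]{HTT} packages, and it is how the paper argues.

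This also dissolves your ``main obstacle''. No homotopy between $R(\beta_{!,A})$ and $\beta_{!,R(A)}$ is needed; each only has to be an equivalence. $\beta_{!,R(A)}$ is one because $R(A)$ is a group object. $R(\beta_{!,A})$ is $\beta_{!,R(A)}$ precomposed with the flip automorphism of $C_0(\R^{1,1})\otimes C_0(\R^{1,1})$, up to a symmetry isomorphism, so it is an equivalence whatever the homotopy class of the flip. That is fortunate, because the homotopy you propose does not exist. The flip is a transposition on both $\R^{2,0}$ and $\R^{0,2}$, so it lies in the $(-1,-1)$-component of $O(2)\times O(2)$. It is not even equivariantly properly homotopic to the identity, since on the fixed-point set $\R^{0,2}$ it has degree $-1$.

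Separately, your justification of Assertion~4 does not work as written: $A$ need not be a group object, and there is no $\beta_!$ living in $\mathcal{D}$. The actual input is \Cref{cor: real red toep extn}. Composing a left exact $F$ into an additive $\mathcal{D}$ with the localization gives a group-valued, homotopy invariant, $\mathcal{K}_G$-stable, split exact functor, so $F(\mathcal{T}^R_0\otimes A)\simeq 0$. The Puppe sequence of the Toeplitz extension then shows that $F(\beta_{!,A})$ is an equivalence.
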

\begin{proof}
    We give some details in this proof as the result is ubiquitous for what we do later.\\\\
    Assertion $1$ follows directly from \Cref{eqn: red toep eqn} and the dual version of \cite[Proposition 5.2.7.4]{HTT}, as has been spelled out at the beginning of Section 7 in \cite{KKandE}.\\\\
    We now show Assertion 2. In light of \cite[Proposition 1.4.2.27]{HA}, it suffices to show $\KKRsep$ is small, pointed, left-exact and the loop functor on it is an equivalence. The first three properties are clear in light of Assertion 1 and the work done in the previous three subsections. It remains to see that loops is an equivalence. But this follows from \Cref{cor: loop eq cor} and the last part of Assertion 1.\\\\
    We now move onto Assertion 3. By \Cref{prop: SKGexcl omnibus}.1, $\SKGexcl$ has countable coproducts, whence so does $\KKRsep$ being a Bousfield localization of it (see \cite[Subsection 7.1.4]{kerodon}). Any stable $\infty$-category having countable coproducts is automatically countably cocomplete. That countable coproducts in $\KKRsep$ are modeled by countable direct sum of the algebras follows from \Cref{prop: SKGexcl omnibus}.3 and because of general properties of (co)limits in a Bousfield localization; see again \cite[Subsection 7.1.4]{kerodon}. The case of $!=\operatorname{ex}$ is the content of \cite[Proposition 3.17]{EisCASS} (the Real structure poses no additional difficulty).\\\\
   Assertion 4 follows directly from \Cref{cor: real red toep extn} and \Cref{defi: KKRsep}. \\\\
    Assertion 5 is arguably the most important result from this section for us. It follows from the series of equivalences
    \[
    \mathbf{Fun}^{\operatorname{colim}_{\omega}}(\KKRsep,\mathcal{D}) \xrightarrow[\simeq]{L_{\operatorname{Bott}}^*} \mathbf{Fun}^{\operatorname{lex},\coprod_\omega}(\SKGexcl,\mathcal{D}) \xrightarrow[\simeq]{L_{h,\kg,!,\oplus_\omega}^*}  \mathbf{Fun}^{{h,\kg,!+Sch,\oplus_\omega}}(\RALGsep,\mathcal{D}),
    \]
    where the first equivalence follows from Assertion 4 and (the proof of) Assertion 3, and the second one is \Cref{uni prop excision iwthout symm mon}. We now conclude using \Cref{prop: automatic Schochet exactness}.\\\\
    Assertion 6 follows from \cite[Proposition 3.3.2]{HIN} and the basic permeance properties of $!$-exact sequences with respect to the minimal and maximal tensor products. \\\\
    The first two parts of Assertion 7 follow immediately from the nuclearity of $\mathcal{K}(l^2\N),\mathcal{T}^R, C(\mathbb{S}^{1,1})$ and \cite[Proposition 3.3.2]{HIN}. For the final part, we use the fact that a left-exact and countable coproduct preserving functor between stable $\infty$-categories is automatically countable colimit preserving.\\\\
     For Assertion 8, we build the diagram 
\[\begin{tikzcd}
	{\mathbf{Fun}_{\otimes_?/\operatorname{lax}}^{\operatorname{lex}}(KK_{!,\operatorname{sep}}^{G,\infty},\mathcal{D})} && {\mathbf{Fun}_{\otimes_?/\operatorname{lax}}^{\operatorname{lex}}(S_{\mathcal{K}_G}G\operatorname{\!-\!}C^*\operatorname{\!-alg}_{\operatorname{R,h,!,\oplus}_\omega}^{\operatorname{sep}},\mathcal{D})} \\
	{\mathbf{Fun}_{\otimes_?/\operatorname{lax}}^{\operatorname{}}(KK_{!,\operatorname{sep}}^{G,\infty},\mathcal{D})} && {\mathbf{Fun}^{W_{\operatorname{Bott}}}_{\otimes_?/\operatorname{lax}}(S_{\mathcal{K}_G}G\operatorname{\!-\!}C^*\operatorname{\!-alg}_{\operatorname{R,h,!,\oplus_\omega}}^{\operatorname{sep}},\mathcal{D})}
	\arrow["{L_{\operatorname{Bott}}^*}", dashed, from=1-1, to=1-3]
	\arrow[hook, from=1-1, to=2-1]
	\arrow[hook, from=1-3, to=2-3]
	\arrow["{L_{\operatorname{Bott}}^*,\simeq}", from=2-1, to=2-3]
\end{tikzcd}\]
    The lower horizontal arrow is an equivalence from \Cref{defi: KKRsep} and \cite[Proposition 3.3.2]{HIN}. The dashed arrow can be seen to exist using the fact that $L_{\operatorname{Bott}}$ is left-exact. It remains to show that it is an equivalence. For that, we use the following general fact about symmetric monoidal left-exact Dwyer-Kan localizations: for any symmetric monoidal $\infty$-category $\mathcal{C}$ and an (essentially small) collection of morphisms $W \subseteq \mathcal{C}$ which are a tensor ideal in $\mathcal{C}$, if the Dwyer-Kan localization of $\mathcal{C}$ at $W$ is left-exact, then we have an equivalence
    \[
    \mathbf{Fun}^{\operatorname{lex}}_{\otimes_?/\operatorname{lax}}(\mathcal{C}[W^{-1}],\mathcal{E}) \xrightarrow[\simeq]{L_W^*} \mathbf{Fun}^{\operatorname{lex},W}_{\otimes_?/\operatorname{lax}}(\mathcal{C},\mathcal{E})
    \]
    for every left-exact symmetric monoidal $\infty$-category $\mathcal{E}$. See \cite[Proposition 7.5.6]{CIS} for details.
    \\\\
    The proof of Assertion 9 follows in parallel to the proof of Assertion 5, in light of Assertion 8.
\end{proof}
We end this subsection by mentioning a couple of important corollaries to the construction of $\KKRsep$.
\begin{cor}\label{cor: s fib seqn}
    \hfill
    \begin{enumerate}
        \item In $\KKRsep$, we have an equivalence $qA \xrightarrow{\simeq} A$ for all $A \in \RALGsep$.
        \item The fiber sequence 
        \[
        \s \xrightarrow{\operatorname{ev}_0} \C \xrightarrow{\iota} Cl^{0,1}
        \]
        in $\RALGsTWOh$ as discussed in \Cref{prop: S to C to Cl fib seqn in homotopy cat} remains a fiber sequence in $\KKRGTWOsep$.
        \item The composite
\[
(\s \hat\otimes Cl^{0,1})_u \xrightarrow{{\operatorname{ev}_0}_*} Cl^{0,1}_u \xrightarrow{\pi_1} \C
\]
is an equivalence in $\KKRsep$.
    \end{enumerate}
\end{cor}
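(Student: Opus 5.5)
For Assertion 1, I will follow the classical argument of Cuntz, but inside $\KKRsep$. The split extension $0 \rightarrow qA \rightarrow QA \rightarrow A \rightarrow 0$ of \Cref{rem: qA properties}.1 is split exact, hence semi-split. So $\kkRsep$ sends it to a fiber sequence in a stable $\infty$-category, and the splitting gives $\kkRsep(QA) \simeq \kkRsep(qA) \oplus \kkRsep(A)$.

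Next I show that the two inclusions $i_1,i_2: A \rightarrow QA$ induce a map $A \oplus A \rightarrow QA$ that is an equivalence in $\KKRsep$. I will use the standard rotation homotopy in $M_2(QA)$: conjugating $\operatorname{diag}(i_2(a),0)$ by a rotation path connects it to $\operatorname{diag}(0,i_2(a))$. Combined with $G$-stability (so that corner embeddings into $M_2$, or into $\mathcal{K}$ with trivial action, are inverted) and additivity, this is the usual proof that $\kkRsep(A*A) \simeq \kkRsep(A)\oplus\kkRsep(A)$. Since $\KKRsep$ is additive by \Cref{prop: imp}, I can then identify the fiber of the fold map: it is $A$, embedded via $i_1 - i_2$. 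Concretely, the restriction of $(1_A,0)$ to $qA$ is inverse to the map $A \rightarrow qA$ obtained from the prequasi-homomorphism $(i_1,i_2)$. The Real structure and the $G$-action play no role, since every map involved is equivariant and Real; the rotation matrices are real and $G$-invariant. I expect this step to be the main obstacle, namely verifying $A*A \simeq A \oplus A$ with only $\kg$-stability available. If needed, I will quote \cite[Section 17.6]{BLAK} or \cite{CUNTZ2} for the homotopy-theoretic input.

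For Assertion 2, the sequence of \Cref{prop: S to C to Cl fib seqn in homotopy cat} arises from the mapping cone sequence in \Cref{eqn: mapping cone for s}. That sequence is semi-split by \Cref{example: mapping cone sequence is Schochet}.2, with $\ker(\operatorname{ev}_1) \cong \tilde{\s}$ via $\Phi$ and a middle term homotopy equivalent to $\C$. The functor $\kk^{G_2,\sep}_{!,\infty}$ is homotopy invariant and $!$-exact, so it sends this extension to a fiber sequence. Alternatively, the composite $L_{\operatorname{Bott}}\circ L_{!,\oplus_\omega}\circ L_{\kg}$ is left exact (\Cref{prop: SKGRALKGseph omnibus}, \Cref{prop: SKGexcl omnibus}, \Cref{prop: imp}), so it preserves fiber sequences directly.

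For Assertion 3, I apply the $\KKRsep$-version of \Cref{rem: qA properties}.5: $(\s\hat\otimes Cl^{0,1})_u \cong q\C$. I then check that under this isomorphism ${\operatorname{ev}_0}_*$ followed by $\pi_1: Cl^{0,1}_u \cong \C\oplus\C \rightarrow \C$ agrees with the first canonical map $q\C \rightarrow \C$ of \Cref{rem: qA properties}.2. This amounts to evaluating the function-algebra model $\{f \in C_0((0,1],M_2) \mid f(1)\ \text{diagonal}\}$ at the endpoint and taking the corner that corresponds to the first projection. Assertion 1 with $A=\C$ then gives the claim. The only care needed is matching the corner conventions; if the corners come out swapped, I will use $\pi_2$ and the flip symmetry instead.
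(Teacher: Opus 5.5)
Your proposal is correct and follows the paper's proof. Assertion 1 is the classical Cuntz argument (split exactness plus the rotation homotopy giving $QA\simeq A\oplus A$), which the paper imports as \cite[Lemma 11.4]{KKandE}; Assertion 2 is left-exactness of the localizations after $L_h$, and your $!$-exactness route applied to the semi-split mapping-cylinder extension yields the same pullback square; Assertion 3 is Assertion 1 for $A=\C$ via the function-algebra model of $q\C$.

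One wording fix: the map $A\to qA$ you want is the lift of $\kkRsep(i_1)-\kkRsep(i_2)$ through the fiber of the fold map in $\KKRsep$. It is not produced by the prequasi-homomorphism $(i_1,i_2)$, which only yields maps out of $qA$.
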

\begin{proof}
    Assertion 1 is the content of \cite[Lemma 11.4]{KKandE}, the generalization to the equivariant case is routine. Assertion 2 follows directly from \Cref{prop: S to C to Cl fib seqn in homotopy cat}, since we have shown before that each of the localizations in \Cref{eqn: kk functor} after $L_h$ are left-exact. Assertion 3 follows from Assertion 1 in light of the function algebra model of $q\C$ as described in \Cref{rem: qA properties}.4. 
\end{proof}
\begin{rem}
    We present a more direct proof of \Cref{cor: s fib seqn}.3 in this remark. Consider the ladder of semi-split exact sequences in $\RALGsep$
\[
\footnotesize
\begin{tikzcd}
	0 & {\{f\in C_0((0,1],M_2(\mathbb{C})) \mid f(1) ~\text{dia}\}} & \begin{array}{c} \{f\in C([0,1],M_2(\mathbb{C})) \mid f(0)=\begin{psmallmatrix} * & 0 \\ 0 & 0 \end{psmallmatrix} ~\&~f(1) \text{dia}\} \end{array} & {\mathbb{C}} & 0 \\
	0 & {\mathbb{C}} & {\mathbb{C} \oplus \mathbb{C}} & {\mathbb{C}} & 0
	\arrow[from=1-1, to=1-2]
	\arrow["{\operatorname{inc}}", from=1-2, to=1-3]
	\arrow["{(\operatorname{ev}_1)_{(1,1)}}"', from=1-2, to=2-2]
	\arrow["{(\operatorname{ev}_0)_{(1,1)}}", from=1-3, to=1-4]
	\arrow["{(\operatorname{ev}_0)_{(1,1)} \oplus (\operatorname{ev}_1)_{(1,1)}}", from=1-3, to=2-3]
	\arrow[from=1-4, to=1-5]
	\arrow["{\operatorname{id}}", no head, from=1-4, to=2-4]
	\arrow[from=2-1, to=2-2]
	\arrow["{\iota_2}", from=2-2, to=2-3]
	\arrow["{\pi_1}", from=2-3, to=2-4]
	\arrow[from=2-4, to=2-5]
\end{tikzcd}\]
We have to show that the left vertical arrow is an equivalence in $\KKRsep$. This will follow once the other two vertical arrows are shown to be equivalences in $\KKRsep$. Only the case of the middle vertical arrow needs an argument. The fiber of 
\[
\{f\in C([0,1],M_2(\mathbb{C})) \mid f(0)=\begin{psmallmatrix} * & 0 \\ 0 & 0 \end{psmallmatrix} ~\&~f(1) ~\text{dia}\} \xrightarrow{(\operatorname{ev}_0)_{1,1} \oplus (\operatorname{ev}_1)_{1,1}} \C \oplus \C
\]
is readily seen to be the mapping cone of the right lower corner inclusion $\C \rightarrow M_2(\C)$, and hence is contractible in $\SKGRALGseph$. 
\end{rem}
We end this subsection by mentioning a $1$-categorical universal property of $\KKRsep$. The nature of this universal property is different in flavor than the ones described earlier in this section, and is only needed so as to make the comparison with classical $KK$- and $E$-theory possible later. This technical result can be skipped until the reader reaches Section 6.

We need a definition before proceeding. See also \cite[Section 13]{KKandE} for some definitions circling half exact additive $1$-categories.
\begin{defi}
    Denote by $W_{\KKRsep}$ the collection of morphisms inverted by the functor \[\kkRsep: \RALGsep \rightarrow \KKRsep.\]
\end{defi}
\begin{prop}\label{prop: uni prop hoKK one categorical}{(cf. \cite[Proposition 13.13]{KKandE})}
    Let $\mathcal{D}$ be a half exact additive $1$-category with countable coproducts. Let $F: \RALGsep \rightarrow \mathcal{D}$ be a functor which is homotopy invariant, $\kg$-stable, half exact with respect to $!$-exact sequences, takes countable direct sums in $\RALGsep$ to countable coproducts in $\mathcal{D}$, and \emph{suspension stable} (i.e. for every morphism $f: A \rightarrow B \in \RALGsep$, $F(f)$ is an isomorphism if and only if $F(\Omega(f))$ is an isomorphism). Then $F$ inverts $W_{\KKRsep}$.
\end{prop}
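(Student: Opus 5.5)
The plan is to show that the class of morphisms inverted by $F$ (after all suspensions) is large enough that it must contain $W_{\KKRsep}$. The comparison with $\kkRsep$ goes through the universal property of \Cref{prop: imp}.5 and the saturation of weak equivalences in a category of fibrant objects.

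Set
\[
W_F:=\{f \in \RALGsep \mid F(\Omega^n f)\ \text{is an isomorphism for all } n\ge 0\}.
\]
By suspension stability this is the same as $\{f \mid F(f)\ \text{iso}\}$. The first step is to check that $(\RALGsep,\ \text{Schochet fibrations},\ W_F)$ is a category of fibrant objects.

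1. $W_F$ contains the homotopy equivalences, by homotopy invariance. Since it consists of the morphisms inverted by a functor, it satisfies 2-out-of-6.
2. $W_F$ is stable under pullback along Schochet fibrations. Half exactness on mapping cone sequences (\Cref{example: mapping cone sequence is Schochet}.2) gives Puppe-type long exact sequences $\cdots\to F(\Omega B)\to F(C(f))\to F(A)\to F(B)$. Applying the five lemma to the comparison of two such sequences for a Schochet fibrant square handles acyclic fibrations and their pullbacks.

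Consequently the localization functor $\ell:\RALGsep\to \RALGsep[W_F^{-1}]$ is a left-exact Dwyer–Kan localization. Moreover, because $W_F$ satisfies 2-out-of-6, a morphism inverted by $\ell$ already lies in $W_F$: in a fibration category satisfying 2-out-of-6, the saturation of the weak equivalences is the class itself.

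Next I verify that $\ell$ satisfies all the hypotheses of \Cref{prop: imp}.5, possibly after passing to group objects as in \Cref{prop: imp}.1.

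1. $\ell$ is $\kg$-stable, since $F$ is.
2. $\ell$ sends $!$-exact sequences to fiber sequences. For this it suffices that $\iota_f\in W_F$. This holds because $Q(f)\cong C_0([0,1),C)$ is contractible: the long exact sequences attached to $0\to A\to C(f)\to Q(f)\to 0$, together with half exactness, force $F(\Omega^n\iota_f)$ to be isomorphisms.
3. $\ell$ takes countable sums to coproducts, because $F$ does and $\mathcal{D}$ is additive.
4. The Bott maps $\beta_{!,B}$ lie in $W_F$. I would prove this by running the argument of \Cref{cor: real red toep extn} directly for $F$: split exactness and stability give $F(\mathcal{T}^R_0\otimes B)=0$, and the Toeplitz long exact sequence then makes $F(\beta_{!,B})$ an isomorphism.

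Granting these, the target $\RALGsep[W_F^{-1}]$ is pointed and left exact with invertible loops, hence stable. It also has countable coproducts induced by sums. Then \Cref{prop: imp}.5 factors $\ell\simeq \Phi\circ\kkRsep$. Every $f\in W_{\KKRsep}$ is therefore inverted by $\ell$, hence lies in $W_F$ by saturation, and so $F(f)$ is an isomorphism.

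The step I expect to be the main obstacle is making $\RALGsep[W_F^{-1}]$ genuinely stable and countably cocomplete, so that \Cref{prop: imp}.5 applies. The difficulty lies in extracting group-object structure and exactness of coproducts purely from the $1$-categorical hypotheses on $F$. My first attempt would be to compare the pullback along $L_{h,\kg}$ with \Cref{prop: SKGexcl omnibus}.3, using that $\RALGsep[W_F^{-1}]$ is a further left-exact localization of $\SKGexcl$. Then $\Omega$-invertibility follows from the Bott step, exactly as in the proof of \Cref{prop: imp}.2.
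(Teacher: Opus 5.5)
There is a genuine gap, at the point you flag yourself, and the rest of the argument depends on it. To apply \Cref{prop: imp}.5 you need $\mathcal{L}:=\RALGsep[W_F^{-1}]$ to be a countably cocomplete stable $\infty$-category, and $\ell$ to send countable direct sums to coproducts. Your justification (``because $F$ does and $\mathcal{D}$ is additive'') does not work. The induced functor $\bar F$ on $\mathcal{L}$ lands in a $1$-category, and knowing that $\bar F\ell(\oplus_n B_n)$ is a coproduct in $\mathcal{D}$ says nothing about whether $\ell(\oplus_n B_n)$ corepresents $\prod_n \operatorname{Map}_{\mathcal{L}}(\ell(B_n),-)$. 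It does not even show that $\mathcal{L}$ has countable coproducts: a localization of a category of fibrant objects has finite limits, but nothing guarantees coproducts.

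Your proposed remedy is circular. Using \Cref{prop: SKGexcl omnibus}.3 to exhibit $\mathcal{L}$ as a localization of $\SKGexcl$ already requires countable coproducts in $\mathcal{L}$. Establishing that factorization by other means is essentially the statement you are trying to prove.

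Item 4 is also ill-typed as written. $\beta_{!,B}$ is a morphism of $\SKGexcl$, not an algebra homomorphism. So ``$\beta_{!,B}\in W_F$'' or ``$F(\beta_{!,B})$ is an isomorphism'' only makes sense after $F$ has been factored through $\SKGexcl$.

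The missing idea is that you need no $\infty$-categorical universal property with colimit hypotheses at all, because $\mathcal{D}$ is a $1$-category. $F$ factors through a Dwyer--Kan localization as soon as it inverts the class being inverted, so you can argue stage by stage, as the paper does:
\begin{enumerate}
\item $F$ factors through $\SKGRALGseph$ by homotopy invariance and $\kg$-stability.
\item The factorization $F'$ inverts $\hat W_!$, by the long exact sequences and the five lemma applied to \Cref{eqn: excision diagram}.
\item The class of morphisms inverted by $F'$ is closed under 2-out-of-3 trivially. It is closed under countable sums directly, because $F$ turns direct sums into coproducts; no coproducts in an auxiliary $\infty$-category are needed. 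It is closed under pullbacks by essentially the computation you sketch for pullback-stability of acyclic Schochet fibrations: replace by a mapping cylinder, use the semi-split kernel sequence, and use suspension stability to close off the right end of the long exact sequence.
\item This yields $F''$ on $\SKGexcl$. Only now does your Toeplitz argument apply: $F(\mathcal{T}^R_0\otimes B)=0$ forces $F''(\beta_{!,B})$ to be an isomorphism, giving the factorization through $\KKRsep$.
\end{enumerate}
So your ingredients are the right ones. The detour through $\RALGsep[W_F^{-1}]$ and \Cref{prop: imp}.5 adds an obligation you cannot discharge without first proving the result.
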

\begin{proof}
    Since $\KKRsep$ by construction is a series of Dwyer-Kan localizations of $\RALGsep$, we need to show that we can construct factorizations as indicated by the dashed arrows below
\begin{equation}\label{eqn: factorizations in one category}\begin{tikzcd}
\RALGsep
&&
\\
\SKGRALGseph
&&
\mathcal{D}
\\
\SKGexcl
\\
\KKRsep
\arrow["{L_{h,\kg}}", from=1-1, to=2-1]
\arrow["F", from=1-1, to=2-3]
\arrow["{F'}"{description}, dashed, from=2-1, to=2-3]
\arrow["{L_{!,\oplus_\omega}}", from=2-1, to=3-1]
\arrow["{F''}"{description}, dashed, from=3-1, to=2-3]
\arrow["{L_{\mathrm{Bott}}}", from=3-1, to=4-1]
\arrow["{F'''}"{description}, dashed, from=4-1, to=2-3]
\end{tikzcd}\end{equation}
    The first two hypothesis on $F$ yields the functor $F'$. Note that $F'$ inverts $\hat{W}_!$, as follows from \cite[Theorem 21.4.4]{BLAK} (which states that $F$ yields natural long exact sequences) and a standard 5-lemma argument applied to \Cref{eqn: excision diagram}.

    We now claim that $F'$ also inverts the closure $W_!$ of $\hat{W}_!$ under 2-out-of-3, pullbacks, and countable sums. It suffices to show that the collection of morphisms inverted by $F'$ is closed under pullbacks and countable sums. The argument for pullbacks can be extracted verbatim from the proof of the analogous assertion in \cite[Proposition 13.13]{KKandE}. That the collection of morphisms inverted by $F'$ are preserved under countable sums follows immediately from our hypothesis that the natural map \[\coprod_{n \in \N} F(A_n) \xrightarrow{}F(\oplus_{n \in \N} A_n)\]
    is an isomorphism. Thus, we deduce the existence of $F''$.

    The argument for $F''$ inverting the maps $\beta_{!,B}$ can be read off from the analogous step in the proof of \cite[Proposition 13.13]{KKandE} (with the obvious changes to the Toeplitz extension in the Real case as discussed in Subsection 3.4). This shows the existence of $F'''$ and the proof is complete.
\end{proof}
\subsection{Change of group functors}
In this subsection, we discuss the several change of group functors connecting the $\KKRsep$ categories we defined earlier. The key features of these functors are that they are defined already at the level of algebras, and fit in useful adjunctions upon passing to the $\KKRsep$ categories. Note that it would also have been possible to develop the (dual) Green-Julg adjunctions in this framework, but we omit it since we do not have to put it to use.

We begin by defining the relevant functors at the level of algebras
\begin{defi}\label{defi: change of groups}
    Let $H,G$ be locally compact, second countable groups.
    \begin{enumerate}
        \item  Let $\varphi: H \rightarrow G$ be a continuous homomorphism\footnote{\emph{not necessarily} inclusion of a subgroup}. Then we have the obvious restriction functor
        \[
        \res^G_H: \RALGsep \rightarrow \RALGsepH.
        \]
        \item When $H \subseteq G$ is closed, we define the induction functor
        \[
        \ind^G_H: \RALGsepH \rightarrow \RALGsep
        \]
        as follows:
        \begin{itemize}
            \item On objects, $\ind^G_H$ sends $B \in \RALGsepH$ to the invariant $C^*$-subalgebra of $C_b(G,B)$ (the group $G$ acts on $C_b(G,B)$ by $(g,f) \longmapsto (g' \mapsto f(g^{-1}g'))$; note that this action is not neccesarily continuous, but it will be so on $\ind^G_H(B)$) generated by the functions $f:G \rightarrow B$ satisfying 
            \[
            f(gh)=h^{-1}f(g)~\forall h \in H,\forall g \in G~~\&~~\text{projection of}~\operatorname{supp}(f)~\text{to}~G/H~\text{is compact}.
            \]
            \item On morphisms, $\ind^G_H$ sends a morphism $\varphi: B \rightarrow B'$ in $\RALGsepH$ to \[\ind^G_H(\varphi):\ind^G_H(B) \rightarrow \ind^G_H(B')\] given by $(\ind^G_H(\varphi)(f))(g):=\varphi(f(g))$.
        \end{itemize}
        \item When $H \subseteq G$ is closed and $G/H$ is compact, then we also call the associated induction functor as the  coinduction functor, and is denoted by $\coind^G_H$.\footnote{the coinduction functor can be defined under more general conditions, but we do not have any reason to do so.}
    \end{enumerate}
\end{defi}
\begin{rem}
    Note that $\res^G$ denotes the underlying non-equivariant $C^*$-algebra functor, and $\res_G$ denotes the functor which equips a (non-equivariant) $C^*$-algebra with the trivial $G$-action (we usually omit the trivial group from the notation). Similar comment also applies to the functors $\res^{G_2}_{C_2}$ and $\res^{C_2}_{G_2}$.
\end{rem}
\begin{prop}\label{prop: functors descend}
    All the functors defined above descend essentially uniquely to the respective equivariant $KK_{!,\infty}^{\operatorname{sep}}$-categories. Furthermore, the descended functors preserve countable colimits.
\end{prop}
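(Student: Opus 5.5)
The plan is to use the universal property of \Cref{prop: imp}.5. For each functor $\Phi \in \{\res^G_H, \ind^G_H, \coind^G_H\}$ with source category $\mathsf{X}\CAlg^{\sep}$ and target category $\mathsf{Y}\CAlg^{\sep}$, consider the composite $\kk^{Y,\sep}_{!,\infty}\circ \Phi$. This is a functor into a small stable, countably cocomplete $\infty$-category. It therefore suffices to check four properties of this composite: homotopy invariance, $\mathcal{K}_X$-stability, $!$-exactness, and preservation of countable direct sums. \Cref{prop: imp}.5 then yields an essentially unique countable-colimit-preserving factorization through $\KK^{X,\sep}_{!,\infty}$. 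This gives both the descent and the preservation of countable colimits in one stroke.

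The properties should be verified at the level of algebras, functor by functor.

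\emph{Homotopy invariance.} Each functor commutes with $C([0,1],-)$: this is obvious for restriction, and for induction one has $\ind^G_H(C([0,1],B))\cong C([0,1],\ind^G_H B)$ by the defining formula. Hence each functor preserves homotopies.

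\emph{Countable direct sums.} Restriction clearly preserves them. For induction, $\ind^G_H(\oplus_n B_n)\cong \oplus_n \ind^G_H(B_n)$ should follow from the compact-support-mod-$H$ condition.

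\emph{Exactness.} Restriction preserves exact sequences and cpc-splittings, since an equivariant cpc split restricts to one. For induction, exactness of $\ind^G_H$ on short exact sequences is standard, using local triviality of $G\to G/H$ and partitions of unity. For $!=\mathrm{se}$, a $H$-equivariant cpc split $s$ induces $\ind(s)$ pointwise, which is again cpc and $G$-equivariant.

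\emph{Stability.} For restriction, $\res^G_H(A\otimes \mathcal{K}(V))=\res A\otimes \mathcal{K}(\res V)$ and restricted isometric embeddings are isometric embeddings. Hence $\kk^H\circ\res$ is $G$-stable, because $\kk^H$ is $H$-stable for all separable $H$-Hilbert spaces, not only those of the form $\mathcal{K}_H$. For induction, I would use the natural isomorphism $\ind^G_H(B)\otimes\mathcal{K}(V)\cong \ind^G_H(B\otimes \mathcal{K}(\res^G_H V))$ for $G$-Hilbert spaces $V$. This reduces $G$-stability of $\kk^G\circ \ind$ to stability along embeddings of $H$-Hilbert spaces: I would show $\kk^G\circ\ind$ inverts $B\otimes\mathcal{K}(W')\to B\otimes\mathcal{K}(W)$ for $H$-embeddings $W'\hookrightarrow W$. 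The cleanest route, which I would try first, is to work directly with $\mathcal{K}_H$-equivalences: show that $\ind$ sends $\mathcal{K}_H$-equivalences to $\mathcal{K}_G$-equivalences via $\ind(B)\otimes \mathcal{K}_G \cong \ind(B\otimes \res\mathcal{K}_G)$. One then absorbs $\res\mathcal{K}_G$ into $\mathcal{K}_H$, using $\res^G_H L^2(G)\otimes\ell^2 \cong L^2(H)\otimes \ell^2\otimes(\dots)$, a Fell-absorption type argument.

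I expect this stability step for induction to be the main obstacle. Handling the Real structure also requires some care, but it commutes with everything. Coinduction is a special case of induction, so nothing further is needed. Finally, essential uniqueness and countable-colimit preservation are automatic from \Cref{prop: imp}.5.
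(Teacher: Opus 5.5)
Your proposal is correct and follows essentially the same route as the paper. The paper defers to \cite[Section 4]{BEL} and \cite[Section 2.1]{BUNKEcrossed}, where the descent is obtained by checking exactly the hypotheses of the universal property in \Cref{prop: imp}.5 (with essential uniqueness coming from $\kk$ being a Dwyer--Kan localization). Two small corrections are needed. First, for a general locally compact second countable group the quotient map $G\to G/H$ need not be locally trivial, so prove exactness of $\ind^G_H$ (needed only for $!=\mathrm{ex}$; for $!=\mathrm{se}$ surjectivity is immediate from $\ind^G_H(s)$) via the $C_0(G/H)$-algebra structure and partitions of unity on $G/H$, using continuity of the norm functions $gH\mapsto\|f(g)\|$. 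Second, the absorption step in your stability argument is really the fact that $\res^G_H L^2(G)$ is a multiple of $L^2(H)$, which gives $\res^G_H\mathcal{K}_G\cong\mathcal{K}_H$.
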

\begin{proof}
    This can be extracted from \cite[Section 4]{BEL} and \cite[Section 2.1]{BUNKEcrossed}; no comparison with classical bivariant $K$-theories is made in loc.\! cit.\! to show that the functors descend, and the proof therein is based on universal properties which we have enforced by construction. Note that the last assertion follows from the universal property of the equivariant $KK_{!,\infty}^{\operatorname{sep}}$-category as described in \Cref{prop: imp}.5.
\end{proof}
\begin{rem}
    Note that the restriction functor has a symmetric monoidal refinement for every choice of $(?,!)$ except $(?,!)=(\min,\operatorname{ex})$, see \cite[Corollary 4.8]{BEL}. The same is not true for the induction or coinduction functors, as can be seen by looking at the unit.
\end{rem}
For the next result, we have to provide proofs without banking on classical bivariant $K$-theories. Compare \cite[Example 2.6]{BUNKEcrossed}. Note that in all the adjunctions in this article, the left-adjoint is written on the left and the right-adjoint is written on the right.
\begin{prop}\label{prop: adjunctions at skg level}
    If $H\subseteq G$ is open\footnote{recall that open subgroups of topological groups are automatically closed}, then we have an adjunction
    \[
\operatorname{ind}^G_H:
\mathsf S_{\mathcal K_H}\mathsf H\CAlg_h^{\sep}
\;\leftrightarrows\;
\mathsf S_{\kg}\mathsf G\CAlg_h^{\sep}
:\operatorname{res}^G_H.
\]
\end{prop}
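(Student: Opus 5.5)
We will construct unit and counit transformations already at the level of algebras, up to homotopy and $\mathcal{K}$-stabilization, and verify the triangle identities in the homotopy categories. Since $H$ is open, $G/H$ is discrete, so for $B\in\RALGsepH$ we get
\[
\ind^G_H(B)\cong \bigoplus_{gH\in G/H} B_{gH}.
\]
Here $B_{gH}$ is the space of functions supported on the coset $gH$ and satisfying the $H$-equivariance condition. The summand at the identity coset is $H$-equivariantly isomorphic to $B$ via $f\mapsto f(e)$. This gives a natural $H$-equivariant inclusion $\eta_B\colon B\to \res^G_H\ind^G_H(B)$ onto a corner (a direct summand). Conversely, for $A\in\RALGsep$ we want a natural map $\ind^G_H\res^G_H(A)\to A\otimes \mathcal{K}(\ell^2(G/H))$. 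It sends $f$ to the ``diagonal'' operator whose $(gH,gH)$-entry is $g\cdot f(g)$, which is well-defined by the equivariance condition. This map is $G$-equivariant when $G$ acts on $\ell^2(G/H)$ by translation, since $G/H$ is countable (second countability). Composing with $\kg$-stability, the corner inclusion $A\to A\otimes\mathcal{K}(\ell^2(G/H))$ becomes invertible after tensoring with $\kg$ and passing to $\mathsf S_{\kg}\mathsf G\CAlg_h^{\sep}$. This yields the counit $\epsilon_A\colon \ind^G_H\res^G_H A\to A$ in $\mathsf S_{\kg}\mathsf G\CAlg_h^{\sep}$.

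First, we check that both functors descend. By \Cref{prop: functors descend}, and directly here since they preserve homotopies, they descend to the homotopy-invariant localizations. For stability, we use \Cref{prop: SKGRALKGseph omnibus}.1: $\mathsf S_{\kg}\mathsf G\CAlg_h^{\sep}$ is the full subcategory of $\kg$-stable objects. We need $\res^G_H(\kg)$ to be absorbed by $\mathcal{K}_H$, which holds because $L^2G$ restricted to the open subgroup $H$ is a multiple of $L^2H$. We also need $\ind^G_H(B\otimes\mathcal{K}_H)$ to be absorbed by $\kg$, which holds because $\ind^G_H(\mathcal{K}(L^2H_\N))\cong\mathcal{K}(L^2G_\N)$ for open $H$. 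We then work with $\kg$- and $\mathcal{K}_H$-stable representatives.

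Next, we construct the adjunction equivalence on mapping spaces:
\[
\operatorname{Map}_G(\ind^G_H B, A)\to \operatorname{Map}_H(B,\res^G_H A),
\]
given by restricting along $\eta_B$. Its inverse sends $\varphi\colon B\to\res A$ to
\[
\ind\varphi\colon \ind B\to\ind\res A,
\]
followed by $\epsilon_A$. Both mapping spaces are modeled, by \Cref{prop: RALGseph omnibus}.1, as the topological mapping spaces between stable objects. It therefore suffices to check the triangle identities up to homotopy, naturally in $A$ and $B$.

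For the triangle identities: the composite $\res(\epsilon_A)\circ\eta_{\res A}$ is the $H$-equivariant map $A\to A\otimes\mathcal{K}(\ell^2(G/H))$ into the $(eH,eH)$ corner. This is exactly the corner inclusion, which is inverted in the stable category. The other composite, $\epsilon_{\ind B}\circ\ind(\eta_B)$, is the map
\[
\ind B\to \ind B\otimes\mathcal{K}(\ell^2(G/H)).
\]
It is again a diagonal inclusion, and we expect it to be homotopic (by a rotation homotopy) to a corner embedding after stabilization.

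We expect the main obstacle to be this last step, together with the equivariance of the diagonal map defining $\epsilon$. Concretely, we must show that the diagonal embedding $\ind B\to\ind B\otimes\mathcal{K}(\ell^2(G/H))$ is a $\kg$-equivalence. We would try to identify it with the map induced by an isometric $G$-embedding of $G$-Hilbert spaces, $V\hookrightarrow V\otimes\ell^2(G/H)$, and then invoke $G$-stability. Throughout, the Real structures are compatible since all maps involved are defined by real formulas.
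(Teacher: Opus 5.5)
Your architecture is the paper's: the unit includes $B$ as the $eH$-summand of $\res^G_H\ind^G_H B$, the counit is $\ind^G_H\res^G_H A\cong C_0(G/H)\otimes A\hookrightarrow A\otimes\mathcal{K}(\ell^2(G/H))\simeq A$, and the triangle identities are checked up to homotopy. There is, however, a genuine gap at the second triangle identity, which is the only nontrivial step. You guess that a rotation homotopy should work, but you do not produce one. The concrete mechanism you then propose cannot work as stated.

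Write $e_{kH}$ for the rank-one projection onto $\delta_{kH}$. The composite is $f\mapsto\sum_{kH}\chi_{kH}f\otimes e_{kH}$. This map uses the coset decomposition of $\ind^G_H B$ itself. So even after tensoring with $\mathcal{K}(V)$ it is not of the form $1_{\ind B}\otimes\operatorname{Ad}(W)$ for any $G$-isometry $W\colon V\to V\otimes\ell^2(G/H)$. To see this, test on $f$ supported on a single coset: $\operatorname{Ad}(W)$ would have to equal $T\mapsto e_{kH}\otimes T$ for every $k$ simultaneously. Hence $G$-stability has nothing to act on.

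Moreover, being a $\kg$-equivalence is weaker than the triangle identity. What you need is that, after the inclusion into $\ind B\otimes\mathcal{K}(\C\oplus\ell^2(G/H))$, this map is homotopic to $f\mapsto p\otimes f$, where $p$ projects onto the added trivial summand $\C e_0$. Mere invertibility would also suffice, but only via the separate lemma that invertible triangle composites still make $\varphi\mapsto\res(\varphi)\circ\eta_B$ bijective, and you do not invoke it.

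The missing idea is the paper's explicit equivariant rotation, carried out coset by coset. Choose a partial isometry $u$ with $upu^*=e_{eH}$ and put $u_k=k\cdot u$, so that $u_kpu_k^*=e_{kH}$. Then set $\varphi_t(f)=\sum_{kH}(\cos^2(\tfrac{\pi}{2}t)e_{kH}+\sin^2(\tfrac{\pi}{2}t)p+\cos(\tfrac{\pi}{2}t)\sin(\tfrac{\pi}{2}t)(u_k+u_k^*))\otimes\chi_{kH}f$.
\begin{itemize}
\item For $t>0$ the rotated projections attached to different cosets are not orthogonal, since they all move toward $e_0$.
\item Each $\varphi_t$ is nevertheless a $*$-homomorphism, because the coefficients $\chi_{kH}f$ have disjoint supports.
\item Each $\varphi_t$ is $G$-equivariant, because $p$ is $G$-fixed and the $u_k$ are translates of a single $u$.
\end{itemize}

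Your stability instinct can be made to work, but with an $H$-isometry rather than a $G$-isometry. Take $W=\C\oplus\ell^2(G/H)$ and use the isomorphism $\ind^G_H(B)\otimes\mathcal{K}(W)\cong\ind^G_H(B\otimes\res^G_H\mathcal{K}(W))$, $f\otimes T\mapsto(g\mapsto f(g)\otimes g^{-1}T)$. It carries the two maps to $\ind^G_H$ of the $H$-equivariant corner embeddings at the $H$-fixed vectors $\delta_{eH}$ and $e_0$. These are $H$-homotopic by a rotation in their span, and inducing that homotopy gives exactly the $\varphi_t$ above.

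Two smaller points.
\begin{itemize}
\item If $[G:H]=\infty$, there is no $G$-equivariant corner inclusion $A\to A\otimes\mathcal{K}(\ell^2(G/H))$, since $\ell^2(G/H)$ has no invariant unit vector. The counit must therefore be built from the zig-zag through $A\otimes\mathcal{K}(\C\oplus\ell^2(G/H))$, as in the paper. Your first triangle identity then also needs the easy $H$-equivariant rotation from $\delta_{eH}$ to $e_0$.
\item $\ind^G_H\mathcal{K}(L^2H_\N)$ is the block-diagonal $c_0$-sum $\bigoplus_{G/H}\mathcal{K}(L^2H_\N)$, not $\mathcal{K}(L^2G_\N)$. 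Your justification that $\ind^G_H$ descends to the stabilized categories therefore needs to be replaced.
\end{itemize}
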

\begin{proof}
    We begin by the description of the unit and the co-unit.

    The unit will be the natural transformation
    $
        \varepsilon: \operatorname{id} \rightarrow \mathsf{res}^G_H \circ \mathsf{ind}^G_H
    $ with \begin{align*}
        \varepsilon_A: A &\rightarrow \mathsf{res}^G_H \mathsf{ind}^G_H(A) \\
        a &\mapsto \chi_H(g)\alpha_{g^{-1}}a = \begin{cases}
            \alpha_{g^{-1}}a&\text{if}~g \in H \\
            0 &\text{otherwise}
        \end{cases}
    \end{align*}\footnote{this is just notation. $A$ is a $H$-$C^*$-algebra, so the action by a general element $g^{-1}$ in $G$ doesn't make sense. However, this will only be considered when $g^{-1} \in H$.}

    The description of the co-unit $\eta:\mathsf{ind}^G_H \circ \mathsf{res}^G_H \rightarrow \operatorname{id}$ is more complicated. It is easy to see that
    \[
    \mathsf{ind}^G_H (\mathsf{res}^G_H (B)) \cong C_0(G/H) \otimes B
    \]
    Upon representing $C_0(G/H)$ in $\mathcal{B}(L^2(G/H))$ as multiplication operators, $C_0(G/H)$ factors through $\mathcal{K}(L^2(G/H))$ since $G/H$ is discrete, courtesy $H$ being open in $G$. We can now define the co-unit maps by 
    \[
    \eta_B:\mathsf{ind}^G_H(\mathsf{res}^G_H(B)) \cong C_0(G/H) \otimes B \rightarrow \mathcal{K}(L^2(G/H)) \otimes B \simeq B.
    \]
    It remains to check the triangle identities. We have that the composite
    \begin{align*}
        \mathsf{res}^G_H&(B) \xrightarrow{\varepsilon_{\mathsf{res}^G_H(B)}} \mathsf{res}^G_H(\mathsf{ind}^G_H(\mathsf{res}^G_H(B))) \xrightarrow{\mathsf{res}^G_H(\eta_B)} \mathsf{res}^G_H(B)
    \end{align*}
    maps
    \begin{align*}
        \mathsf{res}^G_H(B) \ni b &\longmapsto g\mapsto\chi_H(g)\beta_{g^{-1}}(b) \\
        &\longmapsto gH\mapsto\chi_H(g)\beta_g\beta_{g^{-1}}(b) \\
        &= gH \mapsto \chi_H(g)b \\
        &\longmapsto \chi_H \otimes b \in \mathsf{res}^G_H(\mathcal{K}(L^2(G/H))\otimes B) \\
        &\longmapsfrom b,
    \end{align*}
    where the last map is a left-upper corner inclusion. Note that as a $H$-$C^*$-algebra, $\mathcal{K}(L^2(G/H))$ has the trivial action. It follows that \[
    \mathsf{res}^G_H(\eta_B) \circ \varepsilon_{\mathsf{res}^G_H(B)} \simeq \operatorname{id}_{\mathsf{res}^G_H(B)}.
    \]
    The analysis of the other composite is more subtle. We have that the composite
    \[
    \mathsf{ind}^G_H(A) \xrightarrow{\mathsf{ind}^G_H(\varepsilon_A)} \mathsf{ind}^G_H(\mathsf{res}^G_H(\mathsf{ind}^G_H(A))) \xrightarrow{\eta_{\mathsf{ind}^G_H(A)} } \mathsf{ind}^G_H(A)
    \]
    maps
    \begin{align*}
        \mathsf{ind}^G_H(A) \ni (g \mapsto f(g)) &\quad \longmapsto (g \mapsto (l \mapsto \chi_H(l)\alpha_{l^{-1}}f(g))) \in \mathsf{ind}^G_H(\mathsf{res}^G_H(\mathsf{ind}^G_H(A))) \\ 
        &\quad \longmapsto (gH \mapsto (l \mapsto \chi_H(g^{-1}l)\alpha_{(g^{-1}l)^{-1}}f(g)))\in C_0(G/H,\ind^G_H(A)) \\
        &\quad = (gH \mapsto (l \mapsto \chi_H(g^{-1}l)f(l)))\in C_0(G/H,\ind^G_H(A)) \\
        &\quad \xmapsto{(\dagger)} \sum_{kH \in G/H} \chi_{kH} \otimes \chi_{kH}f \in \mathcal{K}(L^2(G/H)) \otimes \mathsf{ind}^G_H(A) ~\text{(see \Cref{lem: technical manipulation} below)}
    \end{align*}
    It remains to analyse the fate of $\sum_{kH \in G/H} \chi_{kH} \otimes \chi_{kH}f$ under the equivalences 
    \[
    \mathcal{K}(L^2(G/H)) \otimes \mathsf{ind}^G_H(A) \xrightarrow{\simeq} \mathcal{K}(\C \oplus L^2(G/H)) \otimes \mathsf{ind}^G_H(A) \xleftarrow{\simeq} \mathsf{ind}^G_H(A).
    \]
    We will be done if we can prove the following
    \begin{clm}
        The two homomorphisms $\mathsf{ind}^G_H(A) \rightarrow \mathcal{K}(\C \oplus L^2(G/H)) \otimes \mathsf{ind}^G_H(A)$ given by \[f \mapsto \sum_{kH \in G/H} \chi_{kH} \otimes \chi_{kH}f\] and \[f \mapsto p \otimes f\] are homotopic, where $p\in \mathcal{K}(\C \oplus L^2(G/H))$ is the projection onto the summand $\C$.
    \end{clm}
    \begin{proof}
        Let $i:\mathcal{K}(L^2(G/H)) \xrightarrow{} \mathcal{K}(\C \oplus L^2(G/H)) $ denote the embedding. Then, $i(\chi_{kH}) \in \mathcal{K}(\C \oplus L^2(G/H))$ is a (not $G$-equivariant) one-dimensional projection. It is elementary to see that there is a rank-one partial isometry $u \in \mathcal{K}(\C \oplus L^2(G/H))$ such that $upu^*=i(\chi_{eH})$. Defining $u_k:=ku$, we also have $u_kpu_k^*=i(\chi_{kH})$. Our sought after homotopy is then given by the following one-parameter family of $G$-equivariant homomorphisms
        \begin{align*}
            \varphi_t:\mathsf{ind}^G_H(A) &\rightarrow \mathcal{K}(L^2(G/H)) \otimes \mathsf{ind}^G_H(A)\\ \quad 
            f &\mapsto \underset{kH \in G/H}{\Sigma} (\cos(\frac{\pi}{2}t)^2i(\chi_{kH}) 
            +\sin(\frac{\pi}{2}t)^2p+\cos(\frac{\pi}{2}t)\sin(\frac{\pi}{2}t)(u_k+u_k^*)) \otimes \chi_{kH}f.
        \end{align*}
    \end{proof}
    This finishes the proof of the proposition.
\end{proof}
\begin{rem} \label{lem: technical manipulation}
    We give here details of the map labelled $(\dagger)$ in the last proof.

    On the one hand, we have 
    \begin{align*}
        gH \longmapsto& (l \mapsto \chi_H(g^{-1}l)f(l))
        =(l \mapsto \chi_{gH}(l)f(l))
    \end{align*}
    On the other hand, under the identification $C_0(G/H) \otimes \ind^G_H(A) \cong C_0(G/H,\ind^G_H(A))$, we have \[\underset{{kH \in G/H}}{\sum} \chi_{kH} \otimes \chi_{kH}f\] corresponds to the map
    \begin{align*}
        gH \longmapsto (l \mapsto \underset{kH \in G/H}{\sum}\chi_{kH}(g).\chi_{kH}(l)f(l)).
    \end{align*}
    On a fixed $gH \in G/H$, both the functions take the value $f(l)$ if $l \in gH$, and $0$ otherwise.
\end{rem}
\begin{thm}\label{thm: adjunctions at kkg}
     Let $H\subseteq G$ be open. Then we have an adjunction
    \[
\ind^G_H:
\KK^{H,\sep}_{!,\infty}
\;\leftrightarrows\;
\KK^{G,\sep}_{!,\infty}
:\res^G_H.
\]
    Let $H \subseteq G$ be closed and $G/H$ be compact. Then we have an adjunction
    \[
\res^G_H:
\KK^{G,\sep}_{!,\infty}
\;\leftrightarrows\;
\KK^{H,\sep}_{!,\infty}
:\coind^G_H.
\]
\end{thm}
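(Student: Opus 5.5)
For the first adjunction, the plan is to promote the adjunction of \Cref{prop: adjunctions at skg level} through the last two localization steps. Both $\ind^G_H$ and $\res^G_H$ descend to $\KK^{H,\sep}_{!,\infty}$ and $\KK^{G,\sep}_{!,\infty}$ by \Cref{prop: functors descend}. The unit $\varepsilon$ and counit $\eta$ of \Cref{prop: adjunctions at skg level} are natural transformations between functors on the $\mathcal K$-stabilized homotopy categories. Composing them with the localization $L_{!,\oplus_\omega}\circ L_{\mathrm{Bott}}$ gives natural transformations between the descended functors.

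Concretely, I would argue via the universal property of \Cref{eqn: SKGRALGseph uni prop}, i.e.\ that precomposition with $L_{h,\kg}$ is fully faithful on functor categories. The natural transformations $\kk\circ\varepsilon$ and $\kk\circ\eta$ are defined on $\RALGsep$-level functors. Since the descended functors are determined by their restrictions along the Dwyer--Kan localization $\kkRsep$, these transformations descend uniquely, using \Cref{prop: imp}.5. The triangle identities hold up to homotopy already in $\mathsf S_{\kg}\mathsf G\CAlg_h^{\sep}$, hence after applying the localization. In an $\infty$-category it suffices that the triangle identities hold in the homotopy category to obtain an adjunction (unit transformation inducing equivalences on mapping spaces). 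So I would conclude by checking that $\mathrm{Map}(\ind A,B)\to\mathrm{Map}(A,\res\ind A)\to \mathrm{Map}(A,\res B)$ is an equivalence, which follows from the triangle identities in $\mathrm{ho}$.

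For the second adjunction, with $G/H$ compact and $\coind^G_H=\ind^G_H$, the unit is $\mathrm{id}\to \coind^G_H\res^G_H$, given by $B\to C(G/H,B)\cong \ind^G_H\res^G_H(B)$, $b\mapsto (g\mapsto \beta_g^{-1}... )$, i.e.\ the constant-orbit embedding $b\mapsto(g\mapsto \beta_{g^{-1}}b)$. The counit is $\res^G_H\coind^G_H(A)\to A$, evaluation at the identity $f\mapsto f(e)$, which is $H$-equivariant. Both are genuine $*$-homomorphisms, so I would check the triangle identities already at the level of algebras. The composite $\res B\to \res\coind\res B\to\res B$ is $b\mapsto \beta_{e}b=b$. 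The composite $\coind A\to\coind\res\coind A\to\coind A$ sends $f\mapsto (g\mapsto (l\mapsto f(gl)))\mapsto (g\mapsto f(g))$, again the identity. Then I would descend as above via \Cref{prop: functors descend} and the universal properties.

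The main obstacle is the first adjunction's descent. The counit there only exists after $\kg$-stabilization and involves the zigzag equivalence $\mathcal K(L^2(G/H))\otimes B\simeq B$, so naturality of $\eta$ must be verified in the $\infty$-category $\mathsf S_{\kg}\mathsf G\CAlg_h^{\sep}$ rather than on algebras. I would handle this by checking that each leg of the zigzag is natural in $B$ at the algebra level and is a $\kg$-equivalence, hence invertible after $L_{h,\kg}$.
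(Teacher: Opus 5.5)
Your proposal is correct and follows the paper's own argument: the first adjunction is the $\mathcal{K}$-stabilized adjunction of \Cref{prop: adjunctions at skg level} pushed through the remaining Dwyer--Kan localizations, and the second is the algebra-level adjunction $\res^G_H\dashv\coind^G_H$ descended via \Cref{prop: functors descend}. The paper cites this algebra-level adjunction from \cite{BDS} rather than verifying it, and your unit $b\mapsto(g\mapsto\beta_{g^{-1}}b)$ is the form actually compatible with the convention $f(gh)=h^{-1}f(g)$. Your treatment of the naturality of the zigzag counit, and of detecting an $\infty$-adjunction from triangle identities in the homotopy category, unpacks the paper's one-line justification that adjunctions descend to Dwyer--Kan localizations whenever the functors do.
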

\begin{proof}
    The first statement follows directly from \Cref{prop: functors descend} and \Cref{prop: adjunctions at skg level}, since adjunctions descend to a Dwyer-Kan localization if functors do.

    The second statement also follows from \Cref{prop: functors descend} since we have already have an adjunction 
     \[
\res^G_H:
\mathsf G\CAlg^{\sep}
\;\leftrightarrows\;
\mathsf H\CAlg^{\sep}
:\coind^G_H.
\]
    at the level of algebras (see \cite[Lemma 4.2]{BDS}). For our ease of reference later, we mention the unit and counit of this adjunction here:
    \begin{align*}
        \eta_A: &A \rightarrow \mathsf{coind}^G_H\mathsf{res}^G_H(A),~~a \mapsto \eta(a):=(G \ni t \mapsto ta \in A) \\
    \epsilon_B:&\mathsf{res}^G_H\mathsf{coind}^G_H(B) \rightarrow B,~~f \mapsto \epsilon_B(f):=f(1).
    \end{align*}
\end{proof}
\subsection{Connection to $K$-theories}
In this subsection, we show that the mapping spaces in $\KKRsep$ can be understood completely when in the first variable we plug in certain ``special" algebras. Consequently, we recover the classical as well as graded, $G$-equivariant $K$-theory spaces as described in Section 2 as the infinite loop spaces of the mapping spectrum out of $\C$ and $\s$ in $\KKRsep$ respectively; see \Cref{thm: Recovering classical K-theories from our stable inf cat}.

Since we do not yet know that $\KKRsep$ is an $\infty$-categorical refinement of the classical bivariant $K$-categories, the results of these subsection are crucial for nevertheless being able to make connections to $K$-theory, which we use heavily in Section 5.

We now introduce the notion of a $!$-special $C^*$-algebra. \emph{In what follows, $G$ is a compact group.}
\begin{defi}\label{defi: special}
    $A \in \RALGsep$ is called \emph{$!$-special} if it satisfies the following three conditions\footnote{The H in the enumeration below stands for Hypothesis!}.
    \begin{enumerate}[{(H1)}]
        \item For any $!$-exact sequence $0 \rightarrow B \rightarrow C \rightarrow D \rightarrow 0$ in $\RALGsep$, the induced map 
        \[\operatorname{Map}_{\RALGseph}(A,B \otimes \kg) \rightarrow \operatorname{Map}_{\RALGseph}(A,C \otimes \kg) \rightarrow \operatorname{Map}_{\RALGseph}(A,D \otimes \kg) 
        \]
        is a fiber sequence in Anima.
        \item The natural map of abelian monoids
        \[
        \oplus_{n \in \N}[A,B_n \otimes \kg]_G \rightarrow[A,\oplus_{n \in \N} (B_n \otimes \kg)]_G 
        \]
        is an isomorphism for all countable collection of algebras $\{B_n\}_{n \in \N}$ in $\RALGsep$.
        \item The abelian monoid of stable $G$-homotopy classes of maps $[A,B \otimes \kg]_G$ is a group for all $B \in \RALGsep$.
    \end{enumerate}
\end{defi}
\begin{rem}\label{rem: special sufficient}
    Clearly, $\operatorname{ex-special} \implies \operatorname{se-special}$. In the algebras we consider, we will be showing that they are $\operatorname{ex-special}$.
    
    H1 will be satisfied if the induced map of topological spaces
    \[
    \underline{\operatorname{Hom}}_G(A,B) \rightarrow \underline{\operatorname{Hom}}_G(A,C) \rightarrow \underline{\operatorname{Hom}}_G(A,D)
    \]
    is a Serre fibration of topological spaces for every $!$-exact sequence $0 \rightarrow B \rightarrow C \rightarrow D \rightarrow 0$. Indeed, this follows since $- \otimes \kg$ is an exact functor on $\RALGsep$, and because the $\infty$-category $\RALGseph$ is presented by the topologically enriched category $\RALGsep$; see \Cref{prop: RALGseph omnibus}.1.

    Similarly, a sufficient criterion to check H2 is that the functor 
    \[
    [A,-]_G: \RALGsep \rightarrow \operatorname{Set}
    \]
    preserves countable filtered colimits. This is because countable direct sums can be written as a filtered colimit of finite direct sums, and the functor $[A,-]_G$ clearly preserves finite direct sums of algebras in $\RALGsep$.
\end{rem}
The reason we care about $!$-special algebras is the following result.
\begin{prop}\label{prop: special consequences}
    If $A \in \RALGsep$ is $\operatorname{!-special}$, then 
    \begin{enumerate}
        \item The functor
        \[
        \operatorname{map}_{\SKGRALGseph}(A,-): \SKGRALGseph \rightarrow \mathbf{CMon}(\operatorname{An})
        \]
        takes values in $\mathbf{CGrp}(\operatorname{An})$.
        \item The functor
        \[
        F_A:=\operatorname{map}_{\SKGRALGseph}(A,-): \SKGRALGseph \rightarrow \mathbf{CGrp}(\operatorname{An})
        \]
        factors through $\KKRsep$ (necessarily essentially uniquely) as the functor
        \[
        \tau_{\geq 0}\operatorname{map}_{\KKRsep}(A,-): \KKRsep\rightarrow \mathbf{CGrp}(\operatorname{An})
        \]
        \item For each $B \in \RALGsep$, there exists a natural equivalence in $\mathbf{CGrp}(\operatorname{An})$
        \[
        \operatorname{map}_{\SKGRALGseph}(A,B) \xrightarrow{\simeq} \tau_{\geq 0}\operatorname{map}_{\KKRsep}(A,B)
        \]
    \end{enumerate}
\end{prop}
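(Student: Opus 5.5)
The plan is to use the universal properties of the successive localizations in \Cref{eqn: kk functor}: I will show that $F_A$ is left-exact, sends $\hat W_!$ and then $W_!$ to equivalences, preserves countable sums, and lands in group objects. It then descends through $L_{!,\oplus_\omega}$ and, by \Cref{prop: imp}.4, through $L_{\operatorname{Bott}}$. Finally I identify the descended functor with $\tau_{\geq 0}\operatorname{map}_{\KKRsep}(A,-)$ via Yoneda.

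For Assertion 1, note that by \Cref{prop: SKGRALKGseph omnibus}.1 we have $\operatorname{map}_{\SKGRALGseph}(A,B)\simeq \operatorname{Map}_{\RALGseph}(A,B\otimes\kg)$. Its $\pi_0$ is $[A,B\otimes\kg]_G$, which is a group by (H3). A commutative monoid in anima whose $\pi_0$ is a group is grouplike, so $F_A$ takes values in $\mathbf{CGrp}(\operatorname{An})$.

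For Assertion 2, I proceed in three steps.

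\emph{Left-exactness.} $F_A$ is a corepresentable functor, so it is left-exact.

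\emph{Inverting $\hat W_!$.} Take a $!$-exact sequence and the diagram \Cref{eqn: excision diagram}. By (H1), $F_A$ sends the rows $A\to B\to C$ and $C(f)\to Z(f)\to C$ to fiber sequences, and likewise the column $A\to C(f)\to Q(f)$, which is again $!$-exact since $\hat s$ is a split in the se case. Since $Q(f)$ is contractible, $F_A(\iota_f)$ is an equivalence. The closure under pullbacks and 2-out-of-3 is automatic for a left-exact functor. For countable sums I use (H2): together with the grouplike property, it shows that $F_A(\oplus B_n)\simeq\bigoplus F_A(B_n)$ on all homotopy groups. Here one applies (H2) to $\Omega^k B_n$ using \Cref{rem: S is loops}, and uses that in $\mathbf{CGrp}(\operatorname{An})$ a countable coproduct is computed as a filtered colimit of finite products. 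Hence $F_A$ inverts countable sums of inverted maps, and it preserves countable coproducts.

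\emph{Descent.} \Cref{prop: SKGexcl omnibus}.3 now gives a factorization through $\SKGexcl$. Because $\mathbf{CGrp}(\operatorname{An})$ is left-exact and additive, \Cref{prop: imp}.4 gives a further factorization $\bar F_A$ through $\KKRsep$.

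To identify $\bar F_A$, I use \Cref{prop: imp}.1: $\KKRsep$ is the full subcategory of group objects of $\SKGexcl$, with $L_{\operatorname{Bott}}$ right adjoint to the inclusion. The factorization through $L_{!,\oplus_\omega}$ is a left-exact localization, and $F_A$ is local. So by Yoneda, $F_A$ restricted to $\SKGexcl$ is $\operatorname{map}_{\SKGexcl}(A,-)$, using that $A$ itself is local in the appropriate sense: the unit $A\to L_{!,\oplus}A$ induces an equivalence on maps into local objects. Then for a group object $X$,
\[
\operatorname{map}_{\SKGexcl}(A,X)\simeq\operatorname{map}_{\KKRsep}(L_{\operatorname{Bott}}A,X),
\]
and under the stable structure this connective mapping anima is $\tau_{\geq0}$ of the mapping spectrum. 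Evaluating at $B$ gives Assertion 3.

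I expect the main obstacle to be the rigorous Yoneda step: showing that the descended functor is corepresented by the image of $A$. This amounts to showing that the natural map $\operatorname{map}_{\SKGRALGseph}(A,B)\to\operatorname{map}_{\SKGexcl}(A,B)$ is an equivalence. I would try to deduce it from the explicit description of a left-exact Dwyer–Kan localization as a colimit over a category of fibrant replacements, or dually via the calculus of fractions in the fibration category, showing that $F_A$ being $W_!$-local forces this comparison map to be invertible. The countable-sum compatibility derived from (H2) is the second delicate point.
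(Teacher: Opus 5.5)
Your reduction follows the paper's own proof up to the descent step. Assertion 1 comes from (H3). Left-exactness, inversion of $\hat W_!$ via (H1) applied to \Cref{eqn: excision diagram}, and countable coproducts via (H2) then give factorizations through \Cref{prop: SKGexcl omnibus}.3 and \Cref{prop: imp}.4. The gap is the step you flag yourself: identifying the descended functors with the corepresentable ones. This is exactly the ``as the functor $\tau_{\geq 0}\operatorname{map}_{\KKRsep}(A,-)$'' part of Assertion 2, and all of Assertion 3, so it is not yet proved. The justifications you give in passing also do not work:
\begin{itemize}
\item The paper provides no adjoint to $L_{!,\oplus_\omega}$; it is only a Dwyer--Kan localization. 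So there is no unit $A\to L_{!,\oplus}A$ to appeal to.
\item ``Equivalence on maps into local objects'' is a condition on the target. What (H1) and (H2) actually give you is that the \emph{source} $A$ is colocal, i.e.\ $F_A$ inverts $W_!$.
\item With the inclusion $i$ of group objects left adjoint to $L_{\operatorname{Bott}}$, the adjunction gives $\operatorname{map}(iY,Z)\simeq\operatorname{map}(Y,L_{\operatorname{Bott}}Z)$. It does not give your displayed $\operatorname{map}_{\SKGexcl}(A,X)\simeq\operatorname{map}_{\KKRsep}(L_{\operatorname{Bott}}A,X)$. That formula needs $A$ to already be a group object in $\SKGexcl$. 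This is true, since $A$ is grouplike in the semi-additive $\SKGRALGseph$ and $L_{!,\oplus_\omega}$ preserves finite products, but it still leaves the comparison between $\SKGRALGseph$ and $\SKGexcl$ open.
\end{itemize}

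The missing idea is purely formal and needs no model of the localization. Let $L\colon\mathcal C\to\mathcal C[W^{-1}]$ be a Dwyer--Kan localization of a small $\infty$-category. Then $L^*$ is fully faithful with left adjoint $\operatorname{Lan}_L$, so the factorization of a $W$-inverting functor $F$ is $\operatorname{Lan}_L F$. For any functor $L$, the Yoneda lemma gives $\operatorname{Lan}_L\operatorname{map}_{\mathcal C}(A,-)\simeq\operatorname{map}_{\mathcal C[W^{-1}]}(LA,-)$; this is the reference \cite[Corollary 8.3.1.9]{kerodon} the paper invokes. Now $\operatorname{map}_{\mathcal C}(A,-)$ lies in the essential image of $L^*$. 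Hence the unit of $\operatorname{Lan}_L\dashv L^*$ at it is an equivalence, and that unit is precisely the comparison map $\operatorname{map}_{\mathcal C}(A,B)\to\operatorname{map}_{\mathcal C[W^{-1}]}(LA,LB)$ induced by $L$. Apply this on underlying anima, first to $L_{!,\oplus_\omega}$ and then to $L_{\operatorname{Bott}}$; this gives Assertions 2 and 3, and it is how the paper concludes. Your calculus-of-fractions plan could presumably also be made to work: for colocal $A$, $(A,\operatorname{id})$ is initial in $W_{/A}$, so a colimit over $(W_{/A})^{op}$ collapses to $\operatorname{map}(A,B)$. But that route first requires establishing the fraction-type colimit formula for this left-exact localization, which the formal argument avoids.
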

\begin{proof}
    Assertion 1 is an immediate consequence of H3 and the fact that being a commutative group is a \emph{property} of a commutative monoid which is furthermore tested on $\pi_0$.\\\\
    To show Assertion 2, we first show that we have a factorization of $F_A$ as indicated by the dashed arrows below.
\[
\begin{tikzcd}
	{\SKGRALGseph} & {} && {\mathbf{CGrp}(\operatorname{An})} \\
	{\SKGexcl} \\
	{\KKRsep}
	\arrow["{F_A}", from=1-1, to=1-4]
	\arrow["{L_{!,\oplus_\omega}}"', from=1-1, to=2-1]
	\arrow["{F_A'}"{pos=0.3}, dashed, from=2-1, to=1-4]
	\arrow["{L_{\mathrm{Bott}}}"', from=2-1, to=3-1]
	\arrow["{F_A''}"{pos=0.4}, dashed, from=3-1, to=1-4]
\end{tikzcd}
\]
    To see that $F_A'$ exists, we bring \Cref{prop: SKGexcl omnibus}.3 into play. We need to check that $F_A$ is left-exact, inverts $\hat{W}_!$ and preserves countable coproducts. Being a corepresentable functor, $F_A$ is clearly left-exact. That it inverts $\hat{W}_!$ follows directly from H1 in presence of \Cref{eqn: excision diagram}. That $F_A$ also preserves countable coproducts is a consequence of H2, as can be seen from the Whitehead's theorem for spectra together with the fact that filtered colimits of spectra commute with finite limits. 
    To now see that $F_A''$ exists, we simply use \Cref{prop: imp}.4.
    
    That the factorizations $F_A'$ and $F_A''$ are essentially unique follows from the fact that $L_{!,\oplus_\omega}$ and $L_{\operatorname{Bott}}$ are Dwyer-Kan localizations. That $F_A'$ and $F_A''$ are also corepresentable by $A$ follows since the left-Kan extension of a corepresentable functor is corepresentable (\cite[Corollary 8.3.1.9]{kerodon}), and in our case, $F_A'$ and $F_A''$ are also left-Kan extensions of $F_A$ along the localization maps. \\\\
    Assertion 3 also follows from the previous paragraph.
\end{proof}
As an immediate consequence of Assertion 3 of \Cref{prop: special consequences} and the Eckmann-Hilton argument we have the following corollary (cf. \cite[Remark 3.24]{DJ}).
\begin{cor}\label{cor: special mapping spaces in KK}
    For any $!$-special algebra $A$ and any $B \in \RALGsep$, we have an weak homotopy equivalence of spaces
    \[
    \underline{\operatorname{Hom}}_G(A,B \otimes \mathcal{K}_G) \simeq \Omega^{
    \infty
    }\operatorname{map}_{\KKRsep}(A,B).
    \]
    Moreover, under the above equivalence, the block-sum addition on $[A,B \otimes \mathcal{K}_G]_G$ coincides with the additive structure on $\pi_0(\operatorname{map}_{\KKRsep}(A,B))$.
\end{cor}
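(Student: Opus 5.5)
The argument combines \Cref{prop: special consequences}.3 with a comparison of mapping anima with topological mapping spaces. By \Cref{prop: special consequences}.3 there is a natural equivalence in $\mathbf{CGrp}(\operatorname{An})$
\[
\operatorname{map}_{\SKGRALGseph}(A,B) \xrightarrow{\simeq} \tau_{\geq 0}\operatorname{map}_{\KKRsep}(A,B),
\]
and applying the forgetful functor $\mathbf{CGrp}(\operatorname{An})\to \operatorname{An}$ turns the right-hand side into $\Omega^\infty\operatorname{map}_{\KKRsep}(A,B)$. So it remains to identify the underlying anima of the left-hand side with $\underline{\operatorname{Hom}}_G(A,B\otimes\kg)$.

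For this I would use \Cref{prop: SKGRALKGseph omnibus}.1. Since $G$ is compact, $L_\kg$ is a left Bousfield localization, and $\SKGRALGseph$ is the full subcategory of $\kg$-stable algebras, with $B\mapsto B\otimes\kg$ as a model for the localization. This gives
\[
\operatorname{Map}_{\SKGRALGseph}(A,B)\simeq \operatorname{Map}_{\RALGseph}(A,B\otimes \kg).
\]
By \Cref{prop: RALGseph omnibus}.1, $\RALGseph$ is presented by the topologically enriched category $\RALGsep$, so the latter anima is the singular complex of $\underline{\operatorname{Hom}}_G(A,B\otimes\kg)$. Composing the two identifications gives the weak homotopy equivalence.

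The compatibility of the additive structures is an Eckmann--Hilton argument on $\pi_0$. The block-sum map $(\varphi,\psi)\mapsto \operatorname{diag}(\varphi,\psi)$, followed by an isometric identification $\kg\otimes M_2\cong\kg$, makes $\underline{\operatorname{Hom}}_G(A,B\otimes\kg)$ an $H$-space. On the other side, the $\mathbf{CMon}$-structure on $\operatorname{map}_{\SKGRALGseph}(A,B)$ comes from semi-additivity (\Cref{prop: SKGRALKGseph omnibus}.4). It is induced by the fold map $B\oplus B\to B$ in $\SKGRALGseph$, using $B\oplus B\simeq B\times B$.

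I expect this matching to be the main obstacle. One has to check that, after $\kg$-stabilization, the fold map $B\oplus B\to B$ is represented by the block inclusion $(B\otimes \kg)\oplus(B\otimes\kg)\to B\otimes\kg\otimes M_2$. This should follow from $G$-stability (\Cref{prop: SKGRALKGseph omnibus}.1): the two corner inclusions $B\otimes\kg\to B\otimes\kg\otimes M_2$ are both the identity in $\SKGRALGseph$. So block sum and the semi-additive sum are two binary operations on $\pi_0$ sharing the unit (the zero map), and they satisfy the interchange law because block sum is natural in the target. Eckmann--Hilton then shows they coincide, and both are commutative. Finally, the group structure on $\pi_0\operatorname{map}_{\KKRsep}(A,B)$ agrees with the monoid structure transported through \Cref{prop: special consequences}.3, because that equivalence lives in $\mathbf{CGrp}(\operatorname{An})$.
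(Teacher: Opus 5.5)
Your proposal is correct and follows the paper's route. The paper obtains the corollary directly from Assertion 3 of \Cref{prop: special consequences} together with an Eckmann--Hilton argument; you additionally make explicit the identification $\operatorname{Map}_{\SKGRALGseph}(A,B)\simeq\underline{\operatorname{Hom}}_G(A,B\otimes\kg)$ (via the left Bousfield localization for compact $G$ and the topological enrichment), which the paper leaves implicit.

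One minor correction: the interchange law comes from the fact that, in the semi-additive category, composition and the projections are additive, with block sum being postcomposition of $(a,c)$ by the fixed block map $\mu$. It does not come from naturality of block sum in the target. Moreover, your observation that $\mu$ agrees with the fold map already gives the equality of the block sum of $a,c$ with $a+c$ directly, so the Eckmann--Hilton step is not needed.
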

The next two lemmas show that $\s$ and $q\C$ are ex-special.
\begin{lem}\label{lem: s special}
    $\s \in \RALGsTWO$ is $\operatorname{ex-special}$.  
\end{lem}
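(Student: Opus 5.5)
We verify the three hypotheses of \Cref{defi: special} for $\s$, regarded as an object of $\RALGsTWO$ (group $G_2=G\times C_2$, compact when $G$ is). We use the sufficient criteria of \Cref{rem: special sufficient} throughout, together with the identification $\underline{\operatorname{Hom}}_{G_2}(\s, B\otimes \mathcal{K}_{G_2}) = \mathbb{K}^{G,\operatorname{sp}}(B)$. This identification holds because $B\otimes\mathcal{K}_{G_2}$, with $B$ viewed as a graded algebra, is $B\hat\otimes\hatkg$, since $L^2(G_2)_\N$ is the standard graded $G$-Hilbert space. It lets us import \Cref{thm: spectral model of g eq k theory} wholesale.

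\textbf{(H1).} By \Cref{rem: special sufficient}, it suffices that for every exact sequence $0\to B\to C\to D\to 0$ in $\RALGsTWO$ the sequence $\underline{\operatorname{Hom}}_{G_2}(\s,B)\to\underline{\operatorname{Hom}}_{G_2}(\s,C)\to\underline{\operatorname{Hom}}_{G_2}(\s,D)$ is a Serre fibration (the fiber over $0$ is visibly the first term). This is \Cref{thm: spectral model of g eq k theory}.2, or more precisely the argument behind it. In the non-equivariant case, \cite[Section 3.2]{D} proves the path-lifting statement for maps out of $\s$ via the Cayley transform picture. One then passes to $G_2$-fixed points, which is exact because $G_2$ is compact. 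We must make sure the argument applies to arbitrary (not necessarily stabilized) targets; the argument in \cite{D} does not use stabilization, and we can also simply tensor the sequence with $\mathcal{K}_{G_2}$ first, which preserves exactness.

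\textbf{(H2).} We show that $[\s,-]_{G_2}$ preserves countable filtered colimits, which suffices by \Cref{rem: special sufficient}. The proof of \Cref{thm: spectral model of g eq k theory}.3 reduces to the weak equivalence $\operatorname{colim}_i\underline{\operatorname{Hom}}(\s,A_i)\to\underline{\operatorname{Hom}}(\s,A)$. This follows from the Cayley transform description (homomorphisms out of $\s$ correspond to unitaries, or odd self-adjoint contractions, satisfying a closed condition) together with Palais' theorem. Equivariance is again handled by taking fixed points under the compact group $G_2$, which commutes with filtered colimits of $C^*$-algebras up to the needed approximation, via averaging. Taking $\pi_0$ gives the claim.

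\textbf{(H3).} This is exactly \Cref{thm: spectral model of g eq k theory}.1: the $H$-group structure on $\mathbb{K}^{G,\operatorname{sp}}(B)$, coming from the null-homotopy of $\varphi\oplus\alpha\varphi$ in $M_2$ with the $\operatorname{Ad}(\tau)$ grading, provides inverses for the block-sum monoid $[\s,B\otimes\mathcal{K}_{G_2}]_{G_2}$. We must also check that this block sum agrees with the monoid structure used in (H3). This holds because both are induced by $B\otimes\mathcal{K}_{G_2}\oplus B\otimes\mathcal{K}_{G_2}\to B\otimes\mathcal{K}_{G_2}$, the embedding coming from an isometry $L^2(G_2)_\N\oplus L^2(G_2)_\N\cong L^2(G_2)_\N$, which is unique up to homotopy.

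\textbf{Main obstacle.} We expect the main difficulty to be (H1) in the ex case, where no completely positive splitting is available: the Serre fibration property for maps out of $\s$ into a surjection must come from the lifting argument of \cite{D}, and we rely on that argument not needing splittings. The equivariant upgrade in (H2) also needs care, though less.
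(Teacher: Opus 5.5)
Your proposal is correct and matches the paper's proof, which simply notes that (H1), (H2) and (H3) are assertions 2, 3 and 1 of \Cref{thm: spectral model of g eq k theory}. The paper states one point explicitly that your identification of $B\otimes\mathcal{K}_{G_2}$ with $B\hat\otimes\hatkg$ relies on without saying so: compact operators on a Hilbert space are inner graded, and this is what lets the ungraded tensor product be replaced by the graded one.
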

\begin{proof}
    The necessary verifications are the content of the first three assertions of \Cref{thm: spectral model of g eq k theory}. Note that we are implicitly using that the $C^*$-algebra of compact operators on a Hilbert space is inner graded.
\end{proof}
\begin{lem}\label{lem: qC special}
    $q\C \in \RALGsep$ is $\operatorname{ex-special}$.
\end{lem}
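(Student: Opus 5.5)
The plan is to reduce the three hypotheses (H1)--(H3) of \Cref{defi: special} for $q\C$ to the corresponding properties of $\s$, which are already established in \Cref{lem: s special}. The reduction uses the isomorphism $q\C \cong \s \rtimes C_2 \cong (\s\hat\otimes Cl^{0,1})_u$ from \Cref{rem: qA properties}.5. I would take as the basic identification a natural homeomorphism of mapping spaces
\[
\underline{\operatorname{Hom}}_G(q\C, B) \cong \underline{\operatorname{Hom}}_{G_2}(\s, \coind^{G_2}_{G} B),
\]
where $B$ is viewed as a $G$-algebra. Up to the standard identification $\coind^{G_2}_G B\cong B\oplus B$ with the flip action, this is graded $*$-homomorphisms from $\s$ into $B \otimes Cl^{0,1}$-type algebras. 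Concretely, a $G$-map $q\C\to B$ is the same as a pair of $G$-invariant projections $p,p'$ in $\tilde B$ with $p-p'\in B$. Equivalently, it is a self-adjoint unitary in a suitable unitization, and this is exactly what a graded map $\s\to B\hat\otimes Cl^{0,1}$ encodes via the Cayley transform. Since coinduction from $G$ to $G_2$ is exact, commutes with $-\otimes\kg$ up to the relevant stabilization, and preserves direct sums, the hypotheses can then be transported.

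\textbf{(H1).} By \Cref{rem: special sufficient}, it suffices to show that $\underline{\operatorname{Hom}}_G(q\C,-)$ turns exact sequences into Serre fibrations. Under the identification above, this becomes the statement for $\underline{\operatorname{Hom}}_{G_2}(\s,\coind(-))$. Since $\coind$ preserves short exact sequences, \Cref{thm: spectral model of g eq k theory}.2 applies. Alternatively, I would argue directly via semiprojectivity. \Cref{rem: qA properties}.6 together with Loring's lifting results for $q\C$ gives path-lifting for projections modulo ideals, and combining this with passage to $G$-fixed points (exact for compact $G$) yields the fibration property.

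\textbf{(H2).} By \Cref{rem: special sufficient}, it is enough that $[q\C,-]_G$ commutes with countable filtered colimits. This follows from the transport to $\s$ and \Cref{thm: spectral model of g eq k theory}.3. It also follows directly from semiprojectivity of $q\C$: maps out of a semiprojective algebra into an inductive limit lift, up to homotopy, to a finite stage. This holds equivariantly after averaging over the compact group $G$.

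\textbf{(H3).} By \Cref{prop: Cuntz picture k-theory}, $[q\C, B\otimes\kg]_G\cong K_0^G(B)$, where $\mathcal K(L^2G_\N)$ plays the role of $\kg$. One then checks that block-sum corresponds to the group structure on $K_0^G$, so the monoid is a group. Equivalently, the $H$-group structure of \Cref{thm: spectral model of g eq k theory}.1 provides inverses.

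The main obstacle I expect is making the identification of mapping spaces precise and natural, including its compatibility with stabilization by $\kg$ versus $\hatkg$ and with the Real structure. The homeomorphism $q\C\cong(\s\hat\otimes Cl^{0,1})_u$ is only stated at the level of algebras, so I would need the adjunction between $\res$ and $\coind$ on algebra level (\cite[Lemma 4.2]{BDS}) together with the graded/ungraded dictionary $\GTWOALG\simeq\RGraded$. If this proves delicate, I would fall back on the direct semiprojectivity arguments for (H1) and (H2), which avoid the transport altogether.
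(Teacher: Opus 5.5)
\textbf{There is a genuine gap: the identification you transport everything along is false.}

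By the algebra-level adjunction $\res^{G_2}_G\dashv\coind^{G_2}_G$, the right-hand side of $\underline{\operatorname{Hom}}_G(q\C,B)\cong\underline{\operatorname{Hom}}_{G_2}(\s,\coind^{G_2}_G B)$ is $\underline{\operatorname{Hom}}_G(\res^{G_2}_G\s,B)=\underline{\operatorname{Hom}}_G(C_0(\R),B)$. Concretely, a graded map $\s\to B\hat\otimes Cl^{0,1}$ with $B$ trivially graded has the form $f\mapsto f_{\operatorname{ev}}(T)+f_{\operatorname{odd}}(T)\varepsilon$ for a self-adjoint regular $T$. So it is just an ungraded map $C_0(\R)\to B$: a unitary under the Cayley transform, not a pair of projections.

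Already for $G$ trivial and $B=\C$ the two sides differ. $\underline{\operatorname{Hom}}(q\C,\C)$ consists of three points: zero and the two characters at $t=1$ in the model of \Cref{rem: qA properties}.4. By contrast, $\underline{\operatorname{Hom}}(C_0(\R),\C)\cong\R^+$ is connected. So your transport would at best show that $\res^{G_2}_G\s$ is special, not $q\C$.

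What the isomorphism $q\C\cong(\s\hat\otimes Cl^{0,1})_u$ and the adjunction actually give, using $\coind^{G_2}_G B\cong B\hat\otimes Cl^{0,1}$, is
$\underline{\operatorname{Hom}}_G(q\C,B)\cong\underline{\operatorname{Hom}}_{G_2}(\s\hat\otimes Cl^{0,1},B\hat\otimes Cl^{0,1})$.
The missing idea is how to cancel this $Cl^{0,1}$, and that cannot be done on unstabilized mapping spaces. The paper does it in $\mathsf S_{\mathcal K_{G_2}}\mathsf G_2\CAlg_h^{\sep}$:
\begin{itemize}
\item there $\hat\otimes$ descends;
\item $-\hat\otimes Cl^{1,0}$ is an equivalence on mapping anima, because $Cl^{0,1}\hat\otimes Cl^{1,0}\cong Cl^{1,1}$ is inner graded and hence invisible after $\mathcal K_{G_2}$-stabilization.
\end{itemize}
This gives $\SKGRALGseph(q\C,B)\simeq\mathsf S_{\mathcal K_{G_2}}\mathsf G_2\CAlg_h^{\sep}(\s,B)$ naturally in $B$. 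Then (H1) for $q\C$, in its anima formulation, follows from (H1) for $\s$ (\Cref{lem: s special}), since giving $B$ the trivial $C_2$-action preserves exact sequences. No Serre fibration statement about $q\C$ itself is needed.

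Your fallback for (H1) does not close the gap. Semiprojectivity is a lifting property through quotients by increasing unions of ideals; that is why it yields continuity of $[q\C,-]$ along inductive limits. The fibration property instead asks you to lift maps $q\C\to Z(\pi)=C\times_D PD$ through the surjection $PC\to Z(\pi)$, for an arbitrary surjection $\pi\colon C\to D$. That is a projectivity-type property, which semiprojectivity does not supply. Moreover, maps $q\C\to B$ are in general not given by pairs of projections in $\tilde B$ with difference in $B$: the natural extension of such a map to $Q\C=\C*\C$ takes values in multipliers. So path-lifting for projections does not apply directly.

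Your primary argument for (H3) is the paper's. For (H2) you need neither the transport nor semiprojectivity. It suffices to combine the (H3) identification $[q\C,B\otimes\kg]_G\cong K_0^G(B)$ with the continuity of $K_0^G$ (Assertions 3 and 6 of \Cref{thm: spectral model of g eq k theory}), which is exactly what the paper does.
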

\begin{proof}
    That $q\C$ satisfies H3 follows from the fact that $[q\C,B \otimes \kg]_G$ is naturally isomorphic to the abelian group $K^G_0(B)$. Since $K^G_0(-)$ is known to preserve filtered colimits (for example, by combining Assertion 3 and Assertion 6 of \Cref{thm: spectral model of g eq k theory}), $q\C$ also satisfies H2.
    
    That $q\C$ satisfies H1 can be shown directly, but we instead deduce it from \Cref{lem: s special}. To that end, we leverage the isomorphism of $C^*$-algebras $q\C \cong (\s \hat\otimes Cl^{0,1})_u$ as mentioned in \Cref{rem: qA properties}.5. The rest follows from the fact that $\s$ satisfies H1 for the group $G_2$ and the restriction-coinduction adjunction. In more detail, for any $B \in \RALGsep$, we have natural equivalences
   \[
\begin{aligned}
\SKGRALGseph(q\C,B)
&\simeq
\SKGRALGseph\!\left(
\mathsf{res}^{G_2}_{G}(\s\hat\otimes Cl^{0,1}),B
\right)
\\
&\simeq
\mathsf S_{\mathcal K_{G_2}}
\mathsf G_2\CAlg_h^{\sep}
\!\left(
\s\hat\otimes Cl^{0,1},
B\hat\otimes Cl^{0,1}
\right)
\\
&\xrightarrow[\simeq]{\hat\otimes Cl^{1,0}}
\mathsf S_{\mathcal K_{G_2}}
\mathsf G_2\CAlg_h^{\sep}(\s,B).
\end{aligned}
\]
    Note that for the second equivalence, we are using the identification\footnote{the more well known isomorphism is with the ungraded tensor product, but since $B$ has the trivial $C_2$-action, we can also use the graded tensor product.} $\mathsf{coind}^{G_2}_G B \cong B \hat\otimes Cl^{0,1}$. For the final equivalence, we are also using that the graded tensor products $\hat\otimes_!$ on $\RALGsTWO$ also descend to a symmetric monoidal structure on $\mathsf S_{\mathcal K_{G_2}}
\mathsf G_2\CAlg_h^{\sep}$, which is easy to see since the algebras of continuous functions on the unit interval and compact operators on a Hilbert space are inner graded.
\end{proof}
\begin{rem}
    We note that $\C$ is \emph{not} $!$-special, as H3 fails for it. We mention without proof that $C_0(\R^{0,1})$ is $!$-special.
\end{rem}
Finally, we obtain the following important connection between classical models of equivariant and graded $K$-theories and the infinite loop spaces of the mapping spectrum in the stable $\infty$-categories we constructed, as mentioned at the beginning of this subsection.
\begin{thm}\label{thm: Recovering classical K-theories from our stable inf cat}
    We have natural equivalences of spaces for any \(B\in\RALGsep\) and \(\hat B\in\RALGsTWO\):
\[
\mathbb K^{G}(B)
\simeq
\Omega^{\infty}\KKRsep(\C,B),
\qquad
\mathbb{\hat K}^{G}(\hat B)
\simeq
\Omega^{\infty}\KKRGTWOsep(\s,\hat B).
\]
    Moreover, $K_0^G(-), \hoKKRsep(\C,-): \RALGsep \rightarrow \operatorname{Ab}$ are equivalent as lax monoidal functors, where $\hoKKRsep(\C,-)$ denotes the homotopy category of $\KKRsep$.
\end{thm}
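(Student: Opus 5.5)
The plan is to reduce both equivalences to \Cref{cor: special mapping spaces in KK}, applied to the ex-special algebras $q\C$ (for $G$) and $\s$ (for $G_2$) from \Cref{lem: qC special} and \Cref{lem: s special}.

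For the ungraded statement, \Cref{cor: s fib seqn}.1 gives an equivalence $q\C\simeq\C$ in $\KKRsep$, induced by either of the two natural maps $q\C\to\C$ of \Cref{rem: qA properties}.2. Precomposition with the first of these gives
\[
\Omega^\infty\KKRsep(\C,B)\simeq\Omega^\infty\KKRsep(q\C,B)\simeq\underline{\operatorname{Hom}}_G(q\C,B\otimes\kg).
\]
Here $\kg=\mathcal K(L^2G_\N)$. By \Cref{rem: the map in the cuntz picture iso}, the homotopy groups of the right-hand side are $K^G_n(B)$. I take $\mathbb K^G(B):=\underline{\operatorname{Hom}}_G(q\C,B\otimes\kg)$ as the model of the $K$-theory space, or check that it agrees with the Fredholm model via \Cref{thm: spectral model of g eq k theory}.5–6. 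This gives the first equivalence. Naturality in $B$ holds because every map in the chain is pre- or post-composition.

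For the graded statement, apply \Cref{cor: special mapping spaces in KK} with $G_2$ in place of $G$ and $A=\s$. This gives $\underline{\operatorname{Hom}}_{G_2}(\s,\hat B\otimes\mathcal K_{G_2})\simeq\Omega^\infty\KKRGTWOsep(\s,\hat B)$. The remaining point is to identify $\hat B\otimes\mathcal K_{G_2}$ with $\hat B\hat\otimes\hatkg$. Under $\RGraded\simeq\GTWOALG$, the algebra $\mathcal K_{G_2}=\mathcal K(L^2(G\times C_2)_\N)$ is the compact operators on a graded $G$-Hilbert space in which every irreducible representation occurs infinitely often in both degrees. That is exactly $\hatkg$. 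Because compact operators are inner graded, the ungraded and graded tensor products with it are isomorphic, as already used in \Cref{lem: s special}. Hence the left side is $\mathbb K^{G,\operatorname{sp}}(\hat B)=\hat{\mathbb K}^G(\hat B)$.

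For the lax monoidal comparison, \Cref{cor: special mapping spaces in KK} already identifies $K^G_0(B)$ with $\pi_0\KKRsep(\C,B)$ as abelian groups, naturally in $B$. It remains to match the lax structures. On the $K$-theory side the structure is the external tensor product of projective modules, equivalently of Fredholm cycles. On the $KK$ side it is the tensor product $\KKRsep(\C,B)\otimes\KKRsep(\C,B')\to\KKRsep(\C,B\otimes B')$ coming from \Cref{prop: imp}.7. Both structures are determined on generators, and a natural lax structure on a corepresented functor is determined by its unit together with $\otimes$ on morphisms. So I would check compatibility on classes $[p]$ represented by projections, i.e.\ maps $\C\to B\otimes\kg$: there the Kasparov-side product of $\C\to B\otimes\kg$ and $\C\to B'\otimes\kg$ is literally $p\otimes p'$.

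The main obstacle is that the Cuntz-picture identification runs through $q\C$, which is not monoidal. To get around this, I would first show that the composite $K_0^G(B)\to[q\C,B\otimes\kg]\cong\pi_0\KKRsep(\C,B)$ sends $[p]$ to the class of $kk(p:\C\to B\otimes\kg)$. This holds because \Cref{rem: the map in the cuntz picture iso} describes the map as precomposition with $q\C\to\C$. With that in hand, monoidality follows from $\kkRsep$ being symmetric monoidal and the unit $\C$ going to $[1]$.
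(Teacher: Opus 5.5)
Your two space-level equivalences follow the paper's proof. Both apply \Cref{cor: special mapping spaces in KK} to the ex-special algebras $q\C$ and $\s$, and use $q\C\simeq\C$ in $\KKRsep$. Your identification of $\mathcal K_{G_2}$ with $\hatkg$, and of $\hat B\otimes\mathcal K_{G_2}$ with $\hat B\hat\otimes\hatkg$ via inner gradedness, spells out a step the paper leaves implicit.

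The lax monoidal part is where your route differs. The paper deduces it from the lax monoidal form of the universal property (\Cref{prop: imp}.9). You instead compute directly. You track the explicit isomorphism $[p]\mapsto kk(p)$, which does follow from \Cref{rem: the map in the cuntz picture iso} once the equivalence $q\C\simeq\C$ is taken along the first map $q\C\to\C$. You then use that $\kkRsep$ is symmetric monoidal and that both pairings are bilinear. Your argument stays at the level of $\pi_0$ and makes visible why the external product on $K_0^G$ matches the tensor product in $\KKRsep$. The paper's appeal is shorter, but \Cref{prop: imp}.9 is formulated for stable targets, so it leaves that matching implicit.

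One step needs patching. For non-unital $B$, classes of projections in $B\otimes\kg$ do not generate $K_0^G(B)$. For instance, $C_0(\R^2)\otimes\mathcal K$ has no nonzero projections, yet $K_0(C_0(\R^2))\cong\Z$. So checking on such classes only settles the unital case.

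Reduce to the unital case via unitizations. Both lax structures and both identifications are natural in $B$ and $B'$. The map $K_0^G(B\otimes B')\to K_0^G(B^+\otimes B'^+)$ is injective, by split exactness of $K_0^G$ applied to two split extensions: $0\to B'\to B'^+\to\C\to0$ tensored with $B$, and $0\to B\to B^+\to\C\to0$ tensored with $B'^+$. For unital $B$ and compact $G$, every class in $K_0^G(B)$ is a difference of classes of $G$-invariant projections, so bilinearity then finishes the argument.
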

\begin{proof}
    The first two assertions follow from \Cref{cor: special mapping spaces in KK} and the last two lemmas. For the first assertion we are also using that $q\C$ and $\C$ are equivalent in $\KKRsep$, as follows from \Cref{cor: s fib seqn}.1. The last assertion follows from the lax case of \Cref{prop: imp}.9.
\end{proof}
\subsection{Extension to all $C^*$-algebras}
In this subsection, we construct presentable stable $\infty$-categories $\KKR$ which contain $\KKRsep$ as its full subcategory of $\aleph_1$-compact objects.
\begin{defi}
    Denote by 
    \[
    \KKR:=\operatorname{Ind}_{\aleph_1}(\KKRsep),
    \]
    where $\operatorname{Ind}_{\aleph_1}(\mathcal{C})$ denotes the $\aleph_1$-ind completion of $\mathcal{C}$, i.e. the category obtained by \emph{freely adding}\footnote{see \cite[Proposition 5.3.5.10]{HTT} for the precise universal property} $\aleph_1$-filtered colimits to $\mathcal{C}$. We define functors
    \[
    \kkR: \RALG \rightarrow \KKR
    \]
    as left Kan-extensions
\[
\begin{tikzcd}
	{\RALGsep} && {\KKRsep} & {\KKR} \\
	&& {\RALG}
	\arrow["{\kkRsep}", from=1-1, to=1-3]
	\arrow["{\operatorname{incl.}}"', from=1-1, to=2-3]
	\arrow["y", from=1-3, to=1-4]
	\arrow["{\kkR}"'{pos=0.6}, dashed, from=2-3, to=1-4]
\end{tikzcd}
\]
where $y: \mathcal{C} \rightarrow \operatorname{Ind}_{\aleph_1}(\mathcal{C})$ denotes the canonical embedding (which preserves countable filtered colimits by \cite[Proposition 5.3.5.14]{HTT}).
\end{defi}
Before studying the general properties of the above construction, we need the following definition and an accompanying lemma.
\begin{defi}\label{defi: s finitary}
    A functor $F$ defined on $\RALG$ is called \emph{s-finitary} if for every $A \in \RALG$, the canonical map 
    \[
    \operatorname{colim}_{A' \subseteq_{\operatorname{sep}}A} F(A') \rightarrow F(A)
    \]
    is an equivalence, where $A'$ runs through the separable $G$-invariant $C^*$-subalgebras of $A$.
\end{defi}
\begin{lem}\label{lem: s finitary}
    Let $F:\RALG \rightarrow \mathcal{D}$ be a functor to an $\infty$-category $\mathcal{D}$ having all small filtered colimits. We have a canonical natural transformation $c: \hat{F} \implies F$, where $\hat{F}$ is the left Kan extension of $F\mid_{\RALGsep}$. Then the functor $F$ is s-finitary if and only if $c$ is an equivalence.
\end{lem}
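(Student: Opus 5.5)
The plan is to compute $\hat F$ pointwise via the pointwise formula for left Kan extensions and compare it with the colimit in \Cref{defi: s finitary}. For $A \in \RALG$, the value is
\[
\hat F(A) \simeq \operatorname{colim}_{(A' \to A) \in \RALGsep_{/A}} F(A'),
\]
indexed by the comma category of separable algebras over $A$. The transformation $c_A$ is induced by the cocone $F(A') \to F(A)$ given by applying $F$ to the structure maps.

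The key step is to show that the full subcategory $\mathcal{P}_A$ of $\RALGsep_{/A}$ consisting of inclusions $A' \subseteq A$ of separable $G$-invariant subalgebras is cofinal. Note that $\mathcal{P}_A$ is a poset. Given an arbitrary $\varphi: A' \to A$ with $A'$ separable, the image $\varphi(A')$ is a separable $G$-invariant (and Real) $C^*$-subalgebra of $A$, since it is the image of a $*$-homomorphism and hence closed. Hence $\varphi$ factors through $\varphi(A') \subseteq A$.

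For cofinality via Quillen's Theorem A (\cite[HTT 4.1.3.1]{HTT}), I must show that for every $\varphi: A' \to A$ the category of factorizations $A' \to A'' \subseteq A$ with $A'' \in \mathcal{P}_A$ is weakly contractible. I claim it is filtered and nonempty.
\begin{itemize}
\item It is nonempty, since $\varphi(A')$ provides a factorization.
\item A factorization through $A''$ exists precisely when $\varphi(A') \subseteq A''$, and then it is unique, because $A'' \to A$ is injective. So this category is the poset of separable $G$-invariant subalgebras of $A$ containing $\varphi(A')$.
\item This poset is directed: given two such subalgebras, the $C^*$-subalgebra generated by their union is again separable and $G$-invariant. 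It is Real-invariant as well, since both structures preserve generated subalgebras.
\end{itemize}
A directed poset is weakly contractible, so cofinality holds. Consequently $\hat F(A) \simeq \operatorname{colim}_{A' \in \mathcal{P}_A} F(A')$, and under this identification $c_A$ becomes the canonical map of \Cref{defi: s finitary}. Naturality in $A$ is automatic from the Kan extension.

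The two directions then follow immediately. $F$ is s-finitary if and only if each $c_A$ is an equivalence, which holds if and only if $c$ is an equivalence, since equivalences in functor categories are detected pointwise. Filtered colimits exist in $\mathcal{D}$ by hypothesis, and $\mathcal{P}_A$ is filtered, so all the colimits involved make sense.

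The main obstacle I expect is set-theoretic and technical: ensuring that $\RALGsep$ is essentially small, so that the Kan extension exists, and checking that separability together with $G$-invariance is preserved under generated subalgebras. For a second countable $G$ acting on a separable subalgebra, the $G$-orbit of a countable dense set is separable, because $G$ has a countable dense subset and the action is point-norm continuous. I would address these with a brief remark.
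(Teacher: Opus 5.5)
Your proposal is correct and follows the same route as the paper. The paper invokes the pointwise formula for left Kan extensions together with the fact that every map from a separable algebra into $A$ factors through a separable $G$-invariant subalgebra, and defers the details to \cite[Lemma 3.1]{BEL}; your cofinality argument, using the directed poset of separable invariant subalgebras containing $\varphi(A')$, spells out exactly the step the paper leaves implicit.
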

\begin{proof}
    This follows immediately from the pointwise formula for left Kan extension, and the fact that every morphism $A \rightarrow B$ in $\RALG$ with $A$ separable factors through a $G$-invariant separable $C^*$-subalgebra of $B$. See \cite[Lemma 3.1]{BEL} for more details.
\end{proof}
\begin{prop}\label{prop: ind omnibus}
    \hfill
    \begin{enumerate}
        \item The category $\KKR$ is a presentably symmetric monoidal (for every choice of $(?,!)$ except $(\min,\operatorname{ex})$), stable $\infty$-category, and $\KKRsep$ can be identified with the full subcategory of $\KKR$ comprising of $\aleph_1$-compact objects.
        \item The functor $\kkR: \RALG \rightarrow \KKR$ is symmetric monoidal (for every choice of $(?,!)$ except $(\min,\operatorname{ex})$), homotopy invariant, $\kg$-stable, $!$-exact, preserves countable direct sums and is s-finitary\footnote{note that any functor on $\RALG$ which preserves countable direct sums and is s-finitary automatically preserves all direct sums.}.
        \item Pullback along $\kkR$ induces for every cocomplete symmetric monoidal stable $\infty$-category $\mathcal{D}$ with bi-cocontinous tensor product an equivalence
        \[
        (\kkR)^*: \mathbf{Fun}^{\operatorname{colim}}_{\otimes_?\operatorname{/lax}}(\KKR,\mathcal{D}) \xrightarrow{\simeq} \mathbf{Fun}^{h,\kg,\operatorname{!},\oplus_\omega,\operatorname{sfin}}_{\otimes_?\operatorname{/lax}}(\RALG,\mathcal{D}),
        \]
        where the superscript $\operatorname{sfin}$ denotes s-finitary functors.
    \end{enumerate}
\end{prop}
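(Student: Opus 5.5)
The plan is to follow the strategy of \cite[Section 8]{KKandE} and \cite[Section 3]{BEL}, and to deduce each assertion from the properties of $\KKRsep$ in \Cref{prop: imp} together with general facts about $\operatorname{Ind}_{\aleph_1}$-completions.

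For Assertion 1, I would use three facts about $\KKRsep$ from \Cref{prop: imp}: it is small, stable and countably cocomplete. From these, \cite[Proposition 5.3.5.11 and Theorem 5.5.1.1]{HTT} give that $\operatorname{Ind}_{\aleph_1}(\KKRsep)$ is presentable. Stability passes to Ind-completions of stable categories, via \cite[Proposition 1.1.3.6]{HA}. The identification of $\KKRsep$ with the $\aleph_1$-compact objects needs more care. In general the $\aleph_1$-compact objects of $\operatorname{Ind}_{\aleph_1}(\mathcal{C})$ are only the retracts of objects of $\mathcal{C}$. Since $\KKRsep$ is idempotent complete (it is stable with countable colimits), these retracts already lie in $\KKRsep$. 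For the symmetric monoidal structure, I would Ind-extend the tensor product of \Cref{prop: imp}.6. It is bi-countable-colimit preserving by \Cref{prop: imp}.7, so \cite[Corollary 4.8.1.14]{HA} makes the extension presentably symmetric monoidal. This also explains why the case $(\min,\operatorname{ex})$ is excluded.

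For Assertion 2, homotopy invariance, $\kg$-stability and $!$-exactness only need to be checked on separable algebras once s-finitarity is known. I would get s-finitarity from \Cref{lem: s finitary} applied to $\kkR$ itself, since it is by definition a left Kan extension. The key inputs are two facts: any $!$-exact sequence in $\RALG$ is an $\aleph_1$-filtered colimit of separable $!$-exact subsequences, and in the stable category $\KKR$ fibre sequences commute with filtered colimits. For the semi-split case the cpc-split has to be restricted compatibly to the separable subalgebras, as in \cite{BEL}. Countable direct sums reduce to \Cref{prop: imp}.3, using again that the separable subalgebras of $\oplus_n A_n$ are cofinal among sums of separable subalgebras. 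Symmetric monoidality reduces similarly to the separable case, because separable subalgebras of the form $A'\otimes B'$ are cofinal in $A\otimes B$.

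For Assertion 3, I would write the chain of equivalences
\[
\mathbf{Fun}^{\operatorname{colim}}(\KKR,\mathcal D)\simeq \mathbf{Fun}^{\operatorname{colim}_\omega}(\KKRsep,\mathcal D)\simeq \mathbf{Fun}^{h,\kg,!,\oplus_\omega}(\RALGsep,\mathcal D)\simeq \mathbf{Fun}^{h,\kg,!,\oplus_\omega,\operatorname{sfin}}(\RALG,\mathcal D).
\]
The first equivalence is the universal property of $\operatorname{Ind}_{\aleph_1}$: functors out of it preserving $\aleph_1$-filtered colimits correspond to arbitrary functors on $\KKRsep$, and such an extension is fully cocontinuous exactly when its restriction preserves countable colimits. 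The second is \Cref{prop: imp}.5. The third is restriction, with inverse given by left Kan extension, using \Cref{lem: s finitary} and the argument of Assertion 2 to see that the Kan extension keeps the listed properties. The monoidal versions follow by the same chain using \Cref{prop: imp}.9 and the monoidal universal property of Ind-completion.

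I expect the main obstacle to be transporting $!$-exactness to non-separable algebras, that is, exhibiting an arbitrary $!$-exact (in particular semi-split) sequence as a filtered colimit of separable ones. There I would rely on the separable-subalgebra exhaustion argument of \cite[Lemma 3.1 and Section 3]{BEL}.
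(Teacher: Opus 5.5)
Your outline follows the paper's route. Assertion 1 comes from general facts about $\operatorname{Ind}_{\aleph_1}$-completions; your remark that idempotent completeness of $\KKRsep$ is needed to identify it with the $\aleph_1$-compact objects is a correct refinement the paper leaves implicit. Assertion 2 is separable exhaustion as in \cite{BEL}, and Assertion 3 is literally the same chain of equivalences. One step, however, fails as written: strong monoidality of $\kkR$ for the maximal tensor product.

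You reduce monoidality to the separable case by asserting that separable subalgebras of the form $A'\otimes B'$ are cofinal in $A\otimes B$. For $\otimes_{\min}$ this is fine, because the minimal tensor product is injective. For $\otimes_{\max}$ it is not. The canonical map $A'\otimes_{\max}B'\to A\otimes_{\max}B$ need not be injective, so $A'\otimes_{\max}B'$ is in general not a subalgebra of $A\otimes_{\max}B$. Its image is only a quotient of it, and $\kkR$ of that quotient has no reason to be $\kkR(A')\otimes\kkR(B')$.

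The lax structure map $\operatorname{colim}_{A',B'}\kkR(A'\otimes_{\max}B')\to\operatorname{colim}_{D\subseteq_{\operatorname{sep}}A\otimes_{\max}B}\kkR(D)$ is formal. To see that it is an equivalence, you need the extra fact that the pairs $(A',B')$ for which $A'\otimes_{\max}B'\to A\otimes_{\max}B$ is isometric are cofinal among pairs of separable $G$-invariant subalgebras. This can be proved by a countable iteration:
\begin{enumerate}
\item For $x\in A\odot B$, take the infimum of $\|x\|_{A''\otimes_{\max}B''}$ over separable $G$-invariant $A'',B''$ with $x\in A''\odot B''$. This is a $C^*$-seminorm on $A\odot B$.
\item It is $\geq\|x\|_{A\otimes_{\max}B}$, since the maximal norm of $A\odot B$ restricts to a $C^*$-seminorm on each $A''\odot B''$. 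It is $\leq\|x\|_{A\otimes_{\max}B}$, since it is itself a $C^*$-seminorm on $A\odot B$. Hence it equals $\|x\|_{A\otimes_{\max}B}$.
\item Enlarge $A',B'$ countably often so that these infima are attained on a countable dense subset of the algebraic tensor product, and take closures of the unions. This gives separable $A_\infty\supseteq A'$ and $B_\infty\supseteq B'$ with $A_\infty\otimes_{\max}B_\infty\subseteq A\otimes_{\max}B$.
\end{enumerate}
This is exactly what the paper's footnote flags (``to show its (strong) monoidality, one has to look into the structure of the minimal and maximal tensor products''). It, and not the transport of $!$-exactness, is the real subtlety in Assertion 2. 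Your exhaustion argument, including the compatible restriction of cpc-splits, handles exactness routinely.
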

\begin{proof}
       Assertion 1 follows from the general fact that if $\mathcal{C}$ is a small, symmetric monoidal, $\kappa$-cocomplete (for some regular cardinal $\kappa$), stable $\infty$-category such that the tensor product preserves $\kappa$-small colimits in each variable, then $\operatorname{Ind}_{\kappa}(\mathcal{C})$ is a presentably symmetric monoidal stable $\infty$-category and $\mathcal{C}$ can be identified with the full subcategory of $\operatorname{Ind}_{\kappa}(\mathcal{C})$ comprising of $\aleph_1$-compact objects. See \cite[Theorem 5.5.1.1]{HTT}, \cite[Proposition 1.1.3.6]{HA} and \cite[Corollary 4.8.1.14]{HA}.\\\\
       The proof of Assertion 2 can be taken from \cite[Theorem 1.18]{BEL} and \cite[Proposition 3.8]{BEL}\footnote{the proof of lax monoidality of $\kkR$ is formal, but to show its (strong) monoidality, one has to look into the structure of the minimal and maximal tensor products; see loc.\! cit.\! for details}. Note that although \cite{BEL} is written with $\operatorname{Ind}_{\aleph_0}$-completion, all the filtered diagrams considered in Section 3 of loc.\! cit.\! are indeed $\aleph_1$-filtered, and so the proofs go through in our context too. The Real and the (potentially) non-discrete group case do not pose any difficulties.\\\\
       Assertion 3 follows from the series of equivalences
       \[
\begin{aligned}
\mathbf{Fun}^{\operatorname{colim}}_{\otimes_?\operatorname{/lax}}
    (\KKR,\mathcal{D})
&\xrightarrow[\simeq]{y^*}
\mathbf{Fun}^{\operatorname{colim}_{\omega}}_{\otimes_?\operatorname{/lax}}
    (\KKRsep,\mathcal{D})
\\[0.6em]
&\xrightarrow[\simeq]{{\kkRsep}^*}
\mathbf{Fun}^{h,\kg,!,\oplus_\omega}_{\otimes_?\operatorname{/lax}}
    (\RALGsep,\mathcal{D})
\\
&\xleftarrow[\simeq]{\operatorname{incl}^*}
\mathbf{Fun}^{h,\kg,!,\oplus_\omega,\operatorname{sfin}}_
    {\otimes_?\operatorname{/lax}}
    (\RALG,\mathcal{D}).
\end{aligned}
\]
       where the first equivalence follows from \cite[Proposition 5.3.5.10]{HTT} and \cite[Corollary 4.8.1.14]{HA}, the second equivalence is the content of \Cref{prop: imp}.9, and the last equivalence follows from \Cref{lem: s finitary} and Assertion 2. Note that the inverse to the last equivalence is precisely the left Kan-extension along the inclusion $\RALGsep \xrightarrow{\operatorname{incl.}} \RALG$.
\end{proof}
The next result is a generalization of \Cref{thm: Recovering classical K-theories from our stable inf cat} to the non-separable case.
\begin{thm}\label{thm: Recovering classical K-theories from our stable inf cat gen}
     We have natural equivalences of spaces for any \(B\in\RALG\) and \(\hat{B}\in\GTWOALG\):
\[
\mathbb{K}^{G}(B)
\simeq
\Omega^{\infty}\KKR(\C,B),
\qquad
\mathbb{\hat K}^G(\hat{B})
\simeq
\Omega^{\infty}\KKRGTWO(\s,\hat{B}).
\]
    Moreover, $K_0^G(-), \hoKKR(\C,-): \RALG \rightarrow \operatorname{Ab}$ {are equivalent as lax monoidal functors, where $\hoKKR$ denotes the homotopy category of $\KKR$.}
\end{thm}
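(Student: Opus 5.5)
The plan is to reduce to the separable case, \Cref{thm: Recovering classical K-theories from our stable inf cat}, using that both sides are s-finitary and compatible with $\aleph_1$-filtered colimits. Let $B\in\RALG$ and write $B\simeq\operatorname{colim}_{B'\subseteq_{\sep}B}B'$ over the poset of separable $G$-invariant $C^*$-subalgebras. This poset is $\aleph_1$-filtered, since a countable family of separable subalgebras generates a separable subalgebra. By \Cref{prop: ind omnibus}.2, $\kkR$ is s-finitary, so $\kkR(B)\simeq\operatorname{colim}_{B'}\kkR(B')$ in $\KKR=\operatorname{Ind}_{\aleph_1}(\KKRsep)$. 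Since $\C$ (respectively $\s$ for the group $G_2$) is separable, its image is $\aleph_1$-compact in $\KKR$. Hence
\[
\KKR(\C,B)\simeq\operatorname{colim}_{B'}\KKRsep(\C,B'),
\]
and likewise $\KKRGTWO(\s,\hat B)\simeq\operatorname{colim}_{\hat B'}\KKRGTWOsep(\s,\hat B')$. Because $\Omega^\infty$ commutes with filtered colimits of spectra, the separable theorem, applied naturally in $B'$, gives
\[
\Omega^\infty\KKR(\C,B)\simeq\operatorname{colim}_{B'}\mathbb K^G(B'),
\]
and the graded analogue.

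It then remains to show that $\operatorname{colim}_{B'}\mathbb K^G(B')\to\mathbb K^G(B)$ is a weak equivalence, i.e.\ that the space-level $K$-theory functor is itself s-finitary. In the graded case I would use the model $\mathbb K^{G,\operatorname{sp}}(\hat B)=\underline{\operatorname{Hom}}_{G_2}(\s,\hat B\hat\otimes\hatkg)$. Since $\s$ is separable (finitely generated as a $C^*$-algebra), every map from a compact space into this mapping space lands in $\underline{\operatorname{Hom}}_{G_2}(\s,\hat B'\hat\otimes\hatkg)$ for some separable $\hat B'$. The same holds for homotopies. Thus the homotopy groups commute with the colimit, which is exactly the argument of \Cref{thm: spectral model of g eq k theory}.3 specialised to the filtered system of separable subalgebras; alternatively one can invoke that result directly on $\pi_n$.

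For the ungraded case I would use $\mathbb K^G(B)\simeq\underline{\operatorname{Hom}}_G(q\C,B\otimes\kg)$, via \Cref{rem: the map in the cuntz picture iso}. The argument is the same because $q\C$ is separable. Alternatively, one can appeal to filtered-colimit preservation of $K^G_n$, as used in \Cref{lem: qC special}.

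The lax monoidal statement follows by the same pattern. Both $K_0^G(-)$ and $\hoKKR(\C,-)$ are s-finitary: the former by the argument above, the latter by compactness of $\C$ together with the fact that $\pi_0$ commutes with filtered colimits. The two functors agree as lax monoidal functors on $\RALGsep$ by the separable theorem. Since the lax structure maps on $B_1\otimes B_2$ are compatible with the cofinal system of products $B_1'\otimes B_2'$ of separable subalgebras, the equivalence extends by left Kan extension, using \Cref{lem: s finitary}.

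I expect the main obstacle to be the compactness claim together with the commutation of mapping spectra with the $\aleph_1$-filtered colimit. More precisely, one must check that $\kkR$ restricted to separable algebras is really $y\circ\kkRsep$, and that the colimit formula produced by the left Kan extension is computed in $\operatorname{Ind}_{\aleph_1}$ over an $\aleph_1$-filtered diagram. Only then does corepresentability by an $\aleph_1$-compact object pass through the colimit. A second point needing care is naturality in $B$ of the space-level comparison: the separable equivalences must be natural in $B'$ so that they assemble over the colimit. This is guaranteed by their construction via \Cref{prop: special consequences}, which is functorial in the target.
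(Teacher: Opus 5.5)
Your proposal is correct and is essentially the paper's argument. The paper deduces the statement from the separable version by noting that all the relevant functors preserve $\aleph_1$-filtered colimits, which is exactly your reduction via s-finitariness of $\kkR$, the $\aleph_1$-compactness of $\C$ and $\s$, and the continuity of the space-level $K$-theory models; you simply make explicit the details the paper leaves implicit (compactness of spheres and homotopies, naturality in $B'$, and extending the lax monoidal comparison).
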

\begin{proof}
    This follows immediately from \Cref{thm: Recovering classical K-theories from our stable inf cat} upon noting that all the relevant functors preserve $\aleph_1$-filtered colimits.
\end{proof}
\begin{cor}\label{cor: EC2 SS ring}
        $\hoKKCTWO(\s,\s) \cong \frac{\Z[x]}{x^2=1}$ as rings, with the preferred isomorphism mapping $[\operatorname{1}_{\s}]$ to $1$, and $[\alpha_\s]$ to $x$. For any graded $C^*$-algebra $A$, the action of $\alpha_A$ on $\hat{K}_0(A)$ coincides with the action of $\alpha_\s \in \hoKKCTWO(\s,\s)$ on $\hat{K}_0(A)$ stemming from the product in the  $\hoKKCTWO$ category.
    \end{cor}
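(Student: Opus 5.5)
The plan is to compute the endomorphism ring by corepresenting $\s$ and then to determine the ring structure from the action on $\hat{K}_0(\s)$. \Cref{thm: Recovering classical K-theories from our stable inf cat gen}, applied with $G$ trivial so that $G_2=C_2$, gives a natural isomorphism of abelian groups
\[
\hoKKCTWO(\s,\hat B)\cong \pi_0\,\mathbb{\hat K}(\hat B)=\hat{K}_0(\hat B).
\]
For $\hat B=\s$ this is $\hoKKCTWO(\s,\s)\cong\hat{K}_0(\s)$.

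Under the spectral model, this isomorphism sends the class $[\varphi]$ of a graded homomorphism $\varphi:\s\to\s$ to the class of the composite $\s\xrightarrow{\varphi}\s\hookrightarrow\s\hat\otimes\hatkg$ (corner inclusion). This is the Yoneda description coming from \Cref{cor: special mapping spaces in KK}. In particular $[1_\s]\mapsto e$ and $[\alpha_\s]\mapsto \alpha_*e$.

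By \Cref{prop: omnibus graded k-theory}.8, $\hat{K}_0(\s)\cong\Z\oplus\Z$ with basis $([1_\s],[\alpha_\s])$, and $\alpha_*$ swaps these basis elements. So additively $\hoKKCTWO(\s,\s)$ is free on $[1_\s]$ and $[\alpha_\s]$. For the multiplicative structure, composition in $\hoKKCTWO$ of classes coming from honest homomorphisms is induced by composition of homomorphisms, since $\kk$ is a functor. Hence $[\alpha_\s]^2=[\alpha_\s\circ\alpha_\s]=[1_\s]$ and $[1_\s]$ is the unit. Bilinearity of composition in the additive category $\hoKKCTWO$ then determines the full ring structure on the basis. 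This gives the ring isomorphism with $\Z[x]/(x^2=1)$ sending $[1_\s]\mapsto 1$ and $[\alpha_\s]\mapsto x$.

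For the second statement, take $A$ graded and $\xi\in\hat{K}_0(A)\cong\hoKKCTWO(\s,A)$. I must show $\alpha_{A*}\xi=\xi\circ[\alpha_\s]$, which is the action of $[\alpha_\s]$ by precomposition in $\hoKKCTWO$. Represent $\xi$ by a homomorphism $\varphi:\s\to A\hat\otimes\hatkg$ and use stability to replace $A$ by $A\hat\otimes\hatkg$. Since $\varphi$ is graded, $\varphi$ commutes with the gradings:
\[
(\alpha_A\hat\otimes\alpha_{\hatkg})\circ\varphi=\varphi\circ\alpha_\s .
\]
The right-hand side represents $\xi\circ[\alpha_\s]$. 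The left-hand side represents $\alpha_{A*}\xi$ once we know that the grading $\alpha_{\hatkg}$ acts as the identity in $\hoKKCTWO$ on $A\hat\otimes\hatkg$. That holds because $\hatkg$ is inner graded: conjugation by the grading unitary $U$ is homotopic to the identity after stabilization. The standard rotation argument in $M_2$, exactly as in the proof of \Cref{prop: adjunctions at skg level}, gives this, since $\operatorname{Ad}(U)\oplus 0$ and $1\oplus0$ are unitarily conjugate by a path.

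The main obstacle is checking that the isomorphism of \Cref{thm: Recovering classical K-theories from our stable inf cat gen} is compatible with composition, i.e.\ that it is genuinely the Yoneda map. Without this, the precomposition action and the ring product cannot be read off from homomorphisms. I would verify it through \Cref{prop: special consequences}. There the equivalence arises from the left Kan extension of the corepresentable functor $\operatorname{map}(\s,-)$ along the localizations, so it is natural in the target and sends $\operatorname{id}_\s$ to the unit. By Yoneda, precomposition by $\kk(\alpha_\s)$ then corresponds to precomposition by $\alpha_\s$ on $\underline{\operatorname{Hom}}_{C_2}(\s,-\hat\otimes\hatkg)$.
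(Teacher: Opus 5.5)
Your proof is correct and follows the same route as the paper, which simply combines the corepresentability $\hoKKCTWO(\s,-)\cong\hat K_0(-)$ of \Cref{thm: Recovering classical K-theories from our stable inf cat gen} with the computation of $\hat K_0(\s)$ as the flip $C_2$-module in \Cref{prop: omnibus graded k-theory}.8. You supply details the paper leaves implicit, namely functoriality of $\kk$ for $[\alpha_\s]^2=[1_\s]$ and the fact that graded homomorphisms intertwine gradings; and the rotation argument for $\alpha_{\hatkg}$ can be dropped, because the corner inclusion $c\colon A\to A\hat\otimes\hatkg$ is a graded $\KKRCTWO$-equivalence satisfying $c\circ\alpha_A=(\alpha_A\hat\otimes\alpha_{\hatkg})\circ c=(\alpha_A\hat\otimes 1)\circ c$, so the two endomorphisms already agree in $\hoKKCTWO$.
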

    \begin{proof}
        This is immediate from Assertion 8 of \Cref{prop: omnibus graded k-theory} and \Cref{thm: Recovering classical K-theories from our stable inf cat gen}.
    \end{proof}
    \begin{rem}\label{rem: C2 action on graded k theory}
        The second perspective in \Cref{cor: EC2 SS ring} directly shows why even the boundary maps in the long exact sequences of graded $K$-theory groups preserve the action of the grading (and this is because of the associativity of composition in the $\hoKKCTWO$ category), and hence the associated long exact sequence can be viewed as a long exact sequence of $C_2$-abelian groups.
    \end{rem}
The final result of this section discusses the change of group functors on the categories of all $C^*$-algebras.
\begin{thm}\label{thm: change of group general}
    There are obvious analogues of the change of group functors of \Cref{defi: change of groups} to the categories of all $C^*$-algebras. These functors then descend to cocontinuous functors between the respective equivariant $KK_{!,\infty}$-categories. The restriction functors have symmetric monoidal refinement for every choice of $(?,!)$ except for $(\min,\operatorname{ex})$. If \(H\subseteq G\) is open, then we have an adjunction
\[
\operatorname{ind}^G_H:
\KK^{H}_{!,\infty}
\leftrightarrows
\KK^{G}_{!,\infty}
:
\res^G_H.
\]

If \(H\subseteq G\) is closed and \(G/H\) is compact, then we have an adjunction
\[
\res^G_H:
\KK^{G}_{!,\infty}
\leftrightarrows
\KK^{H}_{!,\infty}
:
\coind^G_H.
\]
\end{thm}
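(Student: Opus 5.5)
The plan is to reduce everything to the separable statements already established, using the universal property of \Cref{prop: ind omnibus}.3. Let $\varphi$ be any of the change of group functors of \Cref{defi: change of groups}, extended to all algebras by the same formulas. For restriction this is literally the same definition. For (co)induction one takes the invariant subalgebra of $C_b(G,B)$ generated by the functions with the stated equivariance and support property.

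\textbf{Descent.} We show that $\kk^{H}_{!,\infty}\circ\varphi:\RALG\to\KK^H_{!,\infty}$ is homotopy invariant, $\mathcal K_G$-stable, $!$-exact, preserves countable direct sums, and is s-finitary. \Cref{prop: ind omnibus}.3 (in its plain, non-monoidal form) then gives an essentially unique cocontinuous factorisation through $\KK^G_{!,\infty}$.
\begin{itemize}
\item Homotopy invariance, exactness and direct sums are pointwise statements. They reduce to the algebra-level facts used in \Cref{prop: functors descend}: $\varphi$ commutes with $C([0,1],-)$, preserves $!$-exact sequences (including cpc splittings), and preserves direct sums.
\item For stability, note that $\varphi(A\otimes\mathcal K(V))$ agrees with $\varphi(A)\otimes\mathcal K(\res V)$ (or with the analogous induced Hilbert space), which is then absorbed by the stability of the target.
\item For s-finitarity we need that, for the extended functor, the canonical map $\operatorname{colim}_{A'\subseteq_{\sep}A}\varphi(A')\to\varphi(A)$ is an isomorphism of algebras. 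This follows because every element of $\varphi(A)$ is approximated by elements coming from separable invariant subalgebras. For restriction this is immediate. For (co)induction one uses that $G$ is second countable, so a generating function $f$ with cocompact support takes values in a separable $H$-invariant subalgebra.
\end{itemize}
Combined with s-finitarity of $\kk^H_{!,\infty}$, this shows that $\kk\circ\varphi$ is s-finitary. Moreover, its restriction to separable algebras is the functor already handled in \Cref{prop: functors descend}, so the large descended functor equals $\operatorname{Ind}_{\aleph_1}$ of the separable one. Cocontinuity is part of \Cref{prop: ind omnibus}.3.

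\textbf{Monoidality of restriction.} Restriction is symmetric monoidal at the level of algebras for both tensor products. Hence $\kk^H_{!,\infty}\circ\res^G_H$ is symmetric monoidal whenever $\kk^H_{!,\infty}$ is, i.e.\ away from $(\min,\mathrm{ex})$. The symmetric monoidal version of \Cref{prop: ind omnibus}.3 then produces the refinement. Equivalently, one can take $\operatorname{Ind}_{\aleph_1}$ of the separable refinement from \cite[Corollary 4.8]{BEL}.

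\textbf{Adjunctions.} This is the main point, though formal. The descended large functors are $\operatorname{Ind}_{\aleph_1}$-extensions of the separable ones, i.e.\ the unique $\aleph_1$-filtered-colimit-preserving extensions. The functor $\operatorname{Ind}_{\aleph_1}$ is a $2$-functor on small categories and functors, so it sends the adjunctions of \Cref{thm: adjunctions at kkg} (unit, counit, triangle identities) to adjunctions between the $\operatorname{Ind}_{\aleph_1}$-completions.

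The obstacle is to check that $\operatorname{Ind}_{\aleph_1}$ of the separable functor really agrees with the functor descended from the large algebra-level $\varphi$. The naturality of both adjunction data depends on this. This agreement is exactly the s-finitarity established above, together with the uniqueness clause in \Cref{prop: ind omnibus}.3. In the coinduction case one could alternatively observe that the algebra-level adjunction of \cite[Lemma 4.2]{BDS} holds for all algebras and descend it directly, since both functors descend.
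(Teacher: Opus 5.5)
Your proposal is correct and follows essentially the paper's route. The paper also reduces everything to the separable results of Section 3.5 plus the universal property in \Cref{prop: ind omnibus}.3, so the only real input is s-finitarity of $\ind^G_H$, $\res^G_H$, $\coind^G_H$ on all algebras, which it cites from \cite[Section 4]{BEL}. Your sketch of s-finitarity and your explicit transport of the adjunctions through $\operatorname{Ind}_{\aleph_1}$ fill in what the paper leaves implicit. Your separate pointwise checks (in particular the loose stability remark for induction) are redundant: once s-finitarity identifies the large functor with $\operatorname{Ind}_{\aleph_1}$ of the separable one, those properties follow.
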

\begin{proof}
    In light of the results of Section 3.5 and \Cref{prop: ind omnibus}.3, we have to show that the functors $\mathsf{ind}^{G}_H(-),\mathsf{res}^G_H(-), \mathsf{coind}^G_H(-)$ between the relevant categories of all $C^*$-algebras are s-finitary. But this is contained in \cite[Section 4]{BEL}, see also \cite[Section 2.1]{BUNKEcrossed}.
\end{proof}
\section{Graded, Real $KK^G_{!,\infty}$-theory}
In this section, we define the graded $KK^G_{!,\infty}$-categories. We follow some ideas from \cite{BUNKELUDEWIG} and an unpublished note of Bunke-Duenzinger (\cite{BDGRADED}). Note that we make our definitions on the large category $\RGraded$ from the get go. 

It will be ubiquitous for our approach to graded bivariant $K$-theories that $\mathcal{S}$ is a cocommutative coalgebra (also called comonoid) in $(\RGraded,\widehat{\otimes}_?)$. In what follows, we stop decorating the tensor product because $\s$ is nuclear, see \cite[Corollary E.2.19]{WY}. We first come with a diagonal map $\mathcal{S} \rightarrow \mathcal{S} \hat\otimes \mathcal{S}$. The following result is proven in detail in \cite[Proposition 4.3]{D}.
\begin{prop}\label{prop: dia on s}
    There is a unique graded $*$-homomorphism $\s \rightarrow \s \hat\otimes \s$ such that \[u \mapsto u \hat\otimes u,~~~~v \mapsto u \hat\otimes v + v \hat\otimes u,\] where $u:=e^{-x^2}$, $v=xe^{-x^2}$.
\end{prop}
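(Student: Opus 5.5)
Uniqueness is the easy half. The elements $u=e^{-x^2}$ (even) and $v=xe^{-x^2}$ (odd) generate $\s$ as a $C^*$-algebra, by Stone--Weierstrass. Indeed, they are self-adjoint and fixed by the Real structure. They also separate points of $\R$: $u$ separates $|x|$, and $v$ distinguishes $x$ from $-x$. Finally, they vanish simultaneously nowhere, since $u>0$. So any $*$-homomorphism out of $\s$ is determined by its values on $u$ and $v$.

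For existence I would use the functional calculus description of $\s$. A graded $*$-homomorphism $\s\to B$ amounts to an odd, self-adjoint, Real, regular unbounded multiplier $D$ of $B$, via $f\mapsto f(D)$. In $\s\hat\otimes\s\cong C_0(\R^2)$-with-graded-structure, take the unbounded operator
\[
D:=X\hat\otimes 1+1\hat\otimes X,
\]
where $X$ is the (odd, self-adjoint, Real) multiplication-by-$x$ multiplier of $\s$. Graded commutativity gives $(X\hat\otimes 1)(1\hat\otimes X)=-(1\hat\otimes X)(X\hat\otimes 1)$, so the cross terms cancel and
\[
D^2=X^2\hat\otimes 1+1\hat\otimes X^2 .
\]
Define $\Delta(f):=f(D)$. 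This is a graded $*$-homomorphism: $D$ is odd, so $f\mapsto f(D)$ intertwines $f(-x)$ with the grading. It lands in $\s\hat\otimes\s$ rather than in the multipliers because $(1+D^2)^{-1}$ is compact relative to $\s\hat\otimes\s$, i.e.\ $e^{-D^2}=e^{-X^2}\hat\otimes e^{-X^2}$ lies in the algebra.

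It then remains to evaluate $\Delta$ on the generators:
\[
u(D)=e^{-D^2}=e^{-X^2\hat\otimes 1}\,e^{-1\hat\otimes X^2}=u\hat\otimes u,
\]
using that the two summands of $D^2$ are even and commute. Similarly,
\[
v(D)=D\,e^{-D^2}=(X\hat\otimes 1+1\hat\otimes X)(u\hat\otimes u)=v\hat\otimes u+u\hat\otimes v .
\]
No sign appears here, since $u$ is even. Compatibility with the Real structure follows because $X$ is Real and everything is defined by real functions.

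The main obstacle is making the unbounded functional calculus rigorous. Specifically, I need to establish that $D$ is essentially self-adjoint and regular on the graded tensor product Hilbert module. I would avoid this by working concretely. Represent $\s\hat\otimes\s$ faithfully as $C_0(\R^2,Cl^{0,2})$-type functions, or as operators on $L^2(\R)\hat\otimes L^2(\R)$. There $D$ becomes a Dirac-type operator $x_1\varepsilon_1+x_2\varepsilon_2$ with $D^2=x_1^2+x_2^2$, a multiplication operator. This makes $f(D)$ explicit, namely $f^{\mathrm{ev}}(|x|)+f^{\mathrm{odd}}(|x|)\,|x|^{-1}(x_1\varepsilon_1+x_2\varepsilon_2)$, and one checks by hand that it is continuous and vanishing at infinity.
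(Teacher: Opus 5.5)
Your proposal is correct. The paper gives no argument of its own for this statement; it simply defers to \cite[Proposition 4.3]{D}. Your route is the standard Higson--Kasparov--Trout construction: uniqueness by Stone--Weierstrass applied to $u,v$, and existence by the functional calculus $f\mapsto f(X\hat\otimes 1+1\hat\otimes X)$, made rigorous as pointwise functional calculus of $x_1\varepsilon_1+x_2\varepsilon_2$ in a faithful $C_0(\R^2,Cl^{0,2})$-model.

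There are two small points to tighten.

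First, continuity and decay at infinity only place $f(D)$ in $C_0(\R^2,Cl^{0,2})$, not in $\s\hat\otimes\s$. Take the embedding of $\s\hat\otimes\s$ to be the graded tensor square of the injective graded Real map $f\mapsto f^{\mathrm{ev}}+f^{\mathrm{odd}}\varepsilon$. This is injective because the graded and ungraded minimal tensor norms agree. Membership in the image then follows from your generator computation $\Delta(u)=u\hat\otimes u$, $\Delta(v)=v\hat\otimes u+u\hat\otimes v$: since a $*$-homomorphism has closed range, $\Delta(\s)$ is the $C^*$-algebra generated by $\Delta(u)$ and $\Delta(v)$, and so it lies in that image.

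Second, $\s\hat\otimes\s$ agrees with $C_0(\R^2)$ only as a Banach space, not as an algebra. For example, $u\hat\otimes v$ and $v\hat\otimes u$ anticommute.
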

We also have a graded $*$-homomorphism $\eta:\s \rightarrow \C$, given by $\eta(f)=f(0)$. 
\begin{cor}\label{cor: s cocommutative coalgebra}
    The $*$-homomorphisms $\eta$ and $\triangle$ equip $\s$ with a cocommutative coalgebra structure; in the sense that the following diagrams commute
\[\begin{tikzcd}
	{\mathcal{S}} && {\mathcal{S} \hat{\otimes} \mathcal{S}} &&& {\mathcal{S}\hat\otimes \mathcal{S}} &&& {\mathcal{S}} & \\
	&&&& {\mathcal{S}} &&& {\mathcal{S}} && {\mathcal{S}} \\
	{\mathcal{S} \hat{\otimes} \mathcal{S}} && {\mathcal{S} \hat{\otimes} \mathcal{S} \hat{\otimes} \mathcal{S}} &&& {\mathcal{S} \hat\otimes \mathcal{S}} &&& {\mathcal{S} \hat{\otimes} \mathcal{S}}
	\arrow["\Delta", from=1-1, to=1-3]
	\arrow["\Delta"', from=1-1, to=3-1]
	\arrow["{1 \hat{\otimes} \Delta}", from=1-3, to=3-3]
	\arrow["{\operatorname{flip}}", from=1-6, to=3-6]
	\arrow["{=}"', from=1-9, to=2-8]
	\arrow["{=}", from=1-9, to=2-10]
	\arrow["\Delta", from=1-9, to=3-9]
	\arrow["\Delta", from=2-5, to=1-6]
	\arrow["\Delta"', from=2-5, to=3-6]
	\arrow["{\Delta \hat{\otimes} 1}"', from=3-1, to=3-3]
	\arrow["{\eta \hat{\otimes} 1}", from=3-9, to=2-8]
	\arrow["{1 \hat{\otimes} \eta}"', from=3-9, to=2-10]
\end{tikzcd}\]
\end{cor}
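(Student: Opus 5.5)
Since all maps involved are $*$-homomorphisms between $C^*$-algebras, the diagrams need only be checked strictly, at the level of $\RGraded$, and it suffices to test them on a generating set. By the Stone--Weierstrass theorem, the even function $u=e^{-x^2}$ and the odd function $v=xe^{-x^2}$ generate $\s$ as a graded $C^*$-algebra. The functions of the form $p(x)e^{-x^2}$ separate points and do not all vanish anywhere, and their span is a self-adjoint subalgebra of the form needed. Two graded $*$-homomorphisms out of $\s$ that agree on $u$ and $v$ are therefore equal. My plan is to evaluate both itineraries of each diagram on $u$ and $v$, using the formulas of \Cref{prop: dia on s}.

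\emph{Counit.} Here $\eta(u)=1$ and $\eta(v)=0$. Hence
\[(\eta\hat\otimes 1)\Delta(u)=\eta(u)u=u \quad\text{and}\quad (\eta\hat\otimes 1)\Delta(v)=\eta(u)v+\eta(v)u=v,\]
and symmetrically for $1\hat\otimes\eta$. One point needs care: $\eta\hat\otimes 1:\s\hat\otimes\s\to\C\hat\otimes\s\cong\s$ introduces no Koszul sign, because $\C$ is trivially graded.

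\emph{Coassociativity.} On $u$, both composites give $u\hat\otimes u\hat\otimes u$. On $v$, we have
\[(\Delta\hat\otimes 1)\Delta(v)=\Delta(u)\hat\otimes v+\Delta(v)\hat\otimes u=u\hat\otimes u\hat\otimes v+u\hat\otimes v\hat\otimes u+v\hat\otimes u\hat\otimes u,\]
and $(1\hat\otimes\Delta)\Delta(v)$ yields the same three terms. No signs arise here, since the maps $\Delta\hat\otimes 1$ and $1\hat\otimes\Delta$ are tensor products of even homomorphisms applied to elementary tensors.

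\emph{Cocommutativity.} The graded flip is $a\hat\otimes b\mapsto(-1)^{|a||b|}b\hat\otimes a$. It fixes $u\hat\otimes u$. Each summand of $\Delta(v)$ has one even factor and one odd factor, so it carries sign $+1$, and the two summands are exchanged. Hence $\operatorname{flip}\circ\Delta=\Delta$ on generators.

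The main (mild) obstacle is sign bookkeeping: one must confirm that the graded-tensor conventions, namely the Koszul flip and the identification of $\hat\otimes$ of homomorphisms on elementary tensors, produce no sign in any of the checks above. They do not, because every term has at most one odd factor among swapped pairs. Everything else is routine once the generation statement and \Cref{prop: dia on s} are in hand.
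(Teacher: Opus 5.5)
Your proposal is correct and follows the paper's proof, which likewise checks the three diagrams on the generators $u=e^{-x^2}$ and $v=xe^{-x^2}$, and your sign bookkeeping is accurate. One small correction to your generation argument: the span of the functions $p(x)e^{-x^2}$ is not closed under multiplication (e.g.\ $u^2=e^{-2x^2}$ is not of that form), so apply Stone--Weierstrass instead to the $*$-algebra generated by $u$ and $v$, which is spanned by $x^b e^{-nx^2}$ with $0\le b\le n$, $n\ge 1$; it is self-adjoint, separates points since $v/u=x$, and vanishes nowhere since $u>0$.
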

\begin{proof}
    This is immediate by considering the elements $u$ and $v$ in $\s$.
\end{proof}
\begin{lem}
    $\KKRGTWO$ has a presentably symmetric monoidal structure for the graded tensor products $\hat{\otimes}_?$ for every choice of $(?,!)$ except $(?,!)=(\min,\operatorname{ex})$, and the canonical functor $\GTWOALG \rightarrow \KKRGTWO$ has a symmetric monoidal refinement too.
\end{lem}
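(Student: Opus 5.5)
The plan is to rerun the construction of $\KKRGTWO$ from Section 3, this time carrying the graded tensor products $\hat\otimes_?$ along instead of the ungraded $\otimes_?$. The key observation is that the underlying categories of $\GTWOALG^{\sep}$ and $\RGradedsep$ are the same. Hence all the Dwyer-Kan localizations $L_h$, $L_{\mathcal{K}_{G_2}}$, $L_{!,\oplus_\omega}$, $L_{\operatorname{Bott}}$ and the subsequent $\operatorname{Ind}_{\aleph_1}$-completion are literally the same functors. What remains is to show that each localizing class is compatible with $\hat\otimes_?$, i.e.\ that it is a tensor ideal (closed under $-\hat\otimes_? D$ for every separable $D$). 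Then \cite[Proposition 3.3.2]{HIN} gives symmetric monoidal refinements of each localization step, exactly as in the proofs of \Cref{prop: imp}.6--7. The presentably symmetric monoidal structure on $\KKRGTWO$ then follows as in \Cref{prop: ind omnibus}.1, provided $\hat\otimes_?$ on $\KKRGTWOsep$ preserves countable colimits in each variable.

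I would check the steps in order.

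\textbf{Homotopy.} Since $C([0,1])$ is trivially graded, commutative and nuclear, we have $C([0,1],A)\hat\otimes_? D\cong C([0,1],A\hat\otimes_? D)$. So homotopies, and hence homotopy equivalences, are preserved. Schochet fibrant squares are preserved for the same reasons as in \Cref{example: mapping cone sequence is Schochet}.3, since $\hat\otimes_{\max}$ is still a left adjoint-type construction compatible with pullbacks of path algebras.

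\textbf{Stability.} For $\mathcal{K}_{G_2}$-equivalences, I would use that $\mathcal{K}(V)$ for a graded $G$-Hilbert space $V$ is inner graded. For inner graded algebras, $B\hat\otimes \mathcal{K}(V)\cong B\otimes\mathcal{K}(V)$ naturally in $G_2$-$C^*$-algebras $B$ (the isomorphism twisting by the grading unitary). Thus $(f\hat\otimes 1_D)\otimes 1_{\mathcal K_{G_2}}\cong (f\otimes 1_{\mathcal K_{G_2}})\hat\otimes 1_D$, which is a homotopy equivalence by the previous step. This identification is also what was used at the end of the proof of \Cref{lem: qC special}.

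\textbf{Excision.} For $!$-excision, I would argue that $-\hat\otimes_{\max}D$ preserves all exact sequences, and $-\hat\otimes_{\min}D$ preserves semi-split ones: a graded cpc split $s$ gives the graded cpc split $s\hat\otimes 1$. I would also check that the graded tensor product of the diagram \Cref{eqn: excision diagram} is again of the same shape, so that $\hat W_!$ is sent into $\hat W_!$. The closure under pullbacks, 2-out-of-3 and countable sums is then automatic, because $\hat\otimes_?$ commutes with countable direct sums. This is where $(\min,\operatorname{ex})$ fails, exactly as in \Cref{prop: SKGexcl omnibus}.4.

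\textbf{Bott.} For the Bott maps, the Real Toeplitz extension and its ingredients $\mathcal{K}(l^2\N),\mathcal{T}^R_0, C_0(\R^{1,0})$ carry trivial grading and trivial action and are nuclear. So $(\beta_{!,B})\hat\otimes D$ identifies with $\beta_{!,B\hat\otimes D}$, because graded and ungraded tensor products agree when one factor is trivially graded.

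I expect the main obstacle to be the stability step, namely making the natural isomorphism $B\hat\otimes\mathcal{K}(V)\cong B\otimes\mathcal K(V)$ equivariant, Real and natural in $B$, together with the required coherences. I would handle it through the grading operator on $V$, checking that it is $G$-invariant and Real. The final symmetric monoidality of $\GTWOALG\to\KKRGTWO$ on all algebras then follows as in \Cref{prop: ind omnibus}.2 via the s-finitary argument of \cite{BEL}, using that graded tensor products commute with filtered colimits of separable subalgebras.
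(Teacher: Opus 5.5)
Your proposal is correct and follows the paper's route. The paper likewise checks that each localizing class ($\mathcal{W}_h$, the $\mathcal{K}_{G_2}$-equivalences, $\hat W_!$ with its closure, and $W_{\operatorname{Bott}}$) is closed under $-\hat\otimes_? D$, invokes \cite[Proposition 3.3.2]{HIN} at each step, and passes to all algebras as in Section 3.7. You give more detail than the paper, which calls this ``clear'' and cites \cite[Lemma E.2.18]{WY} only for the excision step: you use the inner gradedness of $\mathcal{K}_{G_2}$ for the stability step and the trivial grading of the Real Toeplitz extension for the Bott step.
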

\begin{proof}
    We just discuss the separable analogue of the statement; the generalization to the large categories is a routine argument in light of the results of Section 3.7.
    
    One checks directly that each of the steps in the localizations 
    \[
\RALGsTWO
\xrightarrow{L_h}
\RALGsTWOh
\xrightarrow{L_{\mathcal K_{G_2}}}
\mathrm S_{\mathcal K_{G_2}}\mathsf G_2\CAlg_h^{\sep}
\xrightarrow{L_{!,\oplus}}
\mathrm S_{\mathcal K_{G_2}}\mathsf G_2\CAlg_{h,!,\oplus_\omega}^{\sep}
\xrightarrow{L_{\operatorname{Bott}}}
\KKRGTWOsep.
\]
    has a symmetric monoidal refinement for the graded tensor products, since under the restrictions imposed on $(?,!)$, it is clear that at each step $A \hat\otimes_!-$ preserves the class of morphisms that are inverted at that step (to see this for the excision step, we use that the underlying Banach spaces of the graded and ungraded tensor products coincide, see \cite[Lemma E.2.18]{WY}). The rest of the details are exactly as in the ungraded case.
\end{proof}
\begin{notn}
    Henceforth whenever we mention symmetric monoidal structures on equivariant $KK_{!,\infty}$ categories, we consider all choices of $(?,!)$ except $(?,!)=(\min,\operatorname{ex})$. We also stop decorating the tensor product every time; the decoration is implicitly assumed to be there abiding by the exception mentioned in the previous sentence.
\end{notn}

We now proceed with the task of defining symmetric monoidal bivariant $K$-categories for $\RGraded$. We achieve this by defining $\KKRGRA$ as \emph{a full subcategory of the category of comodules} over $\s$ in $\KKRGTWO$. In general, if $(\cal{C},\otimes)$ is any presentably symmetric monoidal category with a cocommutative coalgebra object $\mathcal{S}$, then we have an adjunction
\[
\operatorname{forget}: \mathbf{Comod}_{\mathcal{C}}(\s) \leftrightarrows \mathcal{C}: \s \otimes -
\]
where $\mathbf{Comod}_{\mathcal{C}}(\s)$ denotes the presentable $\infty$-category of comodules over $\mathcal{S}$, and $\mathcal{S} \otimes -$ denotes the cofree comodule functor. The details of this can be found in \cite[Section 4.2.4]{HA}. \footnote{Note that use of the word (co)free is \emph{not} analogous to the use of the word free as in ``free abelian group on a set", and should not create any confusion!} We mention that the proof of presentability of the category of comodules is subtle (see \cite[Proposition 3.8]{RAM}), and was explained to the author by Maxime Ramzi.

The next definition should be read together with the remarks following it.
\begin{defi}\label{defi: KK-graded}
    The graded bivariant $KK^G_!$-category, denoted $\KKRGRA$, is the full subcategory spanned by cofree comodules in $\mathbf{Comod}_{\KKRGTWO}(\s)$. Note that $\KKRGRA$ becomes a symmetric monoidal category with respect to the balanced tensor product of comodules (\cite[Section 4.4]{HA}). 
    
    Denote by $\kkRGRA$ the composition of the symmetric monoidal functors
    \[
    \kkRGRA: \RGraded \xrightarrow{\kkRGTWO} \KKRGTWO \xrightarrow{\s \hat\otimes -} \KKRGRA\
    \]
\end{defi}
\begin{rem}\label{lem: KKANA cat}
    The following remarks about \Cref{defi: KK-graded} are in order.
    \begin{enumerate}
        \item Explicitly written out, the relative tensor product of $\s \hat\otimes A$ and $\s \hat\otimes B$ is $\s \hat\otimes A \hat\otimes B$, with the codiagonal map $\s \xrightarrow{\Delta} \s \hat\otimes \s$ being used to cancel out one action of $\s$. This also roughly shows why $\KKRGTWO \xrightarrow{\s \hat\otimes -} \KKRGRA$ is symmetric monoidal.
        \item It is not clear if the category $\KKRGRA$ is stable, since cofree comodules need not be closed under the formation of finite limits or colimits. Nevertheless, it is enriched over Spectra being the full subcategory of a stable $\infty$-category, as follows from the next point.
        \item At first glance, a natural solution to the above problem could seem to simply work with the category of all $\s$-comodules. But then, there is no reason for this category to be symmetric monoidal, since one does not have appropriate commutation of the tensor product with cosimplicial limits in general (necessary for the dual of \cite[Theorem 4.5.2.1]{HA} to be applicable). Note that the monoidality assertions in \cite[Definition 2.45]{BUNKELUDEWIG} are incorrect.
        \item In applications, it is more important to have symmetric monoidality of the graded bivariant $K$-categories than having presentability. Therefore we choose the definition above.
        \item \emph{Denote by $\KKANA$ the homotopy category of $\KKRGRA$}. From the forgetful-cofree adjunction, we immediately get the hom groups in $\KKANA$ are given by
    \[
        {\KKANA}(A,B) \cong {\hoKKGTWO}(\s \hat\otimes A,B).
    \]
    The composite of $\varphi \in \KKANA(A,B)\cong\hoKKGTWO(\s \hat\otimes A,B)$ and $\psi \in \KKANA(B,C)\cong\hoKKGTWO(\s \hat\otimes B,C)$ is given by the composite in $\hoKKGTWO$
    \[
    \s \hat\otimes A\xrightarrow{\Delta \hat\otimes 1_A} \s \hat\otimes \s \hat\otimes A \xrightarrow{1_\s \hat\otimes \varphi} \s \hat\otimes B \xrightarrow{\psi} C.
    \]
    The functor \[\kkANA:\RGraded \rightarrow \KKANA\] is identity on objects and precomposes with $\s \hat\otimes A\xrightarrow{{\operatorname{ev}_0}_*} A$ on morphisms emanating out of $A$.
    \end{enumerate}
\end{rem} 
We now mention important by-products of our definitions. These will be crucially used in the next section.
\begin{prop}\label{thm: kkgraeq vs kkctwoeq}
    If $A,B \in \RGraded$ are equivalent in $\KKRGRA$, then $\s \hat\otimes A$ and $\s \hat\otimes B$ are equivalent in $\KKRGTWO$.
\end{prop}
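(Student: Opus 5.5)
The plan is to argue purely formally, using the definition of $\KKRGRA$ as the full subcategory of cofree comodules in $\mathbf{Comod}_{\KKRGTWO}(\s)$ together with the forgetful--cofree adjunction
\[
\operatorname{forget}: \mathbf{Comod}_{\KKRGTWO}(\s) \leftrightarrows \KKRGTWO : \s \hat\otimes -.
\]
By \Cref{defi: KK-graded}, the objects $A,B\in\RGraded$ are sent by $\kkRGRA$ to the cofree comodules $\s\hat\otimes A$ and $\s\hat\otimes B$ (more precisely, to the images of $\kkRGTWO(A)$ and $\kkRGTWO(B)$ under the cofree functor). So saying that $A$ and $B$ are equivalent in $\KKRGRA$ means that there is an equivalence $\s\hat\otimes A\simeq \s\hat\otimes B$ in $\mathbf{Comod}_{\KKRGTWO}(\s)$. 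Here I use that $\KKRGRA$ is a full subcategory, so an equivalence in it is the same as an equivalence in the ambient comodule category.

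The key step is then to apply the forgetful functor. Like any functor, it preserves equivalences: applying $\operatorname{forget}$ to an inverse pair of comodule maps gives an inverse pair in $\KKRGTWO$. The forgetful image of the cofree comodule on $X$ is the underlying object $\s\hat\otimes X$. This holds by the construction of the cofree comodule, which in \cite[Section 4.2.4]{HA} is the object $\s\hat\otimes X$ with coaction $\Delta\hat\otimes 1_X$. So we obtain $\s\hat\otimes A\simeq\s\hat\otimes B$ in $\KKRGTWO$, which is the claim.

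To keep the argument checkable at the level of homotopy categories, I would also phrase it via \Cref{lem: KKANA cat}.5. An equivalence in $\KKANA$ is given by elements $\varphi\in\hoKKGTWO(\s\hat\otimes A,B)$ and $\psi\in\hoKKGTWO(\s\hat\otimes B,A)$ whose composites, in the sense described there, are the identities $[{\operatorname{ev}_0}_*]$. Their adjoints
\[
\tilde\varphi=(1_\s\hat\otimes\varphi)\circ(\Delta\hat\otimes 1_A):\s\hat\otimes A\to\s\hat\otimes B,
\]
and the analogous $\tilde\psi$, compose to the adjoint of $\psi\circ\tilde\varphi$. This is the mate of the identity, namely $(1_\s\hat\otimes{\operatorname{ev}_0})\circ(\Delta\hat\otimes 1)=1$, by the counit axiom of \Cref{cor: s cocommutative coalgebra} (with $\eta=\operatorname{ev}_0$). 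The same holds for the other composite, so $\tilde\varphi$ is an equivalence in $\hoKKGTWO$, hence in $\KKRGTWO$.

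The only point needing care is the identification of the adjoint of a composite with the composite of adjoints. This uses coassociativity of $\Delta$, which is exactly the coalgebra structure recorded in \Cref{cor: s cocommutative coalgebra}. I expect this to be routine rather than a real obstacle.
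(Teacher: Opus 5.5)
Your argument is correct and is essentially the paper's proof. An equivalence in $\KKRGRA$ is an equivalence $\s\hat\otimes A\simeq\s\hat\otimes B$ of cofree comodules in $\mathbf{Comod}_{\KKRGTWO}(\s)$, and applying the forgetful functor gives the required equivalence in $\KKRGTWO$. Your supplementary mate computation in $\KKANA$ (interchange law, coassociativity, counit axiom for $\operatorname{ev}_0$) is a valid explicit unwinding of the same adjunction argument, but it is not needed.
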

\begin{proof}
    By \Cref{defi: KK-graded}, $A$ and $B$ are equivalent in $\KKRGRA$ means that we have an equivalence \[\kkRGTWO(\s) \hat\otimes \kkRGTWO(A)\simeq \kkRGTWO(\s) \hat\otimes \kkRGTWO(B)\] of cofree comodules. Invoking the functoriality of the forgetful functor $\mathbf{Comod}_{\KKRGTWO}(\s) \rightarrow \KKRGTWO$ now finishes the argument. 
\end{proof}
\begin{thm}\label{prop: equivalence KKgra and KKsp}
    For any \(\hat B\in\RGraded\), we have a natural equivalence of spaces
\[
\mathbb{\hat K}^{G}(\hat B)
\simeq
\Omega^{\infty}\KKRGRA(\C,\hat B).
\]
Moreover,
\(
\hat K_0^G(-),\;
\KKANA(\C,-):
\RGraded
\longrightarrow
\operatorname{Ab}
\)
are equivalent as lax monoidal functors.
\end{thm}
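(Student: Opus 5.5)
The plan is to reduce the statement to \Cref{thm: Recovering classical K-theories from our stable inf cat gen} through the forgetful--cofree adjunction, exactly as in \Cref{lem: KKANA cat}.5. By \Cref{defi: KK-graded}, $\kkRGRA(\C)$ is the cofree comodule $\s \hat\otimes \C \simeq \s$ and $\kkRGRA(\hat B)=\s\hat\otimes \hat B$. Since $\KKRGRA$ is a full subcategory of $\mathbf{Comod}_{\KKRGTWO}(\s)$ and forget is left adjoint to $\s\hat\otimes -$, I would first write down the natural equivalence of mapping spectra
\[
\KKRGRA(\C,\hat B)\simeq \mathbf{Comod}_{\KKRGTWO}(\s)(\s,\s\hat\otimes \hat B)\simeq \KKRGTWO(\operatorname{forget}(\s),\hat B)=\KKRGTWO(\s,\hat B).
\]
Here I am using that the comodule $\s\hat\otimes\C$ has underlying object $\s$, and that the adjunction is an equivalence of spectrum-valued mapping objects, which holds because both categories are enriched over spectra and the adjunction is exact on the underlying stable category. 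Taking $\Omega^\infty$ and invoking \Cref{thm: Recovering classical K-theories from our stable inf cat gen} then gives $\mathbb{\hat K}^G(\hat B)\simeq \Omega^\infty\KKRGRA(\C,\hat B)$.

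Next I would check naturality in $\hat B\in\RGraded$. A morphism $f:\hat B\to\hat B'$ acts on the left side through $\kkRGRA(f)=\s\hat\otimes f$, which is the cofree extension of $\kkRGTWO(f)$. The adjunction equivalence is natural in the cofree variable, so this corresponds to postcomposition with $f$ in $\KKRGTWO(\s,-)$, which is exactly the naturality already established in \Cref{thm: Recovering classical K-theories from our stable inf cat gen}. On $\pi_0$ this gives a natural isomorphism of abelian groups $\hat K_0^G(-)\cong \KKANA(\C,-)$, since both group structures are the ones coming from the spectra.

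The main work, and the step I expect to be the obstacle, is the lax monoidal compatibility.

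On the $\KKANA$ side, the lax structure is induced by the symmetric monoidal functor $\kkRGRA$:
\[
\KKANA(\C,A)\otimes\KKANA(\C,B)\to\KKANA(\C\hat\otimes\C,A\hat\otimes B).
\]
By \Cref{lem: KKANA cat}.1 and .5, under the adjunction this product sends $(\varphi,\psi)$ with $\varphi:\s\to A$ and $\psi:\s\to B$ to the composite
\[
\s\xrightarrow{\Delta}\s\hat\otimes\s\xrightarrow{\varphi\hat\otimes\psi}A\hat\otimes B .
\]
The relative tensor product of cofree comodules is computed by cancelling one copy of $\s$ via $\Delta$, and the counit $\eta=\operatorname{ev}_0$ exhibits the unit.

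On the spectral side, the lax structure of \Cref{thm: spectral model of g eq k theory}.4 is defined by the same formula: the comultiplication on $\s$ followed by the external tensor product, with the stabilisation $\hatkg\hat\otimes\hatkg\cong\hatkg$ implicit. So I would verify that the identification in \Cref{thm: Recovering classical K-theories from our stable inf cat gen} matches represented maps $\s\to \hat B\hat\otimes\hatkg$ with their classes in $\KKRGTWO(\s,\hat B)$ compatibly with $\hat\otimes$. This is automatic because $\kkRGTWO$ is symmetric monoidal. It remains to check the unit: the class of $\operatorname{ev}_0=\eta$ corresponds to $1$ in both theories, which follows from the adjunction counit. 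Finally I would argue that a natural isomorphism of abelian-group-valued functors that matches the multiplication maps and the units is an equivalence of lax monoidal functors; in this $1$-categorical setting no higher coherences need checking.
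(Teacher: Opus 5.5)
Your proposal is correct and follows the paper's route. You identify $\KKRGRA(\C,\hat B)$ with $\KKRGTWO(\s,\hat B)$ via the forgetful--cofree adjunction, apply \Cref{thm: Recovering classical K-theories from our stable inf cat gen}, and match the two lax structures by noting that both come from the comultiplication $\Delta$ on $\s$ (\Cref{lem: KKANA cat}.1 and \Cref{thm: spectral model of g eq k theory}.4); you just spell out the naturality and unit checks that the paper leaves implicit.
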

\begin{proof}
    This is an immediate consequence of \Cref{defi: KK-graded}, \Cref{thm: Recovering classical K-theories from our stable inf cat gen}, and \Cref{thm: spectral model of g eq k theory}.4. The lax monoidality assertion follows  upon observing that the lax monoidality of $\KKRGTWO(\s,-)$ stems from the codiagonal map $\s \xrightarrow{\Delta} \s \hat\otimes \s
    $ too, as mentioned in \Cref{lem: KKANA cat}.1.
\end{proof}
We now show that graded $\KKRGRA$-theories recover ungraded $\KKR$-theories when the algebras are trivially graded. More precisely,
\begin{thm}\label{thm: higson email to johannes}
    In the commutative square 
\[
\small
\begin{tikzcd}
	{\RALG} &&& {\KKR} \\
	{\RGraded} && {\KKRGTWO} & {\KKRGRA}
	\arrow["{\kkR}", from=1-1, to=1-4]
	\arrow["{\mathsf{res}^G_{G_2}(-)}"', from=1-1, to=2-1]
	\arrow["{\mathsf{res}^G_{G_2}(-)}"{description}, from=1-4, to=2-3]
	\arrow["{\mathcal{S}\hat\otimes\mathsf{res}^G_{G_2}(-)}", from=1-4, to=2-4]
	\arrow["{\kkRGTWO}"', from=2-1, to=2-3]
	\arrow["{\mathcal{S}\hat\otimes -}"', from=2-3, to=2-4]
\end{tikzcd}
\]
the right functor is fully faithful.
\end{thm}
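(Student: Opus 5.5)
To show that $\s\hat\otimes\mathsf{res}^G_{G_2}(-)\colon \KKR\to\KKRGRA$ is fully faithful, I would compute the mapping spectra via the forgetful--cofree adjunction and then use the restriction--coinduction adjunction of \Cref{thm: change of group general} to reduce to a statement about $\s\hat\otimes Cl^{0,1}$. For $A,B\in\KKR$, the adjunction of \Cref{lem: KKANA cat}.5 (used at the level of mapping spectra, not only homotopy groups) gives
\[
\KKRGRA(\s\hat\otimes \mathsf{res}^G_{G_2}A,\ \s\hat\otimes\mathsf{res}^G_{G_2}B)\simeq \KKRGTWO(\s\hat\otimes\mathsf{res}^G_{G_2}A,\ \mathsf{res}^G_{G_2}B).
\]
Here $\mathsf{res}^G_{G_2}$ means equipping with the trivial $C_2$-action. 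The key step is to identify this spectrum with $\KKR(A,B)$ compatibly with the map induced by the functor. Unwinding the composition formula of \Cref{lem: KKANA cat}.5, one sees that the functor on morphisms sends $f$ to $f\circ(\operatorname{ev}_0\hat\otimes 1)$. So I must show that precomposition with $\operatorname{ev}_0\hat\otimes 1_A\colon \s\hat\otimes A\to A$ induces an equivalence
\[
\KKR(A,B)\xrightarrow{\ \mathsf{res}\ }\KKRGTWO(A,B)\xrightarrow{(\operatorname{ev}_0\hat\otimes 1)^*}\KKRGTWO(\s\hat\otimes A,B),
\]
where $A$ and $B$ are trivially graded.

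Next I would use \Cref{cor: s fib seqn}.2. Tensoring the fiber sequence $\s\to\C\to Cl^{0,1}$ with $A$ (the graded tensor product is exact in $\KKRGTWO$) gives a fiber sequence $\s\hat\otimes A\to A\to Cl^{0,1}\hat\otimes A$. Hence the map above has cofiber (up to shift) $\KKRGTWO(Cl^{0,1}\hat\otimes A,B)$. This is not obviously zero, and in fact $\KKRGTWO(A,B)$ itself differs from $\KKR(A,B)$ (think of $R(C_2)$-coefficients), so the restriction map alone is not an equivalence. Instead I would treat the composite directly.

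The plan is to use the restriction--coinduction adjunction $\res^{G_2}_G\dashv\coind^{G_2}_G$ together with the identification $\coind^{G_2}_G B\cong B\hat\otimes Cl^{0,1}$ from the proof of \Cref{lem: qC special}. A cleaner route is to mimic that proof in reverse. Using the invertibility of $Cl^{0,1}$ via $\hat\otimes Cl^{1,0}$,
\[
\KKRGTWO(\s\hat\otimes A,B)\simeq \KKRGTWO(\s\hat\otimes Cl^{0,1}\hat\otimes A,\ B\hat\otimes Cl^{0,1})\simeq \KKR\big((\s\hat\otimes Cl^{0,1})_u\otimes A,\ B\big).
\]
The first equivalence is tensoring with $Cl^{0,1}$, which is invertible with inverse $Cl^{1,0}$; the second is the adjunction. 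By \Cref{cor: s fib seqn}.3, $(\s\hat\otimes Cl^{0,1})_u\to\C$ via $\pi_1\circ(\operatorname{ev}_0)_*$ is an equivalence in $\KKR$, so the right-hand side is $\KKR(A,B)$.

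The main obstacle is verifying that this chain of equivalences really is the map $f\mapsto f\circ(\operatorname{ev}_0\hat\otimes 1)\circ\mathsf{res}$, as opposed to some other abstract equivalence. For that I would trace the unit and counit formulas $\eta_A(a)=(t\mapsto ta)$ and $\epsilon_B(f)=f(1)$ given in \Cref{thm: adjunctions at kkg}. Concretely, a morphism $f\colon A\to B$ in $\KKR$ corresponds under the chain to $f$ precomposed with $(\s\hat\otimes Cl^{0,1})_u\otimes A\to A$. I would check on the level of algebra homomorphisms that this map is $\pi_1\circ\operatorname{ev}_0$, i.e. evaluation of the first component at $1\in C_2$ after the counit. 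Naturality in $A$ and $B$, plus the fact that all functors involved are colimit-preserving, then lets me reduce the check to the universal case $f=\operatorname{id}$.
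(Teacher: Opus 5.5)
Your proposal is correct and follows essentially the same route as the paper: you reduce via the forgetful--cofree adjunction to showing that $(\operatorname{ev}_0\hat\otimes 1)^*\circ\mathsf{res}^G_{G_2}$ is an equivalence, tensor with the invertible $Cl^{0,1}$, pass through the restriction--coinduction adjunction using $\coind^{G_2}_G B\cong B\hat\otimes Cl^{0,1}$, and conclude from \Cref{cor: s fib seqn}.3. The compatibility check you single out is exactly the commutativity of the left part of the paper's diagram, where the composite $\KKR(A,B)\to\KKR(\mathsf{res}^{G_2}_G(A\otimes Cl^{0,1}),B)$ is identified with $(Cl^{0,1}_u\xrightarrow{\pi_1}\C)^*$ via the explicit adjunction formulas; your Yoneda reduction to $f=\operatorname{id}$ is a clean way to make that step rigorous.
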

\begin{proof}
    We have to show that the composite
    \begin{equation}\label{eqn: triv gra is ungra}
        \KKR(A,B) \xrightarrow{\mathsf{res}^G_{G_2}(-)} \KKRGTWO(\mathsf{res}^G_{G_2}(A),\mathsf{res}^G_{G_2}(B)) \xrightarrow{\operatorname{ev}_0^*} \KKRGTWO(\s \hat\otimes \mathsf{res}^G_{G_2}(A),\mathsf{res}^G_{G_2}(B))
    \end{equation}
    is an equivalence. To that end, we accommodate \Cref{eqn: triv gra is ungra} into the commutative diagram (note that the left part of the diagram commutes can be seen by explicitly calculating the fate of a $G$-equivariant homomorphism from $A$ to $B$ using the definition of the unit of the restriction-coinduction adjunction as described in \Cref{defi: change of groups}. The formulae of loc.\! cit.\! also work in the non-separable case)
\[
\begin{tikzcd}[
	column sep=1.2em,
	row sep=2.2em,
	scale=0.78,
	transform shape
]
	{\KKR(A,B)}
	&
	{\KKRGTWO\!\left(\mathsf{res}^G_{G_2}(A),\mathsf{res}^G_{G_2}(B)\right)}
	&
	{\KKRGTWO\!\left(
		\mathcal{S}\hat\otimes\mathsf{res}^G_{G_2}(A),
		\mathsf{res}^G_{G_2}(B)
	\right)}
	\\
	{}
	&
	{\KKRGTWO\!\left(
		\mathsf{res}^G_{G_2}(A)\hat\otimes Cl^{0,1},
		\mathsf{res}^G_{G_2}(B)\hat\otimes Cl^{0,1}
	\right)}
	&
	{\KKRGTWO\!\left(
		\mathcal{S}\hat\otimes\mathsf{res}^G_{G_2}(A)\hat\otimes Cl^{0,1},
		\mathsf{res}^G_{G_2}(B)\hat\otimes Cl^{0,1}
	\right)}
	\\
	{}
	&
	{\KKR\!\left(
		\mathsf{res}^{G_2}_G(A\otimes Cl^{0,1}),B
	\right)}
	&
	{\KKR\!\left(
		\mathsf{res}^{G_2}_G(\mathcal{S}\hat\otimes Cl^{0,1})\otimes A,B
	\right)}
	\arrow["{\mathsf{res}^G_{G_2}(-)}", from=1-1, to=1-2]
	\arrow[
		"{(Cl^{0,1}_u\xrightarrow{\pi_1}\C)^*}"'{pos=0.4},
		bend right=24,
		from=1-1,
		to=3-2
	]
	\arrow["{\operatorname{ev}_0^*}", from=1-2, to=1-3]
	\arrow["{-\hat\otimes Cl^{0,1}}"', from=1-2, to=2-2]
	\arrow["{-\hat\otimes Cl^{0,1}}"', from=1-3, to=2-3]
	\arrow["{\operatorname{ev}_0^*}", from=2-2, to=2-3]
	\arrow[
		"{\mathsf{res}^{G_2}_G\dashv\mathsf{coind}^{G_2}_G}"',
		from=2-2,
		to=3-2
	]
	\arrow[
		"{\mathsf{res}^{G_2}_G\dashv\mathsf{coind}^{G_2}_G}"',
		from=2-3,
		to=3-3
	]
	\arrow["{\operatorname{ev}_0^*}", from=3-2, to=3-3]
\end{tikzcd}
\]
Note that we implicitly have used that $A$ and $B$ have the trivial grading so as to replace some graded tensor products with the ungraded ones.
In light of the above diagram, our thesis would follow once we have shown that the composite
\[
(\s \hat\otimes Cl^{0,1})_u \xrightarrow{{\operatorname{ev}_0}_*} Cl^{0,1}_u \xrightarrow{\pi_1} \C
\]
is an equivalence in $\KKR$. But this readily follows from \Cref{cor: s fib seqn}.3.
    
\end{proof}
\subsection{The Bott map in $\KKRGRA$ and consequences}
We can now export results from \cite[Section 9]{D} in our setup. We begin with the definition of the Bott class. For $V$ any Euclidean vector space equipped with a self-adjoint involution ($V$ was called a \emph{pseudo Euclidean vector space} in \cite[Section 9.1]{D}), we denote by $\mathcal{C}(V)$ the graded $C^*$-algebra $C_0(V,Cl(V))$. For the next few results, we do not consider the external group $G$.
\begin{defi}
    Let $V$ be a pseudo Euclidean vector space. Let $\iota: V \rightarrow Cl(V)$ denote the canonical inclusion of $V$ into $Cl(V)$. Then the functional calculus of $\iota$ yields a graded homomorphism
    \[
    b_V:\s \rightarrow \mathcal{C}(V),
    \]
    which we call the Bott map. It naturally determines a point in the spectral $K$-theory space
$
\mathbb{\hat K}\bigl(\mathcal C(V)\bigr),
$
and hence also a point in
$
\Omega^\infty \wideKK_{!,\infty}
\bigl(\C,\mathcal C(V)\bigr).
$ In light of \Cref{prop: equivalence KKgra and KKsp},  we denote either of them by $b_V$.
\end{defi}
\begin{rem}\label{rem: Bott facts}
    \cite[Section 9]{D} contains a detailed proof that
$\pi_0\hat{\mathbb K}\bigl(\mathcal{C}(V)\bigr) \cong \Z$,
and that $b_V$ is a generator of this group. Consequently, $b_V$ is also a generator of
$\pi_0\operatorname{map}_{\wideKK_{!,\infty}}\bigl(\C,\mathcal{C}(V)\bigr) \cong \Z$.
Furthermore, the proof of \Cref{prop: omnibus graded k-theory}.4 shows that
\[b_V \times b_{\overline V}
\in
\pi_0\hat{\mathbb K}\bigl(
\mathcal{C}(V)\hat\otimes\mathcal{C}(\overline V)
\bigr)
\cong \pi_0\hat{\mathbb{K}}(\mathcal{C}(V \oplus \overline{V})) \cong \Z\]
is a generator of the group.
\end{rem}
We end this subsection with a technical lemma. Although the result is an immediate corollary of \Cref{thm: dirac-dual Dirac abstract}, we need the lemma in the proof of it, and so must have it a priori.
\begin{lem}\label{lem: product of bott elements iso}
    For any pseudo-Euclidean vector space $V$, the morphism
\[
b_V \times b_{\overline V}
\in
\wideKK_{!,0}\bigl(\C,\mathcal{C}(V)\hat\otimes\mathcal{C}(\overline V)\bigr)
\]
is an isomorphism in $\wideKK_{!,0}$.
\end{lem}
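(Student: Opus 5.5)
The plan is to first show, by formal manipulations, that $\mathcal{C}(V)\hat\otimes\mathcal{C}(\overline V)$ is abstractly isomorphic to $\C$ in $\wideKK_{!,0}$. A short algebraic argument then shows that the specific morphism $b_V\times b_{\overline V}$ must be such an isomorphism.

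\emph{Step 1: an abstract equivalence.} Write $n=\dim V$. Up to isomorphism of graded Real $C^*$-algebras,
\[
\mathcal{C}(V)\hat\otimes\mathcal{C}(\overline V)\cong \mathcal{C}(V\oplus\overline V)\cong C_0(\R^{n,n})\otimes Cl(V\oplus\overline V),
\]
where $C_0(\R^{n,n})$ is trivially graded. Moreover $Cl(V\oplus\overline V)\cong (Cl^{1,1})^{\hat\otimes n}$. Here $Cl^{1,1}\cong M_2(\C)$, graded by $\operatorname{Ad}\begin{psmallmatrix}1&0\\0&-1\end{psmallmatrix}$ and carrying the obvious Real structure, is inner graded. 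I will check that it is exactly $\mathcal{K}(\C^{\mathrm{triv}}\oplus\C^{\mathrm{sign}})$ as a Real $C_2$-algebra.

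$\mathcal{K}_{G_2}$-stability in $\KKRGTWO$ then gives that the corner inclusion $\C\to Cl^{1,1}$ is an equivalence. Hence $Cl(V\oplus\overline V)\simeq\C$ in $\KKRGTWO$. Next, $C_0(\R^{n,n})\cong C_0(\R^{1,1})^{\otimes n}$ is trivially graded. By \Cref{cor: loop eq cor} together with \Cref{prop: imp}.1, the Bott map $\beta_{!}$ gives an equivalence $C_0(\R^{1,1})\otimes B\simeq B$ in $\KKR$. Applying the functor $\s\hat\otimes\mathsf{res}^G_{G_2}(-)$ from \Cref{thm: higson email to johannes}, and using that $-\hat\otimes_!-$ is a functor on $\KKRGTWO$ (so $\s\hat\otimes -$ preserves equivalences), we obtain an isomorphism $\varphi\colon\C\xrightarrow{\cong}\mathcal{C}(V)\hat\otimes\mathcal{C}(\overline V)$ in $\wideKK_{!,0}$.

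\emph{Step 2: $b_V\times b_{\overline V}$ is an isomorphism.} By \Cref{prop: equivalence KKgra and KKsp} and \Cref{thm: spectral model of g eq k theory}.6, the endomorphism ring is $\wideKK_{!,0}(\C,\C)\cong\hat K_0(\C)\cong K_0(\C)\cong\Z$. Postcomposition with $\varphi$ is a bijection
\[
\Z\cong\wideKK_{!,0}(\C,\C)\to\wideKK_{!,0}\bigl(\C,\mathcal{C}(V)\hat\otimes\mathcal{C}(\overline V)\bigr)\cong\Z.
\]
Hence $b_V\times b_{\overline V}=\varphi\circ m$ for a unique $m\in\wideKK_{!,0}(\C,\C)\cong\Z$. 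By \Cref{rem: Bott facts}, $b_V\times b_{\overline V}$ generates the target group, so $m$ corresponds to $\pm1$. In particular $m$ is a unit in the ring $\wideKK_{!,0}(\C,\C)$, and therefore $b_V\times b_{\overline V}=\varphi\circ m$ is an isomorphism.

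\emph{Main obstacle.} The delicate point is Step 1, and specifically the bookkeeping of gradings and Real structures. I must verify the following:
\begin{itemize}
\item $V\oplus\overline V$ really decomposes as $n$ copies of $\R^{1,1}$, compatibly with both the Clifford grading and the Real involution, so that the function part is $C_0(\R^{n,n})$ with trivial grading and the Clifford part is $(Cl^{1,1})^{\hat\otimes n}$.
\item The identification of $Cl^{1,1}$ with compact operators on a graded Real Hilbert space is correct.
\item $C_0(\R^{1,1})$ is the algebra appearing in the Real Toeplitz boundary map of Subsection 3.4.
\end{itemize}
If the direct identification $Cl^{1,1}\cong M_2(\C)$ with the required grading and Real structure fails, I would fall back on a different route: invoke the Morita-type equivalence via a $\mathcal{K}_{G_2}$-stability argument after tensoring with $\hatkg$.
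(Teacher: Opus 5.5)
Your proposal is correct and follows essentially the same route as the paper. Like the paper, you obtain an abstract $\wideKK_{!,0}$-equivalence $\C\simeq\mathcal{C}(V)\hat\otimes\mathcal{C}(\overline V)$ from Real Bott periodicity, where the paper only cites the Toeplitz boundary map and you also spell out the $Cl(V\oplus\overline V)\simeq\C$ step via $\mathcal{K}_{G_2}$-stability; then, because $b_V\times b_{\overline V}$ generates a hom group isomorphic to $\Z$, it differs from that equivalence by $\pm1$, and your factorization $b_V\times b_{\overline V}=\varphi\circ m$ with $m=\pm\mathrm{id}_{\C}$ is just an explicit form of the paper's ``any isomorphism corresponds to $\pm1$'' argument (the one point to state is that $\mathrm{id}_{\C}$ corresponds to $\pm1$, which holds because any unital ring structure on $\Z$ is the standard one).
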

\begin{proof}
    \Cref{rem: Bott facts} shows that $b_V \times b_{\overline{V}}$ is a unit in $\KKANAnon(\C, \mathcal{C}(V) \hat\otimes \mathcal{C}(\overline{V}))$. On the other hand, the Toeplitz extension theory shows that there is a $\KKANAnon$-equivalence between $\C$ and $\mathcal{C}(V) \hat\otimes \mathcal{C}(\overline{V})$, given by the image of the boundary map of the Toeplitz extension under the canonical functor $\hoKKCTWO \rightarrow \KKANAnon$. This also has to be a unit in $\Z$ because of the following general fact: in any additive $1$-category $\mathcal{D}$, if there are isomorphic objects $x$ and $y$ such that $\operatorname{hom}_{\mathcal{D}}(x,y) \cong \Z$ as abelian groups, then the isomorphism necessarily corresponds to $\pm 1$.\footnote{a proof of this fact can be deduced from the elementary algebra fact that any commutative unital ring structure on the abelian group $\Z$ must be isomorphic to the standard ring structure on $\Z$ (\cite[Exercise 2.16]{ALUFFI}).} To prove our thesis, we simply use that if $f$ is an isomorphism in an additive category, then so is $-f$.
\end{proof}
We next prove the following result, which is well known in the classical frameworks of $KK$- and $E$-theory; see \cite{kasparov1981operator}, \cite{KASEQQUIVARIANT}, \cite{HG}. We remind the reader that we \emph{do not} yet know that our $\KKANAnon$ categories are equivalent to classical graded $KK$- or $E$-categories, and hence need to give an independent proof. 
\begin{thm}\label{thm: dirac-dual Dirac abstract}
For any pseudo Euclidean vector space $V$, the Bott element $b_V$ is an equivalence in $\KKANAnon$.
\end{thm}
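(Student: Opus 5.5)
We will use the classical Atiyah rotation trick, carried out entirely inside the symmetric monoidal homotopy category $\KKANAnon$. The input is \Cref{lem: product of bott elements iso}: the product $b_V \times b_{\overline V} \colon \C \to \mathcal{C}(V)\hat\otimes\mathcal{C}(\overline V)$ is an isomorphism. From this, $b_V$ has a left inverse, namely
\[
\mathcal{C}(V) \xrightarrow{\;1\hat\otimes b_{\overline V}\;} \mathcal{C}(V)\hat\otimes\mathcal{C}(\overline V) \xrightarrow{\;(b_V\times b_{\overline V})^{-1}\;} \C .
\]
Indeed, precomposing $1\hat\otimes b_{\overline V}$ with $b_V$ gives $b_V\times b_{\overline V}$, by the interchange law in the symmetric monoidal category $\KKANAnon$. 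It follows that $\C$ is a retract of $\mathcal{C}(V)$.

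To get a two-sided inverse, we will show that $b_V\hat\otimes 1_{\mathcal{C}(V)}$ (equivalently $1\hat\otimes b_V$) is invertible. We consider the object $X:=\mathcal{C}(V)\hat\otimes\mathcal{C}(\overline V)\hat\otimes\mathcal{C}(V)$ and compare the two maps
\[
b_V\times b_{\overline V}\hat\otimes 1_{\mathcal{C}(V)} \quad\text{and}\quad 1_{\mathcal{C}(V)}\hat\otimes (b_{\overline V}\times b_V),
\]
both going from $\mathcal{C}(V)$ to $X$ after reordering factors via the symmetry. Both are isomorphisms, by \Cref{lem: product of bott elements iso} applied to $V$, and to $\overline V$ using $\overline{\overline V}=V$ together with the symmetry isomorphism. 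The rotation trick then asks for the cyclic permutation automorphism $\sigma$ of $\mathcal{C}(V)\hat\otimes\mathcal{C}(\overline V)\hat\otimes \mathcal{C}(V)$ exchanging the two outer copies of $\mathcal{C}(V)$. We need $\sigma$ to act as $\pm\operatorname{id}$ in $\KKANAnon$. It is induced by a linear isometry of $V\oplus\overline V\oplus V$ commuting with the involution, and such isometries preserve the Clifford structure.

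Granting this, the formal argument runs as in Atiyah's proof. Set $\beta:=(b_V\times b_{\overline V})^{-1}\circ(1\hat\otimes b_{\overline V})$, the left inverse constructed above. To show $b_V\circ\beta=1_{\mathcal{C}(V)}$, we express $b_V\circ\beta$, via the monoidal structure and naturality of the symmetry, as a conjugate of $\beta\hat\otimes b_V$. The rotation then identifies this conjugate with $\pm 1_{\mathcal{C}(V)}$. A sign can be absorbed, since $-\beta$ is then the inverse, as in the end of the proof of \Cref{lem: product of bott elements iso}.

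The main obstacle is the rotation step: showing that the swap automorphism of $\mathcal{C}(V)\hat\otimes\mathcal{C}(V)$ (or its version with $\overline V$ inserted) is homotopic to the identity, up to a controllable sign, as a graded Real $*$-automorphism. The key steps are these:
\begin{enumerate}
\item Identify $\mathcal{C}(V)\hat\otimes\mathcal{C}(V)\cong\mathcal{C}(V\oplus V)$, as in \Cref{rem: Bott facts}, so that the swap is induced by the isometry $(x,y)\mapsto(y,x)$.
\item Use the rotation path $\begin{psmallmatrix}\cos t&-\sin t\\ \sin t&\cos t\end{psmallmatrix}$, which lies in the involution-equivariant orthogonal group. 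It connects the swap to $(x,y)\mapsto(x,-y)$, i.e.\ to $1\hat\otimes$ the automorphism induced by $-1$ on $V$.
\item That automorphism is the grading $\alpha$ on $\mathcal{C}(V)$, up to homotopy.
\end{enumerate}
Its effect must then be controlled. It suffices to check that $\alpha$ acts as $\pm1$ on the relevant classes, which reduces to $\alpha_*b_V=\pm b_V$ in $\hat{K}_0(\mathcal{C}(V))\cong\Z$, as in \Cref{prop: omnibus graded k-theory}.4. I expect the sign bookkeeping and the passage from a degree-$0$ $K$-theory statement to the morphism $\alpha\hat\otimes 1$ in $\KKANAnon$ to need the most care. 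I would handle it through \Cref{cor: EC2 SS ring}, expressing $\alpha$ via the element $\alpha_\s$ acting through the product.
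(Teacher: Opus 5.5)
Your first step is correct: $\beta=(b_V\times b_{\overline V})^{-1}\circ(1\hat\otimes b_{\overline V})$ is a left inverse of $b_V$. The gap is in the rotation step, and it has two parts.

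First, you say you need the swap $\sigma$ to act as $\pm\operatorname{id}$ in $\KKANAnon$, and you propose to get this from $\hat K_0$ via \Cref{cor: EC2 SS ring}. That corollary only describes the effect of the grading on $\hat K_0(-)=\KKANAnon(\C,-)$. An endomorphism of $\mathcal{C}(V)\hat\otimes\mathcal{C}(V)$ (or of $X\cong\mathcal{C}(V)$) is not determined by what it does to Bott classes unless you already know that $b_V^*$ is injective, and that is essentially the statement being proved.

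Second, your step 3 is false. Let $\theta$ be the automorphism of $\mathcal{C}(V)$ induced by $-1_V$, so $\theta(f)=\epsilon\circ f\circ(-1_V)$ with $\epsilon$ the grading of $Cl(V)$. It differs from the grading $\alpha$ by $f\mapsto f(-\,\cdot\,)$, and the two are not homotopic. For $V=\R^{0,1}$ a direct check gives $\theta\circ b_V=b_V$ on the nose, whereas $\alpha_*b_V=-b_V$ by \Cref{prop: omnibus graded k-theory}.4. Running your argument with $\alpha$ gives $b_V\circ\beta=(-1)^{\dim V}$. For odd $\dim V$ this is false: together with $\beta\circ b_V=1$ it forces $\beta=-\beta$, hence $2=0$ in $\KKANAnon(\C,\C)\cong\Z$. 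Your remark that ``$-\beta$ is then the inverse'' is also wrong, since $(-\beta)\circ b_V=-1$.

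Both problems go away once you see what the trick actually uses. We have $b_V\circ\beta=(\beta\hat\otimes 1)\circ(1\hat\otimes b_V)$. By naturality of the symmetry, $(\beta\hat\otimes 1)\circ\sigma\circ(1\hat\otimes b_V)=(\beta\hat\otimes 1)\circ(b_V\hat\otimes 1)=(\beta\circ b_V)\hat\otimes 1=1$. So you only need $\sigma\circ(1\hat\otimes b_V)=1\hat\otimes b_V$, not $\sigma=\pm\operatorname{id}$. Your rotation path gives $\sigma\simeq 1\hat\otimes\theta$. Moreover, $\theta\circ b_V=b_V$ holds exactly, because the Bott homomorphism is invariant under isometries of $V$ commuting with the involution. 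So the sign is $+1$, and no passage from $K$-theory to endomorphisms is needed.

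In fact the rotation can be dropped entirely. Your first step with the two factors exchanged shows that $\psi=(b_V\times b_{\overline V})^{-1}\circ(b_V\hat\otimes 1)$ is a left inverse of $b_{\overline V}$. Hence $1\hat\otimes b_{\overline V}\colon\mathcal{C}(V)\to\mathcal{C}(V)\hat\otimes\mathcal{C}(\overline V)$ has left inverse $1\hat\otimes\psi$ and right inverse $b_V\circ(b_V\times b_{\overline V})^{-1}$. It is therefore invertible, and $b_V=(1\hat\otimes b_{\overline V})^{-1}\circ(b_V\times b_{\overline V})$.

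The paper argues differently, via Yoneda and \Cref{lem: bott induces iso in KKANA}. It writes $b_V^*$ on $\KKANAnon(\mathcal{C}(V),X)$ as a composite of three bijections:
\begin{itemize}
\item $-\hat\otimes\mathcal{C}(\overline V)$, bijective since $\mathcal{C}(\overline V)$ is $\otimes$-invertible in $\hoKKCTWO$;
\item $(b_V\times b_{\overline V})^*$;
\item the inverse of graded Bott periodicity $-\times b_{\overline V}$, imported from \cite[Theorem 9.13]{D}.
\end{itemize}
The repaired version of your route needs only \Cref{lem: product of bott elements iso} and the monoidal structure of $\KKANAnon$, so it avoids those two extra inputs.
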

We now build up to the proof of \Cref{thm: dirac-dual Dirac abstract}. We start off by spelling out the \emph{interchange law} in a monoidal ($\infty$)-categories.
\begin{lem}\label{lem: interchange law}
    Let $\mathcal{C}$ be a monoidal $(\infty)$-category, and let
$A,B,C,A',B',C'$ be objects of $\mathcal{C}$. For composable morphisms
\[
A \xrightarrow{f} B \xrightarrow{g} C,
\qquad
A' \xrightarrow{h} B' \xrightarrow{k} C',
\]
the functoriality of the tensor product
$
\otimes : \mathcal{C}\times\mathcal{C}\longrightarrow\mathcal{C}
$
gives a canonical homotopy
\[
(g\circ f)\otimes(k\circ h)
\simeq
(g\otimes k)\circ(f\otimes h)
\]
in the mapping space
$
\operatorname{Map}_{\mathcal{C}}
(A\otimes A',\,C\otimes C').
$
\end{lem}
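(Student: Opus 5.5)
The interchange law is a formal consequence of the fact that $\otimes$ is a functor $\mathcal{C}\times\mathcal{C}\to\mathcal{C}$, together with the description of composition in a product of $\infty$-categories. I would argue as follows.

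First, the pair of composable arrows $(f,h)$ and $(g,k)$ gives a pair of composable morphisms in the product $\mathcal{C}\times\mathcal{C}$:
\[
(A,A')\xrightarrow{(f,h)}(B,B')\xrightarrow{(g,k)}(C,C').
\]
Here I use that mapping spaces in a product are products of mapping spaces, $\operatorname{Map}_{\mathcal{C}\times\mathcal{C}}((X,X'),(Y,Y'))\simeq \operatorname{Map}_{\mathcal{C}}(X,Y)\times\operatorname{Map}_{\mathcal{C}}(X',Y')$. Under this identification, composition in $\mathcal{C}\times\mathcal{C}$ is computed componentwise. Concretely, a $2$-simplex of $\mathcal{C}\times\mathcal{C}$ witnessing a composite is a pair of $2$-simplices, one in each factor. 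So choosing $2$-simplices $\sigma$ and $\tau$ in $\mathcal{C}$ witnessing $g\circ f$ and $k\circ h$, the pair $(\sigma,\tau)$ witnesses $(g,k)\circ(f,h)\simeq(g\circ f,k\circ h)$.

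Next, I apply the functor $\otimes$. Functors of $\infty$-categories preserve $2$-simplices, so $\otimes(\sigma,\tau)$ is a $2$-simplex of $\mathcal{C}$ with the following boundary:
\begin{itemize}
\item edges $f\otimes h$ and $g\otimes k$;
\item long edge $(g\circ f)\otimes(k\circ h)$.
\end{itemize}
This $2$-simplex exhibits the desired homotopy $(g\circ f)\otimes(k\circ h)\simeq(g\otimes k)\circ(f\otimes h)$ in $\operatorname{Map}_{\mathcal{C}}(A\otimes A',C\otimes C')$. Since the space of composition witnesses is contractible (the inner horn filler is unique up to contractible choice), the homotopy is canonical.

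Only the underlying functor $\otimes$ is used, not the monoidal coherence data. So the same argument applies verbatim in the $1$-categorical case, where it reduces to the usual bifunctoriality of $\otimes$.

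I do not expect a genuine obstacle. The only point requiring care is making precise the identification of composition in $\mathcal{C}\times\mathcal{C}$ with componentwise composition, and this follows directly from the simplicial definition of the product.
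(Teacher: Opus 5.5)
Your argument is correct and matches the paper's approach. The paper gives no separate proof and simply attributes the homotopy to the functoriality of $\otimes\colon\mathcal{C}\times\mathcal{C}\to\mathcal{C}$; your pushforward along $\otimes$ of the componentwise $2$-simplex $(\sigma,\tau)$ is exactly how that is made precise, with ``canonical'' meaning unique up to contractible choice, and the paper in fact only uses the lemma in a homotopy category, where it is ordinary bifunctoriality.
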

\begin{lem}\label{lem: bott induces iso in KKANA}
    For any $X \in \CAlgGr$, the following diagram of abelian groups commutes 
\begin{equation}\label{eqn: diagram KKGRA}
\begin{tikzcd}
	{\KKANAnon(\mathcal{C}(V),X)}
	&&
	{\KKANAnon(\C,X)}
	\\
	{\KKANAnon\bigl(
		\mathcal{C}(V)\hat\otimes\mathcal{C}(\overline V),
		X\hat\otimes\mathcal{C}(\overline V)
	\bigr)}
	\\
	{\KKANAnon\bigl(
		\C,
		X\hat\otimes\mathcal{C}(\overline V)
	\bigr)}
	&&
	{\KKANAnon(\C,X)}
	\arrow["{b_V^*}", from=1-1, to=1-3]
	\arrow["{-\hat\otimes\mathcal{C}(\overline V)}", from=1-1, to=2-1]
	\arrow["{=}", no head, from=1-3, to=3-3]
	\arrow["{(b_V\times b_{\overline V})^*}", from=2-1, to=3-1]
	\arrow["{-\times b_{\overline V}}"', from=3-3, to=3-1]
\end{tikzcd}
\end{equation}
Furthermore, all the arrows in the above diagram are isomorphisms.
\end{lem}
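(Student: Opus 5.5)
Start with commutativity. Take $\varphi\in\KKANAnon(\mathcal{C}(V),X)$. Going left-down-right sends it to
\[
(\varphi\hat\otimes 1_{\mathcal{C}(\overline V)})\circ(b_V\times b_{\overline V}).
\]
Here $b_V\times b_{\overline V}$ is the exterior product $b_V\hat\otimes b_{\overline V}$, precomposed with the unit isomorphism $\C\simeq\C\hat\otimes\C$ of the symmetric monoidal structure on $\KKRGRA$ (\Cref{defi: KK-graded}). The interchange law (\Cref{lem: interchange law}) then gives
\[
(\varphi\hat\otimes 1)\circ(b_V\hat\otimes b_{\overline V})\simeq(\varphi\circ b_V)\hat\otimes(1\circ b_{\overline V})=(\varphi\circ b_V)\hat\otimes b_{\overline V}.
\]
The right-hand side is exactly the image of $b_V^*(\varphi)$ under the bottom map $-\times b_{\overline V}$. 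This uses that $-\times b_{\overline V}$ on $\KKANAnon(\C,X)$ means $\psi\mapsto\psi\hat\otimes b_{\overline V}$ composed with $\C\simeq\C\hat\otimes\C$, and that these unit identifications are compatible. Passing to $\pi_0$ gives commutativity in the homotopy category $\KKANAnon$. All maps are group homomorphisms because $\KKANAnon$ is additive (it is spectrally enriched, \Cref{lem: KKANA cat}.2) and $\hat\otimes$ is biadditive on morphisms.

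Next, show the arrows are isomorphisms. $(b_V\times b_{\overline V})^*$ is an isomorphism because $b_V\times b_{\overline V}$ is an isomorphism in $\KKANAnon$ by \Cref{lem: product of bott elements iso}. The right vertical arrow is the identity.

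For $-\times b_{\overline V}$, I would produce an explicit inverse. Since $b_V\times b_{\overline V}\colon\C\to\mathcal{C}(V)\hat\otimes\mathcal{C}(\overline V)$ is invertible, set $\theta:=(b_V\times b_{\overline V})^{-1}$. Using the flip/symmetry, I would show that $-\times b_{\overline V}$ admits a left inverse
\[
\omega\mapsto(1_X\hat\otimes\theta)\circ(\omega\hat\otimes b_V)
\]
up to the symmetry isomorphisms. By the interchange law, applying this to $\psi\hat\otimes b_{\overline V}$ gives $\psi\hat\otimes(\theta\circ(b_{\overline V}\times b_V))$. Here $b_{\overline V}\times b_V$ differs from $b_V\times b_{\overline V}$ by the flip, so this composite is a unit; possibly it is $\pm1$, by the additive-category $\Z$ argument in the proof of \Cref{lem: product of bott elements iso}. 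So $-\times b_{\overline V}$ is injective. Surjectivity is the main obstacle: the same trick only yields a one-sided inverse.

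To get surjectivity, I would run the same argument with the roles of $V$ and $\overline V$ swapped, using $\overline{\overline V}=V$. This shows that $-\times b_V\colon\KKANAnon(\C,Y)\to\KKANAnon(\C,Y\hat\otimes\mathcal{C}(V))$ is injective for all $Y$. Composing the two exterior products gives
\[
\KKANAnon(\C,X)\longrightarrow\KKANAnon\bigl(\C,X\hat\otimes\mathcal{C}(\overline V)\hat\otimes\mathcal{C}(V)\bigr),
\]
which equals $-\times(b_{\overline V}\times b_V)$. This map is bijective because $b_{\overline V}\times b_V$ is invertible (\Cref{lem: product of bott elements iso} applied to $\overline V$), so tensoring with it is postcomposition with an isomorphism. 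Since the second factor $-\times b_V$ is injective and the composite is bijective, the second factor is surjective, hence bijective. Then the first factor $-\times b_{\overline V}$ is bijective too. Finally, commutativity of the square together with three isomorphisms forces $-\hat\otimes\mathcal{C}(\overline V)$ and $b_V^*$ to be isomorphisms.
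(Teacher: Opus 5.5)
Your commutativity argument and your treatment of $(b_V\times b_{\overline V})^*$ agree with the paper. The final step, however, does not follow. The square only gives $(b_V\times b_{\overline V})^*\circ(-\hat\otimes\mathcal{C}(\overline V))=(-\times b_{\overline V})\circ b_V^*$. Knowing that $(b_V\times b_{\overline V})^*$, the identity and $-\times b_{\overline V}$ are bijective shows only that $-\hat\otimes\mathcal{C}(\overline V)$ is bijective if and only if $b_V^*$ is. The square by itself cannot decide whether either one is.

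This matters, because bijectivity of $b_V^*$ for all $X$ is exactly what feeds the Yoneda argument for \Cref{thm: dirac-dual Dirac abstract}. So one of these two arrows must be handled independently. The paper does this for $-\hat\otimes\mathcal{C}(\overline V)$: tensoring with $\mathcal{C}(\overline V)$ is invertible on $\hoKKCTWO$, with inverse given by tensoring with $\mathcal{C}(V)$. The cofree comodule functor $\s\hat\otimes-$ is symmetric monoidal, so $-\hat\otimes\mathcal{C}(\overline V)$ is an autoequivalence of $\KKANAnon$ and hence bijective on hom groups. Within your own setup you could instead note that the isomorphism $b_V\times b_{\overline V}\colon\C\to\mathcal{C}(V)\hat\otimes\mathcal{C}(\overline V)$ of \Cref{lem: product of bott elements iso} already makes $\mathcal{C}(\overline V)$ a $\hat\otimes$-invertible object of $\KKANAnon$, with inverse $\mathcal{C}(V)$. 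That gives the same conclusion, and with it commutativity does force $b_V^*$ to be an isomorphism.

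Apart from this, your treatment of the bottom arrow is correct and takes a genuinely different route from the paper. The paper simply cites \cite[Theorem 9.13]{D}, an analytic Bott periodicity theorem for graded $K$-theory. Your argument instead builds a left inverse from $\theta$, uses injectivity of $-\times b_V$, and uses bijectivity of $-\times(b_{\overline V}\times b_V)$. (In that left inverse, naturality of the symmetry $\sigma$ together with $\sigma_{\C,\C}=\mathrm{id}$ shows the relevant unit is exactly $1$, not merely $\pm1$.) This derives bijectivity of $-\times b_{\overline V}$ purely formally from \Cref{lem: product of bott elements iso} and the symmetric monoidal structure. Once the missing step above is supplied, your version of the lemma, and hence \Cref{thm: dirac-dual Dirac abstract}, no longer needs \cite[Theorem 9.13]{D}.
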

\begin{proof}
    That all the morphisms in the diagram are additive is clear. That it also commutes is an application of \Cref{lem: interchange law}. In the notation of loc.\! cit.\! we take
    \[
    \mathcal{C}:=\KKANAnon, A=A'=\C, B=\mathcal{C}(V), B'=C'=\mathcal{C}(\overline{V}), C=X.
    \]
    As for the morphisms, we can take any $g \in \KKANAnon(\mathcal{C}(V),X)$, and
    \[
    f=b_V, h=b_{\overline{V}}, k=\operatorname{id}_{\mathcal{C}(\overline{V})}.
    \]
    Since \Cref{eqn: diagram KKGRA} commutes, it suffices to show that the two left vertical arrows and the bottom horizontal arrows are isomorphisms. That \[- \hat\otimes \mathcal{C}(\overline{V}): \KKANAnon(\mathcal{C}(V),X) \rightarrow \KKANAnon(\mathcal{C}(V) \hat\otimes \mathcal{C}(\overline{V}), X \hat\otimes \mathcal{C}(\overline{V}))\] is an isomorphism follows because tensoring with $\mathcal{C}(\overline{V})$ yields an isomorphism in $\hoKKCTWO$ (with the inverse being tensoring with $\mathcal{C}(V)$), and because the cofree comodule functor $\KKRCTWO \xrightarrow{\s \hat\otimes -} \KKRGRAnon$ is symmetric monoidal. That $b_V \times b_{\overline{V}}$ is an isomorphism follows from \Cref{lem: product of bott elements iso}, and the bottom horizontal arrow being an isomorphism is the content of \cite[Theorem 9.13]{D}.
\end{proof}
\begin{rem}
    We mention that the proof of \cite[Theorem 9.13]{D} uses non-trivial functional analysis and features of classical graded $K$-theory (such as graded asymptotic morphisms between graded $C^*$-algebras induce morphisms between the graded $K$-groups). However,  the ``$K$-theoretic" result is enough for the proof of \Cref{lem: bott induces iso in KKANA}, and we do not need to know that there is any relation between the $\wideKK_{!,0}$ categories and asymptotic morphisms. And indeed, when $!=\operatorname{se}$, a general asymptotic morphism would not induce a morphism in $\wideKK_{\mathrm{se},0}$.
\end{rem}
\begin{proof}[Proof of \Cref{thm: dirac-dual Dirac abstract}]
    This readily follows from the Yoneda lemma and \Cref{lem: bott induces iso in KKANA}.
\end{proof}
We next deduce an important corollary of \Cref{thm: dirac-dual Dirac abstract}. We can now bring back our external compact group $G$. 
\begin{cor}\label{cor: loops is clifford one zero}
    In $\KKRGRA$, we have an equivalence $C_0(V) \simeq Cl(\overline{V})$. In particular, the functor
    \[
    - \hat\otimes Cl^{1,0}:\KKRGRA \rightarrow \KKRGRA
    \]
    is equivalent to the loops functor on $\KKRGRA$.
\end{cor}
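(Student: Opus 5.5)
The plan is to deduce the equivalence from \Cref{thm: dirac-dual Dirac abstract}, using the fact that $\KKRGRA$ is symmetric monoidal with unit $\C$. Since $\mathcal{C}(V)=C_0(V,Cl(V))\cong C_0(V)\hat\otimes Cl(V)$, the Bott element gives an equivalence $\C\simeq C_0(V)\hat\otimes Cl(V)$ in $\KKRGRA$. The group $G$ acts trivially on $V$ and on the Clifford algebras, so this is an equivalence in the equivariant category as well; \Cref{thm: dirac-dual Dirac abstract} is stated non-equivariantly, but its image under $\res_G$ (the trivial-action functor) stays an equivalence. Tensoring this equivalence with $Cl(\overline V)$ yields
\[
Cl(\overline V)\simeq C_0(V)\hat\otimes Cl(V)\hat\otimes Cl(\overline V).
\]

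Next I will use the classical algebra fact that $Cl(V)\hat\otimes Cl(\overline V)\cong Cl(V\oplus\overline V)$, and that this is a matrix algebra $\hat{\mathcal{K}}(W)$ on a finite-dimensional graded Real Hilbert space $W$. For $V=\R^{p,q}$ this reads $Cl^{p,q}\hat\otimes Cl^{q,p}\cong Cl^{p+q,p+q}\cong M_{2^{p+q}}(\C)$ with an inner grading. Graded $\kg$-stability, inherited from $\mathcal{K}_{G_2}$-stability in $\KKRGTWO$, then gives $Cl(V)\hat\otimes Cl(\overline V)\simeq\C$ in $\KKRGRA$, via a corner inclusion onto a rank-one even projection. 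Hence $C_0(V)\simeq Cl(\overline V)$.

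The step I expect to be the main obstacle is checking that $\hat{\mathcal{K}}(W)\simeq\C$ genuinely holds in the graded category. Concretely, one needs a $C_2$-equivariant corner embedding $\C\to M_{2^n}(\C)$ with the inner grading. This uses the fact that graded matrix algebras with inner grading are $\mathcal{K}_{C_2}$-equivalent to $\C$ in $\KKRGTWO$, which follows from $\kg$-stability applied to the graded Hilbert space $W$ embedded into $\widehat{L^2G_\N}$. One also has to check that $\s\hat\otimes-$ preserves this equivalence, which is automatic since it is a functor.

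For the second statement, take $V=\R^{0,1}$. Then $\overline V$ corresponds to $Cl^{1,0}$, and the first part gives $C_0(\R^{0,1})\simeq Cl^{1,0}$. By \Cref{rem: S is loops}, $C_0(\R^{0,1})\hat\otimes-$ models the loops functor on $\KKRGTWO$, and $\s\hat\otimes-$ is symmetric monoidal. So in $\KKRGRA$ we get
\[
-\hat\otimes Cl^{1,0}\simeq -\hat\otimes C_0(\R^{0,1}).
\]
I will interpret ``loops'' in $\KKRGRA$ as $C_0(\R^{0,1})\hat\otimes-$. If a more intrinsic meaning is required, note that this functor represents the cotensor with $S^1$ on mapping spectra: by the forgetful–cofree adjunction, $\KKRGRA(A,C_0(\R^{0,1})\hat\otimes B)\simeq\Omega\,\KKRGTWO(\s\hat\otimes A,B)$. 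Its inverse is $-\hat\otimes Cl^{0,1}$, because $Cl^{1,0}\hat\otimes Cl^{0,1}\cong M_2(\C)\simeq\C$ by the same stability argument.
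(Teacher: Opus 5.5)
Your proposal is correct and follows the paper's proof essentially step by step: transport the Bott equivalence $\C\simeq\mathcal{C}(V)$ along the trivial-action functor, tensor with $Cl(\overline V)$, use that $\C\to Cl(V\oplus\overline V)$ is an equivalence, and identify loops via $C_0(\R^{0,1})$ and the cofree functor. Your additional details, namely the explicit Real, even rank-one corner embedding giving $\mathcal{K}_{G_2}$-stability and the cotensor description of loops through the forgetful--cofree adjunction, only make explicit what the paper asserts without proof.
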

\begin{proof}
    We begin by first observing that \Cref{thm: dirac-dual Dirac abstract} also shows that the Bott element is an equivalence in $\KKRGRA$. This is because $\mathbb{C}$ and $\mathcal{C}(V)$ have the trivial $G$-action, and we have an obvious ``equipping with the trivial $G$-action" functor $\KKRGRAnon \rightarrow \KKRGRA$.\footnote{More formally, we could have developed a graded variant of the change of group functors, and deduced this from the functoriality of the restriction functor. But because we would not have the occassion to use it anywhere else, we omit the details.}
    
    It suffices to show that $C_0(V)$ and ${Cl}(\overline{V})$ are isomorphic in $\KKANA$. But this follows from the implications
    \[
    \C \cong \mathcal{C}(V) \implies Cl(\overline{V}) \cong C_0(V) \hat\otimes Cl(V) \hat\otimes Cl(\overline{V}) \implies Cl(\overline{V}) \cong C_0(V),
    \]
    where we used that tensoring with $Cl(\overline{V})$ is already an isomorphism in $\hoKKGTWO$, and because $\C \rightarrow Cl(V \oplus \overline{V})$ is a $\hoKKGTWO$-equivalence.
    
    The last assertion follows from the observation that the canonical functor $\KKRGTWO \rightarrow \KKRGRA$ is left-exact, and loops in $\KKRGTWO$ is modeled by $C_0(\R^{0,1})$.
\end{proof}
We end this section by showing a fundamental feature (or defect!) of graded $\wideKK_{!,\infty}$-categories. Recall that (equivariant) $KK_{!,\infty}$-categories for ungraded, separable $C^*$ algebras enjoy the universal property of being a Dwyer-Kan localization at certain class of morphisms. We show that the analogous statement for $\wideKK_{!,\infty}$ categories is false. This issue seems to not have been discussed anywhere in the literature, and Bunke-Duenzinger left it as an open question (in the complex case) in \cite{BDGRADED}. The author learnt the crux of the proof from Johannes Ebert. The proof in its present form was suggested by Maxime Ramzi.
\begin{prop}\label{prop: graded kk not dk loc}
    The full subcategory of $\KKRGRAnon$ spanned by separable, graded $C^*$-algebras (denoted $\wideKK_{!,\infty}^{\operatorname{sep}}$) is not the Dwyer-Kan localization of the category of separable, graded $C^*$-algebras at any class of morphisms.
\end{prop}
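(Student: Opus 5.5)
The plan is a naturality obstruction: the grading furnishes a natural automorphism of the identity functor on separable graded $C^*$-algebras, and such a natural automorphism would have to descend along any Dwyer--Kan localization; the computations of Section 2 show that it cannot descend.

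First we reduce to the functor $\widekk:=\kkRGRA|_{\sep}\colon \RGraded^{\sep}\to \wideKK^{\sep}_{!,\infty}$ (taking $G$ trivial). If $\wideKK^{\sep}_{!,\infty}$ were a Dwyer--Kan localization of $\RGraded^{\sep}$ at some class $W$, the localizing functor $L$ would, up to equivalence of the target, be $\widekk$. Indeed, $\widekk$ is the only natural candidate comparison functor. To rule out exotic identifications, we would phrase the argument so that it applies to any localization $L\colon \RGraded^{\sep}\to \mathcal{C}$ with $\mathcal{C}\simeq \wideKK^{\sep}_{!,\infty}$ compatible with $\widekk$. In all cases the only property we use is that
\[
L^*\colon \mathbf{Fun}(\mathcal{C},\mathcal{D})\to \mathbf{Fun}(\RGraded^{\sep},\mathcal{D})
\]
is fully faithful for every $\mathcal{D}$.

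Next we construct the obstruction. For every graded algebra $A$, its grading $\alpha_A$ is a graded $*$-automorphism. For every graded homomorphism $f\colon A\to B$ we have $f\circ\alpha_A=\alpha_B\circ f$. Hence $\alpha=(\alpha_A)_A$ is a natural automorphism of $\operatorname{id}_{\RGraded^{\sep}}$, and whiskering gives a natural automorphism $L\alpha$ of $L=L^*(\operatorname{id}_{\mathcal{C}})$. Applying full faithfulness of $L^*$ with $\mathcal{D}=\mathcal{C}$, there is a natural automorphism $\theta$ of $\operatorname{id}_{\mathcal{C}}$ with $L^*\theta\simeq L\alpha$. Concretely, in the homotopy category $\wideKK_{!,0}$ we obtain a natural transformation whose components are $\theta_A=[\widekk(\alpha_A)]$. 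Naturality of $\theta$ with respect to all morphisms of $\wideKK_{!,0}$, not just those coming from algebra maps, is exactly the extra information a localization would give us.

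We then evaluate on the pair $\C\to\s$. For every $x\in\wideKK_{!,0}(\C,A)$, naturality gives
\[
\theta_A\circ x=x\circ\theta_\C=x,
\]
because $\C$ is trivially graded and so $\alpha_\C=\operatorname{id}$. By \Cref{prop: equivalence KKgra and KKsp} we have $\wideKK_{!,0}(\C,A)\cong\hat K_0(A)$. Under this isomorphism, postcomposition with $\widekk(\alpha_A)$ is the map $(\alpha_A)_*$ induced on graded $K$-theory; compare \Cref{cor: EC2 SS ring}. Naturality would therefore force $(\alpha_A)_*=\operatorname{id}$ on $\hat K_0(A)$ for every separable graded $A$. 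Taking $A=\s$, \Cref{prop: omnibus graded k-theory}.7 gives $(\alpha_\s)_*(e)=e-u\neq e$, since $u\neq 0$ by Assertion 5 of that proposition. This is a contradiction, so no such localization exists.

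The main obstacle is the step in which the natural transformation $L\alpha$ descends to $\theta$. We must apply $L^*$ in the $\infty$-categorical sense to functor categories with target $\mathcal{C}$ itself, and then check that the resulting $\theta$ has components homotopic to $\widekk(\alpha_A)$. This should follow directly from full faithfulness of $L^*$, which holds for any Dwyer--Kan localization. A smaller bookkeeping point is identifying postcomposition with $\widekk(\alpha_A)$ in $\wideKK_{!,0}(\C,A)$ with $(\alpha_A)_*$ on $\hat K_0(A)$. This is just naturality of the equivalence $\hat{\mathbb K}(A)\simeq\Omega^\infty\KKRGRA(\C,A)$ in $A$ with respect to algebra homomorphisms.
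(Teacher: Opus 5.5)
Your argument is correct, and it takes a different route from the paper.

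\textbf{The paper's route.} The paper argues at the level of objects. Since the forgetful functor $C_2\text{-}\mathrm{Ab}\to\mathrm{Ab}$ reflects isomorphisms and $\hat K_0=\pi_0\wideKK^{\sep}_{!,\infty}(\C,-)\circ\widekk^{\sep}_{!,\infty}$, the functor $\hat K_0$ with its grading action would also invert the localizing class. It would therefore lift to a functor $\wideKK^{\sep}_{!,\infty}\to C_2\text{-}\mathrm{Ab}$. Then $\wideKK$-equivalent algebras would have $C_2$-isomorphic graded $K_0$. This is contradicted by the Bott equivalence $\C\simeq C_0(\R)\hat\otimes Cl^{0,1}$ of \Cref{thm: dirac-dual Dirac abstract}, together with the trivial versus sign actions from \Cref{prop: omnibus graded k-theory}.4.

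\textbf{Your route.} You descend the grading itself as a natural automorphism of the identity. You then test its naturality against morphisms $\C\to A$ that do not come from algebra maps. This gains two things:
\begin{enumerate}
\item You never need Bott periodicity in $\wideKK_{!,\infty}$, whose proof goes through \cite[Theorem 9.13]{D} and the Toeplitz argument of \Cref{lem: product of bott elements iso}. You only need the formal identification $\wideKK_{!,0}(\C,-)\cong\hat K_0$ and one $K$-theory computation.
\item You get a stronger conclusion: the grading would have to act trivially on $\hat K_0(A)$ for every separable graded $A$.
\end{enumerate}
For the computation, \Cref{prop: omnibus graded k-theory}.4 already suffices and is more elementary than Assertion 7: it gives $\alpha_*=-1$ on $\hat K_0(C_0(\R)\hat\otimes Cl^{0,1})\cong\Z$. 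The paper's version, in turn, only applies the lifted functor to a single isomorphism, and it exhibits the failure on a geometrically meaningful pair.

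\textbf{One point to fix.} Your sentence that any localizing functor would ``up to equivalence of the target'' be $\widekk$, because it is ``the only natural candidate'', is not an argument. Take an arbitrary $L$ and an abstract equivalence $\mathcal C\simeq\wideKK^{\sep}_{!,\infty}$. Then you could not identify $\theta_{L(A)}$ with $\widekk(\alpha_A)$, nor $L(\C)$ with $\C$, so the argument would not apply. Simply state, as the paper tacitly does, that the claim concerns the canonical functor $\widekk^{\sep}_{!,\infty}$.
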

\begin{proof}
    Since the grading on a graded $C^*$-algebra induces a $C_2$-action on the graded $K$-groups, we have a commutative diagram of functors
\[
\begin{tikzcd}
	&& {C_2\!\operatorname{-Ab}} \\
	{\CAlgGr^{\sep}} && {\operatorname{Ab}} \\
	\\
	{\wideKK^{\sep}_{!,\infty}}
	\arrow["{\operatorname{forget}}", from=1-3, to=2-3]
	\arrow["{\hat{K}_0(-)}", from=2-1, to=1-3]
	\arrow["{\hat{K}_0(-)}", from=2-1, to=2-3]
	\arrow["{\widekk^{\sep}_{!,\infty}}"', from=2-1, to=4-1]
	\arrow["{\pi_0\wideKK^{\sep}_{!,\infty}(\C,-)}"', from=4-1, to=2-3]
\end{tikzcd}
\]
Note that the bottom triangle commutes from \Cref{prop: equivalence KKgra and KKsp}. But since the forgetful functor $C_2\!\operatorname{-Ab} \rightarrow \operatorname{Ab}$ reflects isomorphisms, had $\widehat{KK}_{!,\infty}^{\operatorname{sep}}$ been a Dwyer-Kan localization, we would obtain a lift in the above diagram $\wideKK_{!,\infty}^{\operatorname{sep}} \rightarrow C_2\!\operatorname{-Ab}$, and thereby conclude that the $\hat{K}_0$-groups of two $\wideKK_{!,\infty}^{\operatorname{sep}}$-equivalent $C^*$-algebras are in fact isomorphic as $C_2$-abelian groups. In light of \Cref{thm: dirac-dual Dirac abstract}, this a contradiction, since $\hat{K}_0(\C) \cong \Z$ with the trivial $C_2$-action, whereas $\hat{K}_0(C_0(\R,Cl^{0,1})) \cong \Z$ with the action $n \mapsto -n$, as shown in \Cref{prop: omnibus graded k-theory}.4. 
\end{proof}
\begin{rem}
    Note that there is an obvious equivariant version of \Cref{prop: graded kk not dk loc} too.
\end{rem}
\section{Applications: certain fiber sequences connecting different K-theories}
    We now use the technology developed in the previous section in order to study the relationships between graded $K$-theory, $C_2$-equivariant $K$-theory and ungraded (classical) $K$-theory. All the results can be deduced in the presence of an external \emph{compact} group $G$, as we will be doing. The following remarks are in order.
    \begin{rem}\label{rem: obvious}
        We will be using the following elementary observations in this section even without mentioning.
        \begin{enumerate}
            \item The datum of a $G \times H$-action on a $C^*$-algebra $A$ is same as the datum of a $G$-action and a $H$-action on $A$ \emph{which commute}. Under this identification, it makes sense to talk about the $G$- and $H$-actions on a $(G\times H)$-$C^*$-algebra. Similar comment also applies to equivariant projective modules. For $G_2$-equivariant projective modules, we will be omitting the $G$-action from the notation, and just keep track of the $C_2$-action. For example, \emph{when we write $(A,\alpha) \in \GTWOALG$, we mean the $C_2$-action is given by $\alpha$.}
            \item Suppose $A,B,C \in \GTWOALG$ have trivial $G$-action, and fit in a fiber sequence in $\KKRCTWO$. Then, since $\res^{C_2}_{G_2}$ is left-exact, they also fit in a fiber sequence in $\KKRGTWO$.
            \item  With \Cref{thm: Recovering classical K-theories from our stable inf cat gen} and \Cref{prop: equivalence KKgra and KKsp} at our disposal, we will henceforth be (abusing notation and) denoting by $\K^G$ (resp. $\hat{\K}^G$) the $G$-equivariant (resp. graded $G$-equivariant) $K$-theory \emph{spectra}, given by $KK_{!,\infty}^{G}(\C,-)$ (resp. $\KKRGRA(\C,-) \simeq KK_{!,\infty}^{G_2}(\s,-)$). As before, we will be denoting their homotopy groups by $K_*^G(-)$ (resp. $\hat{K}_*^G(-)$).
            \item \emph{In the remainder of this section, we fix a compact group $G$.}  For concreteness, we work with $\KKRex$ in this section. The same calculations can also be done with $KK_{\operatorname{se},\infty}^{G_2}$.
        \end{enumerate}
    \end{rem}
    \subsection{A fiber sequence connecting the different $K$-theory spectra}
    The low hanging fruit that we can now pluck off is the following. We will be using the shorthand $(-)_u$ to denote the functor $\res^{G_2}_G(-)$ ($u$ stands for the ``underlying ungraded $C^*$-algebra").
\begin{prop}\label{lem: fib seqn}
    We have a natural fiber sequence of spectra \begin{equation}\label{eqn: first fib seqn}
         \K^G(A_u) \rightarrow \K^{G_2}(A) \rightarrow \hat{\K}^G(A)
    \end{equation}
    for every $A \in \GTWOALG$.
\end{prop}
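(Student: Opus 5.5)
The plan is to obtain the fiber sequence by applying the exact functor $\operatorname{map}_{\KKRGTWO}(-,A)$, which sends cofiber sequences to fiber sequences, to the fiber sequence
\[
\s \xrightarrow{\operatorname{ev}_0} \C \xrightarrow{\iota} Cl^{0,1}
\]
of \Cref{cor: s fib seqn}.2. By \Cref{rem: obvious}.2 this sequence remains a fiber sequence in $\KKRGTWO$ when the algebras carry the trivial $G$-action. Since $\KKRGTWO$ is stable, it is also a cofiber sequence, and rotating gives $Cl^{0,1}\otimes C_0(\R^{0,1}) \to \s \to \C \to Cl^{0,1}$. Applying the mapping spectrum functor yields a natural fiber sequence of spectra
\[
\KKRGTWO(Cl^{0,1},A) \xrightarrow{\iota^*} \KKRGTWO(\C,A) \xrightarrow{\operatorname{ev}_0^*} \KKRGTWO(\s,A).
\]
This is natural in $A\in\GTWOALG$ because the maps are given by precomposition.

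Next, I would identify each term. The middle term is $\K^{G_2}(A)$ by definition, via \Cref{rem: obvious}.3. The right term is $\hat{\K}^G(A)$ by \Cref{prop: equivalence KKgra and KKsp} together with the forgetful–cofree adjunction, which gives $\KKRGRA(\C,A)\simeq\KKRGTWO(\s,A)$. For the left term I would use the restriction–coinduction adjunction of \Cref{thm: change of group general} for $G\subseteq G_2$, which has compact quotient. Since $Cl^{0,1}$ with trivial $G$-action is $\coind^{G_2}_G\C$ (this is the identification $\coind^{G_2}_G B\cong B\hat\otimes Cl^{0,1}$ used in \Cref{lem: qC special}), this is the wrong side of that adjunction. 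The fix is that $\res^{G_2}_G\dashv\coind^{G_2}_G$ computes maps \emph{into} $Cl^{0,1}$, whereas here we need maps \emph{out of} it.

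So I would instead use the induction–restriction adjunction of \Cref{thm: change of group general}, noting that $G$ is open in $G_2$ because $C_2$ is discrete. Since $G_2/G=C_2$ is finite, $\ind^{G_2}_G\C\cong C(C_2)=\C\oplus\C$ with the flip action. The key claim is that $Cl^{0,1}$ with its grading action is isomorphic to $C(C_2)$ with the flip: the map $x+y\varepsilon\mapsto(x+y,x-y)$ intertwines the grading with the swap. Granting this,
\[
\KKRGTWO(Cl^{0,1},A)\simeq\KKRGTWO(\ind^{G_2}_G\C,A)\simeq\KK^G_{!,\infty}(\C,\res^{G_2}_G A)=\K^G(A_u).
\]

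I expect the main obstacle to be bookkeeping rather than substance. First, one must check that the isomorphism $Cl^{0,1}\cong\ind^{G_2}_G\C$ is Real. It is, because $\varepsilon$ is a Real element and the formula has real coefficients. Second, one must confirm that all identifications are natural in $A$, which follows because each equivalence is either a precomposition or an adjunction equivalence natural in the second variable. Finally, one needs to check that the variance conventions of the adjunction match the statement of \Cref{thm: change of group general}.
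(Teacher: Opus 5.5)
Your proposal is correct and is essentially the paper's proof. You apply $\KKRex(-,A)$ to the fiber sequence $\s \to \C \to Cl^{0,1}$ of \Cref{cor: s fib seqn}.2, identify the left term through the induction--restriction adjunction using $\ind^{G_2}_G\C\cong Cl^{0,1}$, and identify the other two terms via \Cref{thm: Recovering classical K-theories from our stable inf cat gen}; your explicit isomorphism $x+y\varepsilon\mapsto(x+y,x-y)$ and the observation that $G$ is open in $G_2$ only fill in details the paper leaves implicit.
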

\begin{proof}
Upon applying the exact functor
\[
\KKRex(-,A): \GTWOALG \longrightarrow \operatorname{Sp}
\]
to the fiber sequence of \Cref{cor: s fib seqn}.2, we are done once we justify the maps $(1)$, $(2)$, and $(3)$ in the following diagram:
\[
\begin{tikzcd}
	{\KKRex(Cl^{0,1},A)}
	&&
	{\KKRex(\C,A)}
	&&
	{\KKRex(\s,A)}
	\\
	{\mathbb K^G(A_u)}
	&&
	{\mathbb K^{G_2}(A)}
	&&
	{\hat{\mathbb K}^{G}(A)}
	\arrow["{\iota^*}", from=1-1, to=1-3]
	\arrow["{\simeq\,(1)}"', tail reversed, from=1-1, to=2-1]
	\arrow["{\operatorname{ev}_0^*}", from=1-3, to=1-5]
	\arrow["{\simeq\,(2)}"', tail reversed, from=1-3, to=2-3]
	\arrow["{\simeq\,(3)}"', tail reversed, from=1-5, to=2-5]
\end{tikzcd}
\]
The map labeled $(1)$ arises from the induction-restriction adjunction (\Cref{thm: change of group general}), since we can identify
$
\mathsf{ind}^{G_2}_G \mathbb{C}$ with $Cl^{0,1}$.
The second and third equivalence follow from \Cref{thm: Recovering classical K-theories from our stable inf cat gen}.
\end{proof}
In the next two remarks, we explain a couple of different formulations of the fiber sequence in \Cref{eqn: first fib seqn}. Though the above sequence will be our main workhorse, these other formulations come handy in certain cases.
\begin{rem}
     We claim that there is a fiber sequence 
    \begin{equation}\label{eqn: another formulation}
        \K^G(A_u) \xrightarrow{(A\rightarrow A \hat\otimes Cl^{0,1})_* } \K^G((A \hat\otimes Cl^{0,1})_u) \rightarrow \hat{\K}^G(A)
    \end{equation}
    which is equivalent to the fiber sequence in \Cref{eqn: first fib seqn}. Indeed, this will follow from the following commutative diagram
\begin{equation}\label{eqn: abs formulation fib seqn}
\begin{tikzcd}
	&&& {\KKRex(\C,A)} \\
	{\KKRex(Cl^{0,1},A)} &&& {\KKRex(Cl^{0,1},A\hat\otimes Cl^{0,1})} \\
	\\
	{\ER(\C,A_u)} &&& {\ER(\C,(A\hat\otimes Cl^{0,1})_u)}
	\arrow["{-\hat\otimes Cl^{0,1}}", from=1-4, to=2-4]
	\arrow["{\iota^*}", from=2-1, to=1-4]
	\arrow["{(A\rightarrow A\hat\otimes Cl^{0,1})_*}"', from=2-1, to=2-4]
	\arrow["{\mathsf{ind}^{G_2}_G \dashv \mathsf{res}^{G_2}_G}"', from=2-1, to=4-1]
	\arrow["{\mathsf{ind}^{G_2}_G \dashv \mathsf{res}^{G_2}_G}", from=2-4, to=4-4]
	\arrow["{(A\rightarrow A\hat\otimes Cl^{0,1})_*}"', from=4-1, to=4-4]
\end{tikzcd}
\end{equation}
Note that the top triangle commutes because the diagram below commutes in $\KKRex$, where $f: Cl^{0,1} \rightarrow A$ is any morphism in $\KKRex$. The left triangle below \emph{does not} commute at the level of algebras, but only up to homotopy.
\begin{equation}\label{eqn: Morita move}
    \begin{tikzcd}
	&& {Cl^{0,1}} && A \\
	{Cl^{0,1}} && {Cl^{0,2}} && {A \hat\otimes Cl^{0,1}}
	\arrow["f", from=1-3, to=1-5]
	\arrow["{=}"', no head, from=1-3, to=2-1]
	\arrow["{1_{Cl^{0,1}} \hat\otimes \iota}", from=1-3, to=2-3]
	\arrow["{1_A \hat\otimes \iota}", from=1-5, to=2-5]
	\arrow["{\iota \hat\otimes Cl^{0,1}}"', from=2-1, to=2-3]
	\arrow["{f \hat\otimes 1_{Cl^{0,1}}}"', from=2-3, to=2-5]
\end{tikzcd}
\end{equation}
\end{rem}
\begin{rem}\label{rem: abs style seqn}
    We claim there is a fiber sequence
    \begin{equation}\label{eqn: fib seq ABS style}
         \K^{G_2}(A \hat\otimes Cl^{1,0}) \xrightarrow{(A \rightarrow A \hat\otimes Cl^{1,0})^*} \K^{G_2}(A) \rightarrow \hat{\K}^G(A)
    \end{equation}
   which is equivalent to the fiber sequence in \Cref{eqn: first fib seqn}. The map labeled $A \rightarrow (A \hat\otimes Cl^{1,0})^*$ should be taken with a pinch of salt as $\K^{G_2}(-)$ is naturally covariantly functorial; what we mean is the map obtained from the zig-zag \[A\hat\otimes Cl^{1,0} \rightarrow A \hat\otimes Cl^{1,1} \xleftarrow{\simeq} A.\] When $A$ is unital, it can be shown (\cite[Section 4.5.2]{AS}) that on $\pi_0$, this has the same effect as precomposing the structure map of a $G_2$-equivariant $A \hat\otimes Cl^{1,0}$ projective module with the homomorphism $A \rightarrow A \hat\otimes Cl^{1,0}$. The equivalence of the fiber sequences in \Cref{eqn: first fib seqn} and \Cref{eqn: fib seq ABS style} can be shown by a similar treatment as in the last remark.

    This formulation of the fiber sequence is directly related to the classical Atiyah-Bott-Shapiro description of $K$-groups of a point, see \Cref{rem: abs} for more details.
\end{rem}
\begin{prop}
    When $A$ is inner graded, the fiber sequence in \Cref{eqn: first fib seqn} (and hence also the ones in \Cref{eqn: another formulation} and \Cref{eqn: fib seq ABS style}) splits.
\end{prop}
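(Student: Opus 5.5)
The plan is to reduce, via the inner grading, to the case of a trivial $C_2$-action, and there to write down an explicit retraction of the first map $\K^G(A_u)\to\K^{G_2}(A)$ of \Cref{eqn: first fib seqn}. A retraction of the first map of a fiber sequence of spectra splits it.

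\emph{Step 1: untwisting.} Let the grading of $A$ be $\operatorname{Ad}(\epsilon)$, where $\epsilon\in M(A)$ is a $G$-invariant, Real, self-adjoint unitary. Write $A^{\operatorname{triv}}:=\res^G_{G_2}(A_u)$ for $A_u$ with the trivial $C_2$-action. I would construct a natural equivalence $A\simeq A^{\operatorname{triv}}$ in $\KKRGTWO$ by the standard exterior-equivalence argument.
\begin{itemize}
\item Form the $G_2$-algebra $M_2(A)$ with $C_2$-action $\operatorname{Ad}\begin{psmallmatrix}\epsilon&0\\0&1\end{psmallmatrix}\circ(\text{grading}\otimes\operatorname{id})$.
\item Both corner inclusions $A\to M_2(A)$ and $A^{\operatorname{triv}}\to M_2(A)$ are then equivariant.
\item They become equivalences by $\mathcal K_{G_2}$-stability and homotopy invariance, after stabilizing with $\mathcal K(L^2(G_2)_\N)$.
\end{itemize}
The points to check are that $\begin{psmallmatrix}\epsilon&0\\0&1\end{psmallmatrix}$ is $G$-invariant and Real, which is why those conditions are built into the definition of an inner grading, and that the construction is natural in morphisms preserving $\epsilon$. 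Transporting \Cref{eqn: first fib seqn} along this equivalence, it suffices to split
\[
\KKRex(Cl^{0,1},A^{\operatorname{triv}})\xrightarrow{\iota^*}\KKRex(\C,A^{\operatorname{triv}})\to\KKRex(\s,A^{\operatorname{triv}}).
\]

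\emph{Step 2: trivial action.} By the adjunction in the proof of \Cref{lem: fib seqn}, the first term is $\K^G(A_u)$. The composite of $\iota^*$ with this identification is the map $\K^G(A_u)\to\K^{G_2}(A^{\operatorname{triv}})$ induced by the counit of $\ind^{G_2}_G\dashv\res^{G_2}_G$, using $\ind^{G_2}_G\C\cong Cl^{0,1}$. On $\pi_0$ it should send $[M]\mapsto[M\oplus M,\text{swap}]=[M\otimes\C^{\operatorname{triv}}]+[M\otimes\C^{\operatorname{sign}}]$.

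For the retraction I would use that, for a trivially acting $C_2$, the trivial representation gives a splitting. The functor $\res^G_{G_2}$ and the external product with $K^{C_2}_0(\C)\cong\Z[x]/(x^2-1)$ from \Cref{prop: KOC2} give a map
\[
r:\ \K^G(A_u)\oplus\K^G(A_u)\longrightarrow \K^{G_2}(A^{\operatorname{triv}}),
\qquad (a,b)\longmapsto a\cdot 1+b\cdot x .
\]
This map is an equivalence, since it realizes the Green--Julg identification $A^{\operatorname{triv}}\rtimes C_2\cong A_u\oplus A_u$ (\Cref{prop: KOC2}.3) at the level of spectra. Concretely, $\C\rtimes C_2\cong\C\oplus\C$ via the two characters, and $(-)\rtimes C_2$ is a functor on the $\KK$-categories. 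Under $r$, the map $\iota^*$ becomes the diagonal $a\mapsto(a,a)$. The projection to the first summand, i.e.\ the component of the trivial character in $\C\rtimes C_2\cong\C\oplus\C$, is therefore a retraction.

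\emph{Main obstacle.} I expect the main difficulty to be making Step 2 precise at the spectrum level rather than on $\pi_0$: identifying $\iota^*$ with the diagonal as a map of spectra, naturally in $A$. I would handle this in $\KKRex$ itself, by observing that in $\KKRCTWO$ the inclusion $\iota:\C\to Cl^{0,1}=\C\oplus\C$ (flip action) composes with the two character projections of the coefficient algebra to give identity maps. This is exactly the matrix computation in the proof of \Cref{prop: KOC2 CL01}, and I would then tensor with $A_u$ using the symmetric monoidal structure. The splittings of \Cref{eqn: another formulation} and \Cref{eqn: fib seq ABS style} follow because they are equivalent to \Cref{eqn: first fib seqn}.
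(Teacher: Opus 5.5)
Your argument is correct, but it takes a different route from the paper's proof.

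\textbf{The paper's route.} The paper never untwists the $C_2$-action. It passes to the equivalent formulation
$\K^G(A_u)\to\K^G((A\hat\otimes Cl^{0,1})_u)\to\hat{\K}^G(A)$, whose first map is induced by the $*$-homomorphism $A\to A\hat\otimes Cl^{0,1}$. For inner graded $A$ one has $A\hat\otimes Cl^{0,1}\cong A\otimes Cl^{0,1}$ in $\GTWOALG$, so $(A\hat\otimes Cl^{0,1})_u\cong A_u\oplus A_u$. The first map is then literally induced by the diagonal homomorphism $A_u\to A_u\oplus A_u$, and either projection is a retraction. Because the diagonal is visible already at the level of algebras, the spectrum-level identification you flag as your main obstacle never arises. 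The only input beyond this is the equivalence of the two formulations of the sequence, which the paper establishes just before the statement.

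\textbf{What your route needs instead.} Your argument relies on two inputs the paper does not develop at this point.
\begin{enumerate}
\item The equivalence $A\simeq\res^G_{G_2}\res^{G_2}_G A$ in $\KKRex$ for inner graded $A$. The paper only invokes this later, for the dual sequence, citing \cite[Example E.2.13]{WY} and \cite[Lemma 2.1]{BUNKEcrossed}. Your linking-algebra sketch works. The cleanest justification is that the unitary $W=1\otimes e_1+\epsilon\otimes e_x$ in $M(A\otimes\mathcal K(L^2C_2))$ intertwines $\operatorname{id}\otimes\operatorname{Ad}\lambda$ with $\operatorname{Ad}\epsilon\otimes\operatorname{Ad}\lambda$. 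Hence the two actions agree after a single $\mathcal K_{G_2}$-stabilization.
\item A spectrum-level $R(C_2)$-decomposition of $\K^{G_2}$ of a trivially acting algebra. This is a Green--Julg type statement, which the paper explicitly chooses not to set up.
\end{enumerate}
In return, your approach is more conceptual. It shows that for inner graded $A$ the whole sequence is equivalent to the one for the trivially graded algebra, and it identifies the retraction as taking the trivial-character component.

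\textbf{Your stated obstacle can be bypassed.} First check that $r$ is an equivalence on $\pi_*$. Use the classical decomposition $K^{G\times C_2}_*(B)\cong K^G_*(B)\otimes R(C_2)$ for trivial $C_2$-action, together with naturality under $C_0(\R^{0,1})\otimes-$ and $C_0(\R^{1,0})\otimes-$ to reach all degrees. Then the self-map $\mathrm{pr}_1\circ r^{-1}\circ\iota^*$ of $\K^G(A_u)$ induces the identity on all homotopy groups, so it is an equivalence. Composing with its inverse gives a retraction of $\iota^*$, without ever identifying $\iota^*$ with the diagonal as a map of spectra.
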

\begin{proof}
    In light of \Cref{eqn: another formulation}, we have to show that the map of spectra
    \[
\ER(\C,A_u)
\xrightarrow{(A\to A\hat\otimes Cl^{0,1})_*}
\ER\bigl(\C,(A\hat\otimes Cl^{0,1})_u\bigr).
\]
    has a retraction when $A$ is inner graded. In this case, since $A \hat\otimes Cl^{0,1} \cong A \otimes Cl^{0,1}$ in $\GTWOALG$, we can extend the aforementioned map to
   \[
\begin{aligned}
\ER(\C,A_u)
&\xrightarrow{(A\to A\hat\otimes Cl^{0,1})_*}
\ER\bigl(\C,(A\hat\otimes Cl^{0,1})_u\bigr)
\\
&\xrightarrow{\simeq}
\ER\bigl(\C,(A\otimes Cl^{0,1})_u\bigr)
\xrightarrow{\simeq}
\ER(\C,A_u\oplus A_u)
\\
&\xrightarrow{\simeq}
\ER(\C,A_u)\oplus \ER(\C,A_u).
\end{aligned}
\]
    It can be directly checked that this composite is the diagonal map of $KK_{\operatorname{ex},\infty}^{G}(\C,A_u)$. Since the diagonal map of any spectrum has two canonical retractions, we are done.
\end{proof}
\begin{rem}
    At this stage, the reader may want to see an example where the fiber sequence in \Cref{eqn: first fib seqn} does not split. This is intimately related to a folklore question (\Cref{que: que}), and we discuss it in \Cref{rem: abs}.
\end{rem}
One next want to analyze the fiber sequence in \Cref{eqn: first fib seqn} when $A$ is not inner graded. To that end, we give an explicit description of the maps in the sequence at level of $\pi_0$.
\begin{lem}\label{lemlem}
    Let $(A,\alpha)$ be a unital algebra in $\GTWOALG$. We have a natural group homomorphism
    \[
    F_A:K_0^{G_2}(A) \rightarrow {K}_0^{G,\operatorname{Fr}}(A)
    \]
   given by taking a finitely generated $G_2$-equivariant projective $A$-module $M$ to the Fredholm cycle $(M,0) \in K_0^{G,\operatorname{Fr}}(A)$.
\end{lem}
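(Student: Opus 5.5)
The plan is to construct $F_A$ first on the monoid of isomorphism classes of finitely generated $G_2$-equivariant projective $A$-modules, check it is a monoid homomorphism, and then extend it to $K_0^{G_2}(A)$ using the universal property of the Grothendieck group. The target is already a group by \Cref{thm: spectral model of g eq k theory}.1 and \Cref{thm: spectral model of g eq k theory}.5.

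\textbf{Step 1 (the Hilbert module).} Given a finitely generated $G_2$-equivariant projective module $M$, I would first turn it into a graded, $G$-equivariant Hilbert $A$-module. The grading is the $C_2$-part $\alpha_M$ of the action; it is compatible with $\alpha$ on $A$ by equivariance, and as in \Cref{rem: obvious}.1 it commutes with the $G$-action. For the inner product, I would realize $M$ as a $G_2$-invariant direct summand of $A\otimes V$, with $V$ a finite dimensional $G_2$-representation, and restrict the standard inner product. This realization follows from an equivariant Swan-type argument: take a finite generating set, translate it by $G_2$, use compactness to reduce to a finite dimensional representation, and average a splitting over the Haar measure. Alternatively, one can start from any $A$-valued inner product and average it over the compact group $G_2$.

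The resulting Hilbert module depends on choices. I would show that any two $G_2$-invariant inner products on $M$ give unitarily isomorphic Hilbert modules, via a $G_2$-equivariant unitary. Write one inner product as $\langle T\cdot,\cdot\rangle$ for the other, with $T$ positive and invertible; then $T^{1/2}$ is the required unitary. Since $T$ is $G_2$-invariant and continuous functional calculus is equivariant, $T^{1/2}$ is $G_2$-invariant as well.

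\textbf{Step 2 (the Fredholm cycle).} Because $M$ is finitely generated projective, $\mathcal{K}_A(M)=\mathcal{B}_A(M)$, so $1$ is $A$-compact. Hence $F=0$ satisfies $F^2-1\in\mathcal{K}_A(M)$, and $0$ is trivially odd, self-adjoint, $G$-invariant and $G$-continuous. Moreover $M$ is countably generated. Therefore $(M,0)$ is a $G$-Fredholm cycle in the sense of \Cref{defi: fred cycle}. By Step 1, isomorphic modules yield isomorphic, and in particular concordant, cycles, so $[M]\mapsto[(M,0)]$ is well defined.

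\textbf{Step 3 (homomorphism and naturality).} Additivity holds because $(M\oplus N,0)=(M,0)\oplus(N,0)$, and the sum in $K_0^{G,\operatorname{Fr}}(A)$ is direct sum of cycles. The universal property of the Grothendieck group then extends the monoid map to the group homomorphism $F_A$. For naturality along a unital map $\varphi:A\to B$, I would use the canonical identification $M\otimes_\varphi B\cong M\hat\otimes_\varphi B$ of graded equivariant Hilbert $B$-modules, under which $0\hat\otimes 1=0$.

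I expect Step 1 to be the main obstacle: producing a $G_2$-invariant Hilbert structure that is canonical up to equivariant unitary isomorphism. The rest is routine once that is in place.
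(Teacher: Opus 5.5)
Your proposal is correct and follows the paper's proof. The paper likewise treats the statement as clear once one knows that every finitely generated $G_2$-equivariant projective $A$-module is canonically a graded $G$-equivariant Hilbert $A$-module, and it defers that fact to \cite[Proposition 11.2.3]{BLAK}; your Step 1 (an invariant summand of $A\otimes V$, with uniqueness up to an equivariant unitary via $T^{1/2}$) supplies exactly that deferred content, and your main route is sound as written. The only thing to tighten is your ``alternative'' averaging route: it must start from an inner product that already makes $M$ a Hilbert module, since an arbitrary definite $A$-valued inner product need not be complete, and in the Real setting the Real structure should be kept compatible as well.
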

\begin{proof}
    The assertion is clear, modulo the verification that every finitely generated $G_2$-equivariant projective $A$-module is canonically a graded $G$-equivariant Hilbert $A$-module. This follows by an easy case by case analysis with signs in light of \cite[Proposition 11.2.3]{BLAK}.
\end{proof}
\begin{cor}
    Following the notation of \Cref{lemlem}, the diagram
\[
\begin{tikzcd}
	{\pi_0\KKRex(\C,A)}
	&&
	{\pi_0\KKRex(\s,A)}
	\\
	{K_0^{G_2}(A)}
	&&
	{\hat{K}_0^{G,\operatorname{Fr}}(A)}
	\arrow["{\operatorname{ev}_0^*}", from=1-1, to=1-3]
	\arrow["\cong", from=1-3, to=2-3]
	\arrow["\cong", from=2-1, to=1-1]
	\arrow["{F_A}", from=2-1, to=2-3]
\end{tikzcd}
\]
commutes.
\end{cor}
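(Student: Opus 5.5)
The plan is to reduce the commutativity of the square to a concrete computation with $*$-homomorphisms. Both vertical isomorphisms come from corepresentability of special algebras, via \Cref{cor: special mapping spaces in KK} and \Cref{thm: Recovering classical K-theories from our stable inf cat gen}. On the left, we use $q\C\simeq \C$ in $\KKRex$ (\Cref{cor: s fib seqn}.1) together with the Cuntz picture (\Cref{prop: Cuntz picture k-theory} and \Cref{rem: the map in the cuntz picture iso}). On the right, we use the spectral model and the isomorphism $\pi_0\mathbb{K}^{G,\operatorname{sp}}(A)\cong K_0^{G,\operatorname{Fr}}(A)$ of \Cref{thm: spectral model of g eq k theory}.5. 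Everything is natural and additive, so it suffices to chase a class $[M]$ of a finitely generated $G_2$-equivariant projective module. By $G_2$-equivariant Kasparov stabilization we may take $M=p(A\otimes \widehat{L^2G_\N})$ for a $G_2$-invariant projection $p\in A\otimes\kg$ (here $\kg$ means the compacts on $L^2(G_2)_\N$, regarded graded).

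First, on the left, $[M]$ corresponds in $\pi_0\KKRex(\C,A)$ to the image of the $G_2$-equivariant homomorphism $\varphi_p:\C\to A\otimes\mathcal{K}_{G_2}$, $\lambda\mapsto\lambda p$. The fact that the identification $q\C\simeq\C$ is compatible with this is exactly the statement that the Cuntz isomorphism is precomposition with the first projection $q\C\to\C$. Second, precomposition with $\operatorname{ev}_0:\s\to\C$ in $\KKRex$ is, by functoriality of $\kk$, the class of the honest homomorphism $\varphi_p\circ\operatorname{ev}_0:\s\to A\hat\otimes\hatkg$, $f\mapsto f(0)p$. This lands in $\underline{\operatorname{Hom}}_{G_2}(\s,A\hat\otimes\hatkg)=\mathbb{K}^{G,\operatorname{sp}}(A)$, and it remains to check that \Cref{thm: Recovering classical K-theories from our stable inf cat gen} sends this homomorphism to exactly this point. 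That is built into \Cref{cor: special mapping spaces in KK}, since the equivalence there is induced by $\kk$ on homomorphisms into $B\otimes\kg$. Third, we apply the map of \Cref{thm: spectral model of g eq k theory}.5. We have $\mathcal H_\varphi=\overline{\varphi(\s)(A\otimes\widehat{L^2G_\N})}=p(A\otimes\widehat{L^2G_\N})\cong M$. The unique extension $\tilde\varphi:C[-1,1]\to\mathcal B(\mathcal H_\varphi)$ sends $f\mapsto f(0)\,\mathrm{id}$, so $\tilde\varphi(x)=0$. This gives the Fredholm cycle $(M,0)=F_A([M])$, using the grading from \Cref{lemlem}.

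The main obstacle I expect is bookkeeping of gradings and identifications in the first and third steps. Specifically, one must check that the graded Hilbert module $\mathcal H_\varphi$, whose grading is induced from $\widehat{L^2G_\N}$ via the $C_2$-part of the $G_2$-action, agrees with the grading on $M$ used in \Cref{lemlem}. One must also check that the left vertical isomorphism in the corollary, which goes through $\mathcal{K}_{G_2}$ versus $\hatkg$ stabilization, matches the identification $A\hat\otimes\hatkg\cong \mathcal{K}_A(A\otimes\widehat{L^2G_\N})$. I would handle this by choosing $\widehat{L^2G_\N}$ to be $L^2(G_2)_\N$ with $C_2$ acting as the grading, so that the two stabilizations coincide literally. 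The remaining claim is that $(M,0)$ is independent of the choice of $p$ up to concordance, which is clear since unitary equivalence and stabilization give isomorphic cycles.
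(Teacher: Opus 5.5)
Your proposal is correct and follows essentially the same route as the paper. You represent $[M]$ by its classifying homomorphism into (stabilized) compacts, precompose with $\operatorname{ev}_0$, and apply the Fredholm-model map; uniqueness of the extension of $(\varphi\circ\operatorname{ev}_0)^{\operatorname{nd}}$ to $C[-1,1]$ then forces $\tilde\varphi(x)=x(0)\cdot\mathrm{id}=0$, which gives the cycle $(M,0)$. The differences are cosmetic: the paper uses $\varphi:\C\to\mathcal{K}_A(V\otimes A)=\mathcal{B}_A(V\otimes A)$ for a finite-dimensional $G_2$-representation $V$ where you use a $G_2$-invariant projection in the stabilization, and you spell out the $q\C\simeq\C$ and grading bookkeeping that the paper leaves to ``unraveling the maps''.
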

\begin{proof}
    Given a $G_2$-equivariant projective $A$-module $M$, we study the fate of $[M] \in K_0^{G_2}(A)$ along the up-right-down itinerary. 

    The classifying map of $M$ is a $G_2$-equivariant $*$-homomorphism 
    \[
    \varphi: \C \rightarrow \mathcal{K}_A(V \otimes A),
    \]
    where $V$ is a finite dimensional $G_2$-vector space. $\varphi$ is the image of $[M]$  in $\pi_0(KK_{\operatorname{ex},\infty}^{G_2}(\mathbb{C},A))$ under the left vertical map, as can be seen by unraveling the maps in \Cref{thm: Recovering classical K-theories from our stable inf cat gen} and the equivariant Kasparov stabilization theorem. Also note that $\mathcal{K}_A(V \otimes A)= \mathcal{B}_A(V \otimes A)$ since $\mathcal{K}_A(V \otimes A)$ is unital, see \cite[Theorem 15.4.2]{WG}. It remains to determine the image of $\operatorname{\varphi \circ ev_0}$ under the right vertical map. From \Cref{thm: spectral model of g eq k theory}.5, one gets that this image is the Fredholm cycle (we are also using the notation of the proof of \Cref{thm: spectral model of g eq k theory}.5) \[(\overline{\varphi \circ \operatorname{ev}_0(\s)(V \otimes A)},\Tilde{\varphi}(x))=(M,0)\]
    Indeed, this follows from the commutative diagram 
\[\begin{tikzcd}
	{\mathcal{S}} & {\mathcal{K}_A(\overline{\varphi \circ \operatorname{ev}_0(\mathcal{S})(V \otimes A)})} & {\mathcal{K}_A(V \otimes A)} \\
	{C[-1,1]} & {\mathbb{C}}
	\arrow["{(\varphi \circ \operatorname{ev}_0)^{\operatorname{nd}}}", from=1-1, to=1-2]
	\arrow[hook, from=1-1, to=2-1]
	\arrow["{\operatorname{ev}_0}"{pos=0.8}, from=1-1, to=2-2]
	\arrow["\iota", from=1-2, to=1-3]
	\arrow["{\exists !}"{pos=0.1}, dashed, from=2-1, to=1-2]
	\arrow["{\operatorname{ev}_0}"', from=2-1, to=2-2]
	\arrow["{\varphi^{\operatorname{nd}}}"', from=2-2, to=1-2]
	\arrow["\varphi"', from=2-2, to=1-3]
\end{tikzcd}\]
The dotted extension of $(\varphi \circ \operatorname{ev}_0)^{\operatorname{nd}}$ is unique. However, $\varphi^{\operatorname{nd}}\circ \operatorname{ev}_0$ is yet another extension, whence they must agree. Our thesis is deduced readily from the latter.  
\end{proof}
Before stating the next result, we need to setup some notation
\begin{notn}
    Let $(A,\alpha) \in \GTWOALG$. Denote by $A_\alpha$ the $G$-equivariant $A_u$-module $\underline{A}$ (the underlying abelian group of $A$) with the $A_u$-action given by $\underline{a}.b:=\underline{a\alpha(b)}$ and the $G$-action given by $g.\underline{a}:=\underline{ga}$. Note that $A_\alpha$ is a $A_u$-$A_u$ bimodule in a canonical way.
\end{notn}
\begin{lem}\label{lem: GA definition}
    Let $(A,\alpha)$ be a unital algebra in $\GTWOALG$. We have a natural group homomorphism
    \[
    G_A:K_0^G(A_u) \rightarrow K_0^{G_2}(A)
    \]
given by taking a finitely generated $G$-equivariant projective $A_u$-module $M$ to the $G_2$-equivariant finitely generated projective $A$-module $M \oplus M_\alpha$, where $M_{\alpha}:=M \otimes_{A_u} A_{\alpha}$\footnote{explicitly, $M_\alpha$ is the abelian group $\underline{M}$ with the $A_u$-action given by $\underline{m}.a:=\underline{m\alpha(a)}$. $M$ and $M_{\alpha}$ are not isomorphic as  $A_u$-modules in general.} and the $C_2$-action on $M \oplus M_{\tau}$ is flip, i.e. given by $(m,\underline{n}) \mapsto (n,\underline{m})$.
\end{lem}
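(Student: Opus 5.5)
The plan is to verify three things in turn: $M\oplus M_\alpha$ with the flip is a legitimate finitely generated projective $G_2$-equivariant $A$-module; the construction is compatible with isomorphisms and direct sums; and it is natural in $A$. Once these hold, the universal property of the Grothendieck group yields $G_A$.

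\textbf{Step 1: the object is well defined.} First we check that $M_\alpha$ is again a finitely generated projective $G$-equivariant $A_u$-module. The functor $-\otimes_{A_u}A_\alpha$ is restriction of scalars along the automorphism $\alpha$ of $A_u$. Since $\alpha$ is an automorphism commuting with the $G$-action, this functor is an equivalence of categories of $G$-equivariant $A_u$-modules. Concretely, it sends $A_u^n$ to $(A_\alpha)^n\cong A_u^n$ via $\underline{a}\mapsto \alpha(a)$, and it preserves direct summands, so it preserves finitely generated projectives.

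Next we define the $C_2$-action $\sigma(m,\underline n)=(n,\underline m)$ on $N:=M\oplus M_\alpha$ and check three things:
\begin{itemize}
\item $\sigma$ is additive and $\sigma^2=1$.
\item $\sigma$ commutes with the $G$-action.
\item $\sigma$ is $\alpha$-semilinear, i.e.\ $\sigma(x\cdot a)=\sigma(x)\cdot\alpha(a)$.
\end{itemize}
The third point is the compatibility required of a $C_2$-equivariant $A$-module. It is a one-line computation: $(m,\underline n)\cdot a=(ma,\underline{n\alpha(a)})$ is sent by $\sigma$ to $(n\alpha(a),\underline{ma})$. This equals $(n,\underline m)\cdot\alpha(a)$ because $\underline m\cdot\alpha(a)=\underline{m\alpha^2(a)}=\underline{ma}$.

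It remains to see that $N$ is projective \emph{as a $G_2$-equivariant module}, and not only after forgetting the $C_2$-action. Here we use that $C_2$ is finite (or, more generally, that $G_2$ is compact). Given a $G_2$-equivariant surjection onto $N$, a $G$-equivariant splitting exists because $N$ is $G$-projective. Averaging that splitting over $C_2$ with the twisted action produces a $G_2$-equivariant splitting. We can equally argue by compactness of $G_2$, as in \cite[Chapter 11]{BLAK}.

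Alternatively, we can bypass the averaging entirely by identifying $N\cong M\otimes_{A_u}(A\rtimes C_2)$, i.e.\ with the induced module. Induction preserves finitely generated projectives and equivariance. I expect this identification to be the cleanest way to settle projectivity, and it is the only step requiring care.

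\textbf{Step 2: compatibility with the monoid structure.} An isomorphism $f\colon M\to M'$ induces $f\oplus f$, which intertwines the flips. There is also an evident equivariant isomorphism
\[
(M\oplus M')\oplus(M\oplus M')_\alpha\cong (M\oplus M_\alpha)\oplus(M'\oplus M'_\alpha)
\]
compatible with the flips. Hence $M\mapsto M\oplus M_\alpha$ is a monoid homomorphism on isomorphism classes, and it extends uniquely to the Grothendieck group $K_0^G(A_u)$.

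\textbf{Step 3: naturality.} For a unital $G_2$-equivariant homomorphism $\phi\colon A\to B$, base change $-\otimes_A B$ commutes with $-\otimes A_\alpha$, because $\phi\circ\alpha=\beta\circ\phi$. It also carries the flip on $M\oplus M_\alpha$ to the flip on $(M\otimes B)\oplus(M\otimes B)_\beta$. The isomorphisms involved are canonical, which gives naturality of $G_A$.
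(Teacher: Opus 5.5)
Your argument is correct and is exactly the routine verification the paper leaves to the reader (its proof reads only ``Obvious''). The substantive points are the $\alpha$-semilinearity of the flip, which you correctly reduce to $\alpha^2=1$, and $G_2$-projectivity. That is settled either by your averaging over $C_2$ or by the identification $M\oplus M_\alpha\cong M\otimes_{A_u}(A\rtimes C_2)$, under which the flip becomes right multiplication by the implementing unitary.

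In the Real setting you should also record that $\underline{m}\mapsto\underline{\bar m}$ is a compatible Real structure on $M_\alpha$ commuting with the flip. This holds because $\alpha$ commutes with the Real structure on $A$.
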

\begin{proof}
    Obvious.
\end{proof}
\begin{cor}\label{cor: map induced by C in CL01}
    Following the notation of \Cref{lem: GA definition}, the diagram
\[
\begin{tikzcd}
	{\pi_0\KKRex(Cl^{0,1},A)}
	&&
	{\pi_0\KKRex(\C,A)}
	\\
	{K_0^G(A_u)}
	&&
	{K_0^{G_2}(A)}
	\arrow["{\iota^*}", from=1-1, to=1-3]
	\arrow["\cong", from=1-3, to=2-3]
	\arrow["\cong", from=2-1, to=1-1]
	\arrow["{G_A}", from=2-1, to=2-3]
\end{tikzcd}
\]
commutes.
\end{cor}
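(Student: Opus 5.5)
The plan is to follow a class $[M]\in K_0^G(A_u)$ along the up-right-down path and check that it lands on $[M\oplus M_\alpha]$ with the flip action. First I make the left vertical isomorphism explicit. It is the composite of the adjunction equivalence $\KKRex(Cl^{0,1},A)\simeq\ER(\C,A_u)$, coming from $\mathsf{ind}^{G_2}_G\dashv\mathsf{res}^{G_2}_G$ and $\mathsf{ind}^{G_2}_G\C\cong Cl^{0,1}$ as in \Cref{lem: fib seqn}, with \Cref{thm: Recovering classical K-theories from our stable inf cat gen}. By the proof of \Cref{prop: adjunctions at skg level}, the adjunct of $\psi:\C\to A_u$ is the composite
\[
Cl^{0,1}=\mathsf{ind}^{G_2}_G\C\xrightarrow{\mathsf{ind}(\psi)}\mathsf{ind}^{G_2}_G\mathsf{res}^{G_2}_G A\cong C_0(C_2)\otimes A\to\mathcal K(L^2C_2)\otimes A\simeq A,
\]
where the last arrow is the counit $\eta_A$. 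So I represent $[M]$ by a $G$-equivariant $\psi:\C\to\mathcal K_{A_u}(V\otimes A_u)$ with $V$ a finite-dimensional $G$-representation, chosen so that $\psi(1)$ is the projection with image $M$. Then I write the adjunct explicitly as a $G_2$-equivariant homomorphism $Cl^{0,1}\cong\C\oplus\C\to M_2(\mathcal K_A(V\otimes A))$. In this identification $C_2$ acts on $M_2$ by conjugation with the flip combined with $\alpha$ on entries, and the two idempotents of $C_0(C_2)$ go to $\mathrm{diag}(\psi(1),0)$ and $\mathrm{diag}(0,\alpha(\psi(1)))$.

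The next step is precomposition with $\iota:\C\to Cl^{0,1}$, which sends $1$ to $(1,1)$. The result is the $G_2$-equivariant homomorphism $\C\to M_2(\mathcal K_A(V\otimes A))=\mathcal K_A((V\otimes\C^2)\otimes A)$ given by $1\mapsto\mathrm{diag}(\psi(1),\alpha(\psi(1)))$. Here $C_2$ acts on $\C^2\otimes V\otimes A$ by the flip tensored with $\alpha$. By the description of the right vertical isomorphism in \Cref{thm: Recovering classical K-theories from our stable inf cat gen}, this defines the $G_2$-equivariant projective $A$-module given by the image of that projection. Its first summand is $M$. Its second summand is the image of $\alpha(\psi(1))$, and the map $m\mapsto\alpha$ applied coordinatewise identifies it with $M_\alpha=M\otimes_{A_u}A_\alpha$ as right $A_u$-modules. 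Under this identification the $C_2$-action becomes $(m,\underline n)\mapsto(n,\underline m)$, which is exactly $G_A[M]$ from \Cref{lem: GA definition}.

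I expect the main obstacle to be the first step, namely justifying that the adjunction equivalence on $\pi_0$ is really given by this concrete composite. The unit and counit in \Cref{prop: adjunctions at skg level} hold only up to the stabilization $\mathcal K(L^2C_2)\otimes A\simeq A$. My approach is to compute the map $\ER(\C,A_u)\to\KKRex(Cl^{0,1},A)$ as $\eta_A\circ\mathsf{ind}(-)$, and to argue that the corner embedding used to identify $\mathcal K(L^2C_2)\otimes A$ with $A$ induces the standard stabilization isomorphism on $K_0^{G_2}$. Once that is in place, the remaining work is a routine check of signs and of the Hilbert module identification, using $C_0(C_2)\otimes A\cong A\oplus A$ with the twisted action and the fact that the projective module picture is invariant under passing to matrix algebras.
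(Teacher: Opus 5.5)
Your proposal is correct and is essentially the paper's proof. Both represent $[M]$ by a classifying map $\C\to\mathcal K_{A_u}(V\otimes A_u)$ and apply $\mathsf{ind}^{G_2}_G$ followed by the counit to land in compacts on $V\otimes A\oplus V\otimes A$ with the twisted flip. Both then precompose with $\iota$ to obtain $\operatorname{diag}(\psi(1),\alpha(\psi(1)))$ and identify the second summand with $M_\alpha$ via $\underline{v\otimes a}\mapsto v\otimes\alpha(a)$. The compatibility of the counit's stabilization $\mathcal K(L^2C_2)\otimes A\simeq A$ with the projective-module picture, which you single out as the main obstacle, is the step the paper dispatches by appealing to the counit description from \Cref{prop: adjunctions at skg level} and elementary properties of induction.
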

\begin{proof}
    This follows by chasing through the maps in the diagram. Given a $G$-equivariant finitely generated projective $A_u$-module $M$, the classifying map of $[M] \in K_0(A_u)$ is a $G$-equivariant $*$-homomorphism
    \[
    p:\mathbb{C} \rightarrow \mathcal{K}_{A_u}(V \otimes A_u)
    \]
    where $V$ is some finite dimensional $G$-vector space. We therefore obtain
    \[
    \ind^{G_2}_G \C \xrightarrow{\ind^{G_2}_Gp} \ind^{G_2}_G(\mathcal{K}_{A_u}(V \otimes A_u)) \hookrightarrow \mathcal{K}_A(V \otimes A_u \oplus V \otimes A_u, \operatorname{twisted~flip}),
    \]
    where for the last inclusion we used elementary properties of induction for group representations on vector spaces and $C^*$-algebras (see \cite[Lemma 4.10]{BEL}), as well as the description of the co-unit of the induction-restriction adjunction. Precomposing with the inclusion $\C \rightarrow \ind^{G_2}_G \C$, we conclude that the up-right-down itinerary maps $p$ to the homomorphism
    \[
    \tilde{p}: \C \rightarrow \mathcal{K}_A(V \otimes A_u \oplus V \otimes A_u, \operatorname{twisted~flip}), ~~~~\tilde{p}(1)=(p(1) \oplus\alpha_*p(1)).
    \]
    It remains to check that the $G_2$-equivariant $A$-modules $(p(1)(V \otimes A_u) \oplus \alpha_*p(1)(V \otimes A_u), \operatorname{twisted~flip})~$and $(M \oplus M_\alpha,\operatorname{flip})$ are isomorphic. We have $p(1)(V \otimes A_u) \cong M$ as $G$-equivariant $A_u$-modules by definition. To see that $\alpha_*p(1)(V \otimes A_u) \cong M_{\alpha}$ as $G$-equivariant $A_u$-modules, note that we have 
    \[
    \alpha_*p(1)(V \otimes A_u)=\{v \otimes \alpha a \mid v \otimes a \in p(1)(V \otimes A_u)\}.
    \]
    The map
    \begin{align*}
        \varphi: (p(1)(V \otimes A_u))_{\alpha} &\rightarrow \alpha_*p(1)(V \otimes A_u) \\
        \underline{v \otimes a} &\mapsto v\otimes \alpha(a)
    \end{align*}
 now readily produces an isomorphism between the $G$-equivariant $A_u$-modules $M_\alpha$ and $\alpha_*p(1)(V \otimes A_u)$. It is now also direct to check that juxtaposing the two isomorphisms yields a $G_2$-equivariant isomorphism between $(M \oplus M_\alpha,\operatorname{flip})$ and $(p(1)(V \otimes A_u) \oplus \alpha_*p(1)(V \otimes A_u), \operatorname{twisted~flip}).$
\end{proof}
We end this subsection with the following remark, which connects our results to the classical result of Atiyah-Bott-Shapiro as mentioned in \Cref{thm: Atiyah-Bott-Shapiro}. For simplicity, we ignore the external group $G$ in this remark.
\begin{rem}\label{rem: abs}
    When $A=Cl^{n,0}$, Atiyah-Bott-Shapiro asserts that there are short exact sequences of abelian groups
    \begin{equation}\label{eqn: abs classical}
         0 \rightarrow K_0^{C_2}(Cl^{n+1,0}) \xrightarrow{(Cl^{n,0} \rightarrow Cl^{n+1,0})^*} K_0^{C_2}(Cl^{n,0}) \rightarrow \hat{K}_0(Cl^{n,0}) \rightarrow 0,
    \end{equation}
    where the first map is as described in \Cref{rem: abs style seqn}. Since $\hat{K}_0(Cl^{n,0}) \cong K_n(\C)$, one obtains a convenient (and perhaps miraculous!) description of higher $K$-groups of a point as certain \emph{Clifford module quotients}. This immediately makes one ask the following question.
    \begin{que}\label{que: que}
        Can the higher $K$-groups of any (graded), (Real) unital $C^*$-algebra $A$ be expressed in terms of Clifford module quotients a'la Atiyah-Bott-Shapiro? More precisely, does there exist a short exact sequence of abelian groups of the form
        \[
        0 \rightarrow K_0^{C_2}(A \hat\otimes Cl^{n+1,0}) \xrightarrow{(A \hat\otimes Cl^{n,0} \rightarrow A \hat\otimes Cl^{n+1,0})^*} K_0^{C_2}(A \hat\otimes Cl^{n,0}) \rightarrow \hat{K}_n(A) \rightarrow 0~~~~~~~~~~?
        \]
    \end{que}
    We explain that the answer to this question is in the negative. To that end, we recall that to show exactness of \Cref{eqn: abs classical} at $\hat{K}_0(Cl^{n,0})$, one explicitly constructs a section of the map $K_0^{C_2}(Cl^{n,0}) \rightarrow \hat{K}_0(Cl^{n,0})$ using the notion of index of a Fredholm operator on a \emph{Hilbert space} with Clifford symmetries. The construction of this index map crucially needs the analytic fact that the kernel of a Fredholm operator on a Hilbert space is automatically finitely generated projective. Since the same is not true for a general Hilbert module over a $C^*$-algebra, the index of a Fredholm operator over an \emph{ungraded} unital $C^*$-algebra $B$ is typically defined using the notion of \emph{taming} (as described in detail in \cite[Section 8.2]{D}). Roughly speaking, taming works by adding a dummy finite dimensional free $B$-module to the game, and then  subtracting it later in the group $K_0(B)$. When $B$ is \emph{graded}, the taming construction can fail; for example when $B:=A \hat\otimes Cl^{n,0}$ the taming construction can be seen to not necessarily preserve the relevant Clifford symmetries. So in general, one cannot expect to construct the crucial index map witnessing a section of the map $K_0^{C_2}(A \hat\otimes Cl^{n,0}) \rightarrow \hat{K}_0(A \hat\otimes Cl^{n,0})$.
    
    And indeed, concrete counterexamples exist. Taking $A$ to be the complex $C^*$-algebra $C(S^1,\C)$ and $n=1$, we know that $\hat{K}_1(C(S^1,\C)) \cong K_1(C(S^1,\C)) \cong \Z$, but the relevant {Clifford module quotient} can be shown to be $0$. In context of the fiber sequence in \Cref{eqn: fib seq ABS style}, this failure can be explained to be due to the corresponding boundary map being injective. More elaborations on this can be found in \cite[Example 4.51]{AS}. See also \cite[Appendix A]{KOEN} for other examples and discussions.
\end{rem}
\subsection{The dual fiber sequence and endomorphisms of $K$-groups}

In this subsection, we obtain a dual of the fiber sequence described in \Cref{lem: fib seqn}. The key to the existence of this fiber sequence is the equivalence between $Cl^{1,0}$ and $C_0(\R^{0,1})$ in $\KKRGTWO$, as shown in \Cref{cor: loops is clifford one zero}. 
\begin{prop}\label{prop: Cl10 fib seqn}
    We have a fiber sequence
    \[
    Cl^{1,0} \xrightarrow{j}\mathbb{C} \xrightarrow{p} \mathcal{S}
    \]
    in $\KKRex$, where $j:Cl^{1,0} \rightarrow \C$ is obtained from the zig-zag
    $
    Cl^{1,0} \rightarrow Cl^{1,1} \leftarrow \C,
    $
    and $p \in \pi_0(\KKRex(\C,\s))$ is the image under the $\res^{C_2}_{G_2}$ homomorphism of a generator of $K_0^{C_2}(\s)\cong \Z$ (\Cref{prop: K0C2S}). 
\end{prop}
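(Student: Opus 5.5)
We start from the fiber sequence of \Cref{cor: s fib seqn}.2 and rotate it. In the stable category $\KKRex$ we have
\[
\s \xrightarrow{\operatorname{ev}_0} \C \xrightarrow{\iota} Cl^{0,1}.
\]
Rotating to the left gives a fiber sequence
\[
\Omega Cl^{0,1} \xrightarrow{\partial} \s \xrightarrow{\operatorname{ev}_0} \C .
\]
Rotating once more gives $\Omega\C \to \Omega Cl^{0,1} \to \s$. Neither of these has the claimed shape.

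The plan is instead to produce a second sequence from the graded side and then delooping. By \Cref{cor: loops is clifford one zero}, loops in $\KKRGTWO$ are modelled by $C_0(\R^{0,1})$, and $C_0(\R^{0,1})\simeq Cl^{1,0}$. We will use the version of this in $\KKRGTWO$ itself, not merely in $\KKRGRA$. Since $\C$ and $Cl^{p,q}$ have trivial $G$-action, \Cref{rem: obvious}.2 lets us work entirely in $\KKRexTWO$ and apply $\res^{C_2}_{G_2}$ at the end.

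\textbf{Step 1.} Tensor the basic sequence with $Cl^{1,0}$ and use $Cl^{0,1}\hat\otimes Cl^{1,0}\cong Cl^{1,1}\simeq \C$. This gives a fiber sequence
\[
\s\hat\otimes Cl^{1,0} \to Cl^{1,0} \xrightarrow{\iota\hat\otimes 1} Cl^{1,1}\simeq\C .
\]
The middle map is exactly the zig-zag $j$. Its fiber is therefore $\s\hat\otimes Cl^{1,0}$, and so
\[
\operatorname{cofib}(j)\simeq \Sigma(\s\hat\otimes Cl^{1,0}).
\]

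\textbf{Step 2.} Identify $\Sigma(\s\hat\otimes Cl^{1,0})$ with $\s$ in $\KKRexTWO$. This is the step I expect to be the main obstacle. \Cref{cor: loops is clifford one zero} a priori lives in the comodule category. What we need is the equivalence $\s\hat\otimes Cl^{1,0}\simeq \s\hat\otimes C_0(\R^{0,1})=\Omega\s$ in $\KKRexTWO$, and this is exactly what \Cref{thm: kkgraeq vs kkctwoeq} provides from the $\KKRGRA$-equivalence $Cl^{1,0}\simeq C_0(\R^{0,1})$. Hence $\Sigma(\s\hat\otimes Cl^{1,0})\simeq\s$, and $\operatorname{cofib}(j)\simeq\s$. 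This yields a fiber sequence
\[
Cl^{1,0}\xrightarrow{j}\C\xrightarrow{p'}\s
\]
for some class $p'\in\pi_0\KKRexTWO(\C,\s)=K_0^{C_2}(\s)\cong\Z$ (\Cref{thm: Recovering classical K-theories from our stable inf cat gen}).

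\textbf{Step 3.} Show that $p'$ is a generator. Apply $\pi_0\KKRexTWO(\C,-)$ to the fiber sequence from Step 2. This gives an exact sequence
\[
K_0^{C_2}(Cl^{1,0})\xrightarrow{j_*} K_0^{C_2}(\C)\xrightarrow{p'_*} K_0^{C_2}(\s)\to K_{-1}^{C_2}(Cl^{1,0}).
\]
Alternatively, compose with $\operatorname{ev}_0$. The composite $\operatorname{ev}_0\circ p'$ can be computed via the connecting map $\C\to\Omega Cl^{0,1}\simeq\s\hat\otimes$-type identifications together with \Cref{prop: KOC2S to KOC2C commutes}. Generation then reduces to showing that $(\operatorname{ev}_0)_*p'=\pm(1,-1)$ in $\Z\oplus\Z$.

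I would verify this by tracking the unit $1\in K_0^{C_2}(\C)$. The cokernel of $j_*$ must inject into $K_0^{C_2}(\s)\cong\Z$. Moreover, $j_*$ has image spanned by $(1,1)$: the module $Cl^{1,0}$ over $Cl^{1,1}$ restricts to the regular representation, in analogy with \Cref{prop: KOC2 CL01}. So the cokernel is $\Z$, generated by the image of $(1,0)$, and $p'_*(1)=p'$ generates. If the sign or choice differs from the preferred generator $p$, we precompose with an automorphism of $\s$ (e.g.\ $-1$), which does not affect being a fiber sequence.
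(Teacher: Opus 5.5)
Your Steps 1 and 2 are the paper's argument. You tensor $\s\to\C\to Cl^{0,1}$ with $Cl^{1,0}$, use \Cref{thm: kkgraeq vs kkctwoeq} with \Cref{cor: loops is clifford one zero} to replace $\s\hat\otimes Cl^{1,0}$ by $\Omega\s$ in $\KKRexTWO$, and take cofibers.

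The gap is in Step 3. Exactness only tells you that $\operatorname{coker}(j_*)$ injects into $K_0^{C_2}(\s)\cong\Z$. Even granting $\operatorname{im}(j_*)=\Z(1,1)$, what you have is an injection $\Z\hookrightarrow\Z$ sending the class of $(1,0)$ to $p'$. An injection $\Z\to\Z$ can be multiplication by any $k\neq 0$, so ``$p'$ generates'' does not follow. You need $p'_*$ to be surjective, i.e.\ exactness at $K_0^{C_2}(\s)$ with the next term (or the boundary map) vanishing, and you never address $K_{-1}^{C_2}(Cl^{1,0})$. The alternative route via $\operatorname{ev}_0$ is not carried out either: reducing to $(\operatorname{ev}_0)_*p'=\pm(1,-1)$ is fine, but nothing computes that class.

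The missing input is $K_{-1}^{C_2}(Cl^{1,0})=0$. Tensor with the invertible object $Cl^{0,1}$ (note $Cl^{1,0}\hat\otimes Cl^{0,1}\cong Cl^{1,1}\simeq\C$ in $\KKRexTWO$). Then use the induction--restriction adjunction with $\mathsf{ind}^{C_2}\C\cong Cl^{0,1}$. Together these give $K_{-1}^{C_2}(Cl^{1,0})\cong K_{-1}(\C_u)=0$, so $p'_*$ is surjective.

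You can then finish in either of two ways. One is with $(1,1)\in\ker(p'_*)$, but you only assert this by analogy with \Cref{prop: KOC2 CL01}. The other, which the paper uses, avoids computing $\operatorname{im}(j_*)$ at all. The map $p'_*$ is $C_2$-equivariant, $\Z\oplus\Z$ with the flip is generated by $(1,0)$, and the generator of $C_2$ acts by $-1$ on $K_0^{C_2}(\s)$ (\Cref{prop: K0C2S}). Hence $p'_*(a,b)=(a-b)p'$, and surjectivity forces $p'=p'_*(1,0)=\pm1$.
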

\begin{proof}
    We begin the proof by mentioning that the assertion of the proposition is indeed just: in any stable $\infty$-category $\mathcal{C}$, a fiber sequence $A \xrightarrow{f} B \xrightarrow{g} C$ naturally yields a equivalent fiber sequence $A \xrightarrow{f} B \xrightarrow{-g} C$. In light of \Cref{rem: obvious}.2, it suffices to demonstrate the existence of the fiber sequence in $\KKRexTWO$.
    
    We begin by applying the functor $- \hat\otimes Cl^{1,0}$ to \Cref{eq: s fib seqn}, thereby obtaining the fiber sequence
    \[
    \s \hat\otimes Cl^{1,0} \xrightarrow{\operatorname{ev}_0{_*}} Cl^{1,0} \xrightarrow{\operatorname{\iota}_*} Cl^{1,1}.
    \]
    Now, \Cref{thm: kkgraeq vs kkctwoeq} and \Cref{cor: loops is clifford one zero} show that we have a fiber sequence 
    \[
    \Omega\s \rightarrow Cl^{1,0} \xrightarrow{\iota_*} Cl^{1,1}
    \]
    in $\KKRexTWO$. Taking cofibering in $\KKRexTWO$ and using the equivalence $\mathbb{C} \simeq Cl^{1,1}$ we obtain a fiber sequence
    \[
    Cl^{1,0} \xrightarrow{j} \mathbb{C} \rightarrow \s.
    \]
    To prove our thesis, it will suffice to show that the second morphism above is a generator of $K_0^{C_2}(\s) \cong \Z$. Consider the long exact sequence of $K_*^{C_2}(-)(=\pi_*\KKRexTWO(\C,-))$ groups associated to this fiber sequence
    \[
    ... \rightarrow K_0^{C_2}(Cl^{1,0}) \xrightarrow{j_*} K_0^{C_2}(\C) \rightarrow K_0^{C_2}(\s) \xrightarrow{\partial} K_{-1}^{C_2}(Cl^{1,0}) \rightarrow ...
    \]
    By tensor invertibility of $Cl^{0,1}$ in $\KKRexTWO$ and the induction-restriction adjunction, we have \[
     K_{-1}^{C_2}(Cl^{1,0})  \cong K_{-1}(\mathbb{C}_u) \cong 0, 
    \]
    whence the map 
    \[
    K_0^{C_2}(\C) \rightarrow K_0^{C_2}(\s) 
    \]
    above is surjective. We now consider the square
\[\begin{tikzcd}
	{K_0^{C_2}(\mathbb{C})} && {K_0^{C_2}(\mathcal{S})} \\
	{\mathbb{Z} \oplus \mathbb{Z}} && {\mathbb{Z}}
	\arrow[two heads, from=1-1, to=1-3]
	\arrow["\cong"', from=1-1, to=2-1]
	\arrow["\cong", from=1-3, to=2-3]
	\arrow["\dag", two heads, from=2-1, to=2-3],
\end{tikzcd}\]
where the vertical maps are isomorphisms as mentioned in \Cref{prop: KOC2} and \Cref{prop: K0C2S}, and the map $\dagger$ is so defined to make the diagram commute. Now, since the morphisms in the square are also $C_2$-equivariant and the $C_2$-abelian group $\Z \oplus \Z$ is generated by $(1,0)$, we conclude from the surjectivity of $\dag$ that it maps $(1,0)$ to $\pm1$.
\end{proof}
\begin{prop}\label{lem: dual fib seqn}
    We have a natural fiber sequence 
    \begin{equation}\label{eqn: dual fiber sequence}
            \hat{\K}^G(A) \xrightarrow{} \K^{G_2}(A) \xrightarrow{} \K^G((A \hat\otimes Cl^{0,2})_u)
    \end{equation}

    for every $A \in \GTWOALG$.
\end{prop}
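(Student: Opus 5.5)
The plan is to mirror the proof of \Cref{lem: fib seqn}, this time using the fiber sequence $Cl^{1,0} \xrightarrow{j} \C \xrightarrow{p} \s$ of \Cref{prop: Cl10 fib seqn} instead of the one from \Cref{cor: s fib seqn}.2.

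First, rotate \Cref{prop: Cl10 fib seqn} in the stable $\infty$-category $\KKRex$. This produces a fiber sequence
\[
\s \xrightarrow{\ } \Sigma Cl^{1,0} \xrightarrow{\ } \Sigma\C .
\]
Equivalently, after shifting once more, one gets $\Omega\s \rightarrow Cl^{1,0} \rightarrow \C$. Applying the exact contravariant functor $\KKRex(-,A)$ to $Cl^{1,0}\xrightarrow{j}\C\xrightarrow{p}\s$ gives a fiber sequence of spectra
\[
\KKRex(\s,A) \xrightarrow{p^*} \KKRex(\C,A) \xrightarrow{j^*} \KKRex(Cl^{1,0},A).
\]
The first two terms are identified with $\hat{\K}^G(A)$ and $\K^{G_2}(A)$ by \Cref{thm: Recovering classical K-theories from our stable inf cat gen}, exactly as for the maps $(2)$ and $(3)$ in the proof of \Cref{lem: fib seqn}. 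Naturality in $A$ is automatic, since everything is obtained from a corepresented functor.

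The remaining task is the identification $\KKRex(Cl^{1,0},A)\simeq \K^G((A\hat\otimes Cl^{0,2})_u)$, which I expect to be the main (if bookkeeping) obstacle. The approach is to use the tensor invertibility of Clifford algebras in $\KKRex$: $Cl^{1,0}\hat\otimes Cl^{0,1}\simeq Cl^{1,1}\simeq \C$, so $Cl^{0,1}$ is the inverse of $Cl^{1,0}$. Tensoring with $Cl^{0,1}$ therefore gives
\[
\KKRex(Cl^{1,0},A)\simeq \KKRex(\C, A\hat\otimes Cl^{0,1}).
\]
Tensoring once more with $Cl^{0,1}$ gives
\[
\KKRex(\C,A\hat\otimes Cl^{0,1})\simeq \KKRex(Cl^{0,1},A\hat\otimes Cl^{0,2}).
\]
Now write $Cl^{0,1}\cong \ind^{G_2}_G\C$ as in the proof of \Cref{lem: fib seqn}. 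The induction–restriction adjunction of \Cref{thm: change of group general} then turns this into $\ER(\C,(A\hat\otimes Cl^{0,2})_u)=\K^G((A\hat\otimes Cl^{0,2})_u)$. Checking that the tensoring step is legitimate only needs that $-\hat\otimes Cl^{0,1}$ is an autoequivalence of $\KKRex$ with inverse $-\hat\otimes Cl^{1,0}$, coming from $\C\to Cl^{1,1}$ being an equivalence. This equivalence was already used in \Cref{prop: Cl10 fib seqn} and \Cref{cor: loops is clifford one zero}.

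One should double-check that the composite identification sends $j^*$ to a map which, on $\pi_0$, is the expected "underlying module" map; this is needed later for \Cref{thm: intro endo}.3. For the fiber sequence itself, however, only the existence of the equivalences above is required. The splitting in the inner graded case is a separate statement and is not part of this proof.
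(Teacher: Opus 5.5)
Your proposal is correct and is essentially the paper's proof: apply $\KKRex(-,A)$ to the fiber sequence $Cl^{1,0}\xrightarrow{j}\C\xrightarrow{p}\s$, identify the first two terms as in \Cref{lem: fib seqn}, and identify $\KKRex(Cl^{1,0},A)$ with $\K^G((A\hat\otimes Cl^{0,2})_u)$ via tensor-invertibility of the Clifford algebras and the induction--restriction adjunction. The only difference is cosmetic: the paper tensors with $Cl^{0,2}$ in one step and precomposes with $Cl^{0,1}\to Cl^{1,2}$, whereas you tensor with $Cl^{0,1}$ twice and use the equivalence $\C\simeq Cl^{1,1}$ in between.
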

\begin{proof}
We apply the exact functor \[
\KKRex(-,A):
\GTWOALG
\longrightarrow
\operatorname{Sp}.
\] to the fiber sequence in \Cref{prop: Cl10 fib seqn}. In light of the proof of \Cref{lem: fib seqn}, we only need to identify $\KKRex(Cl^{1,0},A)$ with $\K^G((A \hat\otimes Cl^{0,2})_u)$. For that we use the route
    \[
\resizebox{\textwidth}{!}{$
\KKRex(Cl^{1,0},A)
\xrightarrow[\simeq]{-\hat\otimes Cl^{0,2}}
\KKRex(Cl^{1,2},A\hat\otimes Cl^{0,2})
\xrightarrow[\simeq]{(Cl^{0,1}\rightarrow Cl^{1,2})^*}
\KKRex(Cl^{0,1},A\hat\otimes Cl^{0,2})
\xrightarrow[\simeq]{\mathsf{ind}\dashv\mathsf{res}}
\K^G\bigl((A\hat\otimes Cl^{0,2})_u\bigr)
$}.
\]
\end{proof}
At this juncture, we record an interpretation of the map $j^*:\KKRex(\C,A) \rightarrow \KKRex(Cl^{1,0},A)$ which will come handy later.
\begin{lem}\label{lem: another description of j star}
    The diagram
\[
\begin{tikzcd}
	{\KKRex(\C,A)}
	&&
	{\KKRex(Cl^{1,0},A)}
	\\
	{}
	&&
	{\KKRex(Cl^{1,1},A\hat\otimes Cl^{0,1})}
	\\
	{\KKRex(\C,A)}
	&&
	{\KKRex(\C,A\hat\otimes Cl^{0,1})}
	\\
	{} & {}
	\arrow["{j^*}", from=1-1, to=1-3]
	\arrow["{=}", no head, from=1-1, to=3-1]
	\arrow["{\hat\otimes Cl^{0,1},\,\simeq}", from=1-3, to=2-3]
	\arrow["{(\C\rightarrow Cl^{1,1})_*,\,\simeq}", from=2-3, to=3-3]
	\arrow["{(A\rightarrow A\hat\otimes Cl^{0,1})_*}", from=3-1, to=3-3]
\end{tikzcd}
\]
commutes.
\end{lem}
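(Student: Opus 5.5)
The plan is to reduce the claimed commutativity to a statement about morphisms between the sources, using naturality of the tensor product. Fix $f \in \KKRex(\C,A)$.

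Going right then down, $f$ is sent to
\[
(\C\to Cl^{1,1})^*\bigl((f\circ j)\hat\otimes 1_{Cl^{0,1}}\bigr)=(f\hat\otimes 1_{Cl^{0,1}})\circ(j\hat\otimes 1_{Cl^{0,1}})\circ(\C\to Cl^{1,1}),
\]
using \Cref{lem: interchange law} in the symmetric monoidal category $\KKRex$. Here the vertical arrows are read as precomposition with the equivalence $\C\to Cl^{1,1}$ after tensoring, which is the only reading that makes the types match.

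Going down then right, $f$ is sent to $(1_A\hat\otimes\iota)\circ f$, where $\iota:\C\to Cl^{0,1}$. By naturality of the unit isomorphism this equals $(f\hat\otimes 1_{Cl^{0,1}})\circ\iota$, viewed as a morphism $\C\to A\hat\otimes Cl^{0,1}$. This is the same kind of move as the square in \Cref{eqn: Morita move}.

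It therefore suffices to prove an identity in $\KKRex(\C,Cl^{0,1})$ that does not involve $A$:
\[
(j\hat\otimes 1_{Cl^{0,1}})\circ(\C\to Cl^{1,1}) \simeq \iota .
\]
To prove this, unwind $j$ as the zig-zag $Cl^{1,0}\xrightarrow{a} Cl^{1,1}\xleftarrow{b}\C$, so $j=b^{-1}\circ a$. Then $j\hat\otimes Cl^{0,1}=(b\hat\otimes 1)^{-1}\circ(a\hat\otimes 1)$.

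The composite $(a\hat\otimes 1)\circ(\C\to Cl^{1,1})$ is the algebra inclusion $\C\to Cl^{1,1}\to Cl^{1,2}$. This inclusion factors as $\C\xrightarrow{\iota}Cl^{0,1}\xrightarrow{b\hat\otimes 1}Cl^{1,1}\hat\otimes Cl^{0,1}=Cl^{1,2}$, where the last map is the inclusion of the new generator. Cancelling $b\hat\otimes 1$ leaves $\iota$, as required.

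\textbf{Main obstacle.} The delicate step is getting the Clifford generators lined up. Both routes must produce the same standard inclusion $\C\subset Cl^{1,2}$, and one has to check that the generators of $Cl^{0,1}$ and $Cl^{1,1}$ are matched consistently under $Cl^{1,1}\hat\otimes Cl^{0,1}\cong Cl^{1,2}$. Any discrepancy here would be a permutation of generators. Such a permutation is an even automorphism, homotopic to the identity on the relevant subalgebra, so it would only hold up to homotopy; this is enough because we are working in $\KKRex$. If a sign appears, I would absorb it through this homotopy.
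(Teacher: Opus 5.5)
Your proof is correct (and your reading of the last vertical arrow as precomposition is the one the paper uses), but it is organised differently from the paper's. The paper inserts the auxiliary vertex $\KKRex(Cl^{1,1},A)$ and checks four cells:
\begin{itemize}
\item it maps by $(\C\to Cl^{1,1})^*$ to both copies of $\KKRex(\C,A)$, by $(Cl^{1,0}\to Cl^{1,1})^*$ to $\KKRex(Cl^{1,0},A)$, and by $(A\to A\hat\otimes Cl^{0,1})_*$ to $\KKRex(Cl^{1,1},A\hat\otimes Cl^{0,1})$;
\item three of the resulting cells are formal: they follow from the definition of $j$ and from the fact that pre- and postcomposition commute;
\item the fourth, the triangle through $\KKRex(Cl^{1,0},A)$, amounts to a rotation homotopy, as in \Cref{eqn: Morita move}, between the embeddings $a\hat\otimes 1_{Cl^{0,1}}$ and $1_{Cl^{1,1}}\hat\otimes\iota$ of $Cl^{1,1}$ into $Cl^{1,2}$, which send the $+1$-generator to different generators.
\end{itemize}

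You instead use bifunctoriality of $\hat\otimes$ to factor both routes through $-\hat\otimes Cl^{0,1}\colon \KKRex(\C,A)\to\KKRex(Cl^{0,1},A\hat\otimes Cl^{0,1})$. This reduces everything to the single $A$-independent identity $(j\hat\otimes 1)\circ(\C\to Cl^{1,0}\hat\otimes Cl^{0,1})=\iota$. With $j=b^{-1}a$ this identity holds on the nose, because $(a\hat\otimes 1)\circ(\C\to Cl^{1,0}\hat\otimes Cl^{0,1})$ and $(b\hat\otimes 1)\circ\iota$ are both the unit map $\C\to Cl^{1,1}\hat\otimes Cl^{0,1}$.

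Precomposing with the unit before comparing is exactly what makes the discrepancy between the two embeddings invisible. So your route needs no homotopy at all, whereas the paper's route proves the slightly stronger commutativity of that triangle before precomposition.

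Your ``main obstacle'' is therefore vacuous: no Clifford generator ever enters, since a unital map out of $\C$ is unique. You should delete that paragraph rather than keep it as a fallback, because the fallback is not sound:
\begin{itemize}
\item a sign in a $KK$-group cannot be absorbed by a homotopy;
\item a permutation of Clifford generators need not be homotopic to the identity. For example, swapping the two $+1$-generators $f_1,f_2$ of $Cl^{1,2}$ negates the central element $ef_1f_2$, and so exchanges the two central summands.
\end{itemize}
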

\begin{proof}
    We expand the diagram to
\[
\begin{tikzcd}
	{\KKRex(\C,A)} &&&& {\KKRex(Cl^{1,0},A)} \\
	{} & {\KKRex(Cl^{1,1},A)} &&& {\KKRex(Cl^{1,1},A \hat\otimes Cl^{0,1})} \\
	{\KKRex(\C,A)} &&&& {\KKRex(\C,A \hat\otimes Cl^{0,1})} \\
	{} & {}
	\arrow["{j^*}", from=1-1, to=1-5]
	\arrow["{=}", no head, from=1-1, to=3-1]
	\arrow["{\hat\otimes Cl^{0,1}}", from=1-5, to=2-5]
	\arrow["{(\C \rightarrow Cl^{1,1})^*}"{description}, from=2-2, to=1-1]
	\arrow["{(Cl^{1,0} \rightarrow Cl^{1,1})^*}"{description}, from=2-2, to=1-5]
	\arrow["{(A \rightarrow A \hat\otimes Cl^{0,1})_*}"', from=2-2, to=2-5]
	\arrow["{(\C \rightarrow Cl^{1,1})^*}"{description}, from=2-2, to=3-1]
	\arrow["{(\C \rightarrow Cl^{1,1})^*}", from=2-5, to=3-5]
	\arrow["{(A \rightarrow A \hat\otimes Cl^{0,1})_*}", from=3-1, to=3-5]
\end{tikzcd}
\]
It suffices to show that each of the four shapes with one vertex $\KKRex(Cl^{1,1},A)$ commutes. But all of them are clear (for the right-angled triangle shaped diagram, use an argument similar to the one for \Cref{eqn: Morita move}).
\end{proof}
Unlike in the case of \Cref{lem: fib seqn}, the induced maps at the level of $\pi_0$ groups of the fiber sequence in \Cref{eqn: dual fiber sequence} are not easy to write down explicitly. We instead discuss the various endomorphisms of the $K$-theory groups that arise from the fiber sequences in \Cref{eqn: first fib seqn} and \Cref{eqn: dual fiber sequence}.

Consider the following diagram in $\KKRexTWO$:
\[\begin{tikzcd}
	&& {Cl^{1,0}} & \\
	{} & {\mathcal{S}} & {\mathbb{C}} & {Cl^{0,1}} \\
	&& {\mathcal{S}}
	\arrow["j", from=1-3, to=2-3]
	\arrow["{\operatorname{ev_0}}", from=2-2, to=2-3]
	\arrow["\iota", from=2-3, to=2-4]
	\arrow["p", from=2-3, to=3-3]
\end{tikzcd}\]
\begin{lem}\label{lem: endo of K0C2}
    $\operatorname{ev}_0 \circ p = 1-x \in \hoKKRexT(\C,\C) \cong \frac{Z[x]}{x^2=1}$, with the isomorphism being as in \Cref{prop: KOC2}.1 (keeping in mind \Cref{thm: Recovering classical K-theories from our stable inf cat gen}\footnote{the two natural ring structures on the endomorphisms of the unit in any monoidal category coincide thanks to the Eckmann-Hilton argument. See also \cite[Lemma 5.1]{BUNLANPEN}.}).
\end{lem}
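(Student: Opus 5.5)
The plan is to compute $\operatorname{ev}_0\circ p$ by identifying it with the image of $p$ under the map ${\operatorname{ev}_0}_*:K_0^{C_2}(\s)\to K_0^{C_2}(\C)$. By \Cref{thm: Recovering classical K-theories from our stable inf cat gen}, $\hoKKRexT(\C,\C)\cong K_0^{C_2}(\C)$ and $\hoKKRexT(\C,\s)\cong K_0^{C_2}(\s)$, naturally in the second variable. Post-composition with $\operatorname{ev}_0$ in $\hoKKRexT$ therefore corresponds to ${\operatorname{ev}_0}_*$ on $K_0^{C_2}$. So $\operatorname{ev}_0\circ p$ is the class ${\operatorname{ev}_0}_*(p)\in K_0^{C_2}(\C)$, read in $\frac{\Z[x]}{x^2=1}$ via \Cref{prop: KOC2}.1. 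We also need that the composition ring structure on $\hoKKRexT(\C,\C)$ agrees with the tensor ring structure on $K_0^{C_2}(\C)$. The footnote's Eckmann–Hilton remark gives this, together with the lax monoidal identification in \Cref{thm: Recovering classical K-theories from our stable inf cat gen}.

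Next I will pin down $p$. In \Cref{prop: Cl10 fib seqn}, $p$ is only specified as a generator of $K_0^{C_2}(\s)\cong\Z$, so up to sign. That proof also shows the surjection $\dagger:\Z\oplus\Z\to\Z$ sends $(1,0)\mapsto\pm1$. I will fix the sign convention so that $p$ is the preferred generator $p=p_1-p_2$ from the corollary following \Cref{prop: KOC2S to KOC2C commutes}. The choice is harmless, because the proof of \Cref{prop: Cl10 fib seqn} notes that replacing $p$ by $-p$ still yields a fiber sequence. With this choice, \Cref{prop: KOC2S to KOC2C commutes} gives ${\operatorname{ev}_0}_*(p)=(1,-1)\in\Z\oplus\Z$. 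That proposition's proof shows this class is $[\C^{\operatorname{triv}}]-[\C^{\operatorname{sign}}]$, which the isomorphism of \Cref{prop: KOC2}.1 sends to $1-x$.

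The main obstacle is consistency of identifications. Two points need checking. First, the Cuntz-picture identification $K_0^{C_2}(-)\cong[q\C,-\otimes\mathcal{K}]$ of \Cref{prop: Cuntz picture k-theory} must match the corepresentability isomorphism of \Cref{thm: Recovering classical K-theories from our stable inf cat gen}, which passes through $q\C\simeq\C$ in \Cref{cor: s fib seqn}.1; this is how $p_1-p_2:q\C\to M_2(\s)$ becomes a morphism $\C\to\s$. Second, the sign convention for $p$ must be the one intended in the statement. For the first point I would use \Cref{rem: the map in the cuntz picture iso}: precomposition with the first map $q\C\to\C$ realizes the identification, and this map is exactly the $\KK$-equivalence $q\C\simeq\C$. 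Both identifications are natural in the target, so ${\operatorname{ev}_0}_*$ is compatible with them. For the second point, if the opposite generator were chosen the answer would be $x-1$. I would therefore state explicitly that $p$ is normalized to $p_1-p_2$, equivalently that $\dagger(1,0)=+1$, and this normalization yields $\operatorname{ev}_0\circ p=1-x$.
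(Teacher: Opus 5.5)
Your proposal is correct and follows the paper's proof: the paper likewise identifies $\operatorname{ev}_0\circ p$ with ${\operatorname{ev}_0}_*(p)\in K_0^{C_2}(\C)$ and reads off $(1,-1)\leftrightarrow 1-x$ from \Cref{prop: KOC2S to KOC2C commutes}. Your explicit normalization $p=p_1-p_2$, with the observation that $-p$ would give $x-1$, makes precise a sign convention the paper leaves implicit; the short exact sequence the paper quotes is not actually needed for this computation.
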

\begin{proof}
    From \Cref{prop: KOC2 CL01} and \Cref{prop: K0C2S}, we have a short exact sequence
    \[
    0\rightarrow K_0^{C_2}(\s) \xrightarrow{{\operatorname{ev}_0}_*} K_0^{C_2}(\C) \xrightarrow{\iota_*} K_0^{C_2}(Cl^{0,1})\rightarrow 0.
    \]
    The composition in the $\hoKKRexT$-category will be the image of $p$ under $\operatorname{ev_{0_*}}$. The rest follows from \Cref{prop: KOC2S to KOC2C commutes}.
\end{proof}
We now record a corollary of the above calculation which will come handy later.
\begin{cor}\label{cor: ev0 circ p is mult by 2}
    The diagram 
\[
\begin{tikzcd}
	{\hoKKRexT(\s,\C)} && {\hoKKRexT(\s,\C)} \\
	\Z && \Z
	\arrow["{(\operatorname{ev}_0 \circ p)_*}", from=1-1, to=1-3]
	\arrow["\cong", from=1-1, to=2-1]
	\arrow["\cong", from=1-3, to=2-3]
	\arrow["{n \mapsto 2n}", from=2-1, to=2-3]
\end{tikzcd}
\]
commutes.
\end{cor}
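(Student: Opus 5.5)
The strategy is to identify $(\operatorname{ev}_0\circ p)_*$, acting on $\hoKKRexT(\s,\C)$ by postcomposition, with multiplication by the ring element $1-x$ of \Cref{lem: endo of K0C2}. Then I would compute how $x$ acts on $\hoKKRexT(\s,\C)\cong\Z$.

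\textbf{Step 1.} Postcomposition with an endomorphism of the unit $\C$ in the symmetric monoidal category $\hoKKRexT$ agrees with the tensor action of $\hoKKRexT(\C,\C)$. Concretely, for $f\in\hoKKRexT(\s,\C)$ and $r\in\hoKKRexT(\C,\C)$ we have
\[
r\circ f = r\otimes f,
\]
using the unit isomorphisms and \Cref{lem: interchange law}. This is the Eckmann--Hilton type argument already used in the footnote to \Cref{lem: endo of K0C2}. Hence $(\operatorname{ev}_0\circ p)_* = (1-x)\cdot(-)$ for the $\Z[x]/(x^2-1)$-module structure on $\hoKKRexT(\s,\C)$, and it suffices to show that $x$ acts by $-1$.

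\textbf{Step 2.} I would identify the group. The fiber sequence $\s\to\C\to Cl^{0,1}$ of \Cref{cor: s fib seqn}.2, combined with the identification of $Cl^{0,1}$ as $\mathsf{ind}\,\C$ and the induction--restriction adjunction, shows $\hoKKRexT(\s,\C)\cong\Z$. This agrees with the isomorphism fixed in the statement.

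\textbf{Step 3.} I would compute how the sign representation $x$ acts. Tensoring with $x$ (the sign representation on $\C$) is natural in both variables, so it commutes with the maps in the long exact sequence obtained by applying $\hoKKRexT(-,\C)$ to the fiber sequence. The cleanest route is duality: tensoring with the sign character is an automorphism of the unit, and $x$ acts on $\hoKKRexT(\s,\C)$ exactly as it acts on $\s$ through $x\otimes 1_\s$. I would identify $x\otimes 1_\s$ with the endomorphism of $\s$ whose effect on $K_0^{C_2}(\s)=\hoKKRexT(\C,\s)$ is $n\mapsto -n$, which is \Cref{prop: K0C2S}.

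One then needs to transfer this from covariant to contravariant hom. I would argue that $x\otimes 1_\s$ is a self-equivalence of $\s$ which acts on the invertible rank-one module $\hoKKRexT(\C,\s)$ by $-1$. Since $\s$ is dualizable in $\hoKKRexT$ (indeed $\s\simeq\Omega Cl^{1,0}$ in the graded sense), it must then act by $-1$ on the dual group as well. An alternative argument uses the long exact sequence: $\hoKKRexT(\C,\C)\to\hoKKRexT(\s,\C)$, given by precomposing with $\operatorname{ev}_0$, is a $\Z[x]/(x^2-1)$-module map. On $K_0^{C_2}(\C)$, the kernel of $\iota^*$ is spanned by $1-x$, by the analysis in \Cref{prop: KOC2 CL01}. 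So the image of $\operatorname{ev}_0^*$ is a quotient of $\Z[C_2]$ by the antidiagonal, and $x$ acts on it by $-1$.

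\textbf{Step 4.} I would conclude that $1-x$ acts by $1-(-1)=2$, which gives multiplication by $2$.

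The main obstacle is Step 3: correctly pinning down that $x$ acts by $-1$ on the contravariant group, including surjectivity onto $\Z$. For the second argument this requires the next term $\hoKKRexT(\Omega Cl^{0,1},\C)\cong K_{1}(\C)$ to vanish or map trivially. For Real $K$-theory $KO_1\neq 0$, so I would first try the duality argument and fall back on checking this term directly.
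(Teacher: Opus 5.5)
Your Step 1 is the paper's reduction: by \Cref{lem: endo of K0C2}, $(\operatorname{ev}_0\circ p)_*$ is the action of $1-x$, so everything rests on showing that $x$ acts by $-1$ on $\hoKKRexT(\s,\C)\cong\Z$. Neither argument in your Step 3 establishes this.

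\textbf{The duality argument.} It relies on the inference ``$\theta=x\otimes 1_{\s}$ acts by $-1$ on $\hoKKRexT(\C,\s)$, hence $\theta^*$ acts by $-1$ on $\hoKKRexT(\s,\C)$''. This is not valid in general. $\hoKKRexT(\s,\C)$ is not the $\Z$-dual of $\hoKKRexT(\C,\s)$. Dualizability of $\s$ only rewrites it as $\hoKKRexT(\C,D\s)$, where $\theta^*$ becomes the $x$-action on $K_0^{C_2}(D\s)$, which you have not computed.

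\textbf{The long-exact-sequence argument.} It mixes up variance. In $\hoKKRexT(Cl^{0,1},\C)\xrightarrow{\iota^*}\hoKKRexT(\C,\C)\xrightarrow{\operatorname{ev}_0^*}\hoKKRexT(\s,\C)$, the relevant subgroup is the \emph{image} of the contravariant $\iota^*$, not the kernel of the covariant $\iota_*$ from \Cref{prop: KOC2 CL01}. Moreover, your conclusion contradicts your own premise: the quotient of $\Z[x]/(x^2-1)$ by the antidiagonal $\Z(1-x)$ is $\Z$ with $x$ acting by $+1$, not $-1$. Separately, the term you worry about is $\hoKKRexT(\Omega Cl^{0,1},\C)\cong K_{-1}(\C)=KO_{-1}=0$, not $KO_1$. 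Your Step 2 needs the corrected facts as well. It is simpler to read off $\hoKKRexT(\s,\C)\cong\hat K_0(\C)\cong\Z$, generated by $[\operatorname{ev}_0]$, from \Cref{thm: Recovering classical K-theories from our stable inf cat gen}.

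\textbf{How the paper closes this step.} $x\cdot[\operatorname{ev}_0]$ is represented by $\operatorname{ev}_0$ landing on an odd line of $\hat{\mathcal K}$. The sum of the even and odd copies is null-homotopic by the argument in the proof of \Cref{thm: spectral model of g eq k theory}.1, so $x\cdot[\operatorname{ev}_0]=-[\operatorname{ev}_0]$.

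\textbf{Repairing your sequence argument.} Under $\ind\dashv\res$, the generator of $\hoKKRexT(Cl^{0,1},\C)\cong KO_0$ is the counit $C(C_2)\to\mathcal K(\ell^2 C_2)$. Its restriction along $\iota$ is the regular representation, so $\operatorname{im}\iota^*=\Z(1+x)$. Hence $(1+x)\cdot[\operatorname{ev}_0]=\operatorname{ev}_0^*(1+x)=0$.

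\textbf{A purely formal fix using only \Cref{lem: endo of K0C2}.} Since $x^2=1$, $x$ acts on $\hoKKRexT(\s,\C)\cong\Z$ by some $m=\pm1$. If $m=1$, then $(\operatorname{ev}_0\circ p)\circ\operatorname{ev}_0=(1-x)\circ\operatorname{ev}_0=0$. Composing with $p$ gives $(1-x)^2=0$ in $\Z[x]/(x^2-1)$, contradicting $(1-x)^2=2(1-x)\neq 0$. This composition pairing with $p$ is the correct rigorous replacement for your appeal to dualizability.
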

\begin{proof}
    It suffices to show that the diagram commutes on $[\operatorname{ev}_0]$. The bottom-right itinerary maps it to $2 \in \Z$. To analyse its fate under the right-down itinerary, we bring \Cref{lem: endo of K0C2} into play, which shows that the upper horizontal arrow is product in $\KKRCTWO$-product with $([\C^{\operatorname{triv}}]-[\C^{\operatorname{sign}}])$. 
   
    Postcomposition with $[\C]^{\operatorname{triv}}$ maps $[\operatorname{ev}_0]$ to itself, since $[\C^{\operatorname{triv}}]$ is the identity element of $\hoKKRexT(\C,\C)$. To prove our thesis, we have to show that postcomposition with $[\C^{\operatorname{sign}}]$ maps $[\operatorname{ev}_0]$ to $-[\operatorname{ev}_0]$. Unraveling the definition of postcomposition with $[\C^{\operatorname{sign}}]$, this follows directly from the proof of \Cref{thm: spectral model of g eq k theory}.1.
\end{proof}
\begin{cor}
    Let $A$ be a unital algebra in $\GTWOALG$. The composite
    \[
    K_0^{G_2}(A) \xrightarrow{\operatorname{ev}_{0}^*} \hat{K}^G_0(A) \xrightarrow{p^*} K^{G_2}_0(A)
    \]
    maps a finitely generated $G_2$-equivariant projective $A$-module $M$ to $[M]-[M^{\operatorname{op}}] \in K_0^{G_2}(A)$, where $M^{\operatorname{op}}$ is $M$ as a $G$-equivariant $A_u$-module, with the $C_2$-action given as the negative of the $C_2$-action on $M$.
\end{cor}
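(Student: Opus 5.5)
The plan is to reduce the claim to the computation of Lemma \ref{lem: endo of K0C2}. The composite $p^*\circ \operatorname{ev}_0^*$ on $\pi_0\KKRex(\C,A)\cong K_0^{G_2}(A)$ is, by contravariance, precomposition with $\operatorname{ev}_0\circ p\in\hoKKRex(\C,\C)$. By Remark \ref{rem: obvious}.2 and the symmetric monoidality of $\res^{C_2}_{G_2}$, this element is the image of $1-x\in \hoKKRexT(\C,\C)\cong \Z[x]/(x^2=1)$ computed in Lemma \ref{lem: endo of K0C2}. Hence the composite sends $[M]$ to $[M]\cdot(1-x)=[M]-[M]\cdot x$. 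Here ``$\cdot$'' is the action of $\hoKKRexT(\C,\C)$ on $\pi_0\KKRex(\C,A)$ by precomposition. By the Eckmann--Hilton remark following Lemma \ref{lem: endo of K0C2}, this agrees with the external product action of $K_0^{C_2}(\C)=R(C_2)$ on $K_0^{G_2}(A)$ from Remark \ref{prop: KOC2}.2.

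It therefore remains to identify $[M]\cdot x$ with $[M^{\operatorname{op}}]$, and this is the main step. I would argue as follows. Under Theorem \ref{thm: Recovering classical K-theories from our stable inf cat gen}, the lax monoidal identification of $K_0^{G_2}(-)$ with $\hoKKRex(\C,-)$ matches the action of an element of $\hoKKRex(\C,\C)$ with the external tensor product of modules. The external product of $M$ with the sign representation $\C^{\operatorname{sign}}$ (with trivial $G$-action) is $M\otimes \C^{\operatorname{sign}}$. This has the same underlying $G$-equivariant $A_u$-module as $M$, and its $C_2$-action is $m\mapsto -\alpha_M(m)$, so it is precisely $M^{\operatorname{op}}$.

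To make this concrete, and to guard against sign or convention issues in the Eckmann--Hilton identification, I would also chase classifying maps directly. Represent $[M]$ by $\varphi:\C\to \mathcal{K}_A(V\otimes A)$ with $V$ a finite-dimensional $G_2$-representation, as in the proof of the Corollary after Lemma \ref{lemlem}. Represent $x$ by $\C\to \mathcal{K}(\C^{\operatorname{sign}})=\C$. The composite in $\KKRex$ is modelled by tensoring, namely $\varphi\otimes 1:\C\to\mathcal{K}_A(V\otimes\C^{\operatorname{sign}}\otimes A)$. Its image module is $M\otimes\C^{\operatorname{sign}}\cong M^{\operatorname{op}}$.

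Finally, I would check that $M^{\operatorname{op}}$ is again a $G_2$-equivariant module over $A$. The point is that negating $\alpha_M$ preserves the compatibility $\alpha_M(ma)=\alpha_M(m)\alpha(a)$. Combining these steps gives $p^*\operatorname{ev}_0^*[M]=[M]-[M^{\operatorname{op}}]$. The main care needed is confirming that the equivalence of Theorem \ref{thm: Recovering classical K-theories from our stable inf cat gen} intertwines the two $R(C_2)$-actions, with no twist by $x$. This follows from its lax monoidality together with the unit identification $K_0^{C_2}(\C)\cong\hoKKRexT(\C,\C)$.
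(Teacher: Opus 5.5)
Your proposal is correct and follows essentially the same route as the paper. You identify the composite with precomposition by $\operatorname{ev}_0\circ p$, which is the image under restriction of $1-x$ by the lemma computing $\operatorname{ev}_0\circ p$. You then use that internal and external products by endomorphisms of the unit agree, together with the lax monoidal identification of $K_0^{G_2}(-)$ with $\pi_0$ of the mapping spectrum out of $\mathbb{C}$, to see that multiplying $[M]$ by $x$ gives $[M^{\operatorname{op}}]$. Your direct chase of classifying maps and your check that $M^{\operatorname{op}}$ is again a $G_2$-equivariant $A$-module are harmless extra verifications that the paper leaves implicit.
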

\begin{proof}
    Unraveling the definitions and from \Cref{lem: endo of K0C2}, we conclude that the composite is nothing but the composition with the image under $\res^{C_2}_{G_2}$ of $1-x \in \hoKKRex(\C,\C) \cong \frac{Z[x]}{x^2=1}$. Identifying $\hoKKRex(\C,\C)$ with the classical $G_2$-equivariant $K$-theory of a point, it is clear that $1$ corresponds to the trivial representation, and $x$ corresponds to the sign representation of $G_2$ on $\C$.\footnote{more precisely, $G$ acts trivially on $\C$ in both cases, and $C_2$ acts either trivially or via the sign representation.} Since the internal and external products
    \[
    \hoKKRex(\C,\C) \times \hoKKRex(\C,A) \rightarrow \hoKKRex(\C,A)
    \]
    are the same (this is true in any monoidal category), our thesis follows since the external multiplication of $[M]$ with $x$ yields $[M^{\operatorname{op}}]$ (keeping in mind the lax monoidality assertion of \Cref{thm: Recovering classical K-theories from our stable inf cat gen}).
\end{proof}
The proof of the next result requires slightly more work than needed in the proof of \Cref{lem: endo of K0C2}, but is similar in spirit. Somewhat surprisingly \Cref{cor: ev0 circ p is mult by 2} is needed in the proof!
\begin{lem}\label{lem: 1 plus x is the composition}
    $p \circ \operatorname{ev}_0=1+x \in \hoKKRex(\s,\s) \cong \frac{Z[x]}{x^2=1}$, with the isomorphism being as in \Cref{cor: EC2 SS ring}.
\end{lem}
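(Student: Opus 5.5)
We want to show $p \circ \operatorname{ev}_0 = 1+x$ in $\hoKKRex(\s,\s) \cong \Z[x]/(x^2=1)$, the ring structure and basis $\{[1_\s],[\alpha_\s]\}$ being those of \Cref{cor: EC2 SS ring}. By \Remark{obvious}.2 (restriction along $G_2 \to C_2$), it suffices to work in $\hoKKRexT$. Write $p\circ \operatorname{ev}_0 = a + bx$ with $a,b \in \Z$ to be determined. The plan is to get two linear equations for $a$ and $b$ by computing the action of $p\circ\operatorname{ev}_0$ on $\hat{K}_0$-groups. By \Cref{cor: EC2 SS ring}, composition with $a+bx$ acts on $\hat{K}_0(B) = \hoKKRexT(\s,B)$ as $a + b\,\alpha_{B*}$.

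\textbf{First test: $B=\C$.} Use associativity: $(p\circ \operatorname{ev}_0)$ acting on $\hoKKRexT(\s,\C)$ by postcomposition is not directly what we have. Instead, precompose. Consider the element $\operatorname{ev}_0 \in \hoKKRexT(\s,\C)\cong\Z$, which is a generator by \Cref{prop: omnibus graded k-theory}. We have
\[
\operatorname{ev}_0 \circ (p\circ\operatorname{ev}_0) = (\operatorname{ev}_0\circ p)\circ \operatorname{ev}_0,
\]
and \Cref{cor: ev0 circ p is mult by 2} gives the right side as $2[\operatorname{ev}_0]$. On the left side, $\operatorname{ev}_0\circ(a+bx) = a[\operatorname{ev}_0] + b\,\alpha_{\C*}[\operatorname{ev}_0] = (a+b)[\operatorname{ev}_0]$, since $\C$ is trivially graded, so $\operatorname{ev}_0\circ\alpha_\s = \operatorname{ev}_0$. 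This gives $a+b=2$.

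\textbf{Second test.} We need a target on which $x$ acts nontrivially. The natural choice is $B = C_0(\R)\hat\otimes Cl^{0,1}$ with the Bott element $\beta_\C \in \hoKKRexT(\s,B)$, on which $\alpha_*=-1$ by \Cref{prop: omnibus graded k-theory}.4. By the same associativity trick,
\[
\beta_\C\circ(a+bx) = (a-b)[\beta_\C].
\]
On the other hand, this equals $(\beta_\C\circ p)\circ\operatorname{ev}_0$, where $\beta_\C\circ p \in \hoKKRexT(\C,B) = K_0^{C_2}(B)$. Here $K^{C_2}_0(C_0(\R)\hat\otimes Cl^{0,1}) \cong K_0(C_0(\R)_u)$ by coinduction, and $K_0(C_0(\R)_u)$ vanishes in the complex case. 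In the Real case it is $KO$ of $\R^{0,1}$, and the Bott periodicity makes it vanish too (the relevant group is $KO_{-1}(\mathrm{pt})$ or similar). So $\beta_\C\circ p=0$, giving $a-b=0$. Together with $a+b=2$, this yields $a=b=1$.

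\textbf{Expected obstacle.} The delicate step is the vanishing of $K_0^{C_2}(C_0(\R)\hat\otimes Cl^{0,1})$ in the Real setting. This requires identifying the group correctly via the induction/coinduction adjunction as $\K(\C)$ with a one-step shift by $\R^{0,1}$, that is, the trivial-involution suspension $\Omega$ (\Remark{S is loops}), so that it is $\pi_0\Omega \K(\C) = K_1(\C)=0$ for both $KO$ and $KU$.

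If instead one uses $B=\s$ with the element $u$ (where $\alpha_*u=-u$, \Cref{prop: omnibus graded k-theory}.7), the same style of argument applies, but one must track that $u\circ p$ lies in $K_0^{C_2}(\s)$, which is nonzero. So the Bott target is preferred.
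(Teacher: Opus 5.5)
Your first test is correct and is exactly the paper's argument for the final step: composing with $\operatorname{ev}_0$ and using \Cref{cor: ev0 circ p is mult by 2} gives $a+b=2$. The problem is the second test. The claim that $K_0^{C_2}(C_0(\R)\hat\otimes Cl^{0,1})=0$, and hence $\beta_\C\circ p=0$, is false in the Real setting, which is the paper's default.

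For $\beta_\C\colon f\mapsto (t\mapsto f(t\varepsilon))$ to be a Real map, $C_0(\R)$ must carry the pointwise Real structure, because $\varepsilon$ is fixed by the Real structure of $Cl^{0,1}$. So it is $C_0(\R^{0,1})$, the model for $\Omega$. Coinduction, or \Cref{prop: KOC2 CL01}, then gives
\[
K_0^{C_2}\bigl(C_0(\R^{0,1})\hat\otimes Cl^{0,1}\bigr)\cong K_0\bigl(C_0(\R^{0,1})\bigr)\cong K_1(\C)=KO_1(\mathrm{pt})\cong \Z/2,
\]
which is $\widetilde{KO}(S^1)$, generated by the M\"obius bundle. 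You reach $K_1(\C)$ yourself in your last paragraph; the mistake is asserting $KO_1=0$. The group $KO_{-1}=KO_7=0$ you cite would belong to $C_0(\R^{1,0})$, which has the wrong involution. So your equation $a-b=0$ is not justified as written.

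The step can be repaired. You only need $(\beta_\C\circ p)\circ\operatorname{ev}_0=0$. This is the image of $\beta_\C\circ p$ under the group homomorphism $\operatorname{ev}_0^*\colon K_0^{C_2}(C_0(\R)\hat\otimes Cl^{0,1})\to \hat K_0(C_0(\R)\hat\otimes Cl^{0,1})\cong\Z$. Its source is $0$ (complex) or $\Z/2$ (Real), so it vanishes for torsion reasons, and then $a-b=0$ and $a=b=1$ follow.

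The detour is unnecessary, though. The paper gets $a=b$ from the $C_2$-equivariance of $p_*$. In your notation this is the fact you already used in the first test: $\operatorname{ev}_0\circ\alpha_\s=\operatorname{ev}_0$ as homomorphisms. Hence $(p\circ\operatorname{ev}_0)\circ\alpha_\s=p\circ\operatorname{ev}_0$, i.e.\ $(a+bx)x=a+bx$, which forces $a=b$. So $p\circ\operatorname{ev}_0=k(1+x)$, and your first test gives $2k=2$. This needs no computation of $K^{C_2}_*$ of $C_0(\R)\hat\otimes Cl^{0,1}$ and no case split between $KU$ and $KO$.
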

\begin{proof}
 From the fiber sequence of \Cref{prop: Cl10 fib seqn}, 
    we obtain a long exact sequence of $\hat{K}_*(-)(=\hoKKRex(\s,-))$ groups
  \[
\cdots \longrightarrow
\hat K_0(Cl^{1,0})
\xrightarrow{j_*}
\hat K_0(\C)
\xrightarrow{p_*}
\hat K_0(\s)
\xrightarrow{\partial}
\cdots
\]
    The composition $p \circ \operatorname{ev}_0$ will be the image of $[\operatorname{ev}_0]$ under $p_*$. Since $p_*$ is a $C_2$-equivariant map, there exists $k \in \Z$ rendering the diagram
\begin{equation}\label{eqn: commuting upto sign}
\begin{tikzcd}
	{\hat K_0(\C)} && {\hat K_0(\s)} \\
	\Z && {\Z \oplus \Z}
	\arrow["{p_*}", from=1-1, to=1-3]
	\arrow["\cong"', no head, from=1-1, to=2-1]
	\arrow["\cong", no head, from=1-3, to=2-3]
	\arrow["{1 \mapsto (k,k)}", from=2-1, to=2-3]
\end{tikzcd}
\end{equation}
commutative, where the left isomorphism is the standard one (mapping $[\operatorname{ev}_0]$ to $1$), and the right one is the one mentioned in \Cref{prop: omnibus graded k-theory}.8.
\begin{clm}
    $k=1$ in the above diagram.
\end{clm}
\begin{proof}
        We expand the diagram to 
\begin{equation}\label{eqn: p circ ev0 claim}
\begin{tikzcd}
	{\hoKKRexT(\s,\C)} && {\hoKKRexT(\s,\s)} && {\hoKKRexT(\s,\C)} \\
	\Z && {\Z \oplus \Z} && \Z
	\arrow["{p_*}", from=1-1, to=1-3]
	\arrow["\cong", no head, from=1-1, to=2-1]
	\arrow["{\operatorname{ev_0}_*}", from=1-3, to=1-5]
	\arrow["\cong", no head, from=1-3, to=2-3]
	\arrow["\cong", no head, from=1-5, to=2-5]
	\arrow[from=2-1, to=2-3]
	\arrow["{(n,m) \mapsto n+m}", from=2-3, to=2-5]
\end{tikzcd}
\end{equation}
Observe that the right square commutes, as can be explicitly checked on the two generators $[\operatorname{id_\s}]$ and $[\alpha_{\s}]$. Furthermore, the rectangle
\[
\begin{tikzcd}
	{\hoKKRexT(\s,\C)} && {\hoKKRexT(\s,\C)} \\
	\Z && \Z
	\arrow["{(\operatorname{ev}_0 \circ p)_*}", from=1-1, to=1-3]
	\arrow["\cong", no head, from=1-1, to=2-1]
	\arrow["\cong", no head, from=1-3, to=2-3]
	\arrow["{n \mapsto 2n}", from=2-1, to=2-3]
\end{tikzcd}
\]
commutes too, as seen in \Cref{cor: ev0 circ p is mult by 2}. The claim follows.
\end{proof}
This also finishes the proof of the lemma.
\end{proof}
\begin{cor}
    Let $A$ be a unital algebra in $\GTWOALG$. The composite 
    \[
    {K}_0^{G,\operatorname{Fr}}(A) \xrightarrow{p^*} K_0^{G_2}(A) \xrightarrow{\operatorname{ev}_0^*} {K}^{G,\operatorname{Fr}}_0(A)
    \]
    maps the Fredholm cycle $(\E,F) \in {K}_0^{G,\operatorname{Fr}}(A)$ to $(\E \oplus \E,F \oplus (-F)) \in {K}_0^{G,\operatorname{Fr}}(A)$.
\end{cor}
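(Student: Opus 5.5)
The plan mirrors the proof of the corollary describing $p^*\circ\operatorname{ev}_0^*$ on $K_0^{G_2}(A)$. The composite $\operatorname{ev}_0^*\circ p^*$ on $\pi_0\KKRex(\s,A)\cong \hat K_0^G(A)\cong K_0^{G,\operatorname{Fr}}(A)$ is precomposition with the morphism $p\circ\operatorname{ev}_0\in\hoKKRex(\s,\s)$. By \Cref{lem: 1 plus x is the composition} (pushed from $\hoKKRexT$ to $\hoKKRex$ via $\res^{C_2}_{G_2}$, which is symmetric monoidal), this morphism equals $1+x$, where $1=[1_\s]$ and $x=[\alpha_\s]$ as in \Cref{cor: EC2 SS ring}. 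Since precomposition is additive, the composite sends a class $\xi$ to $\xi+\alpha_\s^*\xi$.

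Next, identify $\alpha_\s^*$ on the spectral model. A class in $\hat K_0^G(A)$ is represented by $\varphi:\s\to A\hat\otimes\hatkg$, and precomposing with $\alpha_\s$ gives $\varphi\circ\alpha_\s$. By \Cref{cor: EC2 SS ring} this coincides with the action of the grading $\alpha_A$, but we only need the direct formula. We then transport $\varphi\circ\alpha_\s$ through the isomorphism of \Cref{thm: spectral model of g eq k theory}.5. Since $\alpha_\s(f)(t)=f(-t)$, we get $\mathcal H_{\varphi\circ\alpha_\s}=\mathcal H_\varphi$, because $\alpha_\s$ is an automorphism of $\s$. The extension to $C[-1,1]$ is $\tilde\varphi\circ\alpha_{C[-1,1]}$, which sends the coordinate function $x$ to $-\tilde\varphi(x)$. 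Hence $\varphi\circ\alpha_\s$ corresponds to the cycle $(\E,-F)$ whenever $\varphi$ corresponds to $(\E,F)$.

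Finally, we must check that addition in $K_0^{G,\operatorname{Fr}}(A)$ is direct sum of Fredholm cycles. Under \Cref{cor: special mapping spaces in KK}, addition in $\pi_0\KKRex(\s,A)$ is block sum of homomorphisms into $A\hat\otimes\hatkg$. Block sum of $\varphi,\psi$ gives $\mathcal H_{\varphi\oplus\psi}=\mathcal H_\varphi\oplus\mathcal H_\psi$ with extended operator $\tilde\varphi(x)\oplus\tilde\psi(x)$. So $(\E,F)\mapsto(\E,F)+(\E,-F)=(\E\oplus\E,F\oplus(-F))$.

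The main obstacle is bookkeeping rather than a new idea. One must make sure that the identification $\hoKKRex(\s,\s)\cong\Z[x]/(x^2=1)$ used in \Cref{lem: 1 plus x is the composition} really identifies $x$ with precomposition by $\alpha_\s$ on $\pi_0\KKRex(\s,A)$, and not with some twisted version coming from the comodule structure. This is justified by \Cref{cor: EC2 SS ring} together with the naturality of \Cref{thm: Recovering classical K-theories from our stable inf cat gen} under morphisms in the source variable. The unitality hypothesis on $A$ is not used beyond what the previous corollaries already require.
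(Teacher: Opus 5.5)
Your proposal is correct and takes the same route as the paper. The paper's one-line proof also identifies the composite with precomposition by $p\circ\operatorname{ev}_0=1+x$ (\Cref{lem: 1 plus x is the composition}) and then reads off the answer through the explicit isomorphism $\pi_0\mathbb{K}^{G,\operatorname{sp}}(A)\cong K_0^{G,\operatorname{Fr}}(A)$ of \Cref{thm: spectral model of g eq k theory}. You spell out two details the paper leaves implicit: precomposition with $\alpha_\s$ sends $(\E,F)$ to $(\E,-F)$, and the group law is direct sum.
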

\begin{proof}
    This follows from \Cref{lem: 1 plus x is the composition} and the formula of the explicit isomorphism $\pi_0{\K}^{\operatorname{G,sp}}(A) \cong {K}_0^{G,\operatorname{Fr}}(A)$ as mentioned in \Cref{thm: spectral model of g eq k theory}.6. 
\end{proof}
\begin{rem}\label{rem: simplify endo of K fr}
    Note that $(\E,-F)$ is \emph{not} the inverse of $(\E,F) \in {{K}}^{G,\operatorname{Fr}}_0(A)$; the usual formula of the inverse also requires to flip the grading on $\E$ (see \cite[Lemma 8.24]{D} for details). In fact, when $A$ is inner graded, $(\E,-F)=(\E,F)$, as can be seen from \Cref{thm: spectral model of g eq k theory}.6. Thus, for inner graded $A$, ${K}_0^{G,\operatorname{Fr}}(A) \xrightarrow{\operatorname{ev}_0^* \circ p^*} {K}_0^{G,\operatorname{Fr}}(A)$ is the multiplication by $2$ map.
\end{rem}
We now consider the composite
\[
\K^G(A_u) \xrightarrow{\iota^*} \K^{G_2}(A) \xrightarrow{j^*} \K^G((A \hat\otimes Cl^{0,2})_u).
\]
For a general unital $A \in \GTWOALG$. the composite does not have an illuminating description, since we do not have a good understanding of the algebra $(A \hat\otimes Cl^{0,2})_u$. However, the situation is better in the following cases
\begin{enumerate}
     \item $A$ is inner graded (and potentially Real).
    \item $A$ is complex (with potentially non-inner grading).
\end{enumerate}
We begin with the first case. So let $A$ be a unital, inner graded algebra in  $\GTWOALG$. Then, we assert that the map of spectra $\KKRex(\C,A) \xrightarrow{j^*} \KKRex(Cl^{1,0},A)$ splits. To see this, we identify $\KKRex(Cl^{1,0},A)$ with $\K^G(A_u)$ via the route:
\begin{equation}\label{eqn: A inner graded over R}
    \KKRex(Cl^{1,0},A) \xrightarrow[\simeq]{- \hat\otimes Cl^{0,1}} \KKRex(Cl^{1,1},A\hat\otimes Cl^{0,1}) \xrightarrow[A~\text{inner graded},\simeq]{(\C \rightarrow Cl^{1,1})^*} \KKRex(\C,A \otimes Cl^{0,1}) \xrightarrow[\simeq]{\res\dashv\coind} \K^G(A_u).
\end{equation}
We want to understand the induced map $\K^{G_2}(A) \rightarrow \K^G(A)$ under the identification above. To that end, we need the following lemma.
\begin{lem}\label{prop: j compose iota A triv graded over R}
    The diagram
\[
\begin{tikzcd}
	{\KKRex(\C,A)} && {\KKRex(Cl^{1,0},A)} \\
	{\mathbb{K}^{G_2}(A)} && {\mathbb{K}^G(A_u)} 
	\arrow["{j^*}", from=1-1, to=1-3]
	\arrow["\simeq", from=1-3, to=2-3]
	\arrow["\simeq", from=2-1, to=1-1]
	\arrow["{\mathsf{res}^{G_2}_G(-)}", from=2-1, to=2-3]
\end{tikzcd}
\]
commutes, where the left vertical equivalence is induced from \Cref{thm: Recovering classical K-theories from our stable inf cat gen}, and the right vertical equivalence is induced by \Cref{eqn: A inner graded over R}.  In particular, for unital $A$, the induced map $K_0^{G_2}(A) \rightarrow K_0^G(A)$ forgets the $C_2$-action on a $G_2$-equivariant projective $A$-module in $K_0^{G_2}(A)$.
\end{lem}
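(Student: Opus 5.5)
The plan is to reduce the statement to the already established description of $j^*$ in \Cref{lem: another description of j star}, and then identify the remaining composite by unwinding the restriction–coinduction adjunction. By \Cref{lem: another description of j star}, composing $j^*$ with the first two equivalences of \Cref{eqn: A inner graded over R} gives the map
\[
(A \rightarrow A \hat\otimes Cl^{0,1})_*: \KKRex(\C,A) \rightarrow \KKRex(\C, A\hat\otimes Cl^{0,1}).
\]
The only caveat is that \Cref{eqn: A inner graded over R} uses $(\C\rightarrow Cl^{1,1})^*$ and \Cref{lem: another description of j star} uses $(\C\rightarrow Cl^{1,1})_*$. Both are inverse descriptions of the same Morita equivalence, so I will check that they agree in the sense needed, arguing as in \Cref{eqn: Morita move}.

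Since $A$ is inner graded, $A\hat\otimes Cl^{0,1}\cong A\otimes Cl^{0,1}$ in $\GTWOALG$. Moreover $A\otimes Cl^{0,1}$, with $Cl^{0,1}\cong \C\oplus\C$ carrying the flip, is $\coind^{G_2}_G(A_u)$; more precisely it is $C(C_2,A_u)$ with the induced action. It remains to show that the composite
\[
\KKRex(\C,A)\xrightarrow{(A\rightarrow A\otimes Cl^{0,1})_*}\KKRex(\C,\coind^{G_2}_G A_u)\xrightarrow{\simeq}\ER(\C,A_u)
\]
equals $\res^{G_2}_G$. By \Cref{thm: change of group general}, the adjunction equivalence sends $f$ to $\epsilon\circ\res(f)$, where $\epsilon$ is evaluation at $1$. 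So the composite is $\res(f)$ followed by $\res$ of $A\rightarrow A\otimes Cl^{0,1}\cong\coind\res A$, and then by $\epsilon_{A_u}$.

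The key algebraic check is that the map $A\rightarrow A\otimes Cl^{0,1}$, transported along the inner-grading isomorphism, is the unit $\eta_A: a\mapsto (t\mapsto ta)$ of the restriction–coinduction adjunction. Once this holds, the triangle identity $\epsilon_{\res A}\circ\res(\eta_A)=\mathrm{id}$ yields exactly $\res^{G_2}_G(f)$. This is where I expect the main obstacle. The isomorphism $A\hat\otimes Cl^{0,1}\cong A\otimes Cl^{0,1}$ uses the grading unitary $\epsilon_A$: it sends $a\hat\otimes\varepsilon$ to $a\epsilon_A\otimes\varepsilon$, or something similar. One must verify by an explicit sign computation with the idempotents $\frac{1\pm\varepsilon}{2}$ that the inclusion $a\mapsto a\hat\otimes 1$ becomes $a\mapsto (a,\alpha(a))$, which is $\eta_A$. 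If instead it comes out as $a\mapsto(\alpha(a),a)$, I would compose with the flip automorphism of $Cl^{0,1}$. Since the identification \Cref{eqn: A inner graded over R} is only fixed up to such a choice, this adjustment costs nothing.

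For the final $\pi_0$ statement, I combine the above with \Cref{thm: Recovering classical K-theories from our stable inf cat gen}. A class $[M]$ is represented by a $G_2$-equivariant projection $\varphi:\C\rightarrow \mathcal{K}_A(V\otimes A)$, and $\res^{G_2}_G\varphi$ is the same projection with the $C_2$-equivariance forgotten. That projection represents the underlying $G$-equivariant module $M$, so the map on $K_0$ forgets the $C_2$-action.
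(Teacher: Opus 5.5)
Your proposal is correct and follows the paper's own argument. The paper reduces via \Cref{lem: another description of j star} to the composite $(A\to A\otimes Cl^{0,1})_*$ followed by the restriction--coinduction adjunction equivalence, and identifies this with $\res^{G_2}_G$ using the counit (evaluation at $1$), exactly as you do with the triangle identity. Your two worries resolve directly. The $(\C\to Cl^{1,1})^*$ versus $(\C\to Cl^{1,1})_*$ discrepancy is only a notational inconsistency in the paper, since both denote precomposition. For the sign check, $a\hat\otimes 1\mapsto a\otimes 1$ under the inner-grading isomorphism, and the untwisting $A\otimes C(C_2)\cong\coind^{G_2}_G A_u$ sends $a\otimes 1$ to $\eta_A(a)$, so no flip adjustment is needed.
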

\begin{proof}
    In light of \Cref{lem: another description of j star}, we just need to show that the composite
    \[
    \KKRex(\C,A) \xrightarrow{(A \rightarrow A \otimes Cl^{0,1})_*} \KKRex(\C,A \otimes Cl^{0,1}) \xrightarrow[\cong]{\text{res}\dashv \text{coind}} \ER(\C,A_u)
    \]
    identifies with the restriction map. But this is clear in light of description of the counit of the restriction-coinduction adjunction as mentioned in \Cref{thm: adjunctions at kkg} (the adjunction holds good for non-separable algebras too).
\end{proof}
\begin{prop}
    Let $A$ be an inner graded algebra in $\GTWOALG$. Then, the fiber sequence in \Cref{eqn: dual fiber sequence} splits.
\end{prop}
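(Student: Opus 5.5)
The splitting of \Cref{eqn: dual fiber sequence} is equivalent to the second map $j^*\colon \K^{G_2}(A) \simeq \KKRex(\C,A) \to \KKRex(Cl^{1,0},A)$ admitting a section. For a fiber sequence of spectra, such a section identifies the total spectrum with the sum of the fiber and the base. The plan is therefore to produce such a section using the identification \Cref{eqn: A inner graded over R} of the target with $\K^G(A_u)$. By \Cref{prop: j compose iota A triv graded over R}, under these identifications $j^*$ becomes the restriction map
\[
\res^{G_2}_G\colon \K^{G_2}(A) \to \K^G(A_u).
\]
So it suffices to split restriction when $A$ is inner graded.

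For the section, write the inner grading as $\operatorname{Ad}(\gamma)$, where $\gamma$ is a $G$-invariant (and Real) self-adjoint unitary in the multiplier algebra of $A$. Conjugation by $\gamma$ then identifies $A$ with $\res^{C_2}_{G_2}$-trivially-graded data. More precisely, there is a $G_2$-equivariant isomorphism between $A$ and $A_u$ equipped with the trivial $C_2$-action, up to the standard Morita/stabilization twist. Concretely, the map $a \mapsto a\gamma^{|a|}$, or equivalently passing to $A\otimes \mathcal{K}(\C^{\operatorname{triv}}\oplus \C^{\operatorname{sign}})$ and using $\gamma$ to untwist, gives an equivalence $A \simeq \res^{C_2}_{G_2}(A_u)$ in $\KKRex$. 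This is the same mechanism behind the isomorphism $A\hat\otimes Cl^{0,1}\cong A\otimes Cl^{0,1}$ used earlier.

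Composing with this equivalence, the candidate section is the map
\[
s\colon \K^G(A_u) \to \K^{G_2}(\res^{C_2}_{G_2} A_u) \simeq \K^{G_2}(A)
\]
given by $\res^{C_2}_{G_2}$, i.e.\ equipping modules with the trivial $C_2$-action. This map descends to $\KK$ by \Cref{thm: change of group general}. Since forgetting a trivial action is the identity, we expect $\res^{G_2}_G\circ s\simeq \operatorname{id}$. I would verify this at the level of functors on algebras, namely that $\res^{G_2}_G\circ\res^{C_2}_{G_2}$ equals the identity of $\RALG$. I would then check that the untwisting equivalence, after forgetting the $C_2$-action, is homotopic to the identity of $A_u$ in $\KK$: it is conjugation by a unitary, and conjugation by a multiplier unitary is the identity in $\KK$ after stabilization by a rotation homotopy.

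The main obstacle is exactly this compatibility: that the equivalence $A\simeq \res^{C_2}_{G_2}A_u$ built from $\gamma$, followed by $\res^{G_2}_G$, is compatible with the identifications in \Cref{eqn: A inner graded over R} and \Cref{prop: j compose iota A triv graded over R}. My first attempt would be to do everything on $\pi_0$ for unital $A$ using projective modules. A $G$-equivariant $A_u$-module $M$ goes to $M$ with $C_2$-action $m\mapsto m\gamma$ (suitably arranged), and forgetting recovers $M$. I would then upgrade to spectra by noting that every map in the chain is induced by $*$-homomorphisms or adjunction units, so the homotopy is natural.

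As a fallback if this becomes messy, I would mimic the splitting proof for \Cref{eqn: first fib seqn}. There the composite became the diagonal, which has canonical retractions; here the analogous calculation would be that $j^*\circ(\text{the corresponding map})$ is the identity, via $A\otimes Cl^{0,1}\cong A\oplus A$ as used before.
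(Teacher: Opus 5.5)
Your proposal is correct and is essentially the paper's proof. You reduce via \Cref{prop: j compose iota A triv graded over R} to splitting $\res^{G_2}_G\colon \K^{G_2}(A)\to\K^G(A_u)$, and split it by equipping with the trivial $C_2$-action followed by the $\KKRex$-equivalence $A\simeq\res^G_{G_2}\res^{G_2}_G A$, which the paper simply cites from \cite[Example E.2.13]{WY}; note that in the paper's notation the trivial-$C_2$-action functor is $\res^G_{G_2}$, not $\res^{C_2}_{G_2}$.

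Your ``main obstacle'' is unnecessary: $\res^{G_2}_G$ carries that equivalence to an equivalence, so $\res^{G_2}_G\circ s$ is automatically an automorphism of $\K^G(A_u)$, and $s\circ(\res^{G_2}_G\circ s)^{-1}$ is then a section. Also, $a\mapsto a\gamma^{|a|}$ is not a $*$-homomorphism, since a product of two odd elements picks up a sign. Your second description does work, however: conjugation by $W=\begin{psmallmatrix} p_+ & p_- \\ p_- & p_+ \end{psmallmatrix}$ with $p_\pm=\frac{1\pm\gamma}{2}$ carries $\operatorname{Ad}(\gamma\otimes\operatorname{diag}(1,-1))$ to $\operatorname{Ad}(1\otimes\operatorname{diag}(1,-1))$. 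This gives a $G_2$-equivariant isomorphism $A\otimes\mathcal{K}(\C^{\mathrm{triv}}\oplus\C^{\mathrm{sign}})\cong\res^G_{G_2}(A_u)\otimes\mathcal{K}(\C^{\mathrm{triv}}\oplus\C^{\mathrm{sign}})$.
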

\begin{proof}
    In light of \Cref{prop: j compose iota A triv graded over R}, it suffices to construct a section of the map $\K^{G_2}(A) \xrightarrow{\mathsf{res}^{G_2}_G(-)} \K^G(A_u)$. If $A$ were trivially graded, this section could have been taken to be $\mathsf{res}^G_{G_2}(-)$. In the general inner graded case, we use the fact that $A \simeq \res^{G}_{G_2}\res^{G_2}_GA$ in $\KKRex$ as follows by an application of \cite[Example E.2.13]{WY} (see also \cite[Lemma 2.1]{BUNKEcrossed}).
\end{proof}
As an immediate consequence of \Cref{prop: j compose iota A triv graded over R}, we have the following corollary.
\begin{cor}\label{cor: composition last ones}
Let $(A,\alpha)$ be a unital, inner graded algebra in $\GTWOALG$.
\begin{enumerate}
    \item The composite $K_0^G(A_u) \xrightarrow{\iota^*} K_0^{G_2}(A) \xrightarrow{j^*} K_0^G(A_u)$ is given by
    \begin{align*}
        K_0^G(A_u) &\rightarrow K_0^G(A_u) \\
        [M] &\mapsto [M \oplus M_{\alpha}]=2[M]
    \end{align*}
    \item The composite $K_0^{G_2}(A) \xrightarrow{j^*} K_0^G(A_u) \xrightarrow{\iota^*} K_0^{G_2}(A)$ is given by
    \begin{align*}
    K_0^{G_2}(A) &\rightarrow K_0^{G_2}(A) \\
    [M,\alpha_M] &\mapsto[M \oplus M_{\alpha},\operatorname{flip}] = [M \oplus M, (m,n) \mapsto (\alpha_M(n),\alpha_M(m))]
    \end{align*}
\end{enumerate}
\end{cor}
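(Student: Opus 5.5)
Both composites can be computed at the level of $\pi_0$ by combining two explicit descriptions already in hand. The first is \Cref{cor: map induced by C in CL01}: under the identifications of \Cref{lem: fib seqn}, the map $\iota^*$ on $\pi_0$ is $G_A$, sending $[M]$ to $[M\oplus M_\alpha,\operatorname{flip}]$. The second is \Cref{prop: j compose iota A triv graded over R}: under the identification \Cref{eqn: A inner graded over R}, the map $j^*$ on $\pi_0$ is the forgetful map $\res^{G_2}_G$, which discards the $C_2$-action. One must check that the identification of $\KKRex(Cl^{1,0},A)$ with $\K^G(A_u)$ used for $j^*$ matches the one used for the target of $\iota^*$ after applying $\mathsf{ind}\dashv\mathsf{res}$. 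To do this, I would note that both routes pass through tensoring with $Cl^{0,1}$ and a Morita move. Their compatibility should then follow as in \Cref{eqn: Morita move} and \Cref{lem: another description of j star}.

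For assertion 1, forgetting the $C_2$-action on $G_A[M]$ gives $[M\oplus M_\alpha]$ in $K_0^G(A_u)$. It then remains to see that $M_\alpha\cong M$ when $\alpha$ is inner. Write $\alpha=\operatorname{Ad}(\epsilon)$ for a $G$-invariant self-adjoint unitary $\epsilon$, which is also Real-invariant in the Real case. The map $\underline{m}\mapsto m\epsilon$ is then $A_u$-linear from $M_\alpha$ to $M$, because $m\alpha(a)\epsilon=m\epsilon a\epsilon\epsilon=(m\epsilon)a$, using $\epsilon^2=1$. It is also bijective and $G$-equivariant. Hence $[M\oplus M_\alpha]=2[M]$.

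For assertion 2, start from $[M,\alpha_M]$. Applying $j^*$ forgets the $C_2$-action and gives $[M]$. Applying $\iota^*$ then gives $[M\oplus M_\alpha,\operatorname{flip}]$. To reach the stated form, I would use the $C_2$-structure $\alpha_M$ itself. It is an additive bijection $M\to M$ satisfying $\alpha_M(ma)=\alpha_M(m)\alpha(a)$, so $\underline{m}\mapsto\alpha_M(m)$ is an $A_u$-linear isomorphism $M_\alpha\to M$. Transporting the flip along $\operatorname{id}\oplus\alpha_M$ produces the involution $(m,n)\mapsto(\alpha_M(n),\alpha_M(m))$, and this uses $\alpha_M^2=1$. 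One should also confirm that this is compatible with the $A$-module structure, that is, semilinear over $\alpha$; this is a short check.

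I expect the main obstacle to be the bookkeeping in the first step: making sure the two identifications of the middle term agree, including sign conventions. I would settle this by tracing a projection classifying map $p\colon\C\to\mathcal{K}_{A_u}(V\otimes A_u)$ through both routes, as in the proof of \Cref{cor: map induced by C in CL01}.
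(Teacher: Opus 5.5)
Your proposal is correct and is essentially the paper's proof: you compose the explicit descriptions from \Cref{cor: map induced by C in CL01} and \Cref{prop: j compose iota A triv graded over R}, then identify $M_\alpha\cong M$ via $\underline{m}\mapsto m\epsilon$ for Assertion 1 and via $\alpha_M$ for Assertion 2. The paper does not carry out the compatibility check between the two identifications of $K_0^G(A_u)$ that you flag, and it is not needed: the statement just composes the two maps as already described on $K_0$-groups, each with respect to its own identification.
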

\begin{proof}
    Everything follows immediately from \Cref{cor: map induced by C in CL01} and \Cref{prop: j compose iota A triv graded over R}, except that we have to justify the equalities in Assertions 1 and 2. 
    
    For Assertion 1, it suffices to show that for $M$ a $G$-equivariant $A_u$-module over an inner graded $A$, then $M_\alpha \cong M$ as $G$-equivariant $A_u$-modules. If $u$ is the symmetry implementing the grading on $A$, then 
    \[
    M_{\alpha} \rightarrow M, ~~~ \underline{m} \mapsto mu
    \]
    is such an isomorphism.
    
    For Assertion 2, we first show that when $M$ is a $G_2$-equivariant projective $A$-module, then $M \cong M_{\alpha}$ as $G$-equivariant $A_u$-modules. Indeed, an isomorphism is given by 
    \begin{align*}
        M &\rightarrow M_\alpha \\
        m &\mapsto \underline{\alpha_M(m)}
    \end{align*}
    Under this isomorphism, $[M \oplus M_{\alpha},\operatorname{flip}]=[M \oplus M, (m,n) \mapsto (\alpha_M(n),\alpha_M(m))] \in K_0^{G_2}(A)$. Also compare the proof of \Cref{cor: map induced by C in CL01}.\footnote{actually this argument did not use that $A$ is inner graded, see also \Cref{cor: endomorphisms in the complex graded case}.}
\end{proof}

We now tackle the case when $A$ is a (potentially non-inner graded) \emph{complex} $C^*$-algebra. In this case, we recall that there is a preferred isomorphism $Cl^{1,0} \cong Cl^{0,1}$, mapping the canonical generator of $Cl^{1,0}$ to $i$ times the canonical generator of $Cl^{0,1}$. Furthermore, the diagram
\begin{equation}\label{eqn: cliffords}
    \begin{tikzcd}
	{Cl^{1,0}} && {Cl^{1,1}} \\
	{Cl^{0,1}}
	\arrow["{-\hat\otimes Cl^{0,1}}", from=1-1, to=1-3]
	\arrow["\cong"', no head, from=1-1, to=2-1]
	\arrow["{- \hat\otimes Cl^{1,0}}"', from=2-1, to=1-3]
\end{tikzcd}
\end{equation}
commutes up to homotopy, as can be easily constructed via the usual sine-cosine formula. We can therefore identify $\KKRex(Cl^{1,0},A)$ with $\K^G(A_u)$ via the route
\begin{equation}
    \KKRex(Cl^{1,0},A) \simeq \KKRex(Cl^{0,1},A) \simeq \K^G(A_u),
\end{equation}
where the last equivalence stems from the induction-restriction adjunction.
\begin{lem}
    The diagram 
\[
\begin{tikzcd}
	{\hoKKRex(\C,A)} && {\hoKKRex(Cl^{1,0},A)} \\
	&& {\hoKKRex(Cl^{0,1},A)} \\
	{K_0^{G_2}(A)} && {K_0^G(A_u)}
	\arrow["{j^*}", from=1-1, to=1-3]
	\arrow[from=1-1, to=2-3]
	\arrow["\cong"', from=1-3, to=2-3]
	\arrow["\cong", from=3-1, to=1-1]
	\arrow["{\text{forget $C_2$ action}}", from=3-1, to=3-3]
	\arrow["\cong", from=3-3, to=2-3]
\end{tikzcd}
\]
commutes, where the left vertical isomorphism is induced from \Cref{thm: Recovering classical K-theories from our stable inf cat gen}, and the other maps are as described above.\footnote{we use the same notations as in the Real case; this should not create any confusion.}
\end{lem}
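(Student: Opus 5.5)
The plan is to reduce the statement to a computation with explicit Clifford algebras, using only the naturality of the induction--restriction adjunction (\Cref{thm: change of group general}). Write $\theta: Cl^{0,1}\xrightarrow{\cong} Cl^{1,0}$ for the preferred isomorphism of the complex case. Let $\eta:\C\to \res^{G_2}_G\ind^{G_2}_G\C \cong (Cl^{0,1})_u$ be the unit of $\ind^{G_2}_G\dashv\res^{G_2}_G$.

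The diagonal arrow of the diagram is $f\mapsto f\circ j\circ \theta$. Under the adjunction, the class $f\circ j\circ\theta\in\hoKKRex(Cl^{0,1},A)$ corresponds to
\[
\res^{G_2}_G(f)\circ \res^{G_2}_G(j\circ\theta)\circ \eta \in \hoKKRex^{G}(\C,A_u),
\]
where by abuse $\hoKKRex^G$ denotes the homotopy category of $\KK^G_{\ex,\infty}$. This is just the standard formula for the adjoint of a composite. Hence the lemma reduces to the following identity in $\hoKK^G_{\ex,0}(\C,\C)$:
\[
\res^{G_2}_G(j)\circ\res^{G_2}_G(\theta)\circ\eta=\operatorname{id}_\C. \qquad (\ast)
\]
Given $(\ast)$, the composite is $f\mapsto\res^{G_2}_G(f)$. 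By \Cref{thm: Recovering classical K-theories from our stable inf cat gen} and the description of classifying maps $\C\to\mathcal{K}_A(V\otimes A)$ used in the earlier corollaries, $\res^{G_2}_G(f)$ on $\pi_0$ simply forgets the $C_2$-action on $V$, and hence on the projective module. That gives commutativity of the outer square. The upper triangle commutes by definition of the right vertical isomorphism.

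To prove $(\ast)$, I would work at the level of underlying algebras with trivial $G$-action. First, $\eta$ is the inclusion $\C\to(Cl^{0,1})_u\cong\C\oplus\C$ onto one summand, i.e.\ $\lambda\mapsto\lambda\frac{1+\varepsilon}{2}$; this follows from the formula for the unit in \Cref{prop: adjunctions at skg level}. Next, $\theta$ sends this projection to $\frac{1-ie'}{2}$, where $e'$ is the generator of $Cl^{1,0}$. Then $\res(j)$ is represented by the zig-zag $(Cl^{1,0})_u\to(Cl^{1,1})_u\cong M_2(\C)\leftarrow\C$, where the last map is a corner inclusion and hence a Morita equivalence. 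In $M_2(\C)$ the image of $\frac{1-ie'}{2}$ is a rank one projection. Any two rank one projections in $M_2(\C)$ are homotopic, so the composite $\C\to M_2(\C)$ is homotopic to the corner inclusion. Inverting the corner inclusion therefore yields the identity in $\hoKK^G_{\ex,0}(\C,\C)$.

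The main obstacle is bookkeeping. I need to be sure that the identifications used to define the right vertical isomorphism (the homotopy-commutative triangle \Cref{eqn: cliffords} and the equivalence $\C\simeq Cl^{1,1}$) are exactly those realising $j$ through the zig-zag above, with no hidden sign. A sign would produce $-\operatorname{id}$, or the sign representation would creep in. I would check this by tracking the generator through the isomorphism $(Cl^{1,1})_u\cong M_2(\C)$ explicitly, verifying that the resulting projection has rank one and is not zero. I would also check that the $G$-action stays trivial throughout, which is immediate because all algebras involved carry trivial $G$-action.
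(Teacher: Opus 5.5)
Your proof is correct, but it is organized differently from the paper's. The paper's proof indicates that the unlabeled diagonal is precomposition with the other zig-zag $Cl^{0,1}\to Cl^{1,1}\leftarrow\C$, where $Cl^{0,1}$ is included as the second tensor factor. It gets the upper triangle from the $C_2$-equivariant sine--cosine homotopy of \Cref{eqn: cliffords}. It gets the trapezoid by chasing classifying maps of projective modules through the induction--restriction adjunction, as in \Cref{cor: map induced by C in CL01}.

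You instead read the diagonal as the composite, so the triangle is tautological. You then use the formula $g\mapsto\res^{G_2}_G(g)\circ\eta$ for the adjunction bijection to reduce the whole statement to the single $A$-independent identity $(\ast)$ in $\KK^{G}_{\mathrm{ex},0}(\C,\C)$. You verify $(\ast)$ after restriction, where no $C_2$-equivariance survives. A non-equivariant path of rank-one projections in $M_2(\C)$ therefore replaces the graded homotopy of \Cref{eqn: cliffords}.

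This gives a cleaner argument that isolates exactly the universal input. It also covers the paper's reading of the diagonal: the same rank-one computation shows that the other zig-zag also corresponds to $\operatorname{id}_\C$ under the adjunction. By bijectivity it then equals $j\circ\theta$ in $\KK^{G_2}_{\mathrm{ex},0}(Cl^{0,1},\C)$.

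The sign bookkeeping you worry about is harmless. After $\res^{G_2}_G$ the sign representation becomes trivial, and the composite is a $*$-homomorphism followed by the inverse of a corner inclusion, so no $-\operatorname{id}$ can appear. The only thing to check is that $\frac{1-ie'}{2}$ has rank one. This holds because $ie'$ is an odd self-adjoint unitary in $M_2(\C)$ and hence traceless.

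Your assumption that $\C\to Cl^{1,1}$ is a corner inclusion is also forced. The unital inclusion has class $1+x$ in $R(C_2)$, which is not invertible. Finally, identifying $\res^{G_2}_G$ on $\pi_0$ with forgetting the $C_2$-action is the same input the paper uses in \Cref{prop: j compose iota A triv graded over R}.
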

\begin{proof}
    The commutativity of the upper right triangle follows from the commutativity up to homotopy of \Cref{eqn: cliffords}. The commutativity of the trapezoid follows from explicitly chasing the definitions, similar to the proof of \Cref{cor: map induced by C in CL01}.
\end{proof}
\begin{cor}\label{cor: endomorphisms in the complex graded case}
    Let $(A,\alpha)$ be a unital, complex $G \times C_2$-equivariant complex $C^*$-algebra.
\begin{enumerate}
    \item The composite $K_0^G(A_u) \xrightarrow{\iota^*} K_0^{G_2}(A) \xrightarrow{j^*} K_0^G(A_u)$ is given by
    \begin{align*}
        K_0^G(A_u) &\rightarrow K_0^G(A_u) \\
        [M] &\mapsto [M \oplus M_{\alpha}]
    \end{align*}
    \item The composite $K_0^{G_2}(A) \xrightarrow{j^*} K_0^G(A_u) \xrightarrow{\iota^*} K_0^{G_2}(A)$ is given by
    \begin{align*}
    K_0^{G_2}(A) &\rightarrow K_0^{G_2}(A) \\
    [M,\alpha_M] &\mapsto[M \oplus M_{\alpha},\operatorname{flip}] = [M \oplus M, (m,n) \mapsto (\alpha_M(n),\alpha_M(m))]
    \end{align*}
\end{enumerate}
\end{cor}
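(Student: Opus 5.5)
The plan is to argue exactly as in the proof of \Cref{cor: composition last ones}, with the complex-case identification of $j^*$ (the lemma immediately preceding the statement) in place of \Cref{prop: j compose iota A triv graded over R}. Under the identification $\KKRex(Cl^{1,0},A)\simeq\KKRex(Cl^{0,1},A)\simeq\K^G(A_u)$ used there, $j^*$ induces on $\pi_0$ the map $K_0^{G_2}(A)\to K_0^G(A_u)$ forgetting the $C_2$-action. On the other side, \Cref{cor: map induced by C in CL01} identifies $\iota^*$ on $\pi_0$ with $G_A$, namely $[M]\mapsto[M\oplus M_\alpha,\operatorname{flip}]$. The complex case only changes the Clifford identification. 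The induction–restriction identification of $\KKRex(Cl^{0,1},A)$ with $\K^G(A_u)$ is the same one used in \Cref{cor: map induced by C in CL01}, so the two descriptions can be composed directly.

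For Assertion 1, I compose $G_A$ with the forgetful map. The $G_2$-module $(M\oplus M_\alpha,\operatorname{flip})$ becomes the $G$-equivariant $A_u$-module $M\oplus M_\alpha$, which gives $[M]\mapsto[M\oplus M_\alpha]$. Unlike the inner graded case, I do not try to simplify this to $2[M]$, since $M_\alpha\not\cong M$ in general.

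For Assertion 2, I first forget the $C_2$-action on $(M,\alpha_M)$ and then apply $G_A$, obtaining $[M\oplus M_\alpha,\operatorname{flip}]$. For the second equality I reuse the isomorphism $M\to M_\alpha$, $m\mapsto\underline{\alpha_M(m)}$, from the proof of \Cref{cor: composition last ones}.2. This is $A_u$-linear because $\alpha_M(ma)=\alpha_M(m)\alpha(a)$, it is $G$-equivariant since the $G$- and $C_2$-actions commute, and as the footnote there notes, it does not use that $A$ is inner graded. Transporting the flip along $\operatorname{id}\oplus(m\mapsto\underline{\alpha_M(m)})$ gives $(m,n)\mapsto(\alpha_M(n),\alpha_M(m))$, where $\alpha_M^2=1$ is used.

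The main obstacle is confirming that the $\pi_0$-level identifications are compatible. Specifically, the isomorphism $K_0^G(A_u)\cong\pi_0\KKRex(Cl^{0,1},A)$ used in \Cref{cor: map induced by C in CL01} has to agree with the one in the preceding lemma, which passes through the homotopy-commutative triangle \eqref{eqn: cliffords}. I would check this by unwinding both against the induction–restriction adjunction unit, noting that the sine–cosine homotopy of \eqref{eqn: cliffords} does not change classes on $\pi_0$.
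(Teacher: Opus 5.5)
Your proposal is correct and is essentially the paper's own argument. The paper proves this by rerunning the proof of \Cref{cor: composition last ones}, with two changes:
\begin{itemize}
\item the complex-case lemma immediately preceding the statement replaces \Cref{prop: j compose iota A triv graded over R};
\item the same isomorphism $m\mapsto\underline{\alpha_M(m)}$ is used, and, as you note, it does not need inner gradedness.
\end{itemize}
The compatibility check you flag as the main obstacle is already contained in that preceding lemma. Its diagram uses the same induction--restriction identification $K_0^G(A_u)\cong\hoKKRex(Cl^{0,1},A)$ as \Cref{cor: map induced by C in CL01}, so no separate unwinding is required.
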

\begin{proof} 
    Exactly the same proof as \Cref{cor: composition last ones}.
\end{proof}
\begin{rem}
    We also mention another description of the map $K_0^{G_2}(A) \rightarrow K_0^{G_2}(A)$ as in \Cref{cor: endomorphisms in the complex graded case}.2. Using the isomorphism
    $
    Cl^{1,0} \cong Cl^{0,1}
    $, we can treat the composition 
    \[
    \hoKKRex(\C,A) \xrightarrow{j^*} \hoKKRex(Cl^{1,0},A) \simeq \hoKKRex(Cl^{0,1},A) \xrightarrow{\iota^*} \hoKKRex(\C,A)
    \]
    as the $\hoKKRex$ product by the image under $\res^{C_2}_{G_2}$ of a certain element in $\hoKKRexT(\C,\C)$. It turns out that this element is\footnote{we omit the details since it is not really needed anywhere} $(1,1)$, whence the induced map must be
    \begin{align*}
       K_0^{G_2}(A) &\rightarrow K_0^{G_2}(A)  \\
       [M,\alpha_M] &\mapsto [M \oplus M^{\operatorname{op}},\alpha_M \oplus (-\alpha_{M})]
    \end{align*}
    And indeed, we have an isomorphism of $G_2$-equivariant $A$-modules
    \begin{align*}
        M \oplus M_\alpha &\rightarrow M \oplus M^{\operatorname{op}}\\
        (m,\underline{n}) &\mapsto (\frac{m+\alpha_M(n)}{2},\frac{m-\alpha_M(n)}{2}).
    \end{align*}
    Thus, we actually have isomorphisms of $G_2$-equivariant $A$-modules
    \[
    (M \oplus M^{\operatorname{op}},\alpha_M \oplus (-\alpha_{M})) \cong (M \oplus M_{\alpha},\operatorname{flip})\cong (M \oplus M, (m,n) \mapsto (\alpha_M(n),\alpha_M(n))).
    \]
\end{rem}
\begin{rem}
    Note that when $A$ is a non-inner graded $C^*$-algebra, the fiber sequence in \Cref{eqn: dual fiber sequence} need not split. A concrete counterexample is $A=\mathbb{C}l^{1}$, since $K_0^{C_2}(\mathbb{C}l^{1}) \cong \Z$ (\Cref{prop: KOC2 CL01}) and $K_0(\mathbb{C}l^{3}_u)\cong K_0(\mathbb{C}l^{1}_u)\cong \Z \oplus \Z$.
\end{rem}
\section{Connections with classical bivariant $K$-theories}
We finally want to show that the homotopy categories of the full subcategories of the graded $\KKRGRA$ categories spanned by separable algebras can be identified with classical graded (Kasparov's) $KK^G$ (\cite{kasparov1981operator}, \cite{KASEQQUIVARIANT})-category or (Connes-Higson's) $E^G$-category (\cite{CH},\cite{HGT},\cite{HG}), according as $!=\operatorname{se}$ or $!=\operatorname{ex}$, respectively. We give a self contained treatment for the case of $E$-theory; see \Cref{last rem} for a discussion on the graded $KK$-theoretic analogue.

As mentioned in the introduction, the results of this section are \emph{not} relevant for our study of graded/equivariant/classical $K$-theory as in Section 5.
\subsection{Recapitulation of classical ungraded $KK^G$-theory}

We begin by recalling the classical definitions of the \emph{ungraded} (Real) $KK^G$-categories. Kasparov in his seminal work defined them in terms of now-called \emph{Kasparov modules}: given $G$-$C^*$-algebras $A$ and $B$, $\KKKASgen(A,B)$ was built out of certain equivalence classes of triples $(\mathcal{E},\varphi,F)$ comprising of a Hilbert $G$-$B$-module $\E$, a equivariant representation $\varphi$ of $A$ on the adjointable operators on $\E$, and an adjointable operator $F$ on $\E$ satisfying certain compactness and equivariance conditions\footnote{Kasparov worked with graded algebras and modules from the get-go.}. It then takes a truckload of work to prove that upon restricting to separable algebras there exists a composition law, now known as the Kasparov product
\[
\KKKAS(A,B) \times \KKKAS(B,C) \rightarrow \KKKAS(A,C)
\]
which is unital and associative, and one concludes that $\KKKAS$ is a category.

Since every equivariant $*$-homomorphism naturally gives a Kasparov module, we have a canonical functor
\[
\kkKAS: \RALGsep \rightarrow \KKKAS.
\]
The question whether the category $\KKKAS$ has a universal property was settled in \cite[Theorem 2.2]{THOMSEN},\cite[Theorem 6.6]{MEYER} (with non-equivariantly the issue being first tackled in \cite[Theorem 4.5]{HIGFIRST}). Note that Meyer also mentions  (as can be readily checked too) that the results also work in the Real case.
\begin{thm}{(Higson-Meyer-Thomsen)}\label{thm: HMT}
    The functor $\kkKAS: \RALGsep \rightarrow \KKKAS$ is initial among functors to an additive $1$-category which are reduced, homotopy invariant, $\kg$-stable and split exact.
\end{thm}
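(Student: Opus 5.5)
The plan is to follow the Cuntz--Higson strategy in its equivariant form due to Meyer, and to keep the Real structure as passive bookkeeping throughout. Fix an additive $1$-category $\mathcal{D}$ and a functor $F:\RALGsep\rightarrow\mathcal{D}$ that is reduced, homotopy invariant, $\kg$-stable and split exact. There are two things to prove. First, $F$ factors through $\kkKAS$. Second, the factorization is unique, which I will deduce from $\KKKAS$ being generated, as a category, by images of $*$-homomorphisms together with inverses of some of them.

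The first step is the Cuntz picture of $\KKKAS$. Set $\hat A:=A\otimes\kg$, and recall the square-zero extension $0\rightarrow q\hat A\rightarrow Q\hat A\rightarrow\hat A\rightarrow 0$ from \Cref{rem: qA properties}. I would prove, following \cite[Theorem 5.5]{MEYER}, that there is a natural bijection
\[
\KKKAS(A,B)\cong[q\hat A,\hat B\otimes\kg]_G .
\]
In one direction, a homomorphism $q\hat A\rightarrow\hat B\otimes\kg$ is sent to its class, composed with the Kasparov inverse of $\kkKAS(\pi_A)$, where $\pi_A:q\hat A\rightarrow\hat A$ is the first restriction map of \Cref{rem: qA properties}.2. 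In the other direction, a Kasparov cycle $(\E,\varphi,F)$ is turned into a prequasi-homomorphism by the usual trick: after Kasparov stabilization, take the pair $\varphi$ and $\mathrm{Ad}(U)\circ\varphi$, where $U$ is the unitary built from $F$ on $\E\oplus\E$. The Real structure causes no trouble, since every construction involved (Kasparov stabilization, the unitary $U$, the functional calculus) commutes with conjugation.

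Next I will check that $F$ inverts $\pi_A$ and the corner embeddings. Split exactness applied to the two split extensions of $Q\hat A$ gives $F(Q\hat A)\cong F(q\hat A)\oplus F(\hat A)$. Using the standard rotation homotopy on $Q\hat A\rightarrow M_2(Q\hat A)$, together with $\kg$-stability, one then shows that $F(\pi_A)$ is an isomorphism. Corner embeddings are inverted by $\kg$-stability. This lets me define
\[
\bar F(x):=F(\varphi_x)\circ F(\pi_A)^{-1}
\]
for any $x\in\KKKAS(A,B)$ represented by $\varphi_x:q\hat A\rightarrow\hat B\otimes\kg$, with the corner identifications inserted on both ends. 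The value is well defined by homotopy invariance, and additivity follows because block sum corresponds to addition in $\mathcal{D}$.

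The main obstacle is showing that $\bar F$ is compatible with the Kasparov product. My first attempt will be to avoid the analytic product entirely. By Cuntz's result $q$ is functorial, and there is a natural map $q\hat A\rightarrow q(q\hat A)$ which, after stabilization, is an inverse of $\pi_{qA}$ up to homotopy. With this, the product of classes $[\varphi]$ and $[\psi]$ is represented by the homomorphism composite
\[
q\hat A\rightarrow qq\hat A\xrightarrow{q\varphi}q(\hat B\otimes\kg)\xrightarrow{\psi}\hat C\otimes\kg .
\]
Here the equivariant input is Meyer's identification of the product with this composite. Granting that, compatibility of $\bar F$ with composition reduces to functoriality of $F$ plus naturality of the maps $\pi$. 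If this identification is hard to extract, the fallback is Thomsen's route \cite{THOMSEN}: show that any functor with the given properties is automatically homotopy invariant on quasi-homomorphisms, then use that the Kasparov product is characterized by its values on quasi-homomorphisms.

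For uniqueness, the formula $x=\kkKAS(\varphi_x)\circ\kkKAS(\pi_A)^{-1}$, up to corner embeddings, shows that any factorization of $F$ through $\kkKAS$ must agree with $\bar F$. This completes the plan.
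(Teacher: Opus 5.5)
Your outline is correct, and it is essentially the argument behind the paper's treatment. The paper does not reprove this theorem: it cites \cite[Theorem 2.2]{THOMSEN}, \cite[Theorem 6.6]{MEYER} and \cite[Theorem 4.5]{HIGFIRST}, and notes that the Real case goes through. Your route is exactly Meyer's derivation of his Theorem 6.6: the Cuntz picture $\KKKAS(A,B)\cong[q\hat A,\hat B\otimes\kg]_G$, invertibility of $F(\pi_A)$ from split exactness and $\kg$-stability, and compatibility with the Kasparov product via $q\hat A\to qq\hat A$. As in the paper, the load-bearing inputs are taken from \cite{MEYER} rather than proved. These are the equivariant Cuntz picture, including the handling of equivariance where the $\kg$-stabilization of $A$ enters, and the identification of the Kasparov product with the Cuntz composite.
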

\begin{rem}\label{rem: KKAS properties}
    The following remarks about Kasparov's $KK^G$-theory are in order
    \begin{enumerate}
        \item It is a deep result of Kasparov (\cite[Theorem 2.9]{KASEQQUIVARIANT}) that $\KKKAS$ has countable coproducts, and the functor $\kkKAS$ takes countable direct sums in $\RALGsep$ to countable coproducts in $\KKKAS$. An alternative proof using the Cuntz-Meyer picture of $\KKKAS$ can be given following \cite[Corollary 11.16]{KKandE}. 
        \item One can view the additive $1$-category $\KKKAS$ as a half exact additive category by taking the maximal marking and taking the distinguished half exact sequences to be all short exact sequences in $\RALGsep$ admitting a equivariant cpc-split; the verification that this indeed is a half exact structure follows from \cite[Section 7]{BS} (see also \cite[Theorem 3.28]{ECH} and the discussion above it).
        \item $\KKKAS$ is a symmetric monoidal category and the functor $\kkKAS: \RALGsep \rightarrow \KKKAS$ has a symmetric monoidal refinement for both the minimal and the maximal tensor products on $\RALGsep$; this is explicitly stated in \cite[Theorem 4.a]{CuntzBiv}
    \end{enumerate}
\end{rem}
Using \Cref{thm: HMT}, we can prove the comparison result between $\KKKAS$ and $\hoKKse$.
\begin{thm}\label{thm: hoKK inf is KKKas}
    We have a natural equivalence of additive, symmetric monoidal (for both symmetric monoidal refinements) categories
    \[
    \hoKKse \simeq \KKKAS.
    \]
\end{thm}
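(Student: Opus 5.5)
We construct comparison functors in both directions and show that they are mutually inverse, using the two universal properties available: \Cref{thm: HMT} for $\KKKAS$, and \Cref{prop: uni prop hoKK one categorical} for $\hoKKse$, which is the homotopy category of the Dwyer--Kan localization $\kkse^{\sep}:\RALGsep\to\KKse^{\sep}$. Write $h\kk$ for the composite of $\kkse^{\sep}$ with the projection to the homotopy category.

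\textbf{Step 1: a functor $\Phi:\KKKAS\to\hoKKse$.} The category $\hoKKse$ is the homotopy category of a stable $\infty$-category (\Cref{prop: imp}.2), so it is additive (even triangulated). The functor $h\kk$ is reduced, homotopy invariant and $\kg$-stable by construction. It is split exact because it is semi-split exact: a split exact sequence is semi-split, maps to a fiber sequence in $\KKse^{\sep}$, and the splitting turns this fiber sequence into a direct sum. \Cref{thm: HMT} then gives a unique additive functor $\Phi$ with $\Phi\circ\kkKAS\cong h\kk$.

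\textbf{Step 2: a functor $\Psi:\hoKKse\to\KKKAS$.} We apply \Cref{prop: uni prop hoKK one categorical} with $!=\operatorname{se}$ and $F=\kkKAS$, viewing $\KKKAS$ as a half exact additive category as in \Cref{rem: KKAS properties}.2, with countable coproducts as in \Cref{rem: KKAS properties}.1. We must check the following hypotheses.
\begin{itemize}
\item Homotopy invariance and $\kg$-stability are classical.
\item Half exactness for semi-split sequences is Kasparov's six-term exact sequence, equivalently the half exact structure recalled in \Cref{rem: KKAS properties}.2.
\item Countable direct sums go to coproducts by \Cref{rem: KKAS properties}.1.
\item Suspension stability follows from Bott periodicity in $\KKKAS$: $\Omega^2\cong\operatorname{id}$ in the complex case, and in the Real case $\Omega$ composed with $C_0(\R^{1,0})\otimes-$ is an auto-equivalence. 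Hence $\Omega$ is an equivalence of categories, so it reflects isomorphisms.
\end{itemize}
So $\kkKAS$ inverts $W_{\KKse^{\sep}}$. Since $\kkse^{\sep}$ is a Dwyer--Kan localization, its homotopy category is the $1$-categorical localization at these morphisms, and we obtain $\Psi$ with $\Psi\circ h\kk\cong\kkKAS$.

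\textbf{Step 3: $\Phi$ and $\Psi$ are mutually inverse.} On one side, $\Psi\Phi\kkKAS\cong\kkKAS$. Since $\Psi\Phi$ is additive, the uniqueness in \Cref{thm: HMT} gives $\Psi\Phi\cong\operatorname{id}$. Additivity of $\Psi$ is automatic, because a functor between additive categories that preserves finite (co)products is additive; the semi-additive structures on both sides are induced from finite direct sums of algebras. On the other side, $\Phi\Psi h\kk\cong h\kk$. Since $h\kk$ is a $1$-categorical localization, precomposition with it is fully faithful on functor categories, so $\Phi\Psi\cong\operatorname{id}$.

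\textbf{Step 4: symmetric monoidality.} For monoidality I would argue that $\Phi$ is symmetric monoidal by uniqueness. Both $\Phi(-\otimes-)$ and $\Phi(-)\otimes\Phi(-)$ are functors out of $\KKKAS\times\KKKAS$ that are additive, homotopy invariant, stable and split exact in each variable, and they agree after precomposition with $\kkKAS\times\kkKAS$, using \Cref{rem: KKAS properties}.3 together with the symmetric monoidality of $\kkse^{\sep}$ from \Cref{prop: imp}.7. Applying \Cref{thm: HMT} one variable at a time gives the comparison isomorphisms. The coherences (associativity, symmetry, unit) also follow from uniqueness, since every structure map is determined on the image of $\kkKAS$, which is identity on objects.

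\textbf{Main obstacle.} I expect the hardest part to be the verification of suspension stability together with half exactness in Step 2, in particular the Real Bott periodicity in Kasparov theory. If a direct argument is awkward, I would instead cite Kasparov's Real periodicity $KK(A,B)\cong KK(A\otimes C_0(\R^{1,1}),B)$, which makes $C_0(\R^{1,1})\otimes-$ invertible. Since $C_0(\R^{1,1})\otimes-$ is $\Omega\circ(C_0(\R^{1,0})\otimes-)$, this makes $\Omega$ admit a one-sided inverse, and a symmetric argument gives the other side.
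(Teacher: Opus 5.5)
Your proposal is correct and follows essentially the same route as the paper: you build $\Phi$ from \Cref{thm: HMT} and $\Psi$ from \Cref{prop: uni prop hoKK one categorical}, then conclude by uniqueness of factorizations through the universal functors. The differences are minor. The paper obtains symmetric monoidality of $\Psi\colon\hoKKse\to\KKKAS$ from the proof of \Cref{prop: uni prop hoKK one categorical} together with Hinich's localization result, rather than by your one-variable-at-a-time uniqueness argument for $\Phi$. You also spell out the suspension-stability hypothesis via Real $(1,1)$-periodicity, which the paper leaves implicit in its appeal to \Cref{rem: KKAS properties}.
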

\begin{proof}
        By construction, the canonical functor 
    \[
    \hokkse: \RALGsep \rightarrow \hoKKse
    \]
    is reduced, homotopy invariant, $\kg$-stable, and additive. Thus, \Cref{thm: HMT} says that there exists a unique additive functor $\mathcal{F}: \KKKAS \rightarrow \hoKKse$ such that $\mathcal{F} \circ \kkKAS = \hokkse$.

On the other hand, the canonical functor \[
\kkKAS: \RALGsep \rightarrow \KKKAS
\]
is clearly reduced, homotopy invariant and $\kg$-stable, as can be read off from \Cref{thm: HMT}. That it also satisfies the other hypothesis asked for in \Cref{prop: uni prop hoKK one categorical} is the content of \Cref{rem: KKAS properties}. We thus obtain a unique functor $\mathcal{G}: \hoKKse \rightarrow \KKKAS$ such that $\mathcal{G} \circ \hokkse = \KKKAS$. $\mathcal{G}$ is furthermore symmetric monoidal, as follows from the proof of \Cref{prop: uni prop hoKK one categorical} and \cite[Proposition 3.3.2]{HIN}. Our thesis now follows from a standard yoga with universal functors.
\end{proof}
\begin{rem}
    We implicitly used in the above proof the following basic facts: If $\mathcal{C},\mathcal{D}$ are additive (resp. symmetric monoidal) categories, and $F: \mathcal{C} \rightarrow \mathcal{D}$ is an additive (resp. symmetric monoidal) functor which is an equivalence of the underlying categories, then the inverse is automatically additive (resp. symmetric monoidal).
\end{rem}
We finally compare $\hoKKsegen$ with the classical non-separable extension of $\KKKAS$, as described in \cite[Definition 5.5]{KKSKANDALIS} and \cite[Section 2.1]{AZZA}.
\begin{prop}\label{cor: KK large same}
    For $A \in \RALGsep$ and $B \in \RALG$, we have a natural isomorphism of abelian groups
    \[
    \KKKASgen(A,B) \cong \hoKKsegen(A,B).
    \]
\end{prop}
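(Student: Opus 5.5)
The plan is to reduce both sides to colimits over separable subalgebras of $B$ and then invoke \Cref{thm: hoKK inf is KKKas}. Recall that the classical non-separable extension (\cite[Definition 5.5]{KKSKANDALIS}, \cite[Section 2.1]{AZZA}) is defined, or at least can be shown to satisfy,
\[
\KKKASgen(A,B) \cong \operatorname{colim}_{B' \subseteq_{\operatorname{sep}} B} \KKKAS(A,B'),
\]
where $B'$ runs through the $G$-invariant separable $C^*$-subalgebras of $B$. This is a filtered colimit of abelian groups. If Skandalis' definition is instead given directly by Kasparov modules over non-separable $B$, the first step is to verify this formula. A Kasparov $(A,B)$-module with $A$ separable has its relevant data (the image of a countable dense subset of $A$, the operator $F$, and the finitely many compactness witnesses) generated by countably many elements. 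One can therefore cut it down to a module over a separable $G$-invariant subalgebra $B'$. The same argument applied to homotopies over $C([0,1],B)$ shows that the resulting map from the colimit is injective.

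For the right-hand side, I use that $\hoKKsegen(A,B)=\pi_0\operatorname{map}_{\KKse}(\kkse(A),\kkse(B))$. By \Cref{prop: ind omnibus}.2, $\kkse$ is s-finitary, so $\kkse(B)\simeq \operatorname{colim}_{B'\subseteq_{\operatorname{sep}}B}\kkse(B')$. The index poset of separable $G$-invariant subalgebras is $\aleph_1$-filtered, since countable families of separable subalgebras generate a separable subalgebra. Moreover, by \Cref{prop: ind omnibus}.1, $\kkse(A)$ lies in $\KK^{G,\sep}_{\se,\infty}$, i.e.\ it is $\aleph_1$-compact in $\operatorname{Ind}_{\aleph_1}$. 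Hence
\[
\operatorname{map}_{\KKse}(A,B)\simeq \operatorname{colim}_{B'}\operatorname{map}_{\KK^{G,\sep}_{\se,\infty}}(A,B').
\]
Taking $\pi_0$, which commutes with filtered colimits of spectra, gives $\hoKKsegen(A,B)\cong\operatorname{colim}_{B'}\hoKKse(A,B')$.

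Finally, \Cref{thm: hoKK inf is KKKas} provides natural isomorphisms $\hoKKse(A,B')\cong\KKKAS(A,B')$, compatible with the inclusions $B'\subseteq B''$ because the equivalence is a functor under $\RALGsep$. Passing to colimits yields the claim. Naturality in $A$ and in $B$ follows from naturality of each step: a morphism $B\to C$ maps separable subalgebras into separable subalgebras, so it induces a map of colimit diagrams.

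I expect the main obstacle to be the first step, namely matching Skandalis' definition with the colimit over separable subalgebras. For this I would cite \cite[Section 2.1]{AZZA}, where precisely this continuity property is established.
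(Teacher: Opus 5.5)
Your proposal is correct and is essentially the paper's argument. You use the s-finitarity of $\kkse$ and the $\aleph_1$-compactness of separable objects from \Cref{prop: ind omnibus} to write the mapping spectrum as a filtered colimit over separable $G$-invariant subalgebras $B'\subseteq B$, pass to $\pi_0$, and combine \Cref{thm: hoKK inf is KKKas} with the classical continuity $\operatorname{colim}_{B'}\KKKASgen(A,B')\cong\KKKASgen(A,B)$ from \cite[Lemma 2.1]{AZZA}; the only difference is that you spell out the $\aleph_1$-filteredness and compactness step that the paper leaves implicit.
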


\begin{proof}
    In light of \Cref{prop: ind omnibus}, we have for $A$ and $B$ as in the statement of this proposition, an equivalence (with notation as in \Cref{defi: s finitary}) of spectra
    \begin{align*}
        \operatorname{colim}_{B' \subseteq_{\operatorname{sep}} B}
        \KKse\bigl(\kkse(A),\kkse(B')\bigr)
        \xrightarrow{\simeq}
        \KKse\bigl(\kkse(A),\kkse(B)\bigr)
    \end{align*}
    Since homotopy groups of spectra commute with filtered colimits, our thesis follows from the classical result that
    \[
    \operatorname{colim}_{B' \subseteq_{\operatorname{sep}} B}
    \KKKASgen(A,B')
    \xrightarrow{\cong}
    \KKKASgen(A,B),
    \]
    see for example \cite[Lemma 2.1]{AZZA}.
\end{proof}
\begin{rem}\label{rem: KK SKA}
    When the $C^*$-algebra $A$ in the first variable is not separable, $\hoKKsegen(A,-)$ need not coincide with $\KKKASgen(A,-)$ a'la Skandalis; it boils down to the issue that homotopy groups of homotopy limits need not be limits of homotopy groups.
\end{rem}
\subsection{Recapitulation of classical ungraded $E^G$-theory (with some details) }
Just as in the last subsection, we begin by recalling the classical definitions of the \textit{ungraded} (Real) $E^G$-categories. 

Classically, ungraded $E^G$-theory was defined in terms of \textit{asymptotic morphisms} by Connes-Higson in \cite{CH}. The foundational works on (equivariant) $E$-theory (\cite{CH}, \cite{HGT}, \cite{HIGFRAC}) did not discuss the Real structure, and the author (of this article) is only aware of Boersema's work \cite{BOERSEMA} on the same \footnote{which was pointed out to him by Yuezhao Li}. However, loc.\! cit.\! does not discuss equivariant matters. Note that although \cite{BOERSEMA} is written with real $C^*$-algebras, the modifications to the Real case are obvious.

In the remainder of this subsection, we will therefore be making a comparison of $\hoKKex$ with a equivariant version of Boersema's (Real) $E$-category. The complex situation can be dealt similarly, and with lesser difficulties. Note that we omit a discussion of non-separable $E$-theory (the analogues of \Cref{cor: KK large same} and \Cref{rem: KK SKA} hold in this context too). Therefore, we use the notation $\EBGHT$ to always denote the \emph{separable} theory.

We begin with the key definitions.
\begin{defi}\label{defi: e class preparation}
    \hfill
    \begin{enumerate}
        \item 	Let $A,B \in \RALGsep$. An asymptotic morphism
from $A$ to $B$ is a one-parameter family of functions $\varphi_t : A \to B$ with $t \in
[1, \infty)$, satisfying the continuity condition that for all $a \in
A$, the map
\[
t \longmapsto \varphi_t(a) : [1, \infty) \to B
\]
 is bounded and
continuous, and the asymptotic conditions that for all $a, a_1, a_2 \in A$ and
$\lambda \in \mathbb{C}$ 
\[
\begin{rcases}
	\varphi_t(a_1a_2) -  \varphi_t(a_1)\varphi_t(a_2)\\
	\varphi_t(a_1+a_2)- \varphi_t(a_1) - \varphi_t(a_2)\\
	\varphi_t(\lambda a ) - \lambda \varphi_t(a)\\
	\varphi_t(a^{*}) - \varphi_t(a)^{*}\\
    \varphi_t(g.a)-g.\varphi_t(a)
\end{rcases}
\to 0, \qquad \text{as }t \to \infty
\]
When $A$ and $B$ are Real, we shall require that in addition
\[
\tau(\varphi_t(a)) - \varphi_t(\tau(a)) \to 0 \qquad \text{as }t
\to \infty.
\] 
We shall denote an
asymptotic morphism $\varphi : A
\dashrightarrow B$ with  a dashed arrow.
\item There is a natural notion of homotopy between asymptotic morphisms which is an equivalence relation (see \cite[Chapter 2]{HGT}), and we denote by $[[A,B]]_G$ the $G$-equivariant asymptotic homotopy classes of maps\footnote{as in the ordinary homotopy case, if the external group $G$ is clear from the context, we omit it from the notation.} from $A$ to $B$. This forms a category $\ASYG$, whose objects are separable $G$-$C^*$-algebras, and morphisms are $G$-equivariant asymptotic homotopy classes of maps.
\item Just as the passage from $\RALGseph$ to $\SKGRALGseph$ (see \Cref{prop: SKGRALKGseph omnibus}.1), one can form the category $\SKGASYG$ via left and right Bousfield localizations of $\ASYG$, whose hom monoids would be $\kg$-stabilized asymptotic homotopy classes of maps.

Note that $\ASYG$ and $\SKGASYG$ are $1$-categories; an $\infty$-categorical refinement of these categories has been studied in \cite{EisCASS} when $G$ is a discrete countable group. The methods of loc.\! cit.\! would not generalize to general locally compact second countable groups. We anyway work only with the $1$-categorical variants in this section.
\item The classical $E^G$-groups are given by
\[
\EBGHT(A,B):=\operatorname{hom}_{\SKGASYG}(\Omega A,\Omega B)=[[\Omega A \otimes \kg,\Omega B \otimes \kg]],
\]
and yield a symmetric monoidal additive category structure on $\RALGsep$ with respect to the maximal tensor product on $\RALGsep$. That it indeed becomes a symmetric monoidal additive category can be seen by combining the approaches and relevant results of \cite{HGT} and \cite{BOERSEMA}. There is an obvious functor
\[
\eBGHT: \RALGsep \rightarrow \EBGHT.
\]
\item One can view the additive $1$-category $\EBGHT$ as a half exact additive category by taking the maximal marking and taking the distinguished half exact sequences to be all short exact sequences in $\RALGsep$; see \Cref{lem: folklore lem}.3 for details. \emph{In the remainder of this subsection, we by default work with this half exact structure.} 
    \end{enumerate}
\end{defi}
For later use, we mention a few technical results about asymptotic morphisms now. In the non-equivariant case, some of these results are also discussed in the PhD thesis of Andreas Thom (\cite[Section 2.2]{ThomPhD}) with a little more elaborations.
\begin{lem}\label{lem: technical asy}
    Let $A,B \in \RALGsep$. Denote by $\widetilde{[[A,B]]}$ the set of homotopy classes of asymptotic morphisms $\varphi: A \dashrightarrow B$ which have the property that $\varphi_1(a)=0 ~\forall~ a \in A$. Homotopies are also required to satisfy this condition. Then the natural map
    \[
    \Phi_{A,B}:\widetilde{[[A,B]]} \rightarrow [[A,B]]
    \]
    is an isomorphism.
\end{lem}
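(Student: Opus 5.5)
The plan is to construct an explicit inverse $\Psi_{A,B}:[[A,B]]\to\widetilde{[[A,B]]}$ by reparametrising time so that the asymptotic morphism starts at zero. The key observation is that asymptotic homotopy classes only depend on the behaviour as $t\to\infty$, so we may freely modify an asymptotic morphism on a compact initial segment of $[1,\infty)$ without changing its class.

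Concretely, given an asymptotic morphism $\varphi:A\dashrightarrow B$, I would define
\[
\tilde\varphi_t(a):=\begin{cases} (t-1)\,\varphi_2(a) & 1\le t\le 2,\\ \varphi_t(a) & t\ge 2.\end{cases}
\]
This family is pointwise bounded and continuous in $t$. It satisfies all the asymptotic conditions: $G$-equivariance, Real compatibility and the algebraic relations only need to hold as $t\to\infty$, where $\tilde\varphi_t$ agrees with $\varphi_t$. Moreover $\tilde\varphi_1=0$.

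To see that $\tilde\varphi$ and $\varphi$ are asymptotically homotopic, I would take the linear path $s\tilde\varphi_t+(1-s)\varphi_t$, $s\in[0,1]$, regarded as an asymptotic morphism into $C([0,1],B)$. For $t\ge 2$ it is constant in $s$, so the asymptotic conditions hold, and uniform continuity in $(s,t)$ is clear. Hence $\Phi_{A,B}\circ\Psi_{A,B}=\mathrm{id}$, which is surjectivity. The same construction, applied to a homotopy $H:A\dashrightarrow C([0,1],B)$, sends homotopies to homotopies vanishing at $t=1$, so $\Psi$ is well defined on classes.

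For injectivity, I need $\Psi\circ\Phi=\mathrm{id}$: if $\varphi$ already vanishes at $t=1$, then $\tilde\varphi$ and $\varphi$ should be homotopic through asymptotic morphisms vanishing at $t=1$. The linear homotopy above does this, since at $t=1$ it equals $s\cdot 0+(1-s)\cdot 0=0$. Alternatively, one can show injectivity directly: given $\varphi,\psi$ with $\varphi_1=\psi_1=0$ and an unrestricted homotopy $H$ between them, one applies $\Psi$ to $H$ to get $\tilde H$ vanishing at $t=1$. Then $\tilde H$ connects $\tilde\varphi$ to $\tilde\psi$, and these in turn are connected to $\varphi$ and $\psi$ by linear homotopies that vanish at $t=1$.

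The main obstacle is the bookkeeping around the precise definition of homotopy of asymptotic morphisms in \cite[Chapter 2]{HGT}. There, a homotopy is an asymptotic morphism into $C([0,1],B)$, and one needs the continuity and boundedness of $(s,t)\mapsto H_{s,t}(a)$, uniformly in $s$ on compact $t$-sets. I would verify that the piecewise-linear modification preserves the required uniform continuity; this should be routine because it only involves the single element $\varphi_2(a)$, or $H_2(a)$, scaled continuously.
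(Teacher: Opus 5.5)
Your proof is correct and takes essentially the paper's route. The paper multiplies by a fixed cutoff $f$ with $f(1)=0$ and $f\equiv 1$ on $[2,\infty)$, whereas you interpolate linearly to $\varphi_2$ on $[1,2]$; in both cases the change is confined to a compact initial segment of $[1,\infty)$. Your straight-line homotopies (which vanish at $t=1$ whenever both endpoints do) spell out the ``routine check'' that the paper leaves to the reader.
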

\begin{proof}
    We just write a map in the other direction; that it is well-defined and inverse to $\Phi_{A,B}$ is a routine check.

    Fix a function $f:[1,\infty) \rightarrow [0,1]$ with the property that $f(1)=0, f(t)=1~\forall~t \geq 2$. Then the pointwise multiplication of $f$ with an element of $[[A,B]]$ does the job.
\end{proof}
The above lemma has the following interesting consequence. For $B \in \RALGsep$, denote by $C_{0b}([1,\infty),B)$ the (non-separable) $G$-$C^*$-algebra of $G$-continuous, continuous, bounded functions $f:[1,\infty) \rightarrow B$ which further satisfy $f(1)=0$. \Cref{lem: technical asy} shows that the datum of an (equivalence class) of an asymptotic morphism $A \dashrightarrow B$ is the same datum as a $G$-equivariant homomorphism $A \rightarrow \frac{C_{0b}([1,\infty),B)}{C_0((1,\infty),B)}$. Thus, we can naturally associate to an asymptotic morphism $\varphi: A \dashrightarrow B$ the extension given in the top row of the diagram
\begin{equation}\label{eqn: E phi of thomsen}
    \begin{tikzcd}
	0 & {C_0((1,\infty),B)} & {P_{\varphi}} & A & 0 \\
	0 & {C_0((1,\infty),B)} & {C_{0b}([1,\infty),B)} & {\frac{C_{0b}([1,\infty),B)}{C_0((1,\infty),B)}} & 0
	\arrow[from=1-1, to=1-2]
	\arrow[from=1-2, to=1-3]
	\arrow["{=}", from=1-2, to=2-2]
	\arrow[from=1-3, to=1-4]
	\arrow[from=1-3, to=2-3]
	\arrow[from=1-4, to=1-5]
	\arrow["\varphi", from=1-4, to=2-4]
	\arrow[from=2-1, to=2-2]
	\arrow[from=2-2, to=2-3]
	\arrow[from=2-3, to=2-4]
	\arrow[from=2-4, to=2-5]
\end{tikzcd}
\end{equation}
where the right square is a pullback. Note that $P_\varphi \in \RALGsep$ courtesy both $C_0((1,\infty),B)$ and $A$ being separable.
\begin{lem}\label{lem: HLT lemma}
    In the notation of the above discussion, let $\psi: \Omega A \dashrightarrow \Omega B$ be the asymptotic morphism obtained by applying the equivariant (Real) Connes-Higson construction (\cite[Proposition 5.5]{HGT}, \cite[Proposition 4.3]{BOERSEMA}) to the top extension in \Cref{eqn: E phi of thomsen}. Then $[\psi]=-[\Omega \varphi]$ in $[[\Omega A,\Omega B]]$.
\end{lem}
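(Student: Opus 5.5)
We will compute $\psi$ explicitly from the Connes--Higson construction and then exhibit an asymptotic homotopy from $\psi$ to $\Omega\varphi$ composed with an orientation reversal of the suspension coordinate. Recall the construction: for an extension $0\to J\to E\to A\to 0$ with $J$ separable, choose a $G$-invariant, Real quasicentral approximate unit $(u_t)_{t\in[1,\infty)}$ of $J$ for $E$, and a Real set-theoretic section $s$. Then
\[
\psi_t(f\otimes a)=f(u_t)\,s(a),\qquad f\in C_0(0,1),
\]
is an asymptotic morphism $\Omega A\dashrightarrow J$, well defined up to homotopy. Here $J=C_0((1,\infty),B)$ and $E=P_\varphi$, so we must then identify $J$ with $\Omega B$. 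We write elements of $P_\varphi$ as pairs $(a,F)$ with $F\in C_{0b}([1,\infty),B)$ a lift of $\varphi(a)$. By \Cref{lem: technical asy} we may take $\varphi_1=0$, and then a section is $s(a)=(a,t\mapsto\varphi_t(a))$.

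The key step is to choose a convenient approximate unit. We take $u_t\in C_b((1,\infty))\subseteq M(J)$ to be the multiplication operator by a function $h_t:(1,\infty)\to[0,1]$. This function should be $0$ near $1$, rise to $1$ on a window $[1,t]$, and stay equal to $1$ beyond $t$; to get an element of $J$ rather than only a multiplier, we let it decay back to $0$ on $[t^2,\infty)$ in a slowly varying way. Such $u_t$ are scalar-valued functions of the parameter, so they commute exactly with the $B$-valued functions, and quasicentrality is automatic. They are also $G$-invariant and Real. Then $\psi_t(f\otimes a)$ is the function $r\mapsto f(h_t(r))\varphi_r(a)$. On the increasing window $r\in[1,t]$ we have $\varphi_r(a)$ as $r$ ranges there, and on the decreasing part $\varphi_r(a)$ is asymptotically close to $\varphi_t(a)$. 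The latter needs a reparametrisation argument: we choose the decay region so that $\varphi$ is nearly constant on it, which is possible because asymptotic morphisms can be reparametrised to be slowly varying (equicontinuity in $t$ on compacts).

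Next we identify the two sides. Under $C_0((1,\infty),B)\cong C_0(0,1)\otimes B$ via an orientation-preserving homeomorphism $(1,\infty)\cong(0,1)$, the increasing part of $h_t$ contributes nothing essential, since $f$ composed with it paired with $\varphi_r$ for varying $r$ can be homotoped away by scaling the window to be short relative to $t$. What remains is the decreasing bump, on which $\psi_t(f\otimes a)$ is approximately $f(\text{decreasing coordinate})\otimes\varphi_t(a)$. This is exactly $\Omega\varphi_t$ precomposed with the flip $x\mapsto 1-x$ of $C_0(0,1)$. The flip acts by $-1$ on the group $[[\Omega A,\Omega B]]$, which follows from the standard fact that orientation reversal of a suspension is the inverse for the $H$-cogroup structure. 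This gives $[\psi]=-[\Omega\varphi]$.

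The main obstacle is the bookkeeping in the second step: showing that the contribution of the rising part of $h_t$ is null-homotopic and that the resulting family depends only on $\varphi_t$ up to asymptotically negligible error, uniformly on compact subsets of $\Omega A$. Failing that, we would instead pick $h_t$ with only a rising edge and argue via the Real multiplier $C_b$. Finally, we must confirm that every homotopy stays $G$-equivariant and Real, which should be automatic since the $h_t$ are scalar and real-valued.
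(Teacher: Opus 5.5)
Your route differs from the paper's: the paper does no computation and simply cites Larsen--Thomsen (Lemma 1.3) and Thomsen (Lemma 3.4), noting the sign error in Guentner--Higson--Trout. A direct computation with a scalar quasicentral approximate unit $h_t\in C_0((1,\infty))\subseteq M(J)$ is a sensible way to make the sign visible. Quasicentrality is automatic, and $C_0((1,\infty))\cdot P_\varphi\subseteq J$ gives $f(h_t)s(a)\in J$ and a straight-line homotopy to a genuine approximate unit in $J$.

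However, the step that actually decides the sign, namely the rising edge of $h_t$, is mishandled. First, an $h_t$ rising over $[1,t]$ is not an approximate unit for $J=C_0((1,\infty),B)$. Requiring $h_tj\to j$ for all $j$ forces $h_t\to1$ uniformly on compact subsets of $(1,\infty)$, so the rising edge must sit in $(1,1+\epsilon_t]$ with $\epsilon_t\to0$. More seriously, your reason for discarding the rising edge is wrong. Suppose it occupies $[1,w_t]$ with $w_t\to\infty$, however small $w_t/t$ is, and write $r=r_t(x)$ for the inverse of $h_t$ there. Its contribution is then $x\mapsto f(x)\varphi_{r_t(x)}(a)$. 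Sliding the parameter up to $r_t(1)=w_t$ shows this is homotopic to $+\Omega\varphi$, so it would cancel the falling edge and give $0$. A long window also cannot be homotoped into a short one, since the two give different classes. For windows of bounded, non-shrinking length the formula is not even asymptotically multiplicative, because $\varphi_r$ is not for bounded $r$.

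The missing observation is that, once the rising edge is confined to $(1,1+\epsilon_t]$, its contribution is asymptotically zero, not merely null-homotopic:
$\|f(h_t(r))\varphi_r(a)\|\le\|f\|_\infty\sup_{1\le r\le 1+\epsilon_t}\|\varphi_r(a)\|\to0$, because $\varphi_1=0$. This, and not only the existence of the section $s$, is where the normalization of \Cref{lem: technical asy} (working in $C_{0b}([1,\infty),B)$) enters.

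For the same reason your fallback of an $h_t$ with only a rising edge fails. Then $h_t\notin C_0((1,\infty))$ and $h_t\cdot P_\varphi\not\subseteq J$, and the formula gives $0$ instead of $-[\Omega\varphi]$. The whole class is carried by the falling edge.

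For the falling edge, take a short window $[T_t,T_t+\delta_t]$ with $T_t\to\infty$. Choose $\delta_t$ (by separability and a diagonal argument) so that $\sup_{r\in[T_t,T_t+\delta_t]}\|\varphi_r(a)-\varphi_{T_t}(a)\|\to0$ on a dense set. A slowly decaying $h_t$ is the wrong way round unless you first reparametrize $\varphi$. Then $\psi_t$ is asymptotic to $\Omega\varphi_{T_t}$ pulled back along a degree $-1$ self-map of the suspension coordinate, which gives $[\psi]=-[\Omega\varphi]$. This holds for an orientation-preserving identification $(1,\infty)\cong(0,1)$. You should state explicitly that this is the identification $J\cong\Omega B$ used in the lemma, since the sign depends on it.
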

\begin{proof}
    This is the content of \cite[Lemma 1.3]{ThomsenLarsen}, see also \cite[Lemma 3.4]{THOASY}. Note that there is a sign error in the analogous assertion \cite[Proposition 5.12]{HGT}.
\end{proof}
Our first important result of this subsection is to prove the following theorem, which is folklore even in the complex case: to the best of the author's knowledge, the classical literature on $E$-theory just \emph{state} the result without a proof; see also \cite[Remark 3.33]{EisCASS}.

\begin{thm}{(folklore)}\label{thm: folklore}
    The functor $\eBGHT: \RALGsep \rightarrow \EBGHT$ is initial among functors to an additive $1$-category which are reduced, homotopy invariant, $\kg$-stable and half exact.
\end{thm}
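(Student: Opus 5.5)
We argue as in Thomsen's proof of the universal property of ungraded $E$-theory \cite{THOASY}, adding the $G$-action and the Real structure. Fix a functor $F:\RALGsep\to\mathcal D$ into an additive $1$-category which is reduced, homotopy invariant, $\kg$-stable and half exact. There are three steps: \emph{(a)} derive the formal consequences of these axioms, \emph{(b)} extend $F$ to asymptotic morphisms, \emph{(c)} check functoriality and uniqueness. That $\eBGHT$ itself has the four properties is taken as known from \cite{HGT}, \cite{BOERSEMA}; the half exactness of $\EBGHT$ is part of \Cref{lem: folklore lem}.3.

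For \emph{(a)}, the standard Puppe argument (cf.\ \cite[Theorem 21.4.4]{BLAK}) shows that homotopy invariance and half exactness give natural long exact sequences with boundary maps $\partial_E:F(\Omega C)\to F(J)$ for every extension $E$. It also shows that $F$ is split exact and inverts the comparison maps $\iota_f$ of \Cref{eqn: excision diagram}. Next we apply $F$ to the Real Toeplitz extension of \Cref{cor: real red toep extn} tensored with $A$. Since $F(A\otimes\mathcal T^R_0)=0$ and $F$ is $\kg$-stable, we get a natural isomorphism
\[
F\bigl(C_0(\R^{1,1})\otimes A\bigr)\cong F(A).
\]
Consequently $F(\Omega f)$ is an isomorphism if and only if $F(f)$ is. This replaces the complex identification $\Omega^2\simeq\operatorname{id}$, which is unavailable in the Real case.

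For \emph{(b)}, let $\varphi:\Omega A\otimes\kg\dashrightarrow\Omega B\otimes\kg$ represent a class in $\EBGHT(A,B)$. By \Cref{lem: technical asy} we may assume $\varphi_1=0$, so $\varphi$ yields the separable extension $P_\varphi$ of \Cref{eqn: E phi of thomsen}. Its boundary map is
\[
\partial_{P_\varphi}:F(\Omega^2 A\otimes\kg)\to F\bigl(C_0((1,\infty),\Omega B\otimes\kg)\bigr)\cong F(\Omega^2B\otimes\kg).
\]
We define $\bar F([\varphi])$ as the composite of $-\partial_{P_\varphi}$ with the stability isomorphisms and the identifications $F(\Omega X)\cong F(C_0(\R^{1,0})\otimes X)$, which are invertible up to the Bott isomorphism above. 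The sign is dictated by \Cref{lem: HLT lemma}. Homotopy invariance of $F$, applied to $P_\Phi$ for a homotopy $\Phi$ of asymptotic morphisms, shows that $\bar F([\varphi])$ depends only on the asymptotic homotopy class. For a genuine $*$-homomorphism $\varphi$, the extension $P_\varphi$ is the pullback of a cone-type extension, and one checks that $\bar F([\varphi])=F(\varphi)$. Hence $\bar F\circ\eBGHT=F$.

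The main obstacle is step \emph{(c)}: showing that $\bar F$ is compatible with composition and that it is the unique factorization. For composition, given $\varphi:A\dashrightarrow B$ and $\psi:B\dashrightarrow C$, we reparametrize so that the composite $\psi_{s(t)}\circ\varphi_t$ is an honest asymptotic morphism. We then build a map of extensions from $P_{\psi\circ\varphi}$ into an extension assembled from $P_\varphi$ and $P_\psi$ by pulling back along the evaluation homomorphisms. Naturality of $\partial$ then yields $\partial_{P_{\psi\circ\varphi}}=\partial_{P_\psi}\circ\partial_{P_\varphi}$, up to the Bott identifications. We expect the sign bookkeeping with \Cref{lem: HLT lemma} to be the delicate part, since the classical reference \cite[Proposition 5.12]{HGT} already contains a sign error. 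Uniqueness is then formal. By \Cref{lem: HLT lemma}, every morphism of $\EBGHT$ is, up to stabilization and Bott isomorphisms, $\pm$ the $E$-theory boundary of a separable extension $P_\varphi$. Since $\eBGHT$ is half exact, such boundaries are determined by $\eBGHT$ on $*$-homomorphisms together with inverses of images of $\iota_f$. Any additive factorization of $F$ must therefore agree with $\bar F$.
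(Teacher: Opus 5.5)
There is a genuine gap in step \emph{(b)}, and it is exactly the Real-case subtlety around which the paper's proof is built. The boundary map of $P_\varphi$ is a morphism $F(\Omega^2A\otimes\kg)\to F(\Omega^2B\otimes\kg)$, and nothing from your step \emph{(a)} turns it into a morphism $F(A)\to F(B)$. The Toeplitz isomorphism $F(C_0(\R^{1,1})\otimes X)\cong F(X)$ can only cancel a factor $\Omega=C_0(\R^{0,1})\otimes-$ against a factor $C_0(\R^{1,0})\otimes-$, and $P_\varphi$ contains no such factor. The identifications you invoke, $F(\Omega X)\cong F(C_0(\R^{1,0})\otimes X)$ and implicitly $F(\Omega^2X)\cong F(X)$, are false: for $F=K_0$ and $X=\C$ they would give $KO_1=\Z/2\cong KO_{-1}=0$ and $KO_2=\Z/2\cong KO_0=\Z$. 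Likewise, the claim in \emph{(a)} that $F(\Omega f)$ is invertible iff $F(f)$ is fails, e.g.\ for $f\colon 0\to C_0(\R^{1,0})$ and $F=K_0$. The same problem reappears in your uniqueness argument. By \Cref{lem: HLT lemma} and half exactness, an arbitrary factorization $F'$ is pinned down only on the $E$-theory boundary of $P_\varphi$, which is $\pm\Omega^2x$ rather than $x$. No isomorphism available to you relates $F'(\Omega^2x)$ to $F'(x)$.

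The missing idea is the paper's $(2,2)$-manoeuvre. Using \Cref{lem: folklore lem}.2 and \Cref{lem: folklore lem}.4 (the latter relies on the comparison functor from $\KKKAS$ and on $C_0(\R^{1,1})\simeq\C$ there), one rewrites
\[
\EBGHT(A,B)\cong[[C_0(\R^{2,1})\otimes A\otimes\kg,\;C_0(\R^{2,1})\otimes B\otimes\kg]].
\]
For a representative $\varphi$ the extension $P_\varphi$ then has kernel $C_0(\R^{2,2})\otimes B\otimes\kg$. Its boundary $F(C_0(\R^{2,2})\otimes A\otimes\kg)\to F(C_0(\R^{2,2})\otimes B\otimes\kg)$ can be transported along the $(2,2)$-Bott isomorphism of \Cref{lem: folklore lem}.1, which is natural also for boundary maps. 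In your setup this means you must tensor $P_\varphi$ with $C_0(\R^{2,0})$ before taking the boundary. Working with $C_0(\R^{1,0})$ instead would require $\EBGHT(A,B)\cong[[C_0(\R^{1,0})\otimes A\otimes\kg,C_0(\R^{1,0})\otimes B\otimes\kg]]$, which is not available (\Cref{rem: why not one zero}). For uniqueness one also needs that the Bott isomorphism in $\EBGHT$ is natural for all $E$-morphisms (\Cref{lem: folklore lem}.5), so that $F'(x)$ is recovered from $F'$ of the $(2,2)$-suspension of $x$.

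Separately, the paper never carries out your step \emph{(c)}, which you only sketch. It shows (\Cref{prop: thomsen heart}) that for $\operatorname{Ab}$-valued $F$ and $d\in F(A)$ there is a unique natural transformation $\EBGHT(A,-)\Rightarrow F$ with $[\operatorname{id}_A]\mapsto d$. Functoriality and uniqueness of the factorization then follow from Higson's Yoneda argument, so no composition formula for boundary maps of the extensions $P_\varphi$ is ever needed.
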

A (detailed) proof of the above result will take us quite some time. We begin with some preparations. The first check one needs to make to prove \Cref{thm: folklore} is that $\eBGHT: \RALGsep \rightarrow \EBGHT$ is homotopy invariant, $\kg$-stable and exact. While the first two are obvious from the definition, half exactness is not so. The first crucial step to that end is demonstrating for each $A \in \RALGsep$, the functor
\[
\EBGHT(A,-): \RALGsep\rightarrow \operatorname{Ab}
\]
is \emph{split} exact. This follows directly from the equivariant, Real analogue of \cite[Corollary 25.5.7]{BLAK}. However, it can also be seen as a consequence of the following stronger result, which is also going to come handy in the next subsection. The complex, non-equivariant case of the result is due to Dadarlat-Loring (\cite[Proposition 3.2]{DL}).
\begin{prop}\label{prop: asy split exact functor}
    For each $D \in \RALGsep$, the functor $[[D,-]]: \RALGsep \rightarrow \operatorname{Set}_*$ is split exact.
\end{prop}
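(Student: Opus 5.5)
We have to show: for every split extension $0 \rightarrow J \xrightarrow{i} E \underset{s}{\overset{\pi}{\rightleftarrows}} B \rightarrow 0$ in $\RALGsep$ (with $s$ an equivariant, Real $*$-homomorphism, $\pi s=1_B$), the sequence of pointed sets
\[
[[D,J]] \xrightarrow{i_*} [[D,E]] \xrightarrow{\pi_*} [[D,B]]
\]
is exact, and in fact that $i_*$ is injective and $\pi_*$ surjective. We follow Dadarlat--Loring and check that equivariance and the Real structure cause no trouble. Surjectivity of $\pi_*$ is immediate, since $\pi_*(s\circ\varphi)=\varphi$. Exactness in the middle also comes from the splitting. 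Suppose $\varphi: D \dashrightarrow E$ and $\pi\circ\varphi$ is asymptotically homotopic to $0$. By the homotopy lifting available for the surjection $C([0,1],E)\rightarrow C([0,1],B)\times_B E$ (lift via $s$, i.e.\ set $\tilde H_u=\varphi + s(H_u)-s(H_0)$ with $H$ the homotopy), we may first replace $\varphi$ by a homotopic asymptotic morphism with $\pi\varphi_t\rightarrow 0$. Then $\varphi_t - s\pi\varphi_t$ defines an asymptotic morphism into $J$ that is asymptotically equivalent to $\varphi$.

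The main step is injectivity of $i_*$. Following Dadarlat--Loring, the key tool is the map $\theta: E \rightarrow J$-valued linear map $e\mapsto e - s\pi(e)$. This map is a $G$-equivariant, Real, linear, self-adjoint retraction onto $J$, but it is not multiplicative. However $\theta(e_1e_2)-\theta(e_1)\theta(e_2) = $ terms of the form $\theta(e_1)s\pi(e_2)+s\pi(e_1)\theta(e_2)$, which lie in $J$ but are not small. So we cannot simply compose a homotopy with $\theta$.

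The plan is to use a quasicentral approximate unit. Given $\varphi,\psi: D\dashrightarrow J$ and an asymptotic homotopy $H: D\dashrightarrow C([0,1],E)$ between $i\varphi$ and $i\psi$, we choose a continuous asymptotically quasicentral approximate unit $(u_t)$ of $J$ in $E$. We take it $G$-invariant (average over compact $G$, or use the standard equivariant construction of \cite{HGT}) and $\tau$-invariant (replace $u_t$ by $\frac12(u_t+\tau(u_t))$). Then set
\[
K_{t}(d)(r)=u_{t}^{1/2}\,\theta(H_t(d)(r))\,u_t^{1/2}.
\]
After reparametrising $t$ suitably against the countable dense subset of $D$, this is an asymptotic morphism $D\dashrightarrow C([0,1],J)$. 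The cross terms $u_t\, s\pi(\cdot)$ are handled by quasicentrality together with the fact that $\theta(e)$ already lies in $J$. The endpoints $K_t(\cdot)(0),K_t(\cdot)(1)$ are asymptotically $u_t^{1/2}\varphi_t u_t^{1/2}$ and $u_t^{1/2}\psi_t u_t^{1/2}$. These are homotopic to $\varphi,\psi$ via the path $u_t^{\lambda}$-compressions, $\lambda\in[0,1/2]$, using that $u_t$ is an approximate unit of $J$ and that $\varphi_t(d)\in J$.

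We expect the obstacle to be the reparametrisation argument making $K$ simultaneously asymptotically multiplicative, equivariant and Real. Equivariance requires $G$-continuity of the elements involved, which holds here because the elements are images of $G$-continuous ones. With $G$ only locally compact we would use the $G$-asymptotically invariant approximate units of \cite[Chapter 2]{HGT} instead of averaging. Everything else is routine bookkeeping.
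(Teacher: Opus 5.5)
\textbf{There is a genuine gap.} Both of your non-trivial steps fail at the same point: the linear retraction $\theta=1-s\pi$ is not asymptotically multiplicative, and nothing in your construction repairs this.

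From $\theta(e_1e_2)=\theta(e_1)\theta(e_2)+\theta(e_1)s\pi(e_2)+s\pi(e_1)\theta(e_2)$, your lift $\tilde H_u=\varphi+s(H_u)-s(H_0)=\theta\varphi+sH_u$ has multiplicativity defect $\theta\varphi(d_1)\,s\bigl(\pi\varphi(d_2)-H_u(d_2)\bigr)+s\bigl(\pi\varphi(d_1)-H_u(d_1)\bigr)\,\theta\varphi(d_2)$. At $u=1$ this is the full cross term. So $\tilde H$ is not an asymptotic homotopy, and the "homotopy lifting via $s$" you invoke is not available.

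In the injectivity step the compression does not help either. For the endpoint homotopies $u_t^{\lambda}\varphi_tu_t^{\lambda}$ to be asymptotically multiplicative, $u_t$ must be far out on the relevant elements of $J$. Then, \emph{precisely because} $\theta(x)\in J$, you get $u_t^{1/2}\theta(x)u_t^{1/2}\approx\theta(x)$. Hence $K\approx\theta\circ H$, whose defect $\theta H(d_1)\,s\pi H(d_2)+s\pi H(d_1)\,\theta H(d_2)$ does not tend to $0$ once the loop $\pi\circ H$ in $B$ is non-trivial.

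A concrete counterexample: take $J$ non-unital, $E=\tilde J$, $B=\C$, $D=C_0(0,1)$, and the honest homotopy given by the unitaries $w_r=e^{2\pi i r}v$ with $1\neq v\in 1+J$. This is a loop from $\varphi$ to itself. At $r=\tfrac12$ you have $\theta H(f)=f(-v)-f(-1)$ and $\theta H(fg)-\theta H(f)\theta H(g)=f(-1)\,\theta H(g)+g(-1)\,\theta H(f)$. This is non-zero for suitable $f=g$, and the defect of $K_t$ converges to it, so $K$ is not an asymptotic morphism. The cross terms are therefore not "handled by quasicentrality", and this is not routine bookkeeping.

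The missing idea is that the $B$-valued part of the path must be transported into $J$ \emph{by functional calculus in $u_t$}, not carried along via $s$ and then compressed. This is what the paper does, following Dadarlat--Loring.
\begin{itemize}
\item By the Puppe sequence for $[[D,-]]$ and the splitting of $\Omega\pi_*$, it suffices to show that $J\to C(\pi)$ is invertible in $\operatorname{Asy}^G$.
\item Write $C(\pi)=\{(a+s(\gamma(0)),\gamma)\mid a\in J\}$, with $\gamma$ viewed in $C_0(0,1]\otimes B$ so that $\gamma(0)$ is evaluation at $1$. The asymptotic inverse is $(a+s(\gamma(0)),\gamma)\mapsto a+\rho_t(\gamma)$, where $\rho_t(f\otimes b)=f(u_t)s(b)\in J$.
\item Quasicentrality with respect to $s(B)$ makes $\rho_t$ asymptotically multiplicative.
\item The approximate-unit property gives $a\,\rho_t(\gamma)\approx a\,s(\gamma(0))$ for $a\in J$. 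This is exactly the cross term your compression leaves unmatched.
\end{itemize}
Middle exactness then needs no lifting at all. A null-homotopy $H$ of $\pi\varphi$, normalized so that $H_0=\pi\varphi$, makes $(\varphi,H)$ an asymptotic morphism into $C(\pi)$.
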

\begin{proof}
    Let $0 \rightarrow A \rightarrow B \xrightarrow{f} C \rightarrow0$ be a short exact sequence in $\RALGsep$ which is split by a homomorphism $\sigma: C \rightarrow B$. Without loss of generality, we can assume that the map $A \rightarrow B$ is the inclusion of an ideal. From the standard excision properties in the asymptotic category and Puppe sequences in $\RALGsep$,(\cite[Proposition 5.16]{HGT}, \cite[Proposition 2.5]{HG}), we get a long exact sequence of pointed sets
    \begin{equation}\label{eqn: les asy}
        ...\rightarrow [[D,\Omega B]] \xrightarrow{\Omega f_*}[[D,\Omega C]] \xrightarrow{\partial}[[D,C(f)]] \rightarrow [[D,B]] \xrightarrow{f_*} [[D,C]] \rightarrow 0
    \end{equation}
    where $C(f)$ denotes the mapping cone of $f$.
    Since $f$ is split by $\sigma$, $\Omega f$ is split by $\Omega \sigma$, and we can extract the split exact sequence
    \[
    0 \rightarrow [[D,C(f)]] \rightarrow [[D,B]] \xrightarrow{f_*} [[D,C]] \rightarrow 0
    \]
    from \Cref{eqn: les asy}. To prove our thesis, it therefore suffices to show that the inclusion $A \rightarrow C(f)$ is an isomorphism in $\ASYG$.
    
    Using the split $\sigma$, we can re-write $C(f)$ as
    \[
    C(f)=\{(a+\sigma(\gamma(0)),\gamma)\mid a \in A\} \subseteq B \oplus PC.
    \]
    Let $u_t$ be a (Real) quasicentral approximate unit in $A$ (\cite[Lemma 5.3]{HGT}). In light of \cite[Lemma 5.6.(iii)]{HGT}, we can now define a $1$-parameter family of homomorphisms
    \begin{align*}
        \varphi_t: C_0(0,1] &\rightarrow A \\
        f &\mapsto f(u_t)
    \end{align*}
    given by functional calculi of the individual $u_t$. We can now define an asymptotic morphism given by the family of maps
    \begin{align*}
        \rho_t:C_0(0,1] \otimes C &\rightarrow A \\
        f \otimes c &\mapsto f(u_t)\sigma(c)
    \end{align*}
     Note that in defining it, we used the universal property of the maximal tensor product, see \cite[Section 25.2.3]{BLAK}. We can now define an asymptotic morphism $\varphi: C(f) \dashrightarrow A$ given by 
    \[
    \varphi_t(a+\sigma(\gamma(0)),\gamma)=a+\rho_t(f).
    \]
    The verification that it indeed is an inverse to the inclusion $A \rightarrow C(f)$ in $\ASYG$ can now be taken from the proof of \cite[Proposition 3.2]{DL}. The equivariance and Real structure do not pose any additional difficulties.
\end{proof}
As the reader may guess, the reason we care about the split exactness result is to bring $\KKKAS$ (or equivalently, $\hoKKRsep$) into the game. From results of the previous subsection and playing around a bit with the uniqueness of universal functors (\cite[Theorem 3.2]{BOERSEMA}), we conclude that there is a unique functor $c: \hoKKRsep \rightarrow \EBGHT$ such that $c \circ \hokkRsep = \eBGHT$ \footnote{c stands for comparison functor}, which is furthermore symmetric monoidal with respect to the maximal tensor product.

Since we already have knowledge of periodicity in $\KKKAS$ (equivalently, in $\hoKKRsep$), we can now get periodicity results in $\EBGHT$ too. All this (and a little more) is the content of the next lemma.
\begin{lem}\label{lem: folklore lem}
    \hfill 
    \begin{enumerate}
        \item Let $F:\RALGsep \rightarrow \mathcal{D}$ be a functor to a half exact additive $1$-category which is homotopy invariant, $\kg$-stable, half exact with respect to exact sequences. Then there is an isomorphism \[F(C_0(\R^{2,2}) \otimes A) \xrightarrow[\cong]{\operatorname{Bott}^F} F(A)\] which is natural with respect to morphisms in $\RALGsep$ and also with respect to the boundary maps of the long exact sequence associated to $F$, as described in \cite[Theorem 21.4.4]{BLAK}.
        \item The natural map
        \[
        C_0(\R^{0,1}) \otimes -: \EBGHT(A,B) \rightarrow \EBGHT(\Omega A, \Omega B)
        \]
        is an isomorphism.
        \item The functor $\eBGHT: \RALGsep \rightarrow \EBGHT$ is homotopy invariant, $\kg$-stable, and half exact.
        \item The natural map
        \[
        C_0(\R^{1,0}) \otimes -: \EBGHT(A,B) \rightarrow \EBGHT(C_0(\R^{1,0}) \otimes A, C_0(\R^{1,0}) \otimes B)
        \]
        is an isomorphism.
        \item The isomorphisms of Assertion 1 applied to the functors $\EBGHT(A,-)$ and $\EBGHT(-,C_0(\R^{2,2}) \otimes B)$ fit into a commutative diagram
\[\begin{tikzcd}
	&& {E^G_{\operatorname{class}}(C_0(\R^{2,2}) \otimes A, C_0(\R^{2,2}) \otimes B)} \\
	{E^G_{\operatorname{class}}(A,B)} && {E^G_{\operatorname{class}}(A, C_0(\R^{2,2}) \otimes B)}
	\arrow["\cong", no head, from=1-3, to=2-3]
	\arrow["{C_0(\R^{2,2}) \otimes -}"{pos=0.4}, no head, from=2-1, to=1-3]
	\arrow["\cong"', no head, from=2-1, to=2-3]
\end{tikzcd}\]
    \end{enumerate}
\end{lem}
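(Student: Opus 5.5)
We will prove the five assertions in order; each feeds the next.

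For Assertion 1, we first convert half exactness into split exactness. A homotopy invariant, $\kg$-stable, half exact functor to an additive category is split exact and yields natural long exact sequences, by \cite[Theorem 21.4.4]{BLAK}. By \Cref{thm: HMT} it therefore factors uniquely through $\kkKAS$, that is, through $\KKKAS\simeq\hoKKse$ by \Cref{thm: hoKK inf is KKKas}. In $\hoKKse$, Real $(1,1)$-periodicity holds by construction: $\beta_{\operatorname{se},\C}$ is an equivalence $C_0(\R^{1,1})\simeq\C$ (\Cref{cor: loop eq cor}, \Cref{prop: imp}). Squaring it and using $C_0(\R^{2,2})\simeq C_0(\R^{1,1})\otimes C_0(\R^{1,1})$ gives a class $\operatorname{Bott}\in\hoKKse(C_0(\R^{2,2}),\C)$. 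We set $\operatorname{Bott}^F:=F(\operatorname{Bott}\otimes 1_A)$, which is an isomorphism natural in $A$. The $(2,2)$ form, rather than $(1,1)$, is chosen so that it commutes with suspensions without sign issues.

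Compatibility with the boundary maps is the delicate point. The boundary map of $F$ is built from the mapping cone and Puppe sequence. Tensoring that construction with $C_0(\R^{2,2})$ commutes with $\operatorname{Bott}\otimes 1$ by the interchange law, as in \Cref{lem: interchange law}, in the symmetric monoidal category $\hoKKse$. The $C_0(\R^{0,1})$ factors might introduce a sign from a flip. We avoid this because $C_0(\R^{2,2})$ has even total dimension, so the flip $C_0(\R^{2,2})\otimes C_0(\R^{0,1})\to C_0(\R^{0,1})\otimes C_0(\R^{2,2})$ acts trivially in $\hoKKse$.

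Assertions 2 through 5 then run as follows.

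\textbf{Assertion 2.} By definition, $\EBGHT(A,B)=[[\Omega A\otimes\kg,\Omega B\otimes\kg]]$. We will show that suspending once more is bijective. Apply Assertion 1 to the functor $[[\Omega A\otimes\kg,-\otimes\kg]]$. This functor is split exact by \Cref{prop: asy split exact functor}, and genuinely half exact by the Connes–Higson excision \cite[Proposition 5.16]{HGT}. Assertion 1 then gives $\Omega^{4}$-periodicity, where $\Omega^4$ denotes $C_0(\R^{2,2})\otimes-$, up to the $\R^{2,0}$ directions. The inverse of $\Omega$ is obtained by composing $\Omega$ three more times with the Bott inverse. \Cref{lem: HLT lemma} is used to track the sign, and only the bijectivity matters here.

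\textbf{Assertion 3.} For an arbitrary extension, the Connes–Higson asymptotic morphism gives a long exact sequence of $[[D,-]]$ in which $\Omega$ appears in the second variable. Assertion 2 makes $\EBGHT(D,-)$ exact in the middle. Exactness in the first variable is dual, using $\EBGHT(-,B)$ and the mapping cone again.

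\textbf{Assertion 4.} Here we apply Assertion 1 to $\EBGHT(-,-)$ with $\R^{1,0}$. By $\Omega$-invertibility (Assertion 2) together with $(2,2)$-periodicity, tensoring with $C_0(\R^{1,0})$ has an inverse, namely tensoring with $C_0(\R^{1,2})$ composed with the inverse of $\operatorname{Bott}$.

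\textbf{Assertion 5.} This is again the interchange law. Both isomorphisms are given by the $\EBGHT$-product with the image of $\operatorname{Bott}$ under the comparison functor $c$, and that product is associative and symmetric.

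I expect the main obstacle to be the boundary-map naturality in Assertion 1, together with the sign bookkeeping through \Cref{lem: HLT lemma}.
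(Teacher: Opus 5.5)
Your Assertion~1 is correct, but it takes a different route from the paper. The paper runs the Real analogue of \cite[Proposition 6.16]{HGT}. That produces an explicit asymptotic morphism $\alpha\colon C_0(\R^{1,1})\dashrightarrow\mathcal K(l^2\N)$ implementing Bott for $F$, which is then applied twice. You instead factor $F$ through $\KKKAS\simeq\hoKKse$ via \Cref{thm: HMT} and \Cref{thm: hoKK inf is KKKas}, and transport the Bott equivalence.

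Compatibility with boundary maps is then formal: the boundary map is $F(\iota_f)^{-1}\circ F(\Omega C\to C(f))$, and $C_0(\R^{2,2})\otimes-$ commutes with mapping cones. No flip sign ever enters. The paper's reason for using $(2,2)$ is the $\R^{1,0}$ versus $\R^{0,1}$ issue of \Cref{rem: why not one zero}, not signs.

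What your route loses is naturality. Your Bott isomorphism is only known to be natural for morphisms coming from $\KKKAS$. By contrast, $\alpha\otimes 1_X$ is an unsuspended asymptotic morphism, so it commutes with \emph{arbitrary} asymptotic morphisms by bifunctoriality of $\otimes$ in $\ASYG$. The paper uses this in Assertion~5, and again in \Cref{cor: Bott inverted} and \Cref{prop: prop for GDL}.

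That loss is exactly where your Assertion~2 has a genuine gap. Applying Assertion~1 to $[[\Omega A\otimes\kg,-\otimes\kg]]$ gives periodicity in the target only, namely $[[\Omega A\otimes\kg,C_0(\R^{2,2})\otimes X\otimes\kg]]\cong[[\Omega A\otimes\kg,X\otimes\kg]]$, natural only for $\KKKAS$-morphisms in $X$. The map in Assertion~2 also suspends the source. To invert it, you must know that $f\mapsto C_0(\R^{2,2})\otimes f$, followed by the Bott identifications in both variables, returns $f$ for every class of asymptotic morphisms $f$. That is a naturality statement for arbitrary $\EBGHT$-morphisms, essentially your Assertion~5, and your abstract Bott class does not supply it at this stage.

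Moreover, ``composing $\Omega$ three more times'' produces $C_0(\R^{0,4})\otimes-$. In the Real setting this is neither $C_0(\R^{2,2})\otimes-$ nor isomorphic to the identity, since $\Omega$-periodicity is $8$-fold. So the ``$\R^{2,0}$ directions'' you set aside are precisely the Real difficulty. Your Assertion~4 has the same dependence: checking that tensoring with $C_0(\R^{1,2})$ followed by inverse Bott is a two-sided inverse needs the same two-variable compatibility. First-variable exactness in Assertion~3 is also not formally dual and rests on Assertion~2.

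The paper sidesteps this by importing Assertion~2 from \cite[Lemma 4.5]{BOERSEMA}. Alternatively, suppose you have a Bott isomorphism $\theta_X\colon C_0(\R^{1,1})\otimes X\to X$ in $\EBGHT$ that is natural for all $\EBGHT$-morphisms. It could come from $\alpha$, or from the comparison functor $c$, provided the monoidal structures on $\EBGHT$ and on $c$ are set up without Assertion~2. Then $C_0(\R^{1,1})\otimes f=\theta_B^{-1}\circ f\circ\theta_A$, so $C_0(\R^{1,1})\otimes-$ is bijective on hom groups. Since it equals both $(C_0(\R^{0,1})\otimes-)\circ(C_0(\R^{1,0})\otimes-)$ and the reverse composite, both factors are faithful and full. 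This gives Assertions~2 and~4 at once.
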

\begin{proof}
We begin with Assertion 1. In light of the given properties of $F$, the Real analog of the proof of \cite[Proposition 6.16]{HGT} shows that there exists an asymptotic morphism \[\alpha:C_0(\R^{1,1}) \dashrightarrow \mathcal{K}(l^2\N)\] 
such that for each $A \in \RALGsep$, the asymptotic morphism
\[
\alpha \otimes 1_A:C_0(\R^{1,1}) \otimes A\dashrightarrow \mathcal{K}(l^2\N) \otimes A
\]
yields a natural isomorphism
\[
F(C_0(\R^{1,1}) \otimes A) \xrightarrow[\cong]{\operatorname{Bott}_A^F} F(A).
\]
Our sought after natural map is now obtained by applying the above isomorphism twice
\[
F(C_0(\R^{2,2}) \otimes A) \xrightarrow[\cong]{\operatorname{Bott}_{C_0(\R^{1,1})\otimes A}^F} F(C_0(\R^{1,1}) \otimes A) \xrightarrow[\cong]{\operatorname{Bott}_{A}^F} F(A).
\]\\
  The map in Assertion 2 being an isomorphism can be readily taken from \cite[Lemma 4.5]{BOERSEMA}, with the obvious modifications to tackle the equivariant case. \\\\
    We now prove Assertion 3 (note that it settles the easy part of \Cref{thm: folklore}). That $\eBGHT$ is homotopy invariant and $\kg$-stable is obvious. That it is also half exact follows by combining \cite[Theorem 4.4]{BOERSEMA} and the excision theory of equivariant $E$-theory developed in chapters 5 and 6 of \cite{HGT}. The careful reader will observe that (variants of) Assertions 1 and 2 go as an input to proving the exactness in the first variable in \cite{BOERSEMA} and \cite{HGT}.\\\\
    For Assertion 4, we can accommodate the map in the commutative diagram
\[\begin{tikzcd}
	{E^{G}_{\operatorname{class}}(A,B)} && {E^{G}_{\operatorname{class}}(C_0(\R^{1,1})\otimes A,C_0(\R^{1,1})\otimes B)} \\
	& {E^{G}_{\operatorname{class}}(C_0(\R^{1,0})\otimes A,C_0(\R^{1,0})\otimes B)}
	\arrow["{C_0(\R^{1,1}) \otimes -}", from=1-1, to=1-3]
	\arrow["{C_0(\R^{1,0}) \otimes -}"'{pos=0.2}, from=1-1, to=2-2]
	\arrow["{C_0(\R^{0,1}) \otimes -}"'{pos=0.8}, from=2-2, to=1-3]
\end{tikzcd}\]
The discussion preceding the lemma shows that the horizontal arrow is an isomorphism, since \[C_0(\R^{1,1}) \simeq_{\KKKAS} \mathbb{C}.\]The right arrow is an isomorphism from Assertion 2.\\\\
Finally we show Assertion 5. In light of Assertion 1, it essentially follows from the commutativity of the diagram 
\[\begin{tikzcd}
	{C_0(\R^{1,1}) \otimes A} && {\mathcal{K}(l^2\N) \otimes A} \\
	{C_0(\R^{1,1}) \otimes B} && {\mathcal{K}(l^2\N) \otimes B}
	\arrow["{\alpha \otimes 1_A}", dashed, from=1-1, to=1-3]
	\arrow["{1_{C_0(\R^{1,1})} \otimes f}"', dashed, from=1-1, to=2-1]
	\arrow["{1_{\mathcal{K}(l^2\N)} \otimes f}", dashed, from=1-3, to=2-3]
	\arrow["{\alpha \otimes 1_B}", dashed, from=2-1, to=2-3]
\end{tikzcd}\]
for every asympototic morphism $f: A \dashrightarrow B$, as can be seen from the symmetric monoidal properties of $\ASYG$. See also \cite[Theorem 4.6]{HGT} and the proof of \cite[Proposition 6.16]{HGT}.
\end{proof}
Before continuing with the proof of \Cref{thm: folklore}, we mention a corollary to some results of the last lemma, which will come handy in the next section.
\begin{cor}\label{cor: Bott inverted}
    There exists an asymptotic morphism $\alpha:C_0(\R^{1,1})\dashrightarrow \mathcal{K}(l^2)$ such that \[\alpha \otimes 1_B: C_0(\R^{1,1}) \otimes B \dashrightarrow \mathcal{K}(l^2) \otimes B\] determines an invertible class in $\EBGHT(C_0(\R^{1,1}) \otimes B,B)$. Furthermore, the functor 
    \[
    \Omega: \SKGASYG \rightarrow \EBGHT
    \]
    exhibits $\EBGHT$ as the $1$-categorical localization of $\SKGASYG$ at the classes of maps $\alpha \otimes 1_B$.
\end{cor}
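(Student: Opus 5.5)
The plan is to extract both assertions of \Cref{cor: Bott inverted} from the proof of \Cref{lem: folklore lem}.1, specialised to $F=\EBGHT(A,-)$ and $F=\EBGHT(-,B)$, and then to identify $\EBGHT$ with a localization using the defining formula $\EBGHT(A,B)=\operatorname{hom}_{\SKGASYG}(\Omega A,\Omega B)$.

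For the first assertion, take $\alpha: C_0(\R^{1,1}) \dashrightarrow \mathcal{K}(l^2)$ to be the Real analogue of the asymptotic morphism in \cite[Proposition 6.16]{HGT}, namely the one used in the proof of \Cref{lem: folklore lem}.1. By \Cref{lem: folklore lem}.3, the functor $\EBGHT(D,-)$ is homotopy invariant, $\kg$-stable and half exact for every $D$. The proof of Assertion 1 then shows that postcomposition with $[\alpha\otimes 1_B]$ gives a bijection
\[
\EBGHT(D,C_0(\R^{1,1})\otimes B)\to\EBGHT(D,\mathcal{K}(l^2)\otimes B)\cong\EBGHT(D,B).
\]
We apply Yoneda in the additive category $\EBGHT$: since this holds for all $D$, the class $[\alpha\otimes1_B]$ is invertible. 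Concretely, choose $D=B$ to produce a right inverse and $D=C_0(\R^{1,1})\otimes B$ to obtain injectivity. The one point to check is that the natural isomorphism of Assertion 1 really is composition with $[\alpha\otimes1_B]$ in $\EBGHT$. This should follow from how $\operatorname{Bott}^F$ is built in \cite[Proposition 6.16]{HGT}, since an asymptotic morphism acts on a homotopy invariant, stable, half exact functor through its class in $\EBGHT$.

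For the localization statement, I read ``$\Omega:\SKGASYG\to\EBGHT$'' as the functor that is the identity on objects and sends $[[A\otimes\kg,B\otimes\kg]]$ to $[[\Omega A\otimes\kg,\Omega B\otimes\kg]]$ via $C_0(\R^{0,1})\otimes-$. This functor inverts $\alpha\otimes1_B$ by the first part. For the universal property, let $H:\SKGASYG\to\mathcal{E}$ invert all $\alpha\otimes 1_B$. Then $H$ also inverts $\alpha\otimes 1_{C_0(\R^{1,1})\otimes B}$. By \Cref{lem: folklore lem}.5, these combine to show that $H$ inverts the map realising the $(2,2)$-periodicity, which is used to show that $C_0(\R^{2,2})\otimes-$ becomes invertible on morphisms. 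We therefore define the factorisation $\bar H$ on a class $x\in\EBGHT(A,B)=[[\Omega A\otimes\kg,\Omega B\otimes\kg]]$ as follows. Tensor $x$ further with $C_0(\R^{1,1})\otimes C_0(\R^{1,0})$, so that a total of $C_0(\R^{2,2})$ appears up to the shuffle $\R^{0,1}\oplus\R^{1,1}\oplus\R^{1,0}\cong\R^{2,2}$. Then conjugate by the $H$-images of the Bott classes $\alpha\otimes1_A$ and $\alpha\otimes1_B$, each applied twice.

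The remaining work is to check that $\bar H$ is well defined, is a functor, is unique, and restricts to $H$. Uniqueness is formal: every morphism of $\EBGHT$ is $\Omega(y)$ composed with inverses of Bott classes, where $y$ is the image of $x$ after the re-tensoring above. Functoriality uses the naturality square in the proof of \Cref{lem: folklore lem}.5, which says that Bott classes commute with $1\otimes f$ for asymptotic morphisms $f$.

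I expect the main obstacle to be this bookkeeping with suspensions. We must show that $\bar H$ does not depend on which Real coordinates are used to absorb $\Omega$, which amounts to checking that the flip on $C_0(\R^{2,2})$ acts trivially after $H$. My first attempt would be to reduce this to the corresponding fact in $\KKKAS$ via the comparison functor $c$ (or \Cref{lem: folklore lem}.4), in which the relevant flip acts by $+1$ because the $\R^{1,1}$ pieces come in pairs.
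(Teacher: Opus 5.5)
Your first assertion follows the paper. It runs the proof of \Cref{lem: folklore lem}.1 with $\mathcal{D}=\EBGHT$, and your Yoneda argument with the representables $\EBGHT(D,-)$ is the same argument.

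For the localization, the paper does not verify the universal property by hand. It applies the dual of \cite[Proposition 5.2.7.4]{HTT} to $R=C_0(\R^{1,1})\otimes-$ on $\SKGASYG$ with counit $\mathfrak{T}=\alpha\otimes1$. This makes $R$ a right Bousfield localization onto its essential image, and also the localization at $\{\alpha\otimes1_B\}$. That image is then identified with $\EBGHT$ using the definition and \Cref{lem: folklore lem}.4:
$\EBGHT(A,B)\cong\EBGHT(C_0(\R^{1,0})\otimes A,C_0(\R^{1,0})\otimes B)=[[C_0(\R^{1,1})\otimes A\otimes\kg,\,C_0(\R^{1,1})\otimes B\otimes\kg]]$.
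In that route the $E$-theoretic suspension is simply the $\R^{0,1}$-factor of $\R^{1,1}$, and uniqueness of the factorization comes with the formalism. Your conjugation route avoids the $\infty$-categorical criterion. The cost is that the suspension bookkeeping must be done explicitly, and you have put the difficulty in the wrong place.

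Once a convention is fixed, $\bar H$ is automatically well defined and functorial, because the shuffle isomorphisms and Bott classes telescope. Also $\bar H\circ\Omega=H$ follows from naturality of $\alpha\otimes1$ in $\SKGASYG$ (the interchange law), since $\Omega(y)$ does not touch the suspension coordinate. So no flip has to be controlled after $H$. Your proposed remedy could not work in any case: an arbitrary $H\colon\SKGASYG\to\mathcal{E}$ inverting the $\alpha\otimes1_B$ has no relation to $\KKKAS$, so the comparison functor $c$ says nothing about it.

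The step that is genuinely not formal is the one you call formal. Write $\mathrm{Bott}_A\colon C_0(\R^{2,2})\otimes A\to A$ for the composite of two maps of the form $\alpha\otimes1$. Uniqueness needs
\[
\Omega(\mathrm{Bott}_B)\circ\Omega\bigl(1_{C_0(\R^{1,1})\otimes C_0(\R^{1,0})}\otimes x\bigr)=x\circ\Omega(\mathrm{Bott}_A)\quad\text{in }\EBGHT\bigl(C_0(\R^{2,2})\otimes A,B\bigr).
\]
On the left, $x$ acts on an inner $\R^{0,1}$-coordinate that the Bott map then absorbs, while the outer coordinate created by $\Omega$ survives. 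On the right, $x$ acts on the surviving coordinate. Comparing the two means conjugating by the swap of two $\R^{0,1}$-coordinates. That swap is $-\operatorname{id}$ in $\EBGHT$, so this needs an argument and is not bookkeeping.

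The fix is short:
\begin{enumerate}
\item The swap is homotopic, through orthogonal maps of those two coordinates, to $1\otimes\nu$, where $\nu$ negates the inner coordinate. These maps are Real because both coordinates carry the trivial involution.
\item By the interchange law, $1\otimes\nu$ commutes with $x$ acting on the outer coordinate, and $\nu^2=1$. Hence conjugating by the swap changes nothing.
\item The identity then reduces to interchange between the Bott map and $x$, equivalently \Cref{lem: folklore lem}.5.
\end{enumerate}
With this supplied, your route proves the statement.
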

\begin{proof}
    The first statement follows readily from the proof of Assertion 1 of \Cref{lem: folklore lem} by taking $\mathcal{D}=\EBGHT$. The second statement follows from the following general fact about $(\infty-)$categories (it is the dual version of \cite[Proposition 5.2.7.4]{HTT}) : Let $\mathcal{C}$ be a category with an endofunctor $R: \mathcal{C} \rightarrow \mathcal{C}$ and a natural transformation $\mathfrak{T}: R \implies \operatorname{id}_{\mathcal{C}}$. If for every object $C \in \mathcal{C}$ the morphims
    \[
    \mathfrak{T}_{R(C)},R(\mathfrak{T}_C): R(R(C)) \rightarrow R(C)
    \]
    are equivalences, then $R:\mathcal{C} \rightarrow R\mathcal{C}$ is the right-adjoint of a right Bousfield localization with counit $\mathfrak{T}$. It is also the Dwyer-Kan localization at the set of morphisms $W_R:=\{\mathfrak{T}_C \mid C \in \mathcal{C}\}$. 

    Indeed, in the above notation, take $\mathcal{C}:=\SKGASYG$, and $R:= C_0(\R^{1,1}) \otimes - :\SKGASYG \rightarrow \SKGASYG$. The rest follows from the results of \Cref{lem: folklore lem} and the definition of $\EBGHT$.
\end{proof}
The heart of the proof of the uniqueness assertion in \Cref{thm: folklore} is contained in the following result.
\begin{prop}{(cf. \cite[Theorem 4.6]{THOASY})}\label{prop: thomsen heart}
    Let $F$ be a homotopy invariant, $\kg$-stable and half exact functor from $\RALGsep$ to $\operatorname{Ab}$. Fix $A$ in $\RALGsep$ and consider an arbitrary element $d \in F(A)$. Then there is a unique natural transformation 
    \[
    T: \EBGHT(A,-) \implies F(-)
    \]
    such that $T_A([\operatorname{id}_A])=d$.
\end{prop}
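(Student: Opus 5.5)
Uniqueness is formal: every element of $\EBGHT(A,B)$ is built from $*$-homomorphisms and the inverted Bott classes, and a natural transformation is determined by $d$. So the work is existence, which I will get by following Thomsen's strategy, adapted to the equivariant Real setting with the tools already set up (\Cref{lem: technical asy}, \Cref{eqn: E phi of thomsen}, \Cref{lem: HLT lemma}, \Cref{lem: folklore lem}).

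\textbf{Reduction.} First replace $F$ by $F(\kg\otimes -)$, which is still homotopy invariant and $\kg$-stable, so I may assume $F$ is stable. Half exactness together with \Cref{lem: folklore lem}.1 gives two pieces of structure: natural boundary maps $\partial\colon F(\Omega C)\to F(J)$ for extensions $0\to J\to E\to C\to 0$, and the natural Bott isomorphism $F(C_0(\R^{2,2})\otimes -)\cong F$. Since $\EBGHT(A,B)=[[\Omega A\otimes\kg,\Omega B\otimes\kg]]$, it suffices to show that an asymptotic morphism $\psi\colon \Omega A\otimes\kg \dashrightarrow \Omega B\otimes \kg$ induces a homomorphism $\psi_*\colon F(\Omega A)\to F(\Omega B)$, and that this assignment is
\begin{enumerate}
\item homotopy invariant in $\psi$,
\item functorial for composition,
\item equal to the usual map when $\psi$ is a genuine $*$-homomorphism.
\end{enumerate}

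\textbf{Construction of $\psi_*$.} By \Cref{lem: technical asy}, $\psi$ gives the extension $0\to C_0((1,\infty),\Omega B\otimes\kg)\to P_\psi\to \Omega A\otimes\kg\to 0$ of \Cref{eqn: E phi of thomsen}. I define
\[
\psi_*:=-\,\partial_\psi\colon F(\Omega^2 A)\to F(C_0((1,\infty))\otimes\Omega B)\cong F(\Omega^2 B).
\]
The sign is chosen following \Cref{lem: HLT lemma}. I then transport this along the suspension and Bott isomorphisms of \Cref{lem: folklore lem}.1 to obtain a map $F(A)\to F(B)$; here I use $\Omega^2\simeq C_0(\R^{0,2})$ and the $(2,2)$/$(1,1)$-periodicity, so one may need to apply $\Omega$ further times to reach an $\R^{2,2}$ factor. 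Set $T_B(\psi):=\psi_*(d)$.

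\textbf{Verifications.} Each property reduces to a statement about extensions and boundary maps.
\begin{itemize}
\item Homotopy invariance: a homotopy of asymptotic morphisms gives an extension over $C[0,1]$, and naturality of $\partial$ applies.
\item Agreement for genuine homomorphisms $\varphi$: the extension $P_\varphi$ is pulled back from the cone extension, whose boundary is $-\Omega\varphi$ up to the suspension identification. This is the content of \Cref{lem: HLT lemma} applied to $F$ in place of $[[-,-]]$, and it forces the sign.
\item Composition: for $\psi\circ\varphi$, compare $P_{\psi\circ\varphi}$ with an iterated pullback of $P_\psi$ and $P_\varphi$. Then use naturality of $\partial$ for morphisms of extensions, together with compatibility of Bott with boundaries (\Cref{lem: folklore lem}.1 and .5).
\end{itemize}
With these in hand, $T$ is well defined and natural in $B$. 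We have $T_A([\mathrm{id}])=d$ because $\mathrm{id}$ is a homomorphism and the periodicity identifications cancel.

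\textbf{Main obstacle.} I expect the functoriality under composition of asymptotic morphisms to be the hardest step. The composite of asymptotic morphisms is only defined up to reparametrization, so I would first pass to the $\kg$-stabilized, reparametrized representative, as in Connes–Higson. I would then build an explicit morphism of extensions between $P_{\psi\circ\varphi}$ and the pullback of $P_\psi$ along $P_\varphi\to\Omega A\otimes\kg$, checking equivariance and the Real structure pointwise.
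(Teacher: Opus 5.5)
\paragraph{Uniqueness.} Your uniqueness step is the real gap. You assert that every class in $\EBGHT(A,B)$ is built from $*$-homomorphisms and inverted Bott classes, so that $T$ is pinned down by $d$. Nothing available at this stage gives such a description. By \Cref{cor: Bott inverted}, a general class is an arbitrary asymptotic morphism with Bott inverted. The description of $\EBGHT$ as a localization of $\RALGsep$ is a consequence of \Cref{thm: hoKKex is Eclass}, which in this paper rests on \Cref{thm: folklore} and hence on the very statement you are proving.

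A competing $T'$ is natural only for $*$-homomorphisms in the second variable, so you cannot move a general $x$ across $T'$ Yoneda-style. This is exactly where the paper's proof spends its effort. For $x=[\varphi]$, it first uses that $T'$ is compatible with the connecting maps of the extension $P_\varphi$ and with the Bott identifications of \Cref{lem: folklore lem}.1. It then proves the key identity: the connecting map of $P_\varphi$ for the functor $\EBGHT(C_0(\R^{2,2})\otimes A,-)$ sends $C_0(\R^{2,2})\otimes[\mathrm{id}_A]$ to $C_0(\R^{2,2})\otimes[\varphi]$. That identity needs three inputs:
\begin{itemize}
\item \Cref{lem: HLT lemma};
\item the fact that composing with the Connes--Higson morphism computes boundary maps in $\EBGHT$;
\item the degree $-1$ of the flip on $S^1\wedge S^1$, whose sign cancels the one from \Cref{lem: HLT lemma}.
\end{itemize}
A diagram chase then forces $T'_B([\varphi])$ to equal the Bott-transported boundary of $d$, i.e.\ $T_B([\varphi])$.

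\paragraph{Existence.} Your existence construction also fails as written in the Real case, at the transport step. There is no suspension isomorphism for $F$; the only periodicity you have is \Cref{lem: folklore lem}.1, which removes a factor $C_0(\R^{2,2})$. For $\psi\colon\Omega A\otimes\kg\dashrightarrow\Omega B\otimes\kg$, the boundary of $P_\psi$ gives a map $F(C_0(\R^{0,2})\otimes A)\to F(C_0(\R^{0,2})\otimes B)$. Taking further loops only produces factors $C_0(\R^{0,n})$, never the missing $\R^{2,0}$-directions. Your preliminary reduction to a map $F(\Omega A)\to F(\Omega B)$ has the same defect: there is no way back to $F(A)$ and $F(B)$. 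In the complex case your route would work, but not here.

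The paper avoids this by first using \Cref{lem: folklore lem}.2 and .4 to rewrite $\EBGHT(A,B)=[[C_0(\R^{2,1})\otimes A\otimes\kg,\,C_0(\R^{2,1})\otimes B\otimes\kg]]$. Then $P_\varphi$ has ideal $C_0(\R^{2,2})\otimes B\otimes\kg$, and the boundary $F(C_0(\R^{2,2})\otimes A)\to F(C_0(\R^{2,2})\otimes B)$ can be Bott-transported to $F(A)\to F(B)$; see \Cref{rem: why not one zero}. Equivalently, you could run your construction with $F(C_0(\R^{2,0})\otimes -)$. With this fix, the rest of your existence plan is Thomsen's argument, which is what the paper cites.

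For a natural transformation out of $\EBGHT(A,-)$, you only need compatibility with postcomposition by $*$-homomorphisms $g$. This follows from naturality of the boundary along the evident map $P_\varphi\to P_{g\circ\varphi}$. So the general composition of asymptotic morphisms that you flag as the main obstacle is not needed for this statement.
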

\begin{proof}
    Firstly, we massage the hom groups of $\EBGHT$ a bit, and write them as
    \[
    \EBGHT(A,B)=[[C_0(\R^{2,1}) \otimes A \otimes \kg,  C_0(\R^{2,1}) \otimes B \otimes \kg]].
    \]
    Indeed this can be done from Assertions 2 and 4 of \Cref{lem: folklore lem} (the reason we do this maneuver will be clear from the upcoming proof, but see also \Cref{rem: why not one zero}).

    \textit{To avoid notational clutter, we subsequently denote $C_0(\R^{p,q}) \otimes X$ as $\R^{p,q}X$ for any $X \in \RALGsep$. Same comment applies to functors involving these algebras.}
    
    Now consider $x \in [[\R^{2,1} A \otimes \kg,  \R^{2,1}B \otimes \kg]]$. Let it be represented by the equivariant asymptotic morphism 
    \[
    \varphi: \R^{2,1}A \otimes \kg \dashrightarrow \R^{2,1}B \otimes \kg.
    \]
     As in \Cref{eqn: E phi of thomsen}, this yields an extension
    \begin{equation}\label{eqn: thomsen proof}
        0 \rightarrow \R^{2,2}B \otimes \kg \rightarrow P_\varphi \rightarrow \R^{2,1}A \otimes \kg \rightarrow 0.
    \end{equation}
    Consequently, we obtain a morphism $F(\R^{2,2}A \otimes \kg ) \rightarrow F(\R^{2,2}B \otimes \kg )$ in $\operatorname{Ab}$ via the connecting map of the long exact sequence associated to $F$ as in \cite[Theorem 21.4.4]{BLAK}. The $\kg$-stability of $F$ together with Assertion 1 of \Cref{lem: folklore lem} naturally produces a morphism $T_x: F(A) \rightarrow F(B)$. Now define
    \begin{align*}
        T_B: [[\R^{2,1}A \otimes \kg,  \R^{2,1}B \otimes \kg]] \rightarrow F(B) \\
        x \mapsto T_x(d).
    \end{align*}
    In light of the machinery we developed in this subsection and functoriality properties of the Connes-Higson construction, the well-definedness and naturality of $T_B$, as well as the verification $T_A([\operatorname{id}_A])=d$ is a routine check, as has been done in the proof of \cite[Theorem 4.6]{THOASY}.

    We now prove the uniqueness part. Let 
    \[
    T':[[\R^{2,1}A \otimes \kg,  \R^{2,1}- \otimes \kg]] \rightarrow F(-)
    \]
    be a natural transformation such that $T'([\operatorname{id}_A])=d$. Consider $x =[\varphi] \in [[\R^{2,1}A \otimes \kg,  \R^{2,1}B \otimes \kg]]$. Let $\partial_1,\partial_2,\partial_3,\partial_4$ denote the connecting maps obtained by applying the functors \[[[\R^{4,3}A \otimes \kg, \R^{2,1}- ]], [[\R^{2,1}A\otimes \kg, \R^{2,1}-]], F(\R^{2,2} -), F(-)\] respectively to the extension in \Cref{eqn: thomsen proof}\footnote{denoting by $\mathcal{G}$ any one of these four functors, we are implicitly using the natural identification of $\mathcal{G}(-)$ with $\mathcal{G}(\R^{2,2}\otimes \kg \otimes -)$ in what follows.}. We get a commutative diagram
\begin{equation}\label{eqn: thomsen dia}
\begin{tikzcd}
	&& {E^G_{\operatorname{class}}(\R^{2,2}A,\R^{2,2}A)} & {E^G_{\operatorname{class}}(\R^{2,2}A,\R^{2,2}B)} && \\
	\\
	{E^G_{\operatorname{class}}(A,A)} && {E^G_{\operatorname{class}}(A,\R^{2,2}A)} & {E^G_{\operatorname{class}}(A, \R^{2,2}B)} && {E^G_{\operatorname{class}}(A,B)} \\
	&& {F(\R^{2,2}A)} & {F(\R^{2,2}B)} \\
	&& {F(A)} & {F(B)}
	\arrow["{\partial_1}", color={rgb,255:red,92;green,92;blue,214}, from=1-3, to=1-4]
	\arrow["{\operatorname{Bott}^{E^G_{\operatorname{class}}(-,\R^{2,2}A)}}"{description}, no head, from=1-3, to=3-3]
	\arrow["{\operatorname{Bott}^{E^G_{\operatorname{class}}(-,\R^{2,2}B)}}"{description}, no head, from=1-4, to=3-4]
	\arrow["{\R^{2,2}-}", shift left, color={rgb,255:red,92;green,92;blue,214}, from=3-1, to=1-3]
	\arrow["{\operatorname{Bott}^{E^G_{\operatorname{class}}(A,-)}}"', no head, from=3-1, to=3-3]
	\arrow["{T'_A}"', from=3-1, to=5-3]
	\arrow["{\partial_2}", from=3-3, to=3-4]
	\arrow["{T'_{\R^{2,2}A}}", from=3-3, to=4-3]
	\arrow["{\operatorname{Bott}^{E^G_{\operatorname{class}}(A,-)}}"', no head, from=3-4, to=3-6]
	\arrow["{T'_{\R^{2,2}B}}", from=3-4, to=4-4]
	\arrow["{\R^{2,2}-}"', shift right, color={rgb,255:red,92;green,92;blue,214}, from=3-6, to=1-4]
	\arrow["{T'_B}", from=3-6, to=5-4]
	\arrow["{\partial_3}", from=4-3, to=4-4]
	\arrow["{\operatorname{Bott}^F}", no head, from=4-3, to=5-3]
	\arrow["{\operatorname{Bott}^F}", no head, from=4-4, to=5-4]
	\arrow["{\partial_4}", from=5-3, to=5-4]
\end{tikzcd}
\end{equation}
where the undirected edges are induced by the maps described in \Cref{lem: folklore lem}.1. Note that the fact that the diagram indeed commutes needs a lot of the machinery we developed earlier in this subsection.

We now claim the following.
\begin{clm}
    The composite 
    \[
    \partial_1 \circ (\R^{2,2}-) : \EBGHT(A,A) \rightarrow \EBGHT(\R^{2,2}A, \R^{2,2}B)
    \]
    equals the composite
    \[
    (\R^{2,2}-) \circ \varphi_* :\EBGHT(A,A) \rightarrow \EBGHT(\R^{2,2}A, \R^{2,2}B).
    \]
\end{clm}
\begin{proof}
    We build the diagram
\[\begin{tikzcd}
	{E^G_{\operatorname{class}}(\R^{2,2}A,\R^{2,2}A)} &&&& {E^G_{\operatorname{class}}(\R^{2,2}A,\R^{2,2}B)} \\
	{E^G_{\operatorname{class}}(\R^{2,1}A,\R^{2,1}A)} &&&& {E^G_{\operatorname{class}}(\R^{2,1}A,\R^{2,1}B)} \\
	{E^G_{\operatorname{class}}(\R^{2,0}A,\R^{2,0}A)} &&&& {E^G_{\operatorname{class}}(\R^{2,0}A,\R^{2,0}B)} \\
	{E^G_{\operatorname{class}}(A,A)} &&&& {E^G_{\operatorname{class}}(A,B)}
	\arrow["{\partial_1}", from=1-1, to=1-5]
	\arrow["{\R^{0,1}-}"', from=2-1, to=1-1]
	\arrow["{-\partial_{[[\R^{4,2}A \otimes \mathcal{K}_G, \R^{2,0}- \otimes \mathcal{K}_G]]}}", from=2-1, to=2-5]
	\arrow["{\R^{0,1}-}", from=2-5, to=1-5]
	\arrow["{\R^{0,1}-}"', from=3-1, to=2-1]
	\arrow["{(\R^{2,0}\varphi)_*}", from=3-1, to=3-5]
	\arrow["{\R^{0,1}-}", from=3-5, to=2-5]
	\arrow["{\R^{2,0}-}"', from=4-1, to=3-1]
	\arrow["{\varphi_*}", from=4-1, to=4-5]
	\arrow["{\R^{2,0}-}", from=4-5, to=3-5]
\end{tikzcd}\]
We need to show that it commutes. The bottom rectangle commutes by the symmetric monoidal properties of $\EBGHT$. The middle rectangle commutes by an application of \Cref{lem: HLT lemma}, together with the fact that composition with the Connes-Higson morphism yields the boundary map in $\EBGHT$, as described in \cite[Theorem 6.15]{HGT}. The top square commuting is a consequence of the general fact that for any $A',B',C' \in \RALGsep$ and $\psi \in \EBGHT(B',C')$, the diagram
\[\begin{tikzcd}
	{E^G_{\operatorname{class}}(\Omega A',\Omega B')} && {E^G_{\operatorname{class}}(\Omega B',\Omega C')} \\
	{E^G_{\operatorname{class}}(A',B')} && {E^G_{\operatorname{class}}(B',C')}
	\arrow["{-\Omega\psi_*}", from=1-1, to=1-3]
	\arrow["{\Omega_*}"', from=2-1, to=1-1]
	\arrow["{\psi_*}", from=2-1, to=2-3]
	\arrow["{\Omega_*}", from=2-3, to=1-3]
\end{tikzcd}\]
commutes. This boils down to the fact that the map $S^1 \wedge S^1 \xrightarrow{\operatorname{flip}} S^1 \wedge S^1$ has degree $-1$.

\end{proof}
The above claim therefore shows that the composite of the purple arrows (we are reversing the direction of the last arrow so as to compose) maps $[\operatorname{id}_A] \in \EBGHT(A,A)$ to $[\varphi] \in \EBGHT(A,B)$. The commutativity of \Cref{eqn: thomsen dia} now tells that 
\[
\partial_4(T_A'([\operatorname{id}_A]))= T_B'([\varphi]).
\]
Since $T_A'([\operatorname{id}_A])$ is given to be $d$, the proof of the proposition follows.
\end{proof}
\begin{rem}\label{rem: why not one zero}
    In the above proof, $(2,2)$-periodicity played a crucial role. It would indeed be natural to expect that the same could be done by the more standard $(1,1)$-periodicity. Unraveling the proof of \Cref{prop: thomsen heart}, one would then want to take 
    \[
    \EBGHT(A,B)=[[\R^{1,0}A \otimes \kg, \R^{1,0}B \otimes \kg]].
    \]
    But it is not clear why this definition coincides with $[[\R^{0,1}A \otimes \kg, \R^{0,1}B \otimes \kg]]$: the results of \Cref{lem: folklore lem} do not say this! Tracing further back, the reader may wonder ``why not \emph{define} $\EBGHT(A,B):=[[\R^{1,0}A \otimes \kg, \R^{1,0}B \otimes \kg]]$?" The reason is that then it would not be clear why this category has excision; the development of the excision results in $E$-theory and the Connes-Higson morphism depend on Puppe sequences, where the loops are $C_0(\R^{0,1})$ and not $C_0(\R^{1,0})$.
\end{rem}
\begin{proof}[Proof of \Cref{thm: folklore}]
    With \Cref{prop: thomsen heart} at our disposal, the argument now follows by  standard yoga with the uniqueness of the natural transformations described in loc.\! cit.\! See for example \cite[Theorem 4.5]{HIGFIRST}. A few more details on a similar line of ideas is in \cite[Theorem 4.9]{THOASY}.
\end{proof}
We can now compare $\hoKKex$ with $E^G_{\operatorname{class}}$.
\begin{thm}\label{thm: hoKKex is Eclass}
    We have a natural equivalence of additive, symmetric monoidal categories 
    \[
    \hoKKex \simeq \EBGHT.
    \]
\end{thm}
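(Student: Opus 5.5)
The plan follows the comparison $\hoKKse \simeq \KKKAS$ of \Cref{thm: hoKK inf is KKKas}: produce functors in both directions from universal properties and check that both composites are the identity. \Cref{thm: folklore} serves as the universal property of $\EBGHT$, and \Cref{prop: uni prop hoKK one categorical} (with $!=\operatorname{ex}$) as the universal property of $\hoKKex$.

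\textbf{The functor $\hoKKex \to \EBGHT$.} I apply \Cref{prop: uni prop hoKK one categorical} to $F=\eBGHT:\RALGsep \to \EBGHT$, using the half exact structure of \Cref{defi: e class preparation}.5. The hypotheses are checked as follows.
\begin{enumerate}
\item Homotopy invariance, $\kg$-stability and half exactness for all exact sequences are \Cref{lem: folklore lem}.3.
\item Suspension stability says that $\eBGHT(f)$ is invertible if and only if $\eBGHT(\Omega f)$ is. This follows from \Cref{lem: folklore lem}.2: $\Omega$ induces isomorphisms on all hom groups, so it reflects isomorphisms.
\item Preservation of countable direct sums needs an argument. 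One could appeal to the equivariant Real analogue of the classical fact that $E$-theory takes countable sums to coproducts (cf.\ \cite{HGT}). Alternatively, since $\eBGHT$ factors through the comparison functor $c:\hoKKRsep\to\EBGHT$, one could try to transport the statement from \Cref{rem: KKAS properties}.1.
\end{enumerate}
This gives $\mathcal{G}:\hoKKex \to \EBGHT$ with $\mathcal{G}\circ \hokkRsep = \eBGHT$. By the proof of \Cref{prop: uni prop hoKK one categorical} together with \cite[Proposition 3.3.2]{HIN}, $\mathcal{G}$ is symmetric monoidal for the maximal tensor product.

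\textbf{The functor $\EBGHT \to \hoKKex$.} The canonical functor $\hokkRsep:\RALGsep\to\hoKKex$ lands in an additive $1$-category, since it is the homotopy category of a stable $\infty$-category. It is reduced, homotopy invariant and $\kg$-stable, and it is half exact because $\kkRsep$ sends exact sequences to fiber sequences, which become long exact sequences on the homotopy category. \Cref{thm: folklore} therefore gives a unique additive $\mathcal{F}:\EBGHT\to\hoKKex$ with $\mathcal{F}\circ\eBGHT=\hokkRsep$.

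\textbf{The composites are identities.} The composite $\mathcal{F}\circ\mathcal{G}$ satisfies $\mathcal{F}\circ\mathcal{G}\circ \hokkRsep = \hokkRsep$, and the identity functor satisfies the same equation. By the uniqueness in \Cref{prop: uni prop hoKK one categorical}, which comes from $\hoKKex$ being a sequence of localizations of $\RALGsep$, $\mathcal{F}\circ\mathcal{G}\cong\operatorname{id}$. In the other direction, $\mathcal{G}\circ\mathcal{F}\circ\eBGHT=\eBGHT$, so uniqueness in \Cref{thm: folklore} gives $\mathcal{G}\circ\mathcal{F}\cong \operatorname{id}$. Additivity and symmetric monoidality of the inverse are automatic, as in the remark following \Cref{thm: hoKK inf is KKKas}.

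\textbf{Main obstacle.} I expect the difficulty to be concentrated in the hypotheses of \Cref{prop: uni prop hoKK one categorical}. The subtle points are the countable-sum condition for $\EBGHT$ and the precise form of suspension stability with $C_0(\R^{0,1})$ as the loop functor. The universal-property bookkeeping itself is formal once \Cref{thm: folklore} is in hand.

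If the countable-sum property for $\EBGHT$ turns out to be hard to establish directly, the fallback is to run the argument first for the version of $\hoKKex$ built without enforcing countable sums. One then checks that the localizations agree, since $W_!$ is already closed under countable sums in that setting by the separable remark after the construction.
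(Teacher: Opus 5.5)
Your proposal is correct and is essentially the paper's proof. The paper adapts the argument of \Cref{thm: hoKK inf is KKKas}, building the two comparison functors from \Cref{thm: folklore} and \Cref{prop: uni prop hoKK one categorical}, and names as the only substantive check that $\EBGHT$ has countable coproducts modeled by direct sums, which it takes from \cite[Proposition 7.1]{HGT} — exactly your first option.

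Your two alternative routes to that property are not needed, and neither is formal as stated. An additive functor such as $c$ need not preserve infinite coproducts. The remark on omitting countable sums for separable algebras holds only a posteriori, as a consequence of the comparison with the classical theory and hence of that same coproduct property, so it cannot be used to bypass it.
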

\begin{proof}
    The proof of \Cref{thm: hoKK inf is KKKas} can be readily adapted to this case. The main check one needs to make is that $\EBGHT$ has countable coproducts, and the functor $\EBGHT$ takes countable direct sums in $\RALGsep$ to countable coproducts in $\EBGHT$. But this is the content of \cite[Proposition 7.1]{HGT}. The Real structure poses no additional difficulty.
\end{proof}
\subsection{A technical result of Dadarlat-Loring}
In this subsection, we prove the following technical result, which will be handy in the later subsections when we study graded $E$-theory. The complex, non-equivariant case of the result is due to Dadarlat-Loring (\cite{DL})
\begin{thm}{(Generalized Dadarlat-Loring)}\label{thm: Gen DL}
    Let $A \in \RALGsep$. Suppose that for all $B \in \RALGsep$ the abelian monoid
    \[
    \operatorname{hom}_{\SKGASYG}(A,B)=[[A \otimes \kg,B \otimes \kg]]
    \]
    is a group. Then, the map
    \[
    \Omega:  [[A \otimes \kg, B \otimes \kg]] \rightarrow \EBGHT(A,B)
    \]
    is an isomorphism of abelian groups for all $B \in \RALGsep$.
\end{thm}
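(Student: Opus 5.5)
The idea is to reduce the statement to the universal property of $\EBGHT$ (\Cref{thm: folklore}) via a corepresentability argument. Fix $A$ as in the hypothesis and set
\[
F_A(-):=[[A\otimes\kg,-\otimes\kg]]:\RALGsep\rightarrow\operatorname{Ab}.
\]
By hypothesis this takes values in abelian groups. The main task is to show that $F_A$ is homotopy invariant, $\kg$-stable and half exact.

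\textbf{Step 1: verifying the three properties.} Homotopy invariance and $\kg$-stability are immediate from the definition of $\SKGASYG$. For half exactness, take an arbitrary extension $0\to I\to B\to C\to 0$. Excision for asymptotic morphisms (\cite[Proposition 5.16]{HGT}, as used in \Cref{eqn: les asy}) gives exactness at the middle term once $I\otimes\kg$ is compared with the mapping cone $C(f\otimes 1)$. The comparison map $I\to C(f)$ is an isomorphism in $\ASYG$ in the split case by \Cref{prop: asy split exact functor}, but for a general extension it is only invertible after suspension, via the Connes--Higson construction. I would therefore split the argument as follows.
\begin{enumerate}
\item Use \Cref{prop: asy split exact functor} to see that $F_A$ is split exact, and hence additive.
\item Use the Puppe sequence in the target variable to obtain a long exact sequence
\[
\cdots\to F_A(\Omega C)\to F_A(C(f))\to F_A(B)\to F_A(C),
\]
which is exact as a sequence of groups because $F_A$ is group-valued.
\end{enumerate}

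\textbf{Step 2: the expected obstacle.} This is the heart of the Dadarlat--Loring argument, and I expect it to be the main difficulty. Group-valuedness of $F_A$ is what allows one to deduce $F_A(I)\cong F_A(C(f))$ for \emph{every} extension. The plan is to apply the split-exactness trick to the extension
\[
0\to C(f)\to Z(f)\to C\to 0,
\]
together with the map of extensions from $0\to I\to B\to C\to 0$. A five-lemma argument in groups, combined with the long exact sequences for $F_A$ in both rows, then identifies $I$ and $C(f)$ under $F_A$. If that route fails, I would instead mimic \cite{DL} directly. There one uses that an inverse of the class of a map $A\otimes\kg\to\ldots$ exists in the group, so the asymptotic morphism $C(f)\dashrightarrow I$ built from a quasicentral approximate unit (as in the proof of \Cref{prop: asy split exact functor}) becomes a two-sided inverse after adding the inverse class.

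\textbf{Step 3: concluding.} Once $F_A$ is known to be homotopy invariant, $\kg$-stable and half exact, \Cref{prop: thomsen heart} applied with $d=[\operatorname{id}_A]\in F_A(A)$ gives a natural transformation
\[
T:\EBGHT(A,-)\Rightarrow F_A
\]
with $T_A([\operatorname{id}_A])=[\operatorname{id}_A]$. The given natural map $\Omega:F_A\Rightarrow\EBGHT(A,-)$ sends $[\operatorname{id}_A]$ to $[\operatorname{id}_A]$. The uniqueness part of \Cref{prop: thomsen heart}, applied to $\EBGHT(A,-)$ (which is half exact by \Cref{lem: folklore lem}.3), shows that $\Omega\circ T=\operatorname{id}$. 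For the composite the other way, I would argue Yoneda-style that $T\circ\Omega$ fixes $[\operatorname{id}_A]$. Every element of $F_A(B)$ is the image of $[\operatorname{id}_A]$ under postcomposition by an asymptotic morphism, and naturality of $T$ and $\Omega$ with respect to asymptotic morphisms (not just $*$-homomorphisms) then forces $T\circ\Omega=\operatorname{id}$. Checking this naturality is the secondary technical point; it should follow from the construction of $T$ through the extension $P_\varphi$ and from \Cref{lem: HLT lemma}.
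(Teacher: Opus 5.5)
Your proposal has a genuine gap at Step 2, and Step 2 is the heart of the argument. Nothing you propose establishes half exactness of $F_A$ for a general extension $0\to I\to B\xrightarrow{f} C\to 0$, which need not even be semi-split.

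\begin{itemize}
\item The five-lemma route is circular. To compare $F_A(I)$ with $F_A(C(f))$ through the map of extensions $(\iota_f,h_f,\operatorname{id}_C)$ into the mapping cone extension, you need a long exact sequence of $F_A$ for the top row. That long exact sequence is exactly the half exactness you are trying to prove.
\item \Cref{prop: asy split exact functor} does not apply to $0\to C(f)\to Z(f)\to C\to 0$. The map $Z(f)\to C$ is in general only cpc-split, not split by a $*$-homomorphism.
\item The fallback does not work as stated. The asymptotic morphism $C(f)\dashrightarrow I$ in the proof of \Cref{prop: asy split exact functor} is built from the splitting $\sigma$, and you offer no substitute for it. Without a splitting, the available tool is the Connes--Higson construction, which only makes $I\to C(f)$ invertible after a suspension.
\item Group-valuedness gives additive inverses of \emph{classes}, not an inverse \emph{morphism}. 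So ``adding the inverse class'' cannot remove that suspension.
\end{itemize}

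The missing idea is a Bott periodicity statement for $F_A$. Half exactness of $F_A$ is in fact true, but in the paper it is a consequence of the theorem (via \Cref{lem: folklore lem}.3), not an input.

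The paper never proves half exactness directly; it gets periodicity from $\KKKAS$.
\begin{itemize}
\item $F_A$ is group-valued, homotopy invariant, $\kg$-stable and split exact by \Cref{prop: asy split exact functor}. Hence it factors through $\KKKAS$ by \Cref{thm: HMT}.
\item Compare the asymptotic Bott morphism $\alpha\colon C_0(\R^{1,1})\dashrightarrow\mathcal K(l^2\N)$ with the Cuntz-picture homomorphisms $\beta\colon q\C\to C_0(\R^{1,1})\otimes\mathcal K(l^2\N)$ and $\gamma\colon q\C\to\mathcal K(l^2\N)$. This gives $\alpha\circ\beta=\pm\gamma$ in the stabilized asymptotic category.
\item Since $\beta\otimes 1_B$ and $\gamma\otimes 1_B$ are $\KKKAS$-equivalences, $F_A$ inverts every $\alpha\otimes 1_B$ (\Cref{prop: prop for GDL}).
\item $\Omega$ exhibits $\EBGHT$ as the right Bousfield localization of $\SKGASYG$ at these maps (\Cref{cor: Bott inverted}). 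So $A$ is colocal, and $\Omega$ is bijective on morphisms out of $A$.
\end{itemize}

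This route also avoids the second weak point, in your Step 3. To get $T\circ\Omega=\operatorname{id}$ you need $T$ to be natural with respect to asymptotic morphisms. \Cref{prop: thomsen heart} only gives naturality in $*$-homomorphisms, and $F_A$ is corepresented by $A$ only on $\SKGASYG$, not on $\RALGsep$, so no Yoneda argument is available at that level. Checking the required naturality would mean controlling how the boundary maps of $F_A$, however they were obtained in Step 2, interact with asymptotic morphisms. \Cref{lem: HLT lemma} is a statement about classes in $[[\Omega A,\Omega B]]$ and does not supply this by itself.
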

\begin{rem}
    An $A$ in $\RALGsep$ satisfying the hypothesis of the above theorem is called \emph{homotopy symmetric}. Note that it is equivalent to asking that the abelian monoid $[[A \otimes \kg,A \otimes \kg]]$ is a group.
\end{rem}
Our proof strategy will be very different from \cite{DL}, except that we too use \Cref{prop: asy split exact functor} in a crucial way. We begin by proving the following result, wherein we use the notation from the last subsection, especially \Cref{cor: Bott inverted}.
\begin{prop}\label{prop: prop for GDL}
    For each homotopy symmetric $A \in \SKGASYG$, the functor
    \[
    \operatorname{hom}_{\SKGASYG}(A,-):\SKGASYG \rightarrow \operatorname{Ab}
    \]
    inverts $\alpha: C_0(\R^{1,1}) \dashrightarrow \mathcal{K}(l^2)$.
\end{prop}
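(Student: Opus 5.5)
The plan is to show that for a homotopy symmetric $A$ the group-valued functor
\[
F_A(-):=\operatorname{hom}_{\SKGASYG}(A,-)=[[A\otimes\kg,-\otimes\kg]]
\]
satisfies enough formal properties that the abstract Bott argument applies to it, with no half exactness needed. The inversion of $\alpha$ would then follow by a rotation argument. I would carry this out in four steps.

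\textbf{Step 1: formal properties of $F_A$.} By the homotopy symmetry hypothesis, $F_A$ takes values in $\operatorname{Ab}$ and not merely in abelian monoids. It is clearly homotopy invariant and $\kg$-stable. By \Cref{prop: asy split exact functor}, applied with $D=A\otimes\kg$ to the split extensions $0\to I\otimes\kg\to E\otimes\kg\to Q\otimes\kg\to 0$, it is split exact. Moreover, since $F_A$ is corepresented on $\SKGASYG$, it is functorial not only for $*$-homomorphisms but for asymptotic morphisms. In particular $F_A(\alpha\otimes 1_B)$ makes sense, where $\alpha$ is the asymptotic morphism of \Cref{cor: Bott inverted}.

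\textbf{Step 2: the Toeplitz algebra.} By \Cref{cor: real red toep extn}, whose hypotheses need only homotopy invariance, $\kg$-stability, split exactness and group values, we get $F_A(\mathcal T^R_0\otimes B)=0$ for every $B$. I would use the same Cuntz-type argument, via the split extension $0\to\mathcal K\to\mathcal T^R\to C(\mathbb S^{1,1})$ pulled back along the $\mathcal T^R$-equivariant data. The aim is to produce an element $\beta_B\in F(\mathcal K\otimes B\to C_0(\R^{1,1})\otimes B)$-type transformation, defined only using split exactness. Concretely, I would realise the dual Bott map as a quasi-homomorphism, i.e.\ a difference of two homomorphisms into the Toeplitz algebra agreeing modulo $C_0(\R^{1,1})\otimes\mathcal K$. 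Split exact group-valued functors are functorial for such quasi-homomorphisms (Cuntz). This gives natural maps $b_B\colon F_A(B)\to F_A(C_0(\R^{1,1})\otimes B)$.

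\textbf{Step 3: the two composites.} One composite, $F_A(\alpha\otimes 1_B)\circ b_B$, is identified with the identity by the classical computation underlying \cite[Proposition 6.16]{HGT}. That computation says that $\alpha$ composed with the Toeplitz quasi-homomorphism is the index class, i.e.\ asymptotically homotopic to a rank-one projection after stabilisation. For the other composite I would use Atiyah's rotation trick. Both $b$ and $\alpha\otimes1$ are natural for asymptotic morphisms and compatible with exterior tensor products (Step 1 supplies exactly this functoriality). Hence $b_B\circ F_A(\alpha\otimes1_B)$ is computed by $1\otimes\alpha$ on $C_0(\R^{1,1})\otimes C_0(\R^{1,1})\otimes B$, up to the flip of the two $\R^{1,1}$ factors. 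That flip is equivariantly homotopic (for the product involution) to a map whose effect on $F_A$ can be controlled. One then reduces to the first composite.

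\textbf{The main obstacle.} I expect the sign in the flip to be the main difficulty. On $\R^{1,1}\oplus\R^{1,1}\cong\R^{2,2}$, the coordinate swap is a product of two transpositions, one among the $\kappa$-negated coordinates and one among the fixed ones. I would try first to rotate each pair separately within $O(2)$ commuting with $\kappa_{2,2}$. If a residual orientation reversal survives, I would compose with an additional reflection in $\R^{1,1}$, namely complex conjugation on $\mathbb S^{1,1}$. I would then check, as in \Cref{prop: omnibus graded k-theory}.4, that this reflection acts as an automorphism of $F_A(C_0(\R^{1,1})\otimes-)$ which is invertible. That suffices for bijectivity, since only invertibility of $F_A(\alpha\otimes1_B)$ is claimed.
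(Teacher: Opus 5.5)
\textbf{Verdict.} Your route is genuinely different from the paper's and can be made to work, but the one step you call classical needs the paper's own argument, and the flip step needs correcting.

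\textbf{The paper's route.} Let $\beta\colon q\C\to C_0(\R^{1,1})\otimes\mathcal K(l^2\N)$ be the Cuntz-picture Bott homomorphism and $\gamma=\iota\circ\pi_1$. The paper proves $\gamma=\pm\,\alpha\circ\beta$ in the stabilized asymptotic category. It then uses \Cref{prop: asy split exact functor} and the Higson--Meyer--Thomsen universal property to factor $\operatorname{hom}_{\SKGASYG}(A,-)$, on homomorphisms, through $\KKKAS$. There $\beta$ and $\gamma$ are invertible, and two-out-of-three gives $\alpha$.

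\textbf{Your first composite is the paper's Claim.} Write your Cuntz map as $b_B=F_A(\beta\otimes 1_B)\circ F_A(\pi_1\otimes 1_B)^{-1}$; invertibility of the second factor is Cuntz's theorem applied to the split exact stable functor $F_A(-\otimes B)$. Then $F_A(\alpha\otimes 1_B)\circ b_B=\pm\mathrm{id}$ is what you get from $\alpha\circ\beta\simeq\pm\gamma$ in the unsuspended group $[[q\C,\mathcal K]]$, which is exactly the paper's Claim. This is not supplied by \cite[Proposition 6.16]{HGT}, which concerns half-exact functors and so only gives the identity after suspension, in $E$-theory. Bringing it back to $[[q\C,\mathcal K]]$ is the actual work in the paper: semiprojectivity of $q\C$ gives $[[q\C,\mathcal K]]\cong[q\C,\mathcal K]\cong\Z\cong E_{\operatorname{class}}(q\C,\mathcal K)$, followed by a generator count. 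You need that argument, or an honest index computation of $\alpha\circ\beta$ at the level of asymptotic morphisms out of $q\C$.

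\textbf{What your route gains and costs.} You replace the universal property of $\KKKAS$ and Kasparov's Real $(1,1)$-periodicity, which the paper uses to invert $F_A(\beta)$, by Atiyah's rotation trick. This makes the argument more self-contained. The cost is two verifications. First, $b$ must be natural for asymptotic morphisms; Step 1 gives functoriality of $F_A$ but not naturality of $b$. Naturality does follow from the $q\C$-formula above and the interchange law in $\SKGASYG$. Second, you need a correct analysis of the flip (below). Note also that the vanishing $F_A(\mathcal T^R_0\otimes B)=0$ from Step 2 is never used.

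\textbf{The flip.} Rotating each pair inside $O(2)$ cannot work, because each transposition has determinant $-1$. The linear automorphisms of $\R^{2,2}$ commuting with $\kappa_{2,2}$ form $GL_2(\R)\times GL_2(\R)$, acting separately on the $\kappa$-negated plane and the $\kappa$-fixed plane. The flip has negative determinant on both planes. Hence it is equivariantly homotopic to $1\oplus\theta$, where $\theta=-1$ on one copy of $\R^{1,1}$. A single reflection such as complex conjugation corrects only one plane, so it is not enough.

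\textbf{Why the correction suffices.} It is not enough that the correcting map is invertible: an automorphism sandwiched between $F_A(\alpha\otimes 1)$ and $b$ need not produce an isomorphism. What saves you is that $\theta$ acts on a single tensor factor. Place it on the factor that $\alpha$ does not hit. It then commutes past $\alpha\otimes 1$, and you get $b_B\circ F_A(\alpha\otimes 1_B)=\pm F_A(\theta\otimes 1_B)$ up to the stabilization isomorphisms. This map is invertible, so $F_A(\alpha\otimes 1_B)$ has a left inverse.
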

\begin{proof}
    In light of \Cref{prop: asy split exact functor} and the universal property of $KK^G_{\operatorname{Kas}}$, there is a unique dashed arrow rendering the below diagram commutative.
\begin{equation}\label{eqn: hom asy factor through KK}
\begin{tikzcd}
	{\RALGsep} & {\SKGASYG} &&& {\operatorname{Ab}} \\
	& {\KKKAS}
	\arrow[from=1-1, to=1-2]
	\arrow["{\kkKAS}"', from=1-1, to=2-2]
	\arrow["{\operatorname{hom}_{\SKGASYG}(A,-)}", from=1-2, to=1-5]
	\arrow[dashed, from=2-2, to=1-5]
\end{tikzcd}
\end{equation}
We now momentarily ignore the external group $G$. Since $\KK_{\Kas}^{\sep}$  satisfies $(1,1)$-periodicity, we know in light of the Cuntz picture of $\KK_{\Kas}^{\sep}$ that there is a morphism $q\C \xrightarrow{\beta} C_0(\R^{1,1}) \otimes \mathcal{K}(l^2)$ in $\CAlg^\sep$ which implements the $(1,1)$-periodicity. There is also a homomorphism $q\C \xrightarrow{\gamma} \mathcal{K}$ given by the composite
\[
q\C \xrightarrow{\pi_1} \C \xrightarrow{\iota} \mathcal{K}(l^2).
\]
We now claim the following.
\begin{clm}
    The diagram
\begin{equation}\label{eqn: alpha beta gamma}
    \begin{tikzcd}
	{q\mathbb{C}} && {\mathcal{K}(l^2\mathbb{N})} \\
	& {C_0(\R^{1,1}) \otimes \mathcal{K}(l^2\mathbb{N})}
	\arrow["\gamma", from=1-1, to=1-3]
	\arrow["\beta"', from=1-1, to=2-2]
	\arrow["\alpha"', dashed, from=2-2, to=1-3]
\end{tikzcd}
\end{equation}
commutes upto a sign in the $\mathcal{K}(l^2\N)$-stabilized asymptotic homotopy category $L_\mathcal{K}\!\operatorname{Asy}$.
\end{clm}
\begin{proof}
    We consider the following diagram, where the vertical maps are induced by the localization functor $L_\mathcal{K}\!\operatorname{Asy} \rightarrow E_{\operatorname{class}}$ (i.e. the trivial group case of \Cref{cor: Bott inverted}).
\[
\resizebox{\textwidth}{!}{$
\begin{array}{ccccc}
[[q\mathbb{C},C_0(\R^{1,1})\otimes\mathcal{K}(l^2\N)]]
&
\times
&
[[C_0(\R^{1,1})\otimes\mathcal{K}(l^2\N),\mathcal{K}(l^2\N)]]
&
\longrightarrow
&
[[q\mathbb{C},\mathcal{K}(l^2\N)]]\cong\Z
\\[1.2ex]
\downarrow
&&
\downarrow
&&
\downarrow\scriptstyle{\cong}
\\[1.2ex]
E_{\operatorname{class}}
\bigl(q\mathbb{C},C_0(\R^{1,1})\otimes\mathcal{K}(l^2\N)\bigr)
&
\times
&
E_{\operatorname{class}}
\bigl(C_0(\R^{1,1})\otimes\mathcal{K}(l^2\N),
\mathcal{K}(l^2\N)\bigr)
&
\longrightarrow
&
E_{\operatorname{class}}
\bigl(q\mathbb{C},\mathcal{K}(l^2\N)\bigr)\cong\Z
\\[0.5ex]
\cong \Z
&&
\cong \Z
&&
\end{array}
$}
\]
We now justify the isomorphisms appearing on the diagram. That $[[q\C,\mathcal{K}(l^2\N)]] \cong \Z$ follows from 
\[
\Z \xrightarrow[\cong]{\Cref{prop: Cuntz picture k-theory}} [q\C,\mathcal{K}(l^2\N)] \xrightarrow[\cong]{\Cref{rem: qA properties}.6} [[q\C,\mathcal{K}(l^2\N)]].
\]
 We furthermore note that $\gamma$ corresponds to $1$ under this isomorphism; see \Cref{rem: the map in the cuntz picture iso}. The isomorphisms in the bottom row follow since $q\C,\mathcal{K}(l^2\N), C_0(\R^{1,1})$ are all $KK_{\operatorname{Kas}}^{\operatorname{sep}}$-equivalent to $\C$, and $K_0(\C) \cong \Z$. That the right vertical map is an isomorphism follows from the composite map
\[
[q\C,\mathcal{K}(l^2\N)] \xrightarrow{\cong} [[q\C,\mathcal{K}(l^2\N)]] \rightarrow E_{\operatorname{class}}(\C,\C)
\]
being an isomorphism from \Cref{lem: qC special} (we are implicitly also using the universal property of $\mathcal{K}(l^2)$-stabilized homotopy category of separable $C^*$-algebras here). From the discussion preceding the claim, $\beta$ and $\alpha$ map to a generator in $\Z$ under the two left vertical arrows. An argument similar to the one in \Cref{lem: product of bott elements iso} shows that the bottom bilinear map must send $1 \times 1$ to $\pm 1$. The conclusion follows.
\end{proof}
We are now in a position to bring back the group $G$. Since all the algebras in \Cref{eqn: alpha beta gamma} have the trivial $G$-action, we conclude that \Cref{eqn: alpha beta gamma} also commutes in $\SKGASYG$. Furthermore, $\gamma$ and $\beta$ remain equivalences in $\KKKAS$, whence so is $\alpha$. The proof of the proposition follows in light of \Cref{eqn: hom asy factor through KK}.
\end{proof}
\begin{proof}[Proof of \Cref{thm: Gen DL}]
    In light of \Cref{cor: Bott inverted} and general facts about mapping spaces (in our context, mapping abelian groups) out of colocal objects (\cite[Remark 9.9]{KKandE}), it suffices to show that the functor 
    \[
    \operatorname{hom}_{\SKGASYG}(A,-): \SKGASYG \rightarrow \operatorname{Ab}
    \]
    inverts the morphisms $\alpha \otimes 1_B: C_0(\R^{1,1}) \otimes B \dashrightarrow \mathcal{K}(l^2\N) \otimes B$. But this follows upon tensoring \Cref{eqn: alpha beta gamma} with $B$; the diagram continues to commute up to a sign (which is independent of $B$), and the morphisms $q\C \otimes B \xrightarrow{\beta \otimes 1_B} C_0(\R^{1,1}) \otimes \mathcal{K}(l^2\N) \otimes  B$ and $q\C \otimes B \xrightarrow{\gamma \otimes 1_B} \mathcal{K}(l^2\N) \otimes B$ are isomorphisms in $\KKKAS$.
\end{proof}
\begin{rem}
    The following remarks about \Cref{thm: Gen DL} are in order.
    \begin{enumerate}
        \item There is a non-technical way to think about the result. Note that \Cref{cor: Bott inverted} roughly says that one obtains $\EBGHT$ from $\SKGASYG$ by ``inverting Bott". When $A$ is homotopy symmetric, \Cref{eqn: hom asy factor through KK} shows that the functor \[\operatorname{hom}_{\SKGASYG}(A,-): \SKGASYG \rightarrow \operatorname{Ab}\] indeed ``inverts Bott", since it factors through $\KKKAS$. \Cref{prop: prop for GDL} basically makes this line of argument rigorous. And probably this is also the argument which was hinted at the end of Section 4 in \cite{DL}.
        \item A treatment analogous to Section 3.4 would now show that $\EBGHT$ is the full subcategory of group objects of $\SKGASYG$. This seems to not have been mentioned in the literature.
    \end{enumerate}
\end{rem}
\subsection{Comparison with classical, graded $\widehat{E}^G$-theory}
We are finally in a position to discuss classical, graded $\Egrclass$-categories. The main reference for the same is \cite[Section 2]{HG} (in the complex case). Following the formulation given there, we introduce its Real analogue. In what follows, $[[A,B]]_{\operatorname{gr}}$ denotes the homotopy classes of graded, $G$-equivariant asymptotic morphisms from $A$ to $B$. Recall that $\Egrclass$ still denotes the \emph{separable} theory.          
\begin{defi}{(cf. \cite[Chapter 2]{HG})}
    For $A,B \in \RGradedsep$, define a category $\Egrclass$ with objects the objects of $\RGraded$, and morphims from $A$ to $B$ being given by 
    \[
    \Egrclass(A,B):=[[\s \hat\otimes A \hat\otimes \hatkg, B \hat\otimes \hatkg]]_{\operatorname{gr}}.
    \]
    The composition law in $\Egrclass$ is given by the usual formula (see \cite[Definition 2.9]{HG}) using the codiagonal map $\s \xrightarrow{\Delta} \s \hat\otimes \s$. Denote by
    \[
    \egrclass: \RGradedsep \rightarrow \Egrclass
    \]
    the canonical functor induced by precomposition with the homomorphism $\s\xrightarrow{\operatorname{ev}_0} \mathbb{C}$.
\end{defi}
To connect $\Egrclass$ and $\hoEgra$, we need the following lemma.
\begin{lem}\label{lem: last lemma}
    The abelian monoid $[[\s \hat\otimes A \hat\otimes \hatkg, B \hat\otimes \hatkg]]_{\operatorname{gr}}$ is a group for all $A,B \in \RGradedsep$. 
\end{lem}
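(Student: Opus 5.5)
The argument will be an Eckmann--Hilton-type inverse construction that uses only structure already present in the source algebra $\s$. The monoid structure on $[[\s \hat\otimes A \hat\otimes \hatkg, B \hat\otimes \hatkg]]_{\operatorname{gr}}$ is block sum, obtained through an isometric embedding $\widehat{L^2G_\N}\oplus\widehat{L^2G_\N}\hookrightarrow \widehat{L^2G_\N}$. I will show that every class has an additive inverse, given by the precomposition
\[
[\varphi] \longmapsto [\varphi \circ (\alpha_\s \hat\otimes 1)]\circ\text{(grading flip on the target Hilbert module)}.
\]
Concretely, for $\varphi$ representing a class, let $\varphi^{-}$ be the asymptotic morphism $f\hat\otimes a\hat\otimes k \mapsto \varphi_t(\alpha_\s(f)\hat\otimes a\hat\otimes k)$, composed with the grading-reversal isomorphism $\hatkg\cong \hatkg^{\operatorname{op}}$. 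This is the graded analogue of the inverse $(\E,F)\mapsto(\E^{\operatorname{op}},-F)$ for Fredholm cycles mentioned in \Cref{rem: simplify endo of K fr}.

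The main step is the null-homotopy of $\varphi\oplus\varphi^{-}$. I will reduce it to the universal case, namely the observation from the proof of \Cref{thm: spectral model of g eq k theory}.1 (cf. \cite[Remark 1.7]{HG}). That observation says that for any graded $*$-homomorphism $\psi:\s\to D$, the map $f\mapsto \operatorname{diag}(\psi(f),\alpha(\psi(f)))$ into $D\hat\otimes M_2(\C)$, with $M_2$ graded by $\operatorname{Ad}(\tau)$, is null-homotopic through graded homomorphisms. The homotopy there is a rotation, defined using the odd generator $x\in\s$, which turns the odd operator $\operatorname{diag}(X,-X)$ into one with trivial spectral data. To apply this here, I will use the comultiplication $\Delta$ of \Cref{prop: dia on s} to write
\[
\varphi \simeq \varphi\circ(\operatorname{ev}_0\hat\otimes 1)\circ(\Delta\hat\otimes 1),
\]
using the counit identity of \Cref{cor: s cocommutative coalgebra}. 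This rewriting factors the $\s$-variable through a genuine graded homomorphism $\operatorname{ev}_0:\s\to\C$ on one tensor factor. The rotation homotopy can then be applied to that factor, i.e. to $\s\to\C\subseteq M_2(\C)$, with the other factor held fixed. Tensoring a genuine homotopy $\s\to C([0,1],M_2)$ with the fixed asymptotic morphism gives an asymptotic homotopy, by the functoriality of the graded maximal tensor product on asymptotic morphisms (\cite[Chapter 2]{HG}). Finally, $\hatkg$-stability lets me absorb the $M_2(\C)$ with its inner grading into $\hatkg$.

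I expect the hardest part to be the sign and grading bookkeeping, in particular checking that the null-homotopy is equivariant for $G$, for the grading, and for the Real structure simultaneously. For the rotation homotopy, $\Delta$, and $\operatorname{ev}_0$ this is fine, because all three are $C_2$-equivariant, Real, and carry trivial $G$-action. The delicate point is that after pushing $\alpha_\s$ through $\Delta$ the result matches the formula $\operatorname{diag}(\psi,\alpha\psi)$ of \cite[Remark 1.7]{HG}. The identity I need is $\Delta\circ\alpha_\s=(\alpha_\s\hat\otimes\alpha_\s)\circ\Delta$, which holds because $\Delta$ is graded, and this gives the required compatibility.

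A fallback route, if this direct approach is awkward, would be to show that $\s\hat\otimes A$ is homotopy symmetric by using the $C_2$-ring $\hoKKCTWO(\s,\s)\cong\Z[x]/(x^2=1)$ from \Cref{cor: EC2 SS ring}. However, that requires knowing that asymptotic classes factor through $\KKRGTWO$, so I would prefer the explicit homotopy.
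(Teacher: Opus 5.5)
\textbf{Verdict: there is a genuine gap, but it is easily repaired.} Your mechanism is the paper's: the rotation null-homotopy from the proof of \Cref{thm: spectral model of g eq k theory}.1, transported to asymptotic morphisms. Routing it through $\Delta$ and $\operatorname{ev}_0$ is a legitimate way to do that transport. The gap is in identifying what your homotopy actually null-homotopes.

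Let $h$ be the rotation null-homotopy of $\s\xrightarrow{\operatorname{ev}_0}\C\subseteq M_2(\C)$. By the counit identity, $(1\hat\otimes\varphi)\circ(h\hat\otimes 1)\circ(\Delta\hat\otimes 1)$ starts at $1_{M_2}\hat\otimes\varphi$. Put $D=B\hat\otimes\hatkg$ and grade $M_2(\C)$ by $\operatorname{Ad}\tau$. Under $M_2(\C)\hat\otimes D\cong M_2(D)$, the Koszul sign sends $1\hat\otimes d$ to $\operatorname{diag}(d,\alpha_D(d))$. So the second block is $\alpha_D\circ\varphi$, i.e.\ $\varphi$ precomposed with the grading $\alpha_\s\hat\otimes\alpha_A\hat\otimes\alpha_{\hatkg}$ of the \emph{whole} source, not with $\alpha_\s\hat\otimes 1$.

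The identity $\Delta\circ\alpha_\s=(\alpha_\s\hat\otimes\alpha_\s)\circ\Delta$ cannot bridge this. After the counit factorization, one copy of $\alpha_\s$ sits on the $\operatorname{ev}_0$-leg, where it is invisible because $\operatorname{ev}_0\circ\alpha_\s=\operatorname{ev}_0$. The other copy stays inside $\varphi$, where the rotation never acts.

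Consequently your $\varphi^-$ is not an inverse in general, whichever way you implement the grading reversal. To pass to the trivially graded $M_2$ used for block sums, conjugate by $\operatorname{diag}(1,w)$, where $w=1\hat\otimes u$ and $u$ is an odd unitary of $\widehat{L^2G_\N}$. This turns the second block $\alpha_D\circ\varphi$ into $(1_B\hat\otimes\operatorname{Ad}(u\gamma))\circ\varphi$, where $\gamma$ is the grading operator and $u\gamma$ is again odd. Two cases follow.
\begin{enumerate}
\item If your reversal is conjugation by an odd unitary on the $\hatkg$-factor alone, then no $\alpha_\s$ is wanted. For $A=\C$, $B=\s$ and $\varphi$ representing $e=[1_\s]$, you get $[\varphi]+[\varphi^-]=e-[\alpha_\s]=u\neq 0$ by \Cref{prop: omnibus graded k-theory}.7.
\item If the reversal is implemented on the Hilbert module, as your analogy with $(\E^{\operatorname{op}},-F)$ suggests, you have dropped the twist by $\alpha_A$. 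Kasparov's inverse is $(\E^{\operatorname{op}},\phi\circ\alpha_A,-F)$. For $A=B=C_0(\R)\hat\otimes Cl^{0,1}$ and $\varphi=\operatorname{ev}_0\hat\otimes\operatorname{id}$, you get $[\varphi]+[\varphi^-]=[\operatorname{id}_A]-[\alpha_A]$. This acts on $\hat K_0(A)\cong\Z$ by $2$ (\Cref{prop: omnibus graded k-theory}.4), so it is not zero.
\end{enumerate}

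The repair is to not prescribe the inverse in advance. Your homotopy gives $[1_{M_2}\hat\otimes\varphi]=0$. Write $1_{M_2}=p_1+p_2$, with $p_1,p_2$ the even projections onto the even and odd lines of $\C^{1|1}$, and absorb $M_2(\C)$ into $\hatkg$. Then this class is $[\varphi]+[\varphi']$, where $\varphi'$ is the transport of $\varphi$ to the odd corner. That is all the lemma needs, and with this correction your argument is the paper's.
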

\begin{proof}
    This follows from an argument very similar to the argument given in the proof of \Cref{thm: spectral model of g eq k theory}.1. See \cite[Lemma 2.1]{HG} for more details.
\end{proof}
\begin{thm}\label{thm: final}
\hfill
\begin{enumerate}
    \item For all $A,B \in \RGradedsep$, we have a natural isomorphism $\ETheory^{G_2}_{\mathrm{class}}(\s \hat\otimes A,B) \cong \Egrclass(A,B)$.
    \item We have an equivalence of additive, symmetric monoidal (for the graded maximal tensor product) categories \[
    \hoEgra \simeq \Egrclass.
    \]
    \item For ungraded $C^*$-algebras $A$ and $B$ in $\RGradedsep$, we have a natural isomorphism \[\Egrclass(A,B) \cong \EBGHT(A,B).\]
\end{enumerate}  
\end{thm}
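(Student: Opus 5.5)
For Assertion 1, I would use the Generalized Dadarlat--Loring theorem (\Cref{thm: Gen DL}) for the group $G_2$. Graded $G$-equivariant asymptotic morphisms are the same as $G_2$-equivariant asymptotic morphisms of the underlying algebras, and $\hatkg$ plays the role of $\mathcal{K}_{G_2}$ after a $\mathcal{K}_{G_2}$-stabilization argument. Concretely, I would identify
\[
[[\s \hat\otimes A \hat\otimes \hatkg, B \hat\otimes \hatkg]]_{\operatorname{gr}} \cong \operatorname{hom}_{\mathrm S_{\mathcal K_{G_2}}\Asy^{G_2}}(\s \hat\otimes A, B).
\]
The one point to check is that tensoring with $\hatkg$ and with $\mathcal{K}_{G_2}$ are interchangeable up to asymptotic homotopy. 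I would argue this using inner gradedness of the compact operators together with the stabilization yoga of \Cref{prop: SKGRALKGseph omnibus}.1. By \Cref{lem: last lemma}, $\s \hat\otimes A$ is homotopy symmetric. \Cref{thm: Gen DL} then gives $\operatorname{hom}_{\mathrm S_{\mathcal K_{G_2}}\Asy^{G_2}}(\s\hat\otimes A,B)\cong \ETheory^{G_2}_{\mathrm{class}}(\s\hat\otimes A,B)$, and naturality is clear.

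For Assertion 2, I would combine Assertion 1 with \Cref{thm: hoKKex is Eclass} for $G_2$ and with \Cref{lem: KKANA cat}.5. These give
\[
\hoEgra(A,B)\cong \hoKKRex(\s\hat\otimes A,B)\cong \ETheory^{G_2}_{\mathrm{class}}(\s\hat\otimes A,B)\cong \Egrclass(A,B).
\]
Both sides have the same objects, so these isomorphisms assemble into a functor provided they respect composition. In both categories, composition is defined by the same recipe: precompose with $\Delta\hat\otimes 1$, tensor with $1_\s$, then compose in the underlying ungraded $G_2$-category (\Cref{lem: KKANA cat}.5 and \cite[Definition 2.9]{HG}). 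It therefore suffices that the comparison equivalence $\hoKKRex\simeq \ETheory^{G_2}_{\mathrm{class}}$ is symmetric monoidal for $\hat\otimes_{\max}$ and compatible with precomposition by $\Delta$ and with the Assertion 1 isomorphism. For the monoidality I would rerun the proof of \Cref{thm: hoKKex is Eclass} with the graded maximal tensor product, using that $\hat\otimes$ descends to $\KKRGTWO$ and to $\ETheory^{G_2}_{\mathrm{class}}$. Identities correspond because both functors are induced by precomposition with $\operatorname{ev}_0$, and symmetric monoidality then follows from \Cref{lem: KKANA cat}.1.

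I expect the main obstacle to be the compatibility of the Dadarlat--Loring isomorphism $\Omega$ with the $\Delta$-twisted composition. The map $\Omega$ is a suspension, whereas the graded composition uses the comultiplication on $\s$. I would handle this by noting that $\Omega = C_0(\R^{0,1})\otimes -$ is monoidal on $\mathrm S_{\mathcal K_{G_2}}\Asy^{G_2}$ and commutes with precomposition by $\Delta$ through naturality of $\Delta\hat\otimes 1$. This is where the sign conventions of \Cref{lem: HLT lemma} could interfere, and I would check carefully that no stray signs appear.

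For Assertion 3, take $A,B$ trivially graded. Then $\Egrclass(A,B)\cong\hoEgra(A,B)$ by Assertion 2. By \Cref{thm: higson email to johannes}, this agrees with $\hoKKex(A,B)$, which equals $\EBGHT(A,B)$ by \Cref{thm: hoKKex is Eclass}. Naturality holds because each map involved is natural in $A$ and $B$.
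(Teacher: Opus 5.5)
Your proposal is correct and follows essentially the same route as the paper. Assertion 1 comes from \Cref{thm: Gen DL} for $G_2$, \Cref{lem: last lemma}, and inner gradedness of $\hatkg$. Assertion 2 comes from \Cref{thm: hoKKex is Eclass} for $G_2$ with the graded maximal tensor product, Assertion 1, and \Cref{lem: KKANA cat}.5. Assertion 3 comes from \Cref{thm: higson email to johannes} together with Assertion 2 and the ungraded comparison. The paper's proof is only a few lines and leaves implicit the compatibility with the $\Delta$-twisted composition; your point that the localization functor to $\ETheory^{G_2}_{\mathrm{class}}$ is a monoidal functor correctly covers this, and no boundary-map signs enter.
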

\begin{proof} 
    Assertion 1 follows immediately from \Cref{thm: Gen DL} and \Cref{lem: last lemma}, keeping in mind that $\hatkg$ is inner graded. Assertion 2 follows from \Cref{thm: hoKKex is Eclass} for the group $G \times C_2$ (with the monoidal structure being the graded maximal tensor product), Assertion 1, and \Cref{lem: KKANA cat}.5. Assertion 3 was shown in the context of our stable $\infty$-categories in \Cref{thm: higson email to johannes}. We are therefore done by Assertion 2.
\end{proof}
We end the article with a remark circling the analogous comparison between $\widehat{KK}_{\operatorname{Kas}}^{G,\operatorname{sep}}$ and $\hoKKgra$.
\begin{rem}\label{last rem}
    In light of the main result of \cite{THOASY}, we can rework the last two subsections with \emph{completely positive} asymptotic morphisms, and obtain that for all $A,B \in \RGradedsep$,
    \[
    \hoKKgra(A,B) \cong [[\s \hat\otimes A \hat\otimes \hatkg, B \hat\otimes \hatkg]]_{\operatorname{gr}}^{\operatorname{cp}}.
    \]
    However, unlike the definition of $\Egrclass$ (which on the face of it is by asymptotic morphisms), the definition of $\KKKASgr$ is in terms of Kasparov cycles. Therefore to complete the circle of ideas, we would need to obtain a natural isomorphism
    \[
    \KKKASgr(A,B) \cong [[\s \hat\otimes A \hat\otimes \hatkg, B \hat\otimes \hatkg]]_{\operatorname{gr}}^{\operatorname{cp}}.
    \]
    It is not clear to the author how to get a (clean) natural map in either direction. The problem seems to be that completely positive asymptotic morphisms do not naturally arise in practical situations.

    Nevertheless, Meyer has proven using different methods in \cite[Section 7]{MEYER} the analogue of the first two assertions of \Cref{thm: final} for $\widehat{KK}_{\operatorname{Kas}}^{G,\operatorname{sep}}$, whence the analogue of Assertion 3 of \Cref{thm: final} for $\KKKASgr$ also follows from \Cref{thm: higson email to johannes}. 
\end{rem}
\printbibliography
\end{document}